\documentclass[english,12pt]{article}
\usepackage{amsmath,amsfonts,amsthm,amssymb,color,mathrsfs}
\usepackage[T1]{fontenc}
\usepackage[utf8]{inputenc}
\usepackage[left=1in, right=1in, top=1.1in,bottom=1.1in]{geometry}
\setlength{\parskip}{3pt}
\usepackage{amsthm}
\usepackage{amsmath}
\usepackage{amssymb}
\usepackage{wasysym}
\usepackage{esint}
\usepackage{hyperref}
\usepackage{comment}

\usepackage[title]{appendix}

\hypersetup{
     colorlinks   = true,
     citecolor    = blue,
     linkcolor    = blue
}
\usepackage{pdfsync}
\usepackage{stmaryrd}

\setcounter{tocdepth}{4}

\makeatletter

%%%%%%%%%%%%%%%%%%%%%%%%%%%%%%%%%%%%%%%%%%%
%%%%%%%%%%%%%%% Samy's macros %%%%%%%%%%%%%%%%%%%%
%%%%%%%%%%%%%%%%%%%%%%%%%%%%%%%%%%%%%%%%%%%%

\newcommand{\bch}{\bar{\mathcal{H}}}
\newcommand{\hb}[1]{\textcolor{blue}{#1}}

\def\mr{{\mathbb  R}}

\newcommand{\beq}{\begin{equation}}
\newcommand{\eeq}{\end{equation}}

%%%%%%%%%%%%%%%%%%%%%%%%%%%%%%%%%%%%%%%
%%%%%%%%%%%%%%% Mathbf %%%%%%%%%%%%%%%%%%%%
%%%%%%%%%%%%%%%%%%%%%%%%%%%%%%%%%%%%%%%%

%%%%%%%%%%%%%%%%%%%%%%%%%%%%%%%%%%%%%%%%%%%
%%%%%%%%%%%%%%% Mathbb %%%%%%%%%%%%%%%%%%%%%%%%
%%%%%%%%%%%%%%%%%%%%%%%%%%%%%%%%%%%%%%%%%%%%

\newcommand{\E}{\mathbb E}

\newcommand{\R}{\mathbb R}

%%%%%%%%%%%%%%%%%%%%%%%%%%%%%%%%%%%%%%%%%%%%
%%%%%%%%%%%%%%% Calligraphic %%%%%%%%%%%%%%%%%%%%%%%
%%%%%%%%%%%%%%%%%%%%%%%%%%%%%%%%%%%%%%%%%%%%%
\newcommand{\ca}{\mathcal A}

\newcommand{\cac}{\mathcal C}

\newcommand{\cf}{\mathcal F}
\newcommand{\ch}{\mathcal H}

\newcommand{\ck}{\mathcal K}
\newcommand{\cl}{\mathcal L}

\newcommand{\cs}{\mathcal S}

%%%%%%%%%%%%%%%%%%%%%%%%%%%%%%%%%%%%%%%%%
%%%%%%%%%%%%%%% Greek %%%%%%%%%%%%%%%%%%%%%%%
%%%%%%%%%%%%%%%%%%%%%%%%%%%%%%%%%%%%%%%%%%
\newcommand{\al}{\alpha}
\newcommand{\ep}{\varepsilon}

\newcommand{\ga}{\gamma}

\newcommand{\oom}{\Omega}

\newcommand{\vp}{\varphi}

%%%%%%%%%%%%%%%%%%%%%%%%%%%%%%%%%%%%%%%%%%
%%%%%%%%%%%%%%% Brackets %%%%%%%%%%%%%%%%%%%%%%
%%%%%%%%%%%%%%%%%%%%%%%%%%%%%%%%%%%%%%%%%%
\newcommand{\lp}{\left(}
\newcommand{\rp}{\right)}
\newcommand{\lc}{\left[}
\newcommand{\rc}{\right]}
\newcommand{\lcl}{\left\{}
\newcommand{\rcl}{\right\}}

\numberwithin{equation}{section}
  \theoremstyle{plain}
  \newtheorem{thm}{\protect\theoremname}[section]
   \theoremstyle{plain}
  
  \theoremstyle{remark}
  \newtheorem{rem}[thm]{\protect\remarkname}
  \theoremstyle{definition}
  \newtheorem*{notation*}{\protect\notationname}
  \theoremstyle{plain}
  \newtheorem{prop}[thm]{\protect\propositionname}
    \theoremstyle{plain}
  \newtheorem{lem}[thm]{\protect\lemmaname}
  \theoremstyle{plain}
  \newtheorem{cor}[thm]{\protect\corollaryname}
  \theoremstyle{definition}
  \newtheorem{defn}[thm]{\protect\definitionname}
   \theoremstyle{plain}
  
   \theoremstyle{plain}
  \newtheorem*{uha*}{\protect\uhaname}
     \theoremstyle{definition}
      \newtheorem*{uell*}{\protect\uellname}
     \theoremstyle{definition}
  \newtheorem{conv}[thm]{\protect\convname}

  \makeatother

\usepackage{babel}
  \providecommand{\assumptionname}{Assumption}
  \providecommand{\definitionname}{Definition}
  \providecommand{\lemmaname}{Lemma}
  \providecommand{\notationname}{Notation}
  \providecommand{\propositionname}{Proposition}
  \providecommand{\remarkname}{Remark}
\providecommand{\corollaryname}{Corollary}
\providecommand{\theoremname}{Theorem}
\providecommand{\claimname}{Claim}
\providecommand{\uhaname}{Uniform Hypoellipticity Assumption}
\providecommand{\uellname}{Uniform Ellipticity Assumption}
\providecommand{\convname}{Convention}

\begin{document}

\title{Precise Local Estimates for Hypoelliptic Differential Equations driven by Fractional Brownian Motion}

\author{Xi Geng\thanks{School of Mathematics and Statistics, University of Melbourne, Melbourne, Australia. Email: xi.geng@unimelb.edu.au.} 
\and 
Cheng Ouyang\thanks{Department of Mathematics, Statistics and Computer Science, University of Illinois at Chicago, Chicago, United States. Email: couyang@math.uic.edu.} 
\and 
Samy Tindel\thanks{Department of Mathematics, Purdue University, West Lafayette, United States. Email: stindel@purdue.edu.} 
}
\date{}

\maketitle

\begin{abstract}
This article is concerned with stochastic differential equations driven by a $d$ dimensional fractional Brownian motion with Hurst parameter $H>1/4$, understood in the rough paths sense. Whenever the coefficients of the equation satisfy a uniform hypoellipticity condition, we establish a sharp local estimate on the associated control distance function and a sharp local lower estimate on the density of the solution. Our methodology relies heavily on the rough paths structure of the equation.
\end{abstract}

\tableofcontents

\section{Introduction.}

We will split our introduction into two parts. In Section \ref{sec:background}, we recall some background on the stochastic analysis of stochastic differential equations driven by a fractional Brownian motion. In Section \ref{sec: main results} we describe our main results.  Section \ref{sec: strategy} is then devoted to a brief explanation about the methodology we have used in order to obtain our main results.
%\hb{Describe further sections here.}

\subsection{Background and motivation.}\label{sec:background}
One way to envision Malliavin calculus is to see it as a geometric and analytic framework on an infinite dimensional space (namely the Wiener space) equipped with a Gaussian measure. This is already apparent in Malliavin's seminal contribution \cite{Ma78} giving a probabilistic proof of H\"ormander's theorem. The same point of view has then been  pushed forward in the celebrated series of papers by Kusuoka and Stroock, which set up the basis for densities and probabilities expansions for diffusion processes within this framework.

On the other hand, the original perspective of Lyons' rough path theory (cf. \cite{Lyons94, Lyons98}) is quite different. Summarizing very briefly, it asserts that a reasonable differential calculus with respect to a noisy process $X$ can be achieved as long as one can define enough iterated integrals of $X$. One of the first processes to which the theory has been successfully applied is a fractional Brownian motion, and we shall focus on this process in the present paper. Namely a $\mathbb{R}^d$-valued fractional Brownian motion is a continuous centered Gaussian process $B=\{(B_t^1,\ldots,B_t^d); \, t\geq 0\}$ with independent coordinates, such that each $B^{j}$ satisfies
\begin{equation*}
\E\lc  (B_{t}^{j} - B_{s}^{j})^{2} \rc
=
|t-s|^{2H},
\qquad
s,t\geq 0,
\end{equation*}
for a given $H\in(0,1)$. The process $B$ can be seen as a natural generalization of Brownian motion allowing any kind of H\"older regularity (that is a H\"older exponent $H-\ep$ for an arbitrary small $\ep$, whenever $H$ is given). We are interested in the following differential equation driven by $B$:
\begin{equation}\label{eq: hypoelliptic SDE}
\begin{cases}
dX_{t}=\sum_{\alpha=1}^{d}V_{\alpha}(X_{t})dB_{t}^{\alpha}, & 0\leq t\leq1,\\
X_{0}=x\in\mathbb{R}^{N}.
\end{cases}
\end{equation}
Here the $V_\alpha$'s are $C_b^\infty$ vector fields, and the Hurst parameter is assumed to satisfy the condition $H>1/4$. In this setting, putting together the results contained in \cite{CQ} and \cite{Lyons98}, the stochastic differential equation \eqref{eq: hypoelliptic SDE} can be understood in the framework of rough path theory. Although we will give an account on the notion of rough path solution in Section \ref{sec:signature}, the simplest way of looking at equation \eqref{eq: hypoelliptic SDE} is the following. Let $B^{(n)}_t$ be a dyadic  linear interpolation of $B_t$. Let $X^{(n)}_t$ be the solution to equation \eqref{eq: hypoelliptic SDE} in which the driving process $B_t$ is replaced by $B^{(n)}_t$. From standard ODE theory,  $X^{(n)}_t$ is pathwisely well-defined. The solution to the SDE (\ref{eq: hypoelliptic SDE}) is then proved (cf. \cite{FV06} for instance) to be the limit of $X^{(n)}_t$ as $n\rightarrow \infty$.

With the solution of \eqref{eq: hypoelliptic SDE} in hand, a natural problem one can think of is the following: can we extend the aforementioned analytic studies on Wiener's space to the process $B$? In particular can we complete Kusuoka-Stroock's program in the fractional Brownian motion setting? This question has received a lot of attention in the recent years, and previous efforts along this line include H\"ormander type theorems for the process $X$ defined by \eqref{eq: hypoelliptic SDE} (cf. \cite{BH07,CF10,CHLT15}), some upper Gaussian bounds on the density $p(t,x,y)$ of $X_{t}$ (cf. \cite{BNOT16}), as well as Varadhan type estimates for $\log(p(t,x,y))$ in small time \cite{BOZ15}. One should stress at this point that the road from the Brownian to the fractional Brownian case is far from being trivial. This is essentially due to the lack of independence of the fBm increments and Markov property,  as well as to the technically demanding characterization of the Cameron-Martin space whenever $B$ is not a Brownian motion. We shall go back to those obstacles throughout the article.

Our contribution can be seen as a step in the direction mentioned above. More specifically, we shall obtain  a sharp local estimate on the associated control distance function and a sharp local estimates for the density of $X_{t}$ under hypoelliptic conditions on the vector fields $V_{\al}$. This will be achieved thanks to a combination of geometric and analytic tools which can also be understood as a mix of stochastic analysis and rough path theory. We describe our main results more precisely in the next subsection.

\subsection{Statement of main results.}\label{sec: main results}

Let us recall that equation \eqref{eq: hypoelliptic SDE} is our main object of concern.
We are typically interested in the degenerate case where the vector fields $V=\{V_1,\ldots,V_d\}$ satisfy the so-called \textit{uniform hypoellipticity} assumption to be defined shortly. This is a standard degenerate setting where one can expect that the solution of the SDE (\ref{eq: hypoelliptic SDE}) admits a smooth density with respect to the Lebesgue measure. As mentioned in Section \ref{sec:background}, we wish to obtain quantitative information for the density in this context.

We first formulate the uniform hypoellipticity condition which will be assumed throughout the rest of the paper. For $l\geq1,$ define $\mathcal{A}(l)$ to be the set of words over letters $\{1,\ldots,d\}$ with length at most $l$ (including the empty word), and $\mathcal{A}_1(l)\triangleq \mathcal{A}(l)\backslash\{\emptyset\}$. Denote $\mathcal{A}_1$ as the set of all non-empty words. Given a word $\alpha\in\ca_{1}$, we define the vector field $V_{[\alpha]}$ inductively by $V_{[i]}\triangleq V_i$ and $V_{[\alpha]}\triangleq[V_i,V_{[\beta]}]$ for $\alpha=(i,\beta)$ with $i$ being a letter and $\beta\in\ca_{1}$.
\begin{uha*}
The vector fields $(V_{1},\ldots,V_{d})$ are $C_{b}^{\infty}$, and there exists an integer $l_{0}\geq1$, such that
\begin{equation}\label{eq:unif-hypo-assumption}
\inf_{x\in\mathbb{R}^N}\inf_{\eta\in S^{N-1}}\left\{ \sum_{\alpha\in\mathcal{A}_1(l_0)}\langle V_{[\alpha]}(x),\eta\rangle_{\mathbb{R}^N}^{2}\right\}>0.
\end{equation}
The smallest such $l_0$ is called the hypoellipticity constant for the vector fields.
\end{uha*}

\begin{rem}
The uniform hypoellipticity assumption is a quantitative description of the standard uniform H\"ormander condition that the family of vectors $\{V_{[\alpha]}(x):\alpha\in\mathcal{A}_1(l_0)\}$ span the tangent space $T_x\mathbb{R}^N$ uniformly with respect to $x\in\mathbb{R}^N$.
\end{rem}

Under condition \eqref{eq:unif-hypo-assumption}, it was proved by Cass-Friz \cite{CF10} and Cass-Hairer-Litterer-Tindel \cite{CHLT15} that the solution to the SDE (\ref{eq: hypoelliptic SDE}) admits a smooth density $y\mapsto p(t,x,y)$ with respect to the Lebesgue measure on $\mathbb{R}^N$ for all $(t,x)\in (0,1]\times\mathbb{R}^N$. Our contribution aims at getting quantitative small time estimates for $p(t,x,y)$.

In order to describe our bounds on the density $p(t,x,y)$, let us recall that the small time behavior of $p(t,x,y)$ is closely related to the so-called control distance function associated with the vector fields. This fact was already revealed in the Varadhan-type asymptotics result proved by Baudoin-Ouyang-Zhang \cite{BOZ15}:
\begin{equation}\label{eq:varadhan-estimate}
\lim_{t\rightarrow0}t^{2H}\log p(t,x,y)=-\frac{1}{2}d(x,y)^{2}.
\end{equation}
The control distance function $d(x,y)$ in \eqref{eq:varadhan-estimate} is defined in the following way. For any continuous path $h:[0,1]\rightarrow\mathbb{R}^d$ with finite $q$-variation for some $1\leq q<2$, denote $\Phi_t(x;h)$ as the solution to the ODE 
\begin{equation}\label{eq: skeleton ODE}
\begin{cases}
dx_{t}=\sum_{\alpha=1}^{d}V_{\alpha}(x_{t})dh_{t}^{\alpha}, & 0\leq t\leq1,\\
x_{0}=x,
\end{cases}
\end{equation}which is well-posed in the sense of Young \cite{Young36} and Lyons \cite{Lyons94}. Let $\bar{\mathcal{H}}$ be the Cameron-Martin subspace for the fractional Brownian motion with Hurst parameter $H$, whose definition will be recalled in Section \ref{sec: prel.}. According to a variational embedding theorem due to Friz-Victoir \cite{FV06} (cf. Proposition \ref{prop: variational embedding} in Section \ref{sec: prel.}), every Cameron-Martin path $h\in\bar{\mathcal{H}}$ has finite $q$-variation for some $1\leq q<2$ so that the ODE (\ref{eq: skeleton ODE}) can be solved for all such $h$. With those preliminary considerations in hand, the distance $d$ in \eqref{eq:varadhan-estimate} is given in the next definition.
\begin{defn}\label{def: CM bridge}
For $x,y\in\mathbb{R}^N$ and $\Phi_t(x;h)$ defined as in \eqref{eq: skeleton ODE}, set 
\begin{equation}\label{eq: Pi_xy}
\Pi_{x,y}\triangleq\left\{ h\in\bar{\mathcal{H}}:\Phi_{1}(x;h)=y\right\} 
\end{equation}
to be the space of Cameron-Martin paths which \textit{join} $x$ \textit{to} $y$ \textit{in the sense of differential equations}. The  \textit{control distance function} $d(x,y)=d_{H}(x,y)$ is defined by 
\[
d(x,y)\triangleq\inf\left\{ \|h\|_{\bar{{\cal H}}}:h\in\Pi_{x,y}\right\} ,\ \ \ x,y\in\mathbb{R}^{N}.
\]
\end{defn}

According to \eqref{eq:varadhan-estimate}, one can clearly expect that the Cameron-Martin structure and the control distance function will play an important role in understanding the small time behavior of $p(t,x,y)$. However, unlike the diffusion case and due to the complexity of the Cameron-Martin structure, the function $d(x,y)$ is far from being a metric and its shape is not clear. Our first main result is thus concerned with the local behavior of $d(x,y)$. It establishes a comparison between $d$ and  the Euclidian distance. 

\begin{thm}\label{thm: local comparison}
Under the same assumptions as in Theorem \ref{thm:equiv-distances}, let $l_0$ be the hypoellipticity constant in assumption \eqref{eq:unif-hypo-assumption} and $d$ be the control distance given in Definition \ref{def: CM bridge}.
There exist constants $C_1,C_2,\delta>0$, where $C_1,C_2$ depend only on $H,l_0$ and the vector fields, and where $\delta$ depends only on $l_0$ and the vector fields, such that 
\begin{equation}\label{eq:local-comparison}
C_{1}|x-y|\leq d(x,y)\leq C_{2}|x-y|^{\frac{1}{l_{0}}} \, ,
\end{equation}
for all $x,y\in \mathbb{R}^N$ with $|x-y|<\delta$.
\end{thm}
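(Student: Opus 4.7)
The plan is to treat the two sides of \eqref{eq:local-comparison} by very different arguments.

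For the \emph{lower bound} $C_1|x-y|\leq d(x,y)$, given any $h\in\Pi_{x,y}$ I would write
\[
y-x=\int_0^1\sum_{\alpha=1}^d V_\alpha(\Phi_s(x;h))\,dh_s^\alpha,
\]
which is a Young integral in view of Friz-Victoir's variational embedding (Proposition \ref{prop: variational embedding}): $h\in\bar{\mathcal H}$ has finite $q$-variation for some $1\leq q<2$ with $\|h\|_{q\text{-var}}\leq C_{H,q}\|h\|_{\bar{\mathcal H}}$. Combined with the $C_b^\infty$ bound on the $V_\alpha$'s, standard Young estimates give $|y-x|\leq C\|h\|_{\bar{\mathcal H}}$, and taking the infimum over $h\in\Pi_{x,y}$ yields $|y-x|\leq C\, d(x,y)$. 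Note that this inequality is actually global in $x,y$.

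For the \emph{upper bound} $d(x,y)\leq C_2|x-y|^{1/l_0}$ I would adapt the classical sub-Riemannian ``ball-box'' construction to the fBm Cameron-Martin framework. By \eqref{eq:unif-hypo-assumption} the vectors $\{V_{[\alpha]}(x)\}_{\alpha\in\mathcal{A}_1(l_0)}$ span $\R^N$ with a uniform positive lower bound on the spanning constant; together with the $C_b^\infty$ bound on the $V_\alpha$'s this allows one to select, in a neighbourhood of each $x$, a tuple of words $\alpha_1,\ldots,\alpha_N\in\mathcal{A}_1(l_0)$ such that $V_{[\alpha_1]}(x),\ldots,V_{[\alpha_N]}(x)$ forms a basis of $\R^N$ with condition number bounded uniformly in $x$. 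Setting $k_i=|\alpha_i|\leq l_0$, for $y$ sufficiently close to $x$ we can decompose $y-x=\sum_{i=1}^N c_i V_{[\alpha_i]}(x)+O(|x-y|^2)$ with $|c_i|\leq C|x-y|$. The task then reduces to constructing, for each $i$, an elementary Cameron-Martin control $h^{(\alpha_i,\epsilon_i)}$ with scale $\epsilon_i=|c_i|^{1/k_i}\lesssim|x-y|^{1/l_0}$ that drives \eqref{eq: skeleton ODE} in direction $V_{[\alpha_i]}(x)$ by an amount $\epsilon_i^{k_i} V_{[\alpha_i]}(x)$ up to higher order, with $\|h^{(\alpha_i,\epsilon_i)}\|_{\bar{\mathcal H}}\lesssim\epsilon_i$. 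Concatenating these elementary controls and iterating to absorb the successive residual displacements should produce an $h\in\Pi_{x,y}$ with $\|h\|_{\bar{\mathcal H}}\lesssim|x-y|^{1/l_0}$, giving the upper bound.

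The \emph{main obstacle} I expect lies in constructing these elementary controls with the correct norm scaling. In the Brownian case $H=1/2$ one employs Chow-Rashevsky-type oscillating paths (e.g.\ suitable multiples of $\sin(2\pi t)$ and $\cos(2\pi t)$) whose $H^1$-norm scales linearly with the amplitude and whose iterated integrals of length $k$ single out the desired word. For general $H>1/4$, however, $\|\cdot\|_{\bar{\mathcal H}}$ is a fractional-Sobolev-type norm defined through the fBm kernel $K_H$, and one must verify carefully that analogous smooth, scale-$\epsilon$ building blocks admit $\bar{\mathcal H}$-norm of order exactly $\epsilon$; this will rely on the integral representation of $\bar{\mathcal H}$ recalled in Section \ref{sec: prel.}. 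A coupled difficulty is to quantify the higher-order error in the Taylor-type expansion of $h\mapsto\Phi_1(x;h)$ evaluated at $h=h^{(\alpha_i,\epsilon_i)}$, which, since $h^{(\alpha_i,\epsilon_i)}$ has only Cameron-Martin regularity, must be handled inside the rough-path / Young-integration framework; and one must ensure that the iterative scheme absorbing the residuals converges to a path lying \emph{exactly} in $\Pi_{x,y}$ with a total $\bar{\mathcal H}$-norm of the asserted order $|x-y|^{1/l_0}$.
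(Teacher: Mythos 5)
Your lower bound is fine and is essentially the paper's own argument (Lemma \ref{lem: preliminary case}(i)): Young integration plus the variational embedding gives $|y-x|\leq C\bigl(\|h\|_{\bar{\mathcal H}}\vee\|h\|_{\bar{\mathcal H}}^{q}\bigr)$ for every $h\in\Pi_{x,y}$. Note, however, that this only yields the linear bound after one restricts to near-minimizing $h$ with bounded $\bar{\mathcal H}$-norm (which uses the upper bound and $|x-y|<\delta$), so your parenthetical claim that the lower bound is global is not justified as stated, at least for $H<1/2$. Also, for $H\in(1/4,1/2)$ the upper bound needs no construction at all: by Lemma \ref{lem: continuous embedding when H<1/2}, $W_0^{1,2}\hookrightarrow\bar{\mathcal H}$ continuously, so $d\leq C_H d_{\mathrm{BM}}$ and the bound follows from the Brownian (Kusuoka--Stroock) result. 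The hard case is exclusively $H>1/2$, and that is where your proposal has a genuine gap.

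The gap is the step ``concatenating these elementary controls and iterating \ldots should produce an $h\in\Pi_{x,y}$ with $\|h\|_{\bar{\mathcal H}}\lesssim|x-y|^{1/l_0}$.'' This tacitly treats $\|\cdot\|_{\bar{\mathcal H}}$ as subadditive under concatenation and invariant under the reparametrization needed to fit all pieces into a single time interval. For $H>1/2$ neither holds: $\bar{\mathcal H}=I_{0^+}^{H+1/2}(L^2)$ carries a nonlocal fractional norm, so the norm of a concatenation contains cross-interaction terms between different pieces, and rescaling in time changes the norm by the factor $(T_1/T_2)^H$ of Lemma \ref{lem: CM scaling}. Obtaining the total bound $|x-y|^{1/l_0}$ is precisely the analytic heart of the paper's proof (Steps 4--8): each correction path is run on an interval of length $\|u_m\|_{\textsc{CC}}$, the building blocks are chosen via Lemma \ref{lem: quasi-inverse of signature map} with bounded second derivative and derivative supported in $[1/3,2/3]$ so as to tame the endpoint singularities of $D_{0^+}^{H-1/2}$, and the norm is split into diagonal pieces $Q_1,Q_3$ and a cross term $Q_2$; the cross term generates a factor $k^{2H+1}$ which is only summable because the scheme contracts doubly fast, $\|u_k\|_{\textsc{CC}}\lesssim 2^{-k/l_0}|x-y|^{1/l_0}$. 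Your proposal names this as an expected difficulty but does not supply any of it, so the conclusion you want is exactly the missing estimate. A secondary (fixable) weakness is the word-by-word construction of elementary controls whose iterated integrals ``single out'' a given bracket together with the bookkeeping of cross terms between different words; the paper sidesteps this by working at the group level: hypoellipticity gives non-degeneracy of the Taylor map $F_{l_0}$ (Lemma \ref{lem: nondegeneracy of F}), Kusuoka--Stroock's Lemma \ref{lem: the Psi function} produces $u\in\mathfrak{g}^{(l_0)}$ joining $x$ to $y$ in the Taylor-approximation sense with $\|u\|_{\textsc{HS}}\leq A|x-y|$, and Lemma \ref{lem: quasi-inverse of signature map} realizes $u$ as the signature of a single smooth path of $1$-variation $\lesssim\|u\|_{\textsc{CC}}\lesssim|x-y|^{1/l_0}$, so the iteration only has to absorb the genuine Taylor remainder, which contracts like $|x_m-y|^{1+1/l_0}$.
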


We are in fact able to establish a stronger result, namely, the local equivalence of $d$ to the sub-Riemannian distance induced by the vector fields $\{V_1,..., V_d\}$. More specifically, let us write the distance given in Definition \ref{def: CM bridge} as $d_H(x,y)$, in order to emphasize the dependence on the Hurst parameter $H$. Our second main result asserts that all the distances $d_{H}$ are locally equivalent.

\begin{thm}\label{thm:equiv-distances}
Assume that the vector fields $(V_{1},\ldots,V_{d})$ satisfy the uniform hypoellipticity condition \eqref{eq:unif-hypo-assumption}. For $H\in (1/4,1)$, consider the distance $d_{H}$ given in Definition \ref{def: CM bridge}.
Then
for any $H_1, H_2\in (1/4,1)$, there exist  constants $C=C(H_1,H_2, V)>0$ and $\delta>0$ such that
\begin{equation}\label{eq:equiv-distances}
\frac{1}{C}d_{H_1}(x,y)\leq d_{H_2}(x,y)\leq Cd_{H_1}(x,y),
\end{equation}
for all $x,y\in\mathbb{R}^N$ with $|x-y|<\delta$.  In particular, all distances $d_H$ are locally equivalent to $d_{\textsc{BM}}\equiv d_{1/2}$, where $d_{\textsc{BM}}$ stands for the controlling distance of the system \eqref{eq: skeleton ODE} driven by a Brownian motion, i.e. the sub-Riemannian distance induced by the vector fields $\{V_1,...,V_d\}$.
\end{thm}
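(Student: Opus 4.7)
The strategy is to prove, for each $H\in(1/4,1)$ separately, that $d_H$ is locally equivalent to the classical Carnot--Carath\'eodory distance $d_{CC}$ induced by the vector fields $\{V_1,\ldots,V_d\}$ (which coincides with $d_{1/2}=d_{\textsc{BM}}$ by the standard Wiener-space identification of Cameron--Martin paths with horizontal controls). Once $d_{H_1}\asymp d_{CC}$ and $d_{H_2}\asymp d_{CC}$ are established locally with constants depending only on $H_i$ and the vector fields, chaining these equivalences gives \eqref{eq:equiv-distances}.

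For the upper bound $d_H(x,y)\le C\,d_{CC}(x,y)$, I would invoke the classical Ball--Box theorem for the sub-Riemannian structure: near $x$, the point $y$ is reached by a concatenation of flows of iterated Lie brackets $V_{[\alpha]}$, $\alpha\in\mathcal{A}_1(l_0)$, each such flow being realizable modulo higher-order error by an explicit piecewise-linear control in $\mathbb{R}^d$ of length $\sim\epsilon$ yielding spatial displacement $\sim\epsilon^{|\alpha|}$ (the standard commutator/BCH identity). I would then estimate the $\bar{\mathcal{H}}$-norm of such a piecewise-linear test path using the kernel representation of $\bar{\mathcal{H}}$, obtaining a bound of the form $\|h\|_{\bar{\mathcal{H}}}\le C_H\epsilon$. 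Concatenating at most $O(1)$ such elementary pieces to reach $y$ gives the desired inequality.

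For the lower bound $d_H(x,y)\ge c\,d_{CC}(x,y)$, given any $h\in\bar{\mathcal{H}}$ with $\Phi_1(x;h)=y$, I would combine the variational embedding of Friz--Victoir (Proposition \ref{prop: variational embedding}) with a Taylor-type expansion of $\Phi_1(x;h)-x$ along iterated integrals of $h$ up to level $l_0$. In privileged coordinates centered at $x$ associated with the flag of brackets $\{V_{[\alpha]}:|\alpha|\le l_0\}$, each coordinate $\eta_i$ of $y-x$ of weight $w_i$ is dominated by an iterated integral of $h$ of order $w_i$, which is bounded by $\|h\|_{q\text{-var}}^{w_i}$ and hence by $C\|h\|_{\bar{\mathcal{H}}}^{w_i}$. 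After localizing so that the higher-order error terms are absorbed, the Ball--Box description of $d_{CC}$ then yields $d_{CC}(x,y)\le C\|h\|_{\bar{\mathcal{H}}}$.

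The main obstacle is that the Cameron--Martin norm on $\bar{\mathcal{H}}$ is non-local and, especially for $H<1/2$, has a rather intricate integral-kernel expression with no clean ``integration-by-parts'' structure comparable to the $H^1$ norm. Accurately bounding $\|h\|_{\bar{\mathcal{H}}}$ from above for the explicit piecewise-linear test paths of the upper-bound step, and extracting from an arbitrary Cameron--Martin path the sharp sub-Riemannian lower bound matching the hypoellipticity order $l_0$ in the lower-bound step, are the technical heart of the argument; both require careful uniform-in-$H$ analysis of the $\bar{\mathcal{H}}$-norm on the specific families of paths produced by the sub-Riemannian geometry.
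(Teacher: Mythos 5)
Your high-level strategy---compare each $d_H$ with a single $H$-independent quantity and then chain---is in fact the paper's strategy, except that the paper's intermediary is not the sub-Riemannian distance itself but $g(x,y)=\inf\{\|u\|_{\textsc{CC}}:u\in\mathfrak{g}^{(l)},\ x+F_l(u,x)=y\}$, compared with $d_H$ through Lemma \ref{lem: d respect cc} ($d\le C\,g$) and Lemma \ref{lem: estimating S_xy} with $v=0$ ($g\le\Lambda\,d$). Your lower-bound step (privileged coordinates, iterated integrals of $h$ bounded by $\|h\|_{q\text{-}\mathrm{var}}^{w_i}\le C\|h\|_{\bar{\mathcal H}}^{w_i}$, ball--box) is a workable variant of Step 1 in the proof of Lemma \ref{lem: estimating S_xy}, which achieves the same thing with the Euler estimates of Friz--Victoir and Lyons' extension theorem; it does, however, tacitly need the a priori bound $d_H(x,y)\le C|x-y|^{1/l_0}$ of Theorem \ref{thm: local comparison} to restrict to $h$ of small norm so that the order-$(l_0+1)$ remainder can be absorbed.

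The genuine gap is your upper bound $d_H(x,y)\le C\,d_{CC}(x,y)$ in the regime $H>1/2$. The commutator/BCH moves realize each bracket direction only up to an error of higher order, so concatenating ``at most $O(1)$ such elementary pieces'' does not reach $y$: it reaches a point at distance of order $d_{CC}(x,y)^{1+1/l_0}$ from $y$, whereas membership in $\Pi_{x,y}$ requires $\Phi_1(x;h)=y$ exactly. Repairing this forces an infinite iterative correction scheme (this is precisely the construction in Section \ref{sec:proof-thm-12} and its refinement in Lemma \ref{lem: d respect cc}), and the real work is then to estimate the $\bar{\mathcal H}$-norm of the resulting infinite concatenation: for $H>1/2$ the norm is non-local, the norms of the pieces do not simply add, each correction lives on a shrinking time interval, and one needs the scaling identity of Lemma \ref{lem: CM scaling} together with the fractional-derivative decomposition into $Q_1,Q_2,Q_3$ to prove summability with a bound \emph{linear} in the CC-size of the data. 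You defer exactly this estimate as ``the technical heart,'' so the proposal does not yet contain a proof of the upper bound; moreover you locate the difficulty at $H<1/2$, which is backwards---there the continuous embedding $W_0^{1,2}\subseteq\bar{\mathcal H}$ of Lemma \ref{lem: continuous embedding when H<1/2} reduces the claim to the known diffusion case, and it is $H>1/2$ where the hard analysis lives.
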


%As a direct corollary of the equivalence of distances stated above, plus the Brownian case analyzed in \cite{KS87}, we have the following local comparison of $d(x,y)$ to the Euclidian distance.

%{\color{red}With Theorems \ref{thm:equiv-distances} and \ref{thm: local comparison} in hand it is interesting to observe that, although the definition of the control distance function depends crucially on the Cameron-Martin structure associated with the fractional Brownian motion, the Hurst parameter $H$ does not enter into the H\"older exponent in the estimate \eqref{eq:local-comparison}. Indeed, the H\"older exponent depends only on the hypoellipticity constant $l_0$ for the vector fields. }

\begin{rem}\label{rmk:density-signature}
In the special case when \eqref{eq: hypoelliptic SDE} reads as 
$dX_t=X_t\otimes dB_t,$
that is, when $X_t$ is the truncated signature of $B$ up to order $l>0$, it is proved in \cite{BFO19} that all $d_H(x,y)$ are globally equivalent. The proof crucially depends on the fact that the signature of $B$ is homogeneous with respect to the dilation operator on $G^{(l)}(\mr^d)$, the free nilpotent Lip group over $\mr^d$ of order $l$. In the current general nonlinear case, the local equivalence is much more technically challenging.  In addition, we believe that the global equivalence of the distances $d_{H}$ does not hold.
\end{rem}

Our second main result asserts that the density $p(t,x,y)$ of $X_{t}$ is strictly positive everywhere whenever $t>0$. It generalizes for the first time the result of \cite[Theorem 1.4]{BNOT16} to a general hypoelliptic case, by affirming that Hypothesis 1.2 in that theorem is always verified under our assumption \eqref{eq:unif-hypo-assumption}. Recall that a \textit{distribution} over a differentiable manifold is a smooth choice of subspace of the tangent space at every point with constant dimension.  

\begin{thm}\label{prop: positivity-intro}
Let $\{V_{1},\ldots,V_{d}\}$ be a family of $C_{b}^{\infty}$-vector
fields on $\mathbb{R}^{N},$ which span a distribution over $\mathbb{R}^N$ and satisfy the uniform hypoellipticity assumption \eqref{eq:unif-hypo-assumption}.  Let $X_{t}^{x}$
be the solution to the stochastic differential equation  \eqref{eq: hypoelliptic SDE},
where $B_{t}$ is a $d$-dimensional fractional Brownian motion with
Hurst parameter $H>1/4$. Then for each $t\in(0,1]$, the density
of $X_{t}$ is everywhere strictly positive.
\end{thm}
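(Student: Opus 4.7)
The plan is to deduce Theorem \ref{prop: positivity-intro} directly from \cite[Theorem 1.4]{BNOT16} by verifying that its standing hypothesis (Hypothesis 1.2 of that paper) is an automatic consequence of the uniform hypoellipticity assumption \eqref{eq:unif-hypo-assumption}. That hypothesis asks, for every target $y$, for the existence of a Cameron-Martin path $h\in\bar{\mathcal{H}}$ with $\Phi_{1}(x;h)=y$ along which the deterministic skeleton Malliavin matrix---the Jacobian of the map $h\mapsto \Phi_{1}(x;h)$ in Cameron-Martin directions---is non-degenerate. Given such an $h$, the probabilistic machinery of \cite{BNOT16} (a quantitative local inversion of the It\^o map combined with the non-degeneracy of the Gaussian measure on $\bar{\mathcal{H}}$) produces the positivity of $p(t,x,y)$. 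So the entire task is to produce $h$ and to verify the non-degeneracy.

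The first and main step is \emph{global Cameron-Martin controllability}: $\Pi_{x,y}\neq\emptyset$ for all $x,y\in\mathbb{R}^N$. Theorem \ref{thm: local comparison} already provides the local version, since the bound $d(z,w)\leq C_{2}|z-w|^{1/l_{0}}$, valid whenever $|z-w|<\delta$, shows that the set reachable from any base point $z$ contains a Euclidean ball around $z$; hence the reachable set from $x$ is open. To upgrade to global controllability, I would show that this reachable set is also closed under concatenation of Cameron-Martin paths, so that it is both open and closed in $\mathbb{R}^N$ and connectedness finishes the argument. A convenient way to bypass the delicate description of $\bar{\mathcal{H}}$ for $H\in(1/4,1/2]$ is to run this open-and-closed argument first for $H=1/2$, where $\bar{\mathcal{H}}$ is the classical Cameron-Martin space of Brownian motion and concatenation is transparent (this is essentially the Chow--Rashevsky theorem), and then to transfer the conclusion to arbitrary $H>1/4$ using the local equivalence of the distances $d_{H}$ supplied by Theorem \ref{thm:equiv-distances}.

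The second step is to verify the non-degeneracy of the deterministic Malliavin matrix at the control $h$ produced above. This is the deterministic counterpart of the fBm H\"ormander theorem from \cite{BH07,CHLT15}: the assumption \eqref{eq:unif-hypo-assumption} forces the iterated brackets $\{V_{[\alpha]}:\alpha\in\mathcal{A}_{1}(l_{0})\}$ to span $\mathbb{R}^{N}$ uniformly along the trajectory $\Phi_{\cdot}(x;h)$, and a Norris-type iterated-bracket argument carried out purely at the Cameron-Martin level (with no probabilistic input) then shows that the differential $\bar{\mathcal{H}}\ni k\mapsto D\Phi_{1}(x;h)[k]\in\mathbb{R}^{N}$ is surjective. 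This surjectivity is precisely the non-degeneracy required by \cite[Theorem 1.4]{BNOT16}.

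The main obstacle I anticipate is the open-and-closed step at low Hurst parameter: $\bar{\mathcal{H}}$ is then characterized only through fractional integral operators and direct concatenation arguments become awkward. Routing through Theorem \ref{thm:equiv-distances} sidesteps this cleanly in spirit, but the transfer of controllability from $H=1/2$ to general $H$ still requires lifting a smooth Brownian control into $\bar{\mathcal{H}}$ in a way that preserves the endpoint constraint $\Phi_{1}(x;h)=y$. A compactness plus implicit-function perturbation argument along the Cameron-Martin path, combined with the surjectivity of the differential proved in the second step, should enforce this endpoint condition and thereby close the loop.
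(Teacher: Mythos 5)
Your overall frame coincides with the paper's: reduce Theorem \ref{prop: positivity-intro} to \cite[Theorem 1.4]{BNOT16} and verify its skeleton hypothesis, namely that for every $y$ there is $h\in\bar{\mathcal{H}}$ with $\Phi_1(x;h)=y$ at which the differential of the endpoint map $F(h)=\Phi_1(x;h)$ is surjective. Your Step 1 (global Cameron--Martin reachability, via local reachability from Theorem \ref{thm: local comparison} plus concatenation, or via the Brownian case and Theorem \ref{thm:equiv-distances}) is sound in spirit and close to what the paper does, which obtains reachability by piecewise-linear controls through the Chow--Rashevskii theorem (this is where the ``distribution'' assumption enters); the only caveats there are minor (concatenation inside $\bar{\mathcal{H}}$ for $H>1/2$ needs a reparametrization as in the paper's remarks, and you should also quote the $L^p$-integrability of the inverse Malliavin matrix of $X_t$ from \cite{CHLT15}, which is the other hypothesis of \cite[Theorem 1.4]{BNOT16}).

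The genuine gap is your Step 2. It is simply not true that the uniform hypoellipticity assumption \eqref{eq:unif-hypo-assumption} forces $(dF)_h:\bar{\mathcal{H}}\to\mathbb{R}^N$ to be surjective at an arbitrary control $h$ joining $x$ to $y$: for instance at $h\equiv 0$ one has $\Phi_s(x;0)=x$, $J\equiv\mathrm{Id}$, and the image of $(dF)_0$ is $\mathrm{span}\{V_1(x),\dots,V_d(x)\}$, a proper subspace of $\mathbb{R}^N$ in any genuinely hypoelliptic (non-elliptic) situation; more generally singular (abnormal) controls exist, and nothing in your controllability construction prevents the produced $h$ from being one of them. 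A ``Norris-type argument with no probabilistic input'' cannot exist here, because Norris' lemma is precisely the place where genericity of the driving signal is used; the bracket condition along a fixed smooth trajectory gives no lower bound on the deterministic Malliavin matrix. Your closing suggestion to fix the endpoint by an implicit-function argument is circular, since it presupposes the surjectivity you are trying to prove. The paper circumvents exactly this obstruction by a finite-dimensional genericity argument: apply Sard's theorem (Theorem \ref{thm: Sard's theorem}) to the maps $F_n=F\circ H_n$ over piecewise-linear controls to produce at least one regular value $y_0=F(h_0)$, i.e.\ one control $h_0$ with $(dF)_{h_0}$ surjective, and then propagate surjectivity to every target by concatenation through the identity \eqref{eq:Surj}, $(dF)_{h_0\sqcup\alpha}=J_1(F(h_0);\alpha)\circ(dF)_{h_0}$, using the invertibility of the Jacobian together with reachability from $F(h_0)$. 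Some such genericity/propagation step (Sard, or an equivalent) is indispensable, and it is the missing idea in your proposal.
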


As we will see in Section \ref{section: step 1}, the proof of the above result is  based on finite dimensional geometric arguments such as the classical Sard theorem, as well as a general positivity criteria for densities on the Wiener space. We believe that the positivity result in Theorem \ref{prop: positivity-intro} is non-trivial and interesting in its own right. 
 
Let us now turn to a description of our third main result. It establishes a sharp local lower estimate for the density function $p(t,x,y)$ of the solution to the SDE (\ref{eq: hypoelliptic SDE}) in small time. 

\begin{thm}\label{thm: local lower estimate}
Under the uniform hypoelliptic assumption (\ref{eq:unif-hypo-assumption}), let $p(t,x,y)$ be the density of the random variable $X_{t}$ defined by equation \eqref{eq: hypoelliptic SDE}.
There exist some constants $C,\tau>0$ depending only on $H,l_0$ and the vector fields $V_{\al}$, such that 
\begin{equation}\label{eq:local-density-bound}
p(t,x,y)\geq\frac{C}{|B_{d}(x,t^{H})|},
\end{equation}
for all $(t,x,y)\in(0,1]\times\mathbb{R}^N\times\mathbb{R}^N$ satisfying the following local condition involving the distance $d$ introduced in Definition \ref{def: CM bridge}:
\begin{equation*}
d(x,y)\leq t^H,
\quad\text{and}\quad
t<\tau .
\end{equation*}
In relation \eqref{eq:local-density-bound}, $B_{d}(x,t^{H})\triangleq\{ z\in\mathbb{R}^{N}:d(x,z)<t^{H}\}$ denotes the ball with respect to the distance $d$ and $|\cdot|$ stands for the Lebesgue measure.
\end{thm}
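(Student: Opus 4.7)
The plan is to combine fractional Brownian scaling, a controlled Cameron-Martin shift built from Theorem \ref{thm: local comparison}, and a Malliavin-calculus based local density lower bound under uniform hypoellipticity. The overall philosophy is the classical one of Ben Arous--Léandre and Kusuoka--Stroock, adapted to the rough path / fBm setting.

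\textbf{Step 1: Reduction to $t=1$ via scaling.} Using the self-similarity $\{B_{tu}\}_{u\in[0,1]}\stackrel{d}{=}\{t^{H}B_{u}\}_{u\in[0,1]}$, the time-change $u=s/t$ turns (\ref{eq: hypoelliptic SDE}) into the rescaled system
\[
dX^{(t)}_u=\sum_{\alpha=1}^{d}t^{H}V_{\alpha}(X^{(t)}_u)\,dB^{\alpha}_u,\qquad X^{(t)}_0=x,\quad u\in[0,1],
\]
whose endpoint density $p^{(t)}(1,x,\cdot)$ satisfies $p(t,x,y)=t^{-NH}p^{(t)}(1,x,y)$. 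The associated control distance scales as $d^{(t)}(x,y)=t^{-H}d(x,y)$, so the hypothesis $d(x,y)\leq t^{H}$ becomes $d^{(t)}(x,y)\leq 1$. It therefore suffices to prove a uniform-in-$t$ (for $t<\tau$) lower bound $p^{(t)}(1,x,y)\geq c>0$ whenever $d^{(t)}(x,y)\leq 1$. Once this is established, the Euclidean upper comparison $d\geq C_1|\cdot|$ from Theorem \ref{thm: local comparison} gives $|B_{d}(x,t^H)|\leq C\,t^{NH}$, turning $p(t,x,y)\geq c\,t^{-NH}$ into the desired bound $p(t,x,y)\geq C/|B_{d}(x,t^H)|$.

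\textbf{Step 2: Near-optimal Cameron-Martin shift.} By Definition \ref{def: CM bridge}, pick $h\in\bar{\mathcal{H}}$ with $\Phi_{1}(x;h)=y$ and $\|h\|_{\bar{\mathcal{H}}}\leq 2d(x,y)$. In the rescaled picture, $h^{(t)}:=t^{-H}h$ joins $x$ to $y$ for the vector fields $t^{H}V_{\alpha}$ and satisfies $\|h^{(t)}\|_{\bar{\mathcal{H}}}\leq 2$, uniformly in $t$. Abstract Cameron-Martin theory on Wiener space then gives an absolutely continuous shift $B\mapsto B+h^{(t)}$ whose Radon-Nikodym density has moments of all orders bounded uniformly in $t$. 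Under this shift, the rescaled solution is centered at $y$ in the sense that, writing $\tilde X^{(t)}$ for the solution driven by $B+h^{(t)}$, one has $\tilde X^{(t)}_1\approx y$ up to a stochastic rough-path perturbation of size $O(1)$.

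\textbf{Step 3: Local density lower bound via Malliavin calculus.} Write
\[
p^{(t)}(1,x,y)=\mathbb{E}\!\left[\delta_{y}(X^{(t)}_1)\right]
=\mathbb{E}\!\left[Z^{(t)}\,\delta_{y}(\tilde X^{(t)}_1)\right],
\]
where $Z^{(t)}$ is the shift density. Using the standard Malliavin integration-by-parts formula, represent the right-hand side as an integral against a bounded cut-off times a Skorohod-type weight. A uniform-in-$t$ lower bound $p^{(t)}(1,x,y)\geq c$ then follows, provided one controls (i) the $L^p$-norms of the shift density $Z^{(t)}$ (uniform in $t$, by Step 2), (ii) the negative moments of the determinant of the Malliavin matrix of $\tilde X^{(t)}_1$, and (iii) the Sobolev norms of $\tilde X^{(t)}_1$ on Wiener space. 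Items (i) and (iii) are standard once the CM norm $\|h^{(t)}\|_{\bar{\mathcal{H}}}$ is bounded; item (ii) is where the uniform hypoellipticity assumption (\ref{eq:unif-hypo-assumption}) enters, via the quantitative non-degeneracy estimates of \cite{CHLT15,BNOT16} applied to the rescaled vector fields $t^{H}V_{\alpha}$.

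\textbf{Main obstacle.} The decisive point is item (ii): one needs uniform-in-$t$ negative moments of the Malliavin matrix of the \emph{shifted} rescaled system, not just of the original one. The rescaling introduces a small parameter $t^H$ in front of each vector field, and one must verify that the quantitative hypoellipticity estimate (\ref{eq:unif-hypo-assumption}) propagates through to a lower bound on the smallest eigenvalue of the Malliavin matrix that depends on $H$ and $l_0$ but not on $t\in(0,\tau)$ or on the shift $h^{(t)}$ (as long as $\|h^{(t)}\|_{\bar{\mathcal{H}}}\leq 2$). This is where the full rough-path + Norris-lemma machinery developed in \cite{CHLT15} must be made uniform in both the base point of the tangent flow and the scale parameter, which in turn relies crucially on the local geometric information contained in Theorems \ref{thm: local comparison} and \ref{thm:equiv-distances}. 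Once (ii) is secured, combining the three items with the volume estimate from Step 1 yields the claimed bound.
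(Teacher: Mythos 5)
Your proposal transplants the elliptic argument (the paper's Section \ref{sec: ellip.}: scaling, a near-optimal Cameron--Martin shift, and a uniform Malliavin lower bound) to the hypoelliptic setting, but the step you flag as the ``main obstacle'' is not a technical point to be patched -- it is precisely where this route breaks. Your item (ii) asks for negative moments of the Malliavin determinant of the shifted, rescaled system that are uniform over all shifts $h$ with $\|h\|_{\bar{\mathcal H}}\leq 2$ and over $t<\tau$. In the elliptic case this is Lemma \ref{lem: uniform nondegeneracy of Malliavin matrix}, whose proof uses $V V^{*}\geq \Lambda_{1}\mathrm{Id}$ pointwise; under mere uniform hypoellipticity the limiting variable $X(h)$ is Gaussian with covariance equal to the \emph{deterministic} Malliavin matrix of the skeleton $\Phi_{1}(x;h)$, and this matrix is in general degenerate (brackets are invisible to it for degenerate choices of $h$), so no lower bound uniform in $\|h\|_{\bar{\mathcal H}}\leq 2$ exists. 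The paper states this explicitly at the start of Section \ref{sec: local lower bound}: without Lemma \ref{lem: uniform nondegeneracy of Malliavin matrix} ``the whole elliptic argument will break down and one needs new approaches.'' Asserting that the Norris-lemma machinery of \cite{CHLT15} ``must be made uniform'' is assuming the conclusion; moreover the correct small-time behaviour of the Malliavin matrix is anisotropic (directions associated with brackets of length $k$ degenerate like $t^{2Hk}$), so a single elliptic-type power of $t$ cannot be the right normalization.

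There is also a quantitative error in your Step 1 reduction. From Theorem \ref{thm: local comparison} one only gets $|B_{d}(x,t^{H})|\leq C t^{NH}$, hence $1/|B_{d}(x,t^{H})|\geq C^{-1}t^{-NH}$; therefore a bound $p(t,x,y)\geq c\,t^{-NH}$ does \emph{not} imply $p(t,x,y)\geq C/|B_{d}(x,t^{H})|$ -- the implication goes the wrong way. In the genuinely hypoelliptic case $|B_{d}(x,t^{H})|$ is of order $t^{H Q}$ with $Q>N$, so the target bound is strictly stronger than $t^{-NH}$, and your scheme, even if completed, would only recover the non-sharp elliptic-type rate. This is why the paper's actual proof takes a completely different route: a lower bound for the density of the truncated log-signature $U_{t}^{(l)}$ via positivity (Lemma \ref{lem: positivity of rho}) and exact scaling (Proposition \ref{prop: step one}); transfer to the Taylor approximation $X_{l}(t,x)$ through the non-degeneracy of $F_{l}$, a disintegration formula and the change of variables $S_{x,y}$ on the bridge manifolds $M_{x,y}$, which produces exactly the factor $1/|B_{d}(x,t^{H})|$ (Lemmas \ref{lem: lower estimate in terms of m_xx} and \ref{lem: estimate vol m_xx}); and finally a Fourier-transform comparison showing $|p-p_{l}^{\eta}|\leq Ct$, with the delicate point being an upper bound on $p_{l}^{\eta}$ whose $t$-exponent depends on $l_{0}$ but not on $l$ (Lemma \ref{lem: upper estimate for approximating density}). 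None of these ingredients appears in your proposal, so as written it does not yield Theorem \ref{thm: local lower estimate}.
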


The sharpness of Theorem \ref{thm: local lower estimate} can be seen from the fractional Brownian motion case, i.e. when $N=d$ and $V=\mathrm{Id}$. As we will see, the technique we use to prove Theorem \ref{thm: local comparison} will be an essential ingredient for establishing Theorem \ref{thm: local lower estimate}. Theorem \ref{thm:equiv-distances} and Theorem \ref{prop: positivity-intro} will also be proved as byproducts along our path of proving Theorem \ref{thm: local lower estimate}.

\subsection{Strategy and outlook.}\label{sec: strategy}

Let us say a few words about the methodology we have used in order to obtain our main results. Although we will describe our overall strategy with more details in Section \ref{sec: local lower bound}, let us  mention here that it
 is based on the reduction of the problem to a finite dimensional one, plus some geometric type arguments.  

More specifically, the key point in our proofs is that the solution $X_{t}$ to \eqref{eq: hypoelliptic SDE} can be approximated by a simple enough function of the so-called truncated signature of order $l$ for the fractional Brownian motion $B$. This object is formally defined,  for a given $l\ge 1$, as the following $\oplus_{k=0}^{l}(\R^{d})^{\otimes k}$-valued process:
\begin{equation*}
\Gamma_t
=
1+\sum_{k=1}^{l}\int_{0<t_1<\cdots<t_k<t}dB_{t_1}\otimes \cdots\otimes dB_{t_k},
\end{equation*}
and  it enjoys some convenient algebraic and analytic properties. The truncated signature is the main building block of the rough path theory (see e.g \cite{Lyons98}), and was also used in \cite{KS87} in a Malliavin calculus context. Part of our challenge in the current contribution is to combine the properties of the process $\Gamma$, together with the Cameron-Martin space structure related to the fractional Brownian motion $B$, in order to achieve efficient bounds for the density of $X_{t}$.

As mentioned above, the truncated signature gives rise to a $l$-th order local approximation of $X_{t}$ in a neighborhood of its initial condition $x$. Namely if we set 
\begin{equation}\label{eq:def-Fl-intro}
F_{l}(\Gamma_{t},x)\triangleq
\sum_{k=1}^{l} \sum_{i_{1},\ldots,i_{k}=1}^{d}
V_{(i_{1},\ldots,i_{k})}(x)
\int_{0<t_1<\cdots<t_k<t}dB_{t_1}^{i_{1}} \cdots dB_{t_k}^{i_{k}},
\end{equation}
then classical rough paths considerations assert that $F_{l}(\Gamma_{t},x)$ is an approximation of order $t^{Hl}$ of $X_{t}$ for small $t$. In the sequel we will heavily rely on some non degeneracy properties of $F_{l}$ derived from the uniform hypoelliptic assumption \eqref{eq:unif-hypo-assumption}, in order to get the following information:

\noindent
\emph{(i)}
One can construct a path $h$ in the Cameron-Martin space of $B$ which joins $x$ and any point $y$ in a small enough neighborhood of $x$. This task is carried out thanks to a complex iteration procedure, whose building block is the non-degeneracy of the function $F_{l}$. It is detailed in Section \ref{sec:proof-thm-12}. In this context, observe that the computation of the Cameron-Martin norm of $h$ also requires a substantial effort. This will be the key step in order to prove Theorems \ref{thm:equiv-distances} and \ref{thm: local comparison} concerning the distance $d$ given in Definition \ref{def: CM bridge}.

\noindent
\emph{(ii)}
The proof of the lower bound given in Theorem \ref{thm: local lower estimate} also hinges heavily on the approximation $F_{l}$ given by \eqref{eq:def-Fl-intro}. Indeed the preliminary results about the density of $\Gamma_{t}$ (see Remark~\ref{rmk:density-signature} above), combined with the non-degeneracy of $F_{l}$, yield good properties for the density of $F_{l}(\Gamma_{t},x)$. One is then left with the task of showing that $F_{l}(\Gamma_{t},x)$ approximates $X_{t}$ properly at the density level.

In conclusion, although the steps performed in the remainder of the article might look technically and computationally involved, they rely on a natural combination of analytic and geometric bricks as well as a reduction to a finite dimensional problem. Let us also highlight the fact that our next challenge is to iterate the local estimates presented here in order to get Gaussian type lower bounds for the density $p(t,x,y)$ of $X_{t}$. This causes some further complications due to  the complex (non Markovian) dependence structure for the increments of the fractional Brownian motion $B$. We defer this project to a future publication.

\medskip

\noindent
\textbf{Organization of the present paper.} In Section \ref{sec:prelim}, we present some basic notions from the analysis of fractional Brownian motion and rough path theory. In Section \ref{sec: ellip.}, we give an independent discussion in the elliptic case in which the analysis is considerably simpler. In Section \ref{sec: control distance} and Section \ref{sec: local lower bound}, we develop the proofs of Theorem \ref{thm: local comparison} and Theorem \ref{thm: local lower estimate} respectively in the hypoelliptic case. Theorem \ref{thm:equiv-distances} and Theorem \ref{prop: positivity-intro} are proved in the steps towards proving Theorem \ref{thm: local lower estimate}.

\begin{notation*}
Throughout the rest of this paper, we use "$\mathrm{Letter}_{\mathrm{subscript}}$" to denote constants whose value depend only on objects specified in the "subscript" and may differ from line to line. For instance, $C_{H,V,l_0}$ denotes a constant depending only on the Hurst parameter $H$, the vector fields $V$ and the hypoellipticity constant $l_0$. Unless otherwise stated, a constant will implicitly depend on $H,V,l_0$. We will always omit the dependence on dimension. 
\end{notation*}

\section{Preliminary results.}\label{sec:prelim}

This section is devoted to some preliminary results on the Cameron-Martin space related to a fractional Brownian motion. We shall also recall some basic facts about rough paths solutions to noisy equations.

\subsection{The Cameron-Martin subspace of fractional Brownian motion.}\label{sec: prel.}

Let us start by recalling the definition of fractional Brownian motion.

\begin{defn}\label{def:fbm}
A $d$-dimensional \textit{fractional Brownian motion} with Hurst parameter $H\in(0,1)$ is an $\mathbb{R}^d$-valued continuous centered Gaussian process $B_t=(B_t^1,\ldots,B_t^d)$ whose covariance structure is given by 
\beq\label{eq:cov-fBm}
\mathbb{E}[B_{s}^{i}B_{t}^{j}]=\frac{1}{2}\left(s^{2H}+t^{2H}-|s-t|^{2H}\right)\delta_{ij}
\triangleq R(s,t) \delta_{ij}.
\eeq
\end{defn}

This process is defined and analyzed in numerous articles (cf. \cite{DU97,Nualart06,PT00} for instance), to which we refer for further details.
In this section, we mostly focus on a proper definition of the Cameron-Martin subspace related to $B$. We also prove two general lemmas about this space which are needed for our analysis of the density $p(t,x,y)$. 
Notice that we will frequently identify a Hilbert space with its dual in the canonical way without  further mentioning. 

In order to introduce the Hilbert spaces which will feature in the sequel, consider a one dimensional fractional Brownian motion $\{B_t:0\leq t\leq 1\}$ with Hurst parameter $H\in(0,1)$.   The discussion here can be easily adapted to the multidimensional setting with arbitrary time horizon $[0,T]$. 
Denote $W$ as the space of continuous paths $w:[0,1]\rightarrow\mathbb{R}^{1}$
with $w_{0}=0.$ Let $\mathbb{P}$ be the probability measure over $W$ under which the coordinate process $B_t(w)=w_t$ becomes a fractional Brownian motion.  Let ${\cal C}_{1}$ be the associated first order Wiener chaos,
i.e. ${\cal C}_{1}\triangleq\overline{\mathrm{Span}\{B_{t}:0\leq t\leq1\}}\ {\rm in}\ L^{2}(W,\mathbb{P})$.

\begin{defn}\label{def:bar-H}
Let $B$ be a one dimensional fractional Brownian motion as defined in \eqref{def:fbm}.
Define $\bar{{\cal H}}$ to be the space of elements $h\in W$ which
can be written as 
\beq\label{eq:def-h-in-CM}
h_{t}=\mathbb{E}[B_{t}Z],\ \ \ 0\leq t\leq1,
\eeq
where $Z\in{\cal C}_{1}.$ We equip $\bar{{\cal H}}$ with an inner product structure given by
\[
\langle h_{1},h_{2}\rangle_{\bar{{\cal H}}}\triangleq\mathbb{E}[Z_{1}Z_{2}],\ \ \ h_{1},h_{2}\in\bar{{\cal H}},
\]
whenever $h^{1},h^{2}$ are defined by \eqref{eq:def-h-in-CM} for two random variables $Z_{1},Z_{2}\in{\cal C}_{1}$.
The Hilbert space $(\bar{\mathcal{H}},\langle\cdot,\cdot\rangle_{\bar{\mathcal{H}}})$ is called the \textit{Cameron-Martin
subspace} of the fractional Brownian motion. 
\end{defn}

One of the advantages of working with fractional Brownian motion is that a convenient analytic description of $\bar{\mathcal{H}}$ in terms of fractional calculus is available (cf. \cite{DU97}).  Namely recall that given a function $f$ defined on $[a,b]$, the right and left \textit{fractional integrals} of $f$ of order $\alpha>0$ are respectively defined by
\beq\label{eq:def-frac-integral}
(I_{a^{+}}^{\alpha}f)(t)\triangleq\frac{1}{\Gamma(\alpha)}\int_{a}^{t}f(s)(t-s)^{\alpha-1}ds,
\quad\text{and}\quad 
(I_{b^{-}}^{\alpha}f)(t)\triangleq\frac{1}{\Gamma(\alpha)}\int_{t}^{b}f(s)(s-t)^{\alpha-1}ds.
\eeq
In the same way the right and left \textit{fractional derivatives} of $f$ of order $\alpha>0$ are respectively defined by  
\beq\label{eq:def-frac-deriv}
(D_{a^{+}}^{\alpha}f)(t)\triangleq\left(\frac{d}{dt}\right)^{[\alpha]+1}(I_{a^{+}}^{1-\{\alpha\}}f)(t),
\quad\text{and}\quad 
(D_{b^{-}}^{\alpha}f)(t)\triangleq\left(-\frac{d}{dt}\right)^{[\alpha]+1}(I_{b^{-}}^{1-\{\alpha\}}f)(t),
\eeq
where $[\alpha]$ is the integer part of $\alpha$ and $\{\alpha\}\triangleq\alpha-[\alpha]$ is the fractional part of $\alpha$. The following formula for $D_{a^+}^\alpha$ will be useful for us:
\begin{equation}\label{eq: formula for fractional derivatives}
(D_{a^{+}}^{\alpha}f)(t)=\frac{1}{\Gamma(1-\alpha)}\left(\frac{f(t)}{(t-a)^{\alpha}}+\alpha\int_{a}^{t}\frac{f(t)-f(s)}{(t-s)^{\alpha+1}}ds\right),\ \ \ t\in[a,b].
\end{equation}
The fractional integral and derivative operators are inverse to each other. For this and other properties of fractional derivatives, the reader is referred to \cite{KMS93}.

Let us now go back to the construction of the Cameron-Martin space for $B$, and proceed as in \cite{DU97}. Namely define an isomorphism $K$ between $L^{2}([0,1])$
and $I_{0+}^{H+1/2}(L^{2}([0,1]))$ in the following way:
\begin{equation}\label{eq: analytic expression of K}
K\varphi\triangleq\begin{cases}
C_{H}\cdot I_{0^{+}}^{1}\left(t^{H-\frac{1}{2}}\cdot I_{0^{+}}^{H-\frac{1}{2}}\left(s^{\frac{1}{2}-H}\varphi(s)\right)(t)\right), & H>\frac{1}{2};\\
C_{H}\cdot I_{0^{+}}^{2H}\left(t^{\frac{1}{2}-H}\cdot I_{0^{+}}^{\frac{1}{2}-H}\left(s^{H-\frac{1}{2}}\varphi(s)\right)(t)\right), & H\leq\frac{1}{2},
\end{cases}
\end{equation}
where $c_{H}$ is a universal constant depending only on $H.$ One
can easily compute $K^{-1}$ from the definition of $K$ in terms
of fractional derivatives. Moreover, the operator $K$ admits a kernel representation,
i.e. there exits a function $K(t,s)$ such that 
\[
(K\varphi)(t)=\int_{0}^{t}K(t,s)\varphi(s)ds,\ \ \ \varphi\in L^{2}([0,1]).
\]
The kernel $K(t,s)$ is defined for $s<t$ (taking zero value
otherwise). One can write down $K(t,s)$ explicitly thanks to the definitions \eqref{eq:def-frac-integral} and \eqref{eq:def-frac-deriv}, but this expression is not included here since it will not be used later in  our analysis. A crucial property for $K(t,s)$ is that 
\begin{equation}
R(t,s)=\int_{0}^{t\wedge s}K(t,r)K(s,r)dr,\label{eq: kernel representation of covariance function for fBM}
\end{equation} 
where $R(t,s)$ is the fractional Brownian motion covariance function introduced in \eqref{eq:cov-fBm}. This essential fact enables the following analytic characterization of the Cameron-Martin space in \cite[Theorem~3.1]{DU97}.

\begin{thm}\label{thm:bar-cal-H-as frac-integral}
Let $\bch$ be the space given in Definition \ref{def:bar-H}.
As a vector space we have $\bar{{\cal H}}=I_{0^{+}}^{H+1/2}(L^{2}([0,1])),$ and 
the Cameron-Martin norm is given by 
\begin{equation}
\|h\|_{\bar{{\cal H}}}=\|K^{-1}h\|_{L^{2}([0,1])}.\label{eq: inner product in terms of fractional integrals}
\end{equation}
\end{thm}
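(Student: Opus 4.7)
The plan is to realize $\bar{\mathcal H}$ as the image of $L^2([0,1])$ under the Volterra kernel operator $K$, and then to recognize $K(L^2([0,1]))$ as the image of the fractional integral $I_{0^+}^{H+1/2}$.

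First, I would exploit the kernel representation \eqref{eq: kernel representation of covariance function for fBM} to build the white noise lift of $B$. Concretely, relation $R(t,s)=\int_0^{t\wedge s}K(t,r)K(s,r)\,dr$ means that the Gaussian family $\{B_t\}$ has the same covariance as $\{\int_0^t K(t,r)\,dW_r\}$ for a standard Brownian motion $W$. Enlarging the probability space if necessary, one can thus choose $W$ so that $B_t=\int_0^t K(t,r)\,dW_r$. The Itô isometry then identifies the first Wiener chaos $\mathcal C_1$ of $B$ with $L^2([0,1])$: each $Z\in\mathcal C_1$ is of the form $Z=\int_0^1\varphi(r)\,dW_r$ for a unique $\varphi\in L^2([0,1])$, and $\|Z\|_{L^2(W,\mathbb P)}=\|\varphi\|_{L^2([0,1])}$.

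Next I would compute the map $Z\mapsto h$ of Definition \ref{def:bar-H} in these coordinates. Using Fubini/Itô isometry,
\begin{equation*}
h_t=\mathbb E[B_tZ]=\mathbb E\!\left[\int_0^tK(t,r)\,dW_r\cdot\int_0^1\varphi(r)\,dW_r\right]=\int_0^tK(t,r)\varphi(r)\,dr=(K\varphi)(t),
\end{equation*}
where $K$ here denotes the kernel operator associated with the function $K(t,r)$. Thus the map $\varphi\mapsto K\varphi$ is an isometric isomorphism from $L^2([0,1])$ onto $\bar{\mathcal H}$, and pushing the $L^2$ norm through gives
$\|h\|_{\bar{\mathcal H}}=\|\varphi\|_{L^2([0,1])}=\|K^{-1}h\|_{L^2([0,1])}$, which is precisely \eqref{eq: inner product in terms of fractional integrals}.

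It remains to identify the range of $K$ with $I_{0^+}^{H+1/2}(L^2([0,1]))$. Here I would use the analytic factorization \eqref{eq: analytic expression of K}, which expresses $K$ as a composition of a fractional integral, multiplications by the power weights $t^{H-1/2}$ or $t^{1/2-H}$, and another fractional integral whose combined order equals $H+1/2$. Using the semigroup law $I_{0^+}^{\alpha}I_{0^+}^{\beta}=I_{0^+}^{\alpha+\beta}$ together with the standard fact that multiplication by a power $t^{\gamma}$ sends $I_{0^+}^{\alpha}(L^2)$ into itself in both directions for the relevant exponents (via Hardy–type mapping properties of weighted fractional integrals), one shows $K(L^2([0,1]))=I_{0^+}^{H+1/2}(L^2([0,1]))$ as vector spaces. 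The inverse $K^{-1}$ is computed by inverting each factor using $(I_{0^+}^{\alpha})^{-1}=D_{0^+}^{\alpha}$ and formula \eqref{eq: formula for fractional derivatives}.

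The main obstacle is the last step: verifying carefully that the auxiliary multiplication operators by $t^{H-1/2}$ or $t^{1/2-H}$ in \eqref{eq: analytic expression of K} do not alter the image space $I_{0^+}^{H+1/2}(L^2([0,1]))$. This requires delicate $L^2$-boundedness statements for composition of fractional integrals with power weights, which is precisely the content of the fractional calculus toolkit developed in \cite{KMS93} and applied in \cite{DU97}. The Hilbert space part (isometry, $L^2$ identification of $\mathcal C_1$) is, by contrast, essentially formal once the Volterra representation of $B$ is available.
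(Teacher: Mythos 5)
First, a point of comparison: the paper does not prove this theorem at all --- it is quoted directly from \cite[Theorem 3.1]{DU97} --- so your proposal has to be measured against that reference rather than against an internal argument. Your outline is in fact the standard route of \cite{DU97}: use \eqref{eq: kernel representation of covariance function for fBM} to write $B_t=\int_0^t K(t,r)\,dW_r$, identify the first chaos with an $L^2$ space via the It\^o isometry, compute $\mathbb{E}[B_tZ]=(K\varphi)(t)$, and then identify $K(L^2([0,1]))$ with $I_{0^+}^{H+1/2}(L^2([0,1]))$ through the factorization \eqref{eq: analytic expression of K} together with the weighted mapping theorems of \cite{KMS93}. Since the genuinely hard analytic content (the last step) is exactly what you defer to \cite{KMS93,DU97}, your proposal is a correct reduction to the literature rather than a self-contained proof; that is consistent with how the paper itself treats the statement.

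There is, however, one step you assert that does not come for free and that would leave a real hole if omitted: the claim that the It\^o isometry identifies $\mathcal{C}_1$ with \emph{all} of $L^2([0,1])$. What is immediate is only that every $Z\in\mathcal{C}_1$ equals $\int_0^1\varphi\,dW$ for a unique $\varphi$ lying in the closed linear span $S$ of $\{K(t,\cdot):t\in[0,1]\}$; this yields $\bar{\mathcal{H}}=K(S)$ isometrically, not yet $\bar{\mathcal{H}}=K(L^2([0,1]))$. If $W$ is constructed on an enlarged probability space, as you suggest, the Gaussian space of $W$ may a priori be strictly larger than that of $B$, and then $\int_0^1\varphi\,dW$ for a general $\varphi\in L^2([0,1])$ need not belong to $\mathcal{C}_1$. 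You therefore need $S=L^2([0,1])$, i.e.\ totality of $\{K(t,\cdot)\}$ in $L^2([0,1])$ --- equivalently the surjectivity-type statement that the paper isolates in Lemma \ref{lem: surjectivity of K*} and uses when writing $W_s=I((\mathcal{K}^{*})^{-1}\mathbf{1}_{[0,s]})$. The fix is short: $\langle\varphi,K(t,\cdot)\rangle_{L^2([0,1])}=(K\varphi)(t)$, so $\varphi\perp S$ forces $K\varphi\equiv 0$, and $K$ is injective because by \eqref{eq: analytic expression of K} it is a composition of fractional integrals and multiplications by a.e.\ positive power weights; hence $\varphi=0$ and $S=L^2([0,1])$. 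With this supplement (and the cited weighted fractional-calculus results), your argument is sound, and the norm identity $\|h\|_{\bar{\mathcal{H}}}=\|K^{-1}h\|_{L^2([0,1])}$ follows exactly as you describe.
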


In order to define Wiener integrals with respect to $B$, it is also convenient to look at the Cameron-Martin subspace in terms of the covariance structure. Specifically, we define another space $\mathcal{H}$ as the completion of the space of simple step functions with inner product induced by 
\begin{equation}\label{eq:def-space-H}
\langle\mathbf{1}_{[0,s]},\mathbf{1}_{[0,t]}\rangle_{{\cal H}}\triangleq R(s,t).
\end{equation}
The space $\ch$ is easily related to $\bch$. Namely define the following operator 
\begin{equation}\label{eq:def-K-star}
{\cal K}^{*}:{\cal H}\rightarrow L^{2}([0,1]),
\quad\text{such that}\quad
\mathbf{1}_{[0,t]}\mapsto K(t,\cdot).
\end{equation}
We also set 
\begin{equation}\label{eq:IsoR}
\mathcal{R}\triangleq K\circ\mathcal{K}^{*}:{\cal H}\rightarrow\bar{\mathcal{H}},
\end{equation}where the operator $K$ is introduced in \eqref{eq: analytic expression of K}. Then it can be proved that $\mathcal{R}$ is an isometric isomorphism (cf. Lemma \ref{lem: surjectivity of K*} below for the surjectivity of $\mathcal{K}^*$). In addition, under this identification, $\mathcal{K}^*$ is the adjoint of $K$, i.e. $\mathcal{K}^{*}=K^{*}\circ\mathcal{R}.$ This can be seen by acting on indicator functions and then passing limit. 
As mentioned above, one advantage about the space $\mathcal{H}$ is that the fractional Wiener integral operator $I:{\mathcal{H}}\rightarrow{\mathcal{C}}_{1}$ induced by  
${\bf 1}_{[0,t]}\mapsto B_{t}$ is an isometric isomorphism. According to relation~\eqref{eq: kernel representation of covariance function for fBM},
$B_{t}$ admits a Wiener integral representation with respect to an underlying Wiener process $W$:
\beq\label{eq:B-as-Wiener-integral}
B_{t}=\int_{0}^{t}K(t,s)dW_{s}  .
\eeq
Moreover, the process $W$ in \eqref{eq:B-as-Wiener-integral} can be expressed as a Wiener integral with respect to $B$, that is
$W_{s}=I(({\cal K}^{*})^{-1}{\bf 1}_{[0,s]})$  (cf. \cite[relation (5.15)]{Nualart06}).

Let us also mention the following useful formula for the natural pairing between $\mathcal{H}$ and $\bar{\mathcal{H}}$.

\begin{lem}

Let $\ch$ be the space defined as the completion of the indicator functions with respect to the inner product \eqref{eq:def-space-H}. Also recall that $\bch$ is introduced in Definition \ref{def:bar-H}. Then through the isometric isomorphism $\mathcal{R}$ defined by (\ref{eq:IsoR}), the natural pairing between $\ch$ and $\bch$ is given by
\begin{equation}\label{eq: H-barH pairing}
_{\mathcal{H}}\langle f,h\rangle_{\bar{\mathcal{H}}}=\int_{0}^{1}f_{s} \, dh_{s}.
\end{equation}
\end{lem}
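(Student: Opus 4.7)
The plan is to unwind the pairing through the isometric isomorphism $\mathcal{R}=K\circ\mathcal{K}^{*}$ and reduce the claim to an $L^{2}$ identity that I can check on indicator functions and then extend by density. Since $\mathcal{R}$ identifies $\bar{\mathcal{H}}$ with (the dual of) $\mathcal{H}$, the natural pairing reads $_{\mathcal{H}}\langle f,h\rangle_{\bar{\mathcal{H}}}=\langle f,\mathcal{R}^{-1}h\rangle_{\mathcal{H}}$. Using in addition that $\mathcal{K}^{*}\colon\mathcal{H}\to L^{2}([0,1])$ is an isometry and that $\mathcal{R}^{-1}=(\mathcal{K}^{*})^{-1}\circ K^{-1}$, this pairing collapses to
\begin{equation*}
_{\mathcal{H}}\langle f,h\rangle_{\bar{\mathcal{H}}}=\langle \mathcal{K}^{*}f,K^{-1}h\rangle_{L^{2}([0,1])}.
\end{equation*}
Hence the task is to identify this $L^{2}$ pairing with $\int_{0}^{1}f_{s}\,dh_{s}$, which is a form of fractional integration by parts.

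First I would check the identity on simple step functions. Take $f=\mathbf{1}_{[0,t]}$ and write $h=\mathcal{R}g$ with $g=\mathbf{1}_{[0,u]}$; by the very definition of $\mathcal{K}^{*}$ in \eqref{eq:def-K-star}, one has $\mathcal{K}^{*}\mathbf{1}_{[0,u]}=K(u,\cdot)$, and therefore
\begin{equation*}
h_{\tau}=(K\mathcal{K}^{*}\mathbf{1}_{[0,u]})(\tau)=\int_{0}^{\tau}K(\tau,s)K(u,s)\,ds=R(\tau,u),
\end{equation*}
where the last equality comes from \eqref{eq: kernel representation of covariance function for fBM} after observing that $K(\tau,s)K(u,s)$ vanishes outside $[0,\tau\wedge u]$. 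Consequently $\int_{0}^{1}\mathbf{1}_{[0,t]}(s)\,dh_{s}=h_{t}=R(t,u)$, which matches $\langle\mathcal{K}^{*}\mathbf{1}_{[0,t]},\mathcal{K}^{*}\mathbf{1}_{[0,u]}\rangle_{L^{2}}=\langle\mathbf{1}_{[0,t]},\mathbf{1}_{[0,u]}\rangle_{\mathcal{H}}=R(t,u)$ by the very definition \eqref{eq:def-space-H} of the $\mathcal{H}$-inner product. Bilinearity then yields the identity for all pairs of simple step functions.

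To conclude I would pass to the limit. Simple step functions are dense in $\mathcal{H}$, and the corresponding paths $\mathcal{R}g$ are dense in $\bar{\mathcal{H}}$ through the isometry $\mathcal{R}$. Both the $L^{2}$ pairing and the map $(f,h)\mapsto{_{\mathcal{H}}\langle} f,h\rangle_{\bar{\mathcal{H}}}$ are continuous on $\mathcal{H}\times\bar{\mathcal{H}}$, so the identity extends to all admissible pairs; this continuous extension is precisely what is meant by the right-hand side $\int_{0}^{1}f_{s}\,dh_{s}$ of \eqref{eq: H-barH pairing}. The main delicacy lies in this final interpretation: for a generic $f\in\mathcal{H}$ the element $f$ need not be a bona fide function, and for $H<1/2$ the Cameron-Martin path $h$ has only $(H+\tfrac{1}{2})$-Hölder regularity, so the symbol $\int_{0}^{1}f_{s}\,dh_{s}$ must be understood a priori through the fractional duality $\langle\mathcal{K}^{*}f,K^{-1}h\rangle_{L^{2}}$ displayed above, rather than in the classical Riemann-Stieltjes sense. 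Once this convention is adopted, the density argument anchored on the indicator function computation closes the proof.
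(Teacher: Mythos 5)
Your proposal is correct, but it follows a genuinely different route from the paper. The paper works probabilistically: writing $h_t=\mathbb{E}[B_tZ]$ with $Z=I(g)\in\mathcal{C}_1$ and $\mathcal{R}(g)=h$, it identifies the pairing with $\mathbb{E}[Z\cdot I(f)]$ and then applies a (formal) stochastic Fubini, $\mathbb{E}\bigl[Z\int_0^1 f_s\,dB_s\bigr]=\int_0^1 f_s\,\mathbb{E}[Z\,dB_s]=\int_0^1 f_s\,dh_s$, so the Stieltjes form of the pairing appears in one stroke. You instead stay on the analytic side: using that $\mathcal{K}^*$ is an isometry onto $L^2$ and $K$ an isometry onto $\bar{\mathcal{H}}$, you collapse the pairing to $\langle\mathcal{K}^*f,K^{-1}h\rangle_{L^2([0,1])}$, verify the identity on indicator functions via $h_\tau=\int_0^{\tau}K(\tau,s)K(u,s)\,ds=R(\tau,u)$ and \eqref{eq: kernel representation of covariance function for fBM}, and then extend by bilinearity and density. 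Your intermediate formula is exactly the relation \eqref{eq: duality} that the paper later re-derives inside the proof of Lemma \ref{lem: continuous embedding when H<1/2}, so your argument in effect proves that identity once and for all, which is a small bonus; the paper's argument is shorter and yields the Stieltjes interpretation directly, at the cost of a formal Fubini interchange. Both proofs share the same caveat, which you state explicitly and the paper leaves implicit: for generic $f\in\mathcal{H}$ (a distribution when $H>1/2$) the right-hand side of \eqref{eq: H-barH pairing} is only meaningful as the continuous extension from step functions, i.e.\ through the $L^2$ duality $\langle\mathcal{K}^*f,K^{-1}h\rangle_{L^2}$, and coincides with the classical Young/Riemann--Stieltjes integral whenever the latter is defined. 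So there is no gap, only a different (and equally legitimate) organization of the proof.
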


\begin{proof}
First of all, let $h\in\bch$ and $g\in\mathcal{H}$ be such that $\mathcal{R}(g)=h$.  It is easy to see that $g$ can be constructed in the following way. According to Definition \ref{def:bar-H}, there exists a random variable $Z$ in the first chaos $\cac_{1}$ such that $h_t=\mathbb{E}[B_tZ]$. 
The element $g\in\mathcal{H}$ is then given via
the Wiener integral isomorphism between $\ch$ and $\cac_{1}$, that is, the element $g\in\ch$ such that $Z=I(g)$. Also note that we have $h_t=\mathbb{E}[B_t \, I(g)]$.

Now consider  $f\in\ch$. The natural pairing between $f$ and $h$ is thus given by
\begin{equation*}
_{\mathcal{H}}\langle f,h\rangle_{\bar{\mathcal{H}}}
= \,
_{\mathcal{H}}\langle f,g\rangle_{\mathcal{H}}
=
\mathbb{E}[Z\cdot I(f)].
\end{equation*}
A direct application of Fubini's theorem then yields:
\[
_{\mathcal{H}}\langle f,h\rangle_{\bar{\mathcal{H}}}
=\mathbb{E}[Z\cdot I(f)]
=\mathbb{E}\left[Z\cdot\int_{0}^{1}f_{s}dB_{s}\right]
=\int_{0}^{1}f_{s} \, \mathbb{E}[ZdB_{s}]=\int_{0}^{1}f_{s}dh_{s}.
\]
\end{proof}

The space $\mathcal{H}$ can also be described in terms of fractional calculus (cf. \cite{PT00}), since the operator $\mathcal{K}^*$ defined by \eqref{eq:def-K-star} can be expressed as
\begin{equation}\label{eq: analytic expression for K*}
(\mathcal{K}^{*}f)(t)=\begin{cases}
C_{H}\cdot t^{\frac{1}{2}-H}\cdot\left(I_{1^{-}}^{H-\frac{1}{2}}\left(s^{H-\frac{1}{2}}f(s)\right)\right)(t), & H>\frac{1}{2};\\
C_{H}\cdot t^{\frac{1}{2}-H}\cdot\left(D_{1^{-}}^{\frac{1}{2}-H}\left(s^{H-\frac{1}{2}}f(s)\right)\right)(t), & H\leq\frac{1}{2}.
\end{cases}
\end{equation} 
Starting from this expression, it is readily checked  that when  $H>1/2$ the space $\mathcal{H}$ coincides with the following subspace of the Schwartz distributions $\cs'$:
\begin{equation}\label{eq:ch-case-H-large}
\ch 
=
\lcl f\in\cs' ; \,  t^{1/2-H}\cdot(I_{1^{-}}^{H-1/2}(s^{H-1/2}f(s)))(t) \text{ is an element of } L^{2}([0,1])  \rcl.
\end{equation}
 In the case $H\leq1/2$, we simply have 
 \begin{equation}\label{eq:ch-case-H-small}
\mathcal{H}=I_{1^{-}}^{1/2-H}(L^{2}([0,1])).
\end{equation}

\begin{rem}
As the Hurst parameter $H$ increases, $\mathcal{H}$ gets larger (and contains distributions when $H>1/2$) while $\bar{\mathcal{H}}$ gets smaller. This fact is apparent from Theorem \ref{thm:bar-cal-H-as frac-integral} and relations~\eqref{eq:ch-case-H-large}-\eqref{eq:ch-case-H-small}.
When $H=1/2$, the process $B_t$ coincides with the usual Brownian motion. In this case, we have $\mathcal{H}=L^2([0,1])$ and $\bar{\mathcal{H}}=W_0^{1,2}$, the space of absolutely continuous paths starting at the origin with square integrable derivative.
\end{rem}

Next we mention a variational embedding theorem for the Cameron-Martin subspace $\bar{\mathcal{H}}$ which will be used in a crucial way. The case when $H>1/2$ is a simple exercise starting from the definition \eqref{eq:def-h-in-CM} of $\bar{\mathcal{H}}$ and invoking the Cauchy-Schwarz inequality. The case when $H\leq 1/2$ was treated in \cite{FV06}. From a pathwise point of view, this allows us to integrate a fractional Brownian path against a Cameron-Martin path or vice versa (cf. \cite{Young36}), and to make sense of ordinary differential equations driven by a Cameron-Martin path (cf. \cite{Lyons94}).

\begin{prop}\label{prop: variational embedding}If $H>\frac{1}{2}$, then
$\bar{\mathcal{H}}\subseteq C_{0}^{H}([0,1];\mathbb{R}^{d})$, the space of H-H\"older continuous paths. If $H\leq\frac{1}{2}$,
then for any $q>\left(H+1/2\right)^{-1}$, we have 
$\bar{\mathcal{H}}\subseteq C_{0}^{q\text{-}\mathrm{var}}([0,1];\mathbb{R}^{d})$, the space of continuous paths with finite $q$-variation. In addition, the above inclusions are continuous embeddings.
\end{prop}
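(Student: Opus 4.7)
The plan is to handle the two cases separately, exploiting in each case the most convenient description of $\bar{\mathcal{H}}$ from Section \ref{sec: prel.}.

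For $H > 1/2$, I would work directly from Definition \ref{def:bar-H}: any $h \in \bar{\mathcal{H}}$ has the form $h_t = \mathbb{E}[B_t Z]$ for some $Z \in \mathcal{C}_1$ with $\|h\|_{\bar{\mathcal{H}}} = \|Z\|_{L^2(\mathbb{P})}$, and the Cauchy--Schwarz inequality combined with the covariance identity \eqref{eq:cov-fBm} yields
\begin{equation*}
|h_t - h_s| = |\mathbb{E}[(B_t - B_s)\, Z]| \leq \|B_t - B_s\|_{L^2(\mathbb{P})}\, \|Z\|_{L^2(\mathbb{P})} = |t - s|^H \|h\|_{\bar{\mathcal{H}}},
\end{equation*}
giving the continuous embedding $\bar{\mathcal{H}} \hookrightarrow C_0^H([0,1];\mathbb{R}^d)$ with operator norm at most $1$.

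For $H \leq 1/2$ the same Cauchy--Schwarz bound still produces $H$-H\"older continuity, but this only delivers $(1/H)$-variation, and since $1/(H+1/2) < 1/H$ this is strictly weaker than the desired conclusion. I would therefore invoke the variational embedding theorem of Friz--Victoir \cite{FV06}. The starting point is the analytic representation $\bar{\mathcal{H}} = I_{0^+}^{H+1/2}(L^2([0,1]))$ provided by Theorem \ref{thm:bar-cal-H-as frac-integral}. Writing $h = I_{0^+}^{H+1/2} \varphi$ with $\varphi \in L^2$, each increment $h_{t_i} - h_{t_{i-1}}$ over a partition $\{t_i\}$ can be expressed as $\langle K_i, \varphi \rangle_{L^2}$ for an explicit fractional kernel $K_i$, and the task reduces to bounding $\sum_i |\langle K_i, \varphi\rangle|^q$ uniformly in the partition by a constant multiple of $\|\varphi\|_{L^2}^q$. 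The key technical input from \cite{FV06} is a 2D variation estimate on the fBm covariance $R(s,t)$, which controls the cross-correlations among overlapping $K_i$'s and produces the sharp variation exponent. Continuity of the embedding then follows from the equivalence of $\|\cdot\|_{\bar{\mathcal{H}}}$ and the $L^2$ norm on the fractional-integral preimage, itself a consequence of the open mapping theorem applied to the bijection of Theorem \ref{thm:bar-cal-H-as frac-integral}.

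The main obstacle lies in the sharpness of the exponent $1/(H+1/2)$ when $H \leq 1/2$: a pointwise Cauchy--Schwarz bound on each kernel $K_i$ ignores the interactions among consecutive intervals and only yields the coarse exponent $1/H$. Exploiting the cancellations across overlapping supports via the 2D-variation structure of the covariance is precisely what produces the sharp exponent, reflecting the fact that Cameron--Martin paths of fBm enjoy considerably better variation regularity than a direct H\"older estimate would suggest.
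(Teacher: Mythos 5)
Your proposal follows essentially the same route as the paper: the paper gives no detailed argument for this proposition, noting only that the case $H>1/2$ is precisely the Cauchy--Schwarz computation from Definition \ref{def:bar-H} that you carried out, and that the case $H\leq 1/2$ is the variational embedding theorem of Friz--Victoir \cite{FV06} applied to $\bar{\mathcal{H}}=I_{0^+}^{H+1/2}(L^2([0,1]))$, which you likewise invoke. One minor caveat: the embedding in \cite{FV06} is obtained by direct fractional-calculus estimates on $I_{0^+}^{H+1/2}\varphi$ rather than via a $2$D variation bound on the covariance $R$ (that alternative route only yields the weaker exponent $1/(2H)$ for $H<1/2$), but since you use the theorem as a black box this mischaracterization does not affect the correctness of your argument.
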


Finally, we prove two general lemmas on the Cameron-Martin subspace that are needed later on. These properties do not seem to be contained in the literature and they require some care based on fractional calculus.  The first one claims the surjectivity of $\ck^{*}$ on properly defined spaces.

\begin{lem}\label{lem: surjectivity of K*}
Let $H\in(0,1)$, and consider 
the operator $\mathcal{K}^*:\mathcal{H}\rightarrow L^2([0,1])$ defined by~\eqref{eq:def-K-star}. Then $\ck^{*}$ is surjective.
\end{lem}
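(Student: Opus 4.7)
The strategy is to invert $\mathcal{K}^*$ explicitly using its fractional-calculus representation~\eqref{eq: analytic expression for K*}. I would first reduce surjectivity to a density statement via an isometry argument, and then construct a preimage on a dense subspace by reading the factorization backwards.

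As a preliminary step, a direct computation on indicators combining~\eqref{eq:def-K-star} with the kernel identity~\eqref{eq: kernel representation of covariance function for fBM} gives
\[
\langle \mathcal{K}^*\mathbf{1}_{[0,s]},\mathcal{K}^*\mathbf{1}_{[0,t]}\rangle_{L^2([0,1])}=\int_0^{s\wedge t}K(s,r)K(t,r)\,dr=R(s,t)=\langle\mathbf{1}_{[0,s]},\mathbf{1}_{[0,t]}\rangle_{\mathcal{H}},
\]
so $\mathcal{K}^*$ extends to a linear isometry from $\mathcal{H}$ into $L^2([0,1])$. Being an isometry its image is closed, so it suffices to show that this image is dense in $L^2([0,1])$.

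For density, I would exploit the factorization suggested by~\eqref{eq: analytic expression for K*}, namely $\mathcal{K}^*=C_H\cdot M_{t^{1/2-H}}\circ\Theta\circ M_{s^{H-1/2}}$, where $M_\phi$ denotes multiplication by $\phi$ and $\Theta=I_{1^-}^{H-1/2}$ for $H>1/2$, respectively $\Theta=D_{1^-}^{1/2-H}$ for $H\le 1/2$. Each factor admits a natural inverse, and composing them produces the candidate
\[
f(t)\triangleq C_H^{-1}\,t^{1/2-H}\,\Psi\bigl(s^{H-1/2}\varphi(s)\bigr)(t),\qquad \Psi=\begin{cases}D_{1^-}^{H-1/2}, & H>1/2,\\ I_{1^-}^{1/2-H}, & H\le 1/2.\end{cases}
\]
I would first test this formula on the dense subset $\varphi\in C_c^\infty((0,1))$ of $L^2([0,1])$. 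For such $\varphi$ the input $s^{H-1/2}\varphi(s)$ is smooth and compactly supported inside $(0,1)$, so $\Psi$ is applied in the classical sense and $f$ is a bona fide smooth function of $t$ on $(0,1)$. A direct computation based on the inversion relations $I_{1^-}^{\alpha}D_{1^-}^{\alpha}=D_{1^-}^{\alpha}I_{1^-}^{\alpha}=\mathrm{Id}$ on smooth compactly supported inputs (cf.\ \cite{KMS93}) then yields $\mathcal{K}^* f=\varphi$; in particular $f\in\mathcal{H}$ because its image under $\mathcal{K}^*$ is in $L^2([0,1])$. Hence the image of $\mathcal{K}^*$ contains $C_c^\infty((0,1))$ and is therefore dense, which combined with the closedness from the isometry step gives the desired surjectivity.

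The most delicate regime is $H>1/2$, where $\mathcal{H}$ contains genuine Schwartz distributions by~\eqref{eq:ch-case-H-large}, and the operator $\Psi=D_{1^-}^{H-1/2}$ is a bona fide (and potentially distributional) differentiation. Restricting to $\varphi\in C_c^\infty((0,1))$ is precisely what avoids interpreting $\Psi$ in a distributional sense, but one still has to keep careful track of the endpoint singularities of $M_{s^{H-1/2}}$ and $M_{t^{1/2-H}}$ when justifying the identity $\mathcal{K}^* f=\varphi$. This bookkeeping, though routine, constitutes the main technical content of the argument.
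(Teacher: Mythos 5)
Your argument is correct in outline and reaches the same reduction as the paper (isometry $\Rightarrow$ closed range, so it suffices to exhibit a dense family in the image), but the way you produce the dense family is genuinely different. The paper splits the two regimes: for $H>1/2$ it simply quotes that the indicator functions lie in the range of $\mathcal{K}^*$ (cf.\ \cite[Eq.\ (5.14)]{Nualart06}), and for $H<1/2$ it exhibits the explicit family $f_{\beta}(t)=t^{1/2-H}(1-t)^{\beta+1/2-H}$, checks directly that $D_{1^-}^{1/2-H}f_\beta\in L^{2}$ so that $f_\beta\in I_{1^-}^{1/2-H}(L^2)=\mathcal{H}$ by \eqref{eq:ch-case-H-small}, computes $\mathcal{K}^*f_\beta$ in closed form via \cite{KMS93}, and concludes by Bernstein approximation. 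You instead invert the factorization \eqref{eq: analytic expression for K*} on every $\varphi\in C_c^\infty((0,1))$, using $D_{1^-}^{\alpha}I_{1^-}^{\alpha}=\mathrm{Id}$ (always valid) and $I_{1^-}^{\alpha}D_{1^-}^{\alpha}=\mathrm{Id}$, the latter being legitimate here because $s^{H-1/2}\varphi(s)$ is smooth and supported away from $t=1$, hence lies in $I_{1^-}^{\alpha}(L^1)$. Your route is more uniform (one formula for all $H$) and self-contained for $H>1/2$, where the paper only cites a reference; the paper's route buys an explicit, easily checked preimage family and avoids any inversion identities.

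The one step you should not dismiss as routine is the membership $f\in\mathcal{H}$ in the regime $H\le 1/2$ — which, contrary to your closing remark, is where the delicate point of \emph{your} argument sits, not $H>1/2$. For $H>1/2$ the paper's characterization \eqref{eq:ch-case-H-large} is literally ``$\mathcal{K}^*f\in L^2$'', so your one-line justification is fine there. For $H\le 1/2$ the paper only provides \eqref{eq:ch-case-H-small}, i.e.\ $\mathcal{H}=I_{1^-}^{1/2-H}(L^2)$, and ``$\mathcal{K}^*f\in L^2$'' is the \emph{weighted} condition, not literally membership in that image space; the equivalence of the two descriptions is exactly the kind of weighted bookkeeping you defer. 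To close this either invoke the weighted characterization of the integrand space for $H<1/2$ (in the spirit of \cite{PT00}), or argue directly, as the paper does for $f_\beta$: your candidate $f(t)=C_H^{-1}t^{1/2-H}\,I_{1^-}^{1/2-H}\bigl(s^{H-1/2}\varphi(s)\bigr)(t)$ vanishes near $t=1$, is smooth on $(0,1)$, and behaves like $t^{1/2-H}$ times a smooth function near $0$, whence $D_{1^-}^{1/2-H}f$ is bounded up to a logarithmic factor and thus in $L^{2}$, giving $f\in I_{1^-}^{1/2-H}(L^2)=\mathcal{H}$. With that verification supplied, your proof is complete.
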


\begin{proof}
If $H>1/2$, we know that the image of $\mathcal{K}^*$ contains all indicator functions (cf. \cite[Equation (5.14)]{Nualart06}). Therefore, $\mathcal{K}^*$ is surjective. 

If $H<1/2$, we first claim that the image of $\mathcal{K}^*$ contains functions of the form $t^{1/2-H}p(t)$ where $p(t)$ is a polynomial. Indeed, given an arbitrary $\beta\geq0,$ consider the function
$$f_{\beta}(t)\triangleq t^{\frac{1}{2}-H}(1-t)^{\beta+\frac{1}{2}-H}.$$ It is readily checked that $D_{1^{-}}^{\frac{1}{2}-H}f_{\beta}\in L^{2}([0,1])$, and hence $f_{\beta}\in I_{1^{-}}^{\frac{1}{2}-H}(L^{2}([0,1]))=\mathcal{H}$. Using the analytic expression (\ref{eq: analytic expression for K*}) for $\mathcal{K}^*$, we can compute $\mathcal{K}^*f_\beta$ explicitly (cf. \cite[Chapter 2, Equation (2.45)]{KMS93}) as\[
(\mathcal{K}^{*}f_{\beta})(t)=C_{H}\frac{\Gamma\left(\beta+\frac{3}{2}-H\right)}{\Gamma(\beta+1)}t^{\frac{1}{2}-H}(1-t)^{\beta}.
\] Since $\beta$ is arbitrary and $\mathcal{K}^*$ is linear, the claim follows.

Now it remains to show that the space of  functions of the form $t^{\frac{1}{2}-H}p(t)$ with $p(t)$ being a polynomial is dense in $L^2([0,1])$. To this end, let $\varphi\in C_c^\infty((0,1))$. Then $\psi(t)\triangleq t^{-(1/2-H)}\varphi(t)\in C_{c}^{\infty}((0,1)).$ According to Bernstein's approximation theorem, for any $\varepsilon>0,$ there exists a polynomial $p(t)$ such that \[
\|\psi-p\|_{\infty}<\varepsilon,
\]and thus\[
\sup_{0\leq t\leq1}|\varphi(t)-t^{\frac{1}{2}-H}p(t)|<\varepsilon.
\]Therefore, functions in $C_c^\infty((0,1))$ (and thus in $L^2([0,1])$) can be approximated by functions of the desired form.
\end{proof}

Our second lemma gives some continuous embedding properties for $\ch$ and $\bch$ in the irregular case $H<1/2$.

\begin{lem}\label{lem: continuous embedding when H<1/2}
For $H<1/2$, the inclusions $\mathcal{H}\subseteq L^2([0,1])$ and $W_0^{1,2}\subseteq \bar{\mathcal{H}}$ are continuous embeddings.
\end{lem}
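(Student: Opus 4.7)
The plan is to establish the two inclusions separately. I would first prove $\mathcal{H}\subseteq L^{2}([0,1])$ by a direct analytic estimate on the operator $\mathcal{K}^{\ast}$, and then deduce $W_{0}^{1,2}\subseteq\bar{\mathcal{H}}$ by a duality argument based on the pairing formula \eqref{eq: H-barH pairing}.

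For the first embedding, let $f\in\mathcal{H}$ and set $g\triangleq\mathcal{K}^{\ast}f\in L^{2}([0,1])$, so that by definition $\|f\|_{\mathcal{H}}=\|g\|_{L^{2}}$. Starting from the analytic expression \eqref{eq: analytic expression for K*} for $\mathcal{K}^{\ast}$ in the regime $H<1/2$ and inverting the fractional derivative $D_{1^{-}}^{1/2-H}$ by means of $I_{1^{-}}^{1/2-H}$, a short manipulation yields
\begin{equation*}
f(s)=\frac{s^{1/2-H}}{C_{H}\,\Gamma(1/2-H)}\int_{s}^{1}(u-s)^{-1/2-H}u^{H-1/2}g(u)\,du.
\end{equation*}
Thus $f=Tg$ (up to multiplicative constants), where $T$ is the integral operator with kernel $K(s,u)=s^{1/2-H}(u-s)^{-1/2-H}u^{H-1/2}\mathbf{1}_{\{u>s\}}$. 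I would then establish the $L^{2}$-boundedness of $T$ via Schur's test with weight $w(x)=x^{a}$ for a suitable exponent $a\in(H-3/2,0)$: the substitution $u=s/v$ in the first Schur condition and $s=u\tau$ in the second reduce both inequalities to the convergence of the Beta integral $B(3/2-H+a,1/2-H)$ together with the uniform bounds $s^{1/2-H},u^{1/2-H}\leq 1$ on $[0,1]$. The condition $H<1/2$ is crucially used here. This gives $\|f\|_{L^{2}}\leq C_{H}\|g\|_{L^{2}}=C_{H}\|f\|_{\mathcal{H}}$.

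For the second embedding, let $h\in W_{0}^{1,2}$, so that $h'\in L^{2}([0,1])$. Consider the linear functional $L_{h}(f)\triangleq\int_{0}^{1}f(s)h'(s)\,ds$ on $\mathcal{H}$, which is well-defined thanks to Step 1 and satisfies
\begin{equation*}
|L_{h}(f)|\leq\|f\|_{L^{2}}\|h'\|_{L^{2}}\leq C_{H}\|f\|_{\mathcal{H}}\|h'\|_{L^{2}},
\end{equation*}
so $L_{h}\in\mathcal{H}^{\ast}$ with norm at most $C_{H}\|h'\|_{L^{2}}$. The Riesz representation theorem combined with the isometric isomorphism $\mathcal{R}:\mathcal{H}\to\bar{\mathcal{H}}$ furnishes a unique $\tilde{h}\in\bar{\mathcal{H}}$ satisfying $L_{h}(f)={_{\mathcal{H}}}\langle f,\tilde{h}\rangle_{\bar{\mathcal{H}}}$ for every $f\in\mathcal{H}$ and $\|\tilde{h}\|_{\bar{\mathcal{H}}}\leq C_{H}\|h'\|_{L^{2}}$. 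Testing against $f=\mathbf{1}_{[0,t]}\in\mathcal{H}$ in the pairing formula \eqref{eq: H-barH pairing} yields $\tilde{h}(t)=\int_{0}^{t}h'(s)\,ds=h(t)$ for all $t\in[0,1]$, and consequently $h=\tilde{h}\in\bar{\mathcal{H}}$ with $\|h\|_{\bar{\mathcal{H}}}\leq C_{H}\|h\|_{W_{0}^{1,2}}$.

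The main obstacle is the Schur-type $L^{2}$ estimate in Step 1: one must carefully propagate the singular weights $s^{1/2-H}$ and $u^{H-1/2}$ through the fractional-integral kernel and pick the Schur weight exponent so that the condition $H<1/2$ is exactly what makes both Schur inequalities hold. At the borderline $H=1/2$ the weights disappear and $\mathcal{H}=L^{2}$, which is consistent with the embedding collapsing to an equality in the Brownian case.
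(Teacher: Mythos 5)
Your proof is correct, but it takes a genuinely different route from the paper's in both halves. For the embedding $\mathcal{H}\subseteq L^2$, the paper also starts from the inversion of \eqref{eq: analytic expression for K*}, but then bounds the fractional integral by splitting the kernel and applying Cauchy--Schwarz followed by Fubini; your Schur test with weight $x^a$ is an equally valid and arguably cleaner packaging of the same $L^2$-boundedness. One small imprecision: the two Schur conditions do not reduce to the same Beta integral — the condition $\int_s^1 K(s,u)u^a\,du\leq C s^a$ (via $u=s/v$) produces $B(-a,\tfrac12-H)$, which is where $a<0$ is needed, while the condition in $s$ produces $B(\tfrac32-H+a,\tfrac12-H)$, which forces $a>H-\tfrac32$; your window $a\in(H-\tfrac32,0)$ is exactly the intersection of these constraints, so the argument goes through (e.g.\ $a=-\tfrac12$ works). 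For the embedding $W_0^{1,2}\subseteq\bar{\mathcal{H}}$, the paper writes $h=K\varphi$ (using the set inclusion $I_{0^+}^{1}(L^2)\subseteq I_{0^+}^{H+1/2}(L^2)$ for $H<1/2$), invokes the surjectivity of $\mathcal{K}^*$ from Lemma \ref{lem: surjectivity of K*} to pick an $f$ with $\mathcal{K}^*f=\varphi$, and concludes via the dual characterization of the $W^{1,2}$ norm together with \eqref{eq: continuity of H to L^2}. Your Riesz-representation argument is different and slightly more self-contained: it does not need the surjectivity lemma nor the a priori membership $h\in\bar{\mathcal{H}}$, since testing the representing element against $f=\mathbf{1}_{[0,t]}$ (for which the pairing \eqref{eq: H-barH pairing} is immediate from the definition of $\mathcal{R}$) identifies it pointwise with $h$, both being continuous paths. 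What the paper's route buys is that Lemma \ref{lem: surjectivity of K*} is proved anyway and reused elsewhere; what yours buys is independence from that lemma and a direct construction of $h$ as an element of $\bar{\mathcal{H}}$ with the desired norm bound.
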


\begin{proof}

For the first assertion, let $f\in\mathcal{H}$. We wish to prove that
\begin{equation}\label{eq: continuity of H to L^2}
\|f\|_{L^{2}([0,1])}\leq C_{H}\|f\|_{\mathcal{H}}.
\end{equation}
Towards this aim, define $\varphi\triangleq \mathcal{K}^*f$, where $\ck^{*}$ is defined by \eqref{eq:def-K-star}. Observe that $\ck^{*}:\ch\to L^2([0,1])$ and thus $f\in L^2([0,1])$. By solving $f$ in terms of $\varphi$ using the analytic expression~\eqref{eq: analytic expression for K*} for $\mathcal{K}^*$, we have 
\beq\label{a1}
f(t)=C_H t^{\frac{1}{2}-H}\left(I_{1-}^{\frac{1}{2}-H}\left(s^{H-\frac{1}{2}}\varphi(s)\right)\right)(t).
\eeq
We now bound the right hand side of \eqref{a1}. Our first step in this direction is to notice that according to the definition \eqref{eq:def-frac-integral} of fractional integral we have
\begin{align*}
 \left|\left(I_{1^{-}}^{\frac{1}{2}-H}(s^{H-\frac{1}{2}}\varphi(s))\right)(t)\right|
 & =C_{H}\left|\int_{t}^{1}(s-t)^{-\frac{1}{2}-H}s^{H-\frac{1}{2}}\varphi(s)ds\right|\\
 & \leq C_{H}\int_{t}^{1}(s-t)^{-\frac{1}{2}-H}s^{H-\frac{1}{2}}|\varphi(s)|ds\\
 & =C_{H}\int_{t}^{1}(s-t)^{-\frac{1}{4}-\frac{H}{2}} \left((s-t)^{-\frac{1}{4}-\frac{H}{2}}s^{H-\frac{1}{2}}|\varphi(s)|\right)ds .
 \end{align*}
Hence a direct application of Cauchy-Schwarz inequality gives
\begin{align}\label{a2}
  \left|\left(I_{1^{-}}^{\frac{1}{2}-H}(s^{H-\frac{1}{2}}\varphi(s))\right)(t)\right|
& \leq C_{H}\left(\int_{t}^{1}(s-t)^{-\frac{1}{2}-H}ds\right)^{\frac{1}{2}}
 \left(\int_{t}^{1}(s-t)^{-\frac{1}{2}-H}s^{2H-1}|\varphi(s)|^{2}ds\right)^{\frac{1}{2}} \notag\\
 & =C_{H} (1-t)^{\frac{1}{2}\lp\frac{1}{2}-H\rp} 
 \left(\int_{t}^{1}(s-t)^{-\frac{1}{2}-H}s^{2H-1}|\varphi(s)|^{2}ds\right)^{\frac{1}{2}},
\end{align}
where we recall that $C_{H}$ is a positive constant which can change from line to line.
Therefore, plugging \eqref{a2} into \eqref{a1} we obtain
\begin{equation*}
\|f\|_{L^{2}([0,1])}^{2}  
\leq 
C_{H}\int_{0}^{1}t^{1-2H}(1-t)^{\frac{1}{2}-H}\int_{t}^{1}(s-t)^{-\frac{1}{2}-H}s^{2H-1}|\varphi(s)|^{2}ds.
\end{equation*}
We now bound all the terms of the form $s^{\beta}$ with $\beta>0$ by 1. This gives
\begin{align*}
\|f\|_{L^{2}([0,1])}^{2} 
 & \leq C_{H}\int_{0}^{1}dt\int_{t}^{1}(s-t)^{-\frac{1}{2}-H}|\varphi(s)|^{2}ds
  =C_{H}\int_{0}^{1}|\varphi(s)|^{2}ds\int_{0}^{s}(s-t)^{-\frac{1}{2}-H}dt \\
 & =C_{H}\int_{0}^{1}s^{\frac{1}{2}-H}  |\varphi(s)|^{2}ds
  \leq C_{H}\|\varphi\|_{L^{2}([0,1])}^{2}
  =C_{H}\|f\|_{\mathcal{H}}^{2},
\end{align*}
which is our claim \eqref{eq: continuity of H to L^2}.

For the second assertion about the embedding of $W_0^{1,2}$ in $\bar{\mathcal{H}}$, let $h\in W_0^{1,2}$. We thus also have  $h\in\bch$ and we can write $h=K\varphi$ for some $\varphi\in L^2([0,1])$. We first claim that 
\begin{equation}\label{eq: duality}
\int_{0}^{1}f(s)dh(s)=\int_{0}^{1}\mathcal{K}^{*}f(s)\varphi(s)ds
\end{equation}
for all $f\in\mathcal{H}$. This assertion can be reduced in the following way: since $\ch\hookrightarrow L^{2}([0,1])$ continuously and $\ck^{*}:\ch\to L^{2}([0,1])$ is continuous, one can take limits along indicator functions in \eqref{eq: duality}. Thus it is sufficient to consider $f=\mathbf{1}_{[0,t]}$ in \eqref{eq: duality}. In addition, relation~\eqref{eq: duality} can be checked easily for $f=\mathbf{1}_{[0,t]}$. Namely we have
\[
\int_{0}^{1}\mathbf{1}_{[0,t]}(s)dh(s)=h(t)=\int_{0}^{t}K(t,s)\varphi(s)ds=\int_{0}^{1}\left(\mathcal{K}^{*}\mathbf{1}_{[0,t]}\right)(s)\varphi(s)ds.
\]
Therefore, our claim  (\ref{eq: duality}) holds true. Now from Lemma \ref{lem: surjectivity of K*}, if $\vp\in L^{2}([0,1])$ there exists $f\in\mathcal{H}$ such that $\varphi=\mathcal{K}^*f$. For this particular $f$, invoking relation~\eqref{eq: duality} we get
\beq\label{a21}
\int_{0}^{1}f(s)dh(s)=\|\varphi\|_{L^{2}([0,1])}^{2}.
\eeq
But we also know that
\beq\label{a3}
\|\varphi\|_{L^{2}([0,1])}=\|h\|_{\bar{\mathcal{H}}}=\|f\|_{\mathcal{H}},
\quad\text{and thus}\quad
\|\varphi\|_{L^{2}([0,1])}^{2} = \|h\|_{\bar{\mathcal{H}}}  \|f\|_{\mathcal{H}}.
\eeq
In addition recall that the $W^{1,2}$ norm can be written as
\begin{equation*}
\|h\|_{W^{1,2}}  =\sup_{\psi\in L^{2}([0,1])}\frac{\left|\int_{0}^{1}\psi(s)dh(s)\right|}{\|\psi\|_{L^{2}([0,1])}}
\end{equation*}
Owing to \eqref{a21} and \eqref{a3} we thus get
\begin{align*}
\|h\|_{W^{1,2}} & 
\geq\frac{\int_{0}^{1}f(s)dh(s)}{\|f\|_{L^{2}([0,1])}}
 =\frac{\|h\|_{\bar{\mathcal{H}}}\|f\|_{\mathcal{H}}}{\|f\|_{L^{2}([0,1])}}\geq C_{H}\|h\|_{\bar{\mathcal{H}}},
\end{align*}
where the last step stems from (\ref{eq: continuity of H to L^2}). The continuous embedding $W_0^{1,2}\subseteq \bar{\mathcal{H}}$ follows.
\end{proof}

\subsection{Free nilpotent groups.}\label{sec:free-nilpotent}

In this section we introduce some geometrical objects which are of fundamental importance for the definition of equation \eqref{eq: hypoelliptic SDE} as well as our study on density estimates.

\begin{defn}\label{def:truncated-algebra}
For $l\in\mathbb{N}$, the \textit{truncated tensor algebra} $T^{(l)}$ of order $l$ is defined by 
\begin{equation*}
T^{(l)}=\bigoplus_{n=0}^{l}(\mathbb{R}^{d})^{\otimes n},
\end{equation*}
with the convention $(\mathbb{R}^{d})^{\otimes 0}=\mathbb{R}$. The set $T^{(l)}$ is equipped with a straightforward
vector space structure, plus an operation $\otimes$ defined by
\beq\label{eq:def-product-in-T-l}
\lc g\otimes h\rc^{n}=\sum_{k=0}^{n}g^{n-k}\otimes h^{k},\qquad g,h\in
T^{(l)},
\eeq
where $g^{n}$ designates the projection onto the $n$-th degree component of $g$ for $n\le l$. 
\end{defn}

\noindent
Notice that $T^{(l)}$ should be denoted $T^{(l)}(\R^{d})$. We have dropped the dependence on $\R^{d}$ for notational simplicity. Also observe that with Definition \ref{def:truncated-algebra} in hand,
$(T^{(l)},+,\otimes)$ is an associative algebra with unit element $\mathbf{1} \in (\mathbb{R}^{d})^{\otimes 0}$. The polynomial terms in the expansions which will be considered later on are contained in a subspace of $T^{(l)}$ that we proceed to define now.

\begin{defn}\label{def:free-algebra}
The \textit{free nilpotent Lie algebra} $\mathfrak{g}^{(l)}$ of order $l$ is defined to be the graded sum \[
\mathfrak{g}^{(l)}\triangleq\bigoplus_{k=1}^{l}{\cal L}_{k}\subseteq T^{(l)}.
\]Here $\mathcal{L}_k$ is the space of homogeneous Lie polynomials of degree $k$ defined inductively by $\mathcal{L}_1\triangleq\mathbb{R}^d$ and $\mathcal{L}_k\triangleq [\mathbb{R}^d,\mathcal{L}_{k-1}]$, where the Lie bracket is defined to be the commutator of the tensor product. \end{defn}

We now define some groups related to the algebras given in Definitions \ref{def:truncated-algebra} and \ref{def:free-algebra}. To this aim, introduce the subspace $T_0^{(l)}\subseteq T^{(l)}$ of tensors whose scalar component is zero and recall that $\mathbf{1}\triangleq(1,0,\ldots,0)$. For $u\in T_0^{(l)}$, one can define the inverse $(\mathbf{1}+u)^{-1}$, the exponential $\exp(u)$ and the logarithm $\log(\mathbf{1}+u)$ in $T^{(l)}$ by using the standard Taylor expansion formula with respect to the tensor product. For instance, 
\beq\label{eq:def-exp-on-T-l}
\exp({a})\triangleq\sum_{k=0}^{\infty}\frac{1}{k!} \, {a}^{\otimes k}\in T^{(l)},
\eeq
where the sum is indeed locally finite and hence well-defined. We can now introduce the following group.
\begin{defn}\label{def:free-group}
The \textit{free nilpotent Lie group} $G^{(l)}$ of order $l$ is defined by 
$$G^{(l)}\triangleq\exp(\mathfrak{g}^{(l)})\subseteq T^{(l)}.
$$ 
The exponential function is a diffeomorphism under which $\mathfrak{g}^{(l)}$ in Definition \ref{def:free-algebra} is the Lie algebra of $G^{(l)}$.
\end{defn}

\begin{rem}
One can also include the case of $l=\infty$ in the definitions of $T^{(l)}$, $\mathfrak{g}^{(l)}$ and $G^{(l)}$. But in this case we need to be careful that the direct sums should all be understood as formal series instead of polynomials.
Also all the spaces we have mentioned are defined over the given underlying vector space (which is $\mathbb{R}^d$ in our case), and recall that we have omitted such dependence in the notation for simplicity. 
\end{rem}

It will be useful in the sequel to have some basis available for the algebras introduced above. We shall resort to the following families:
for each word $\alpha=(i_{1},\ldots,i_{r})\in\ca_{1}(l)$, set
\beq\label{eq:def-basis-algebras}
{\rm e}_{(\alpha)}\triangleq{\rm e}_{i_{1}}\otimes\cdots\otimes{\rm e}_{i_{r}},
\quad\text{and}\quad
{\rm e}_{[\alpha]}\triangleq[{\rm e}_{i_{1}},\cdots[{\rm e}_{i_{r-2}},[{\rm e}_{i_{r-1}},{\rm e}_{i_{r}}]]],
\eeq
where $\{{\rm e}_{1},\ldots,{\rm e}_{d}\}$ denotes the canonical basis of $\mathbb{R}^{d}
$. 
Then it can be shown that $\{{\rm e}_{(\alpha)}:\alpha\in{\cal A}(l)\}$ is the canonical
basis of $T^{(l)},$ and we also have $\mathfrak{g}^{(l)}={\rm Span}\{{\rm e}_{[\alpha]}:\alpha\in{\cal A}_{1}(l)\}$.

As a closed subspace, $\mathfrak{g}^{(l)}$ induces a canonical Hilbert structure from $T^{(l)}$ which makes it into a flat Riemannian manifold. The associated volume measure $du$ (the Lebesgue measure) on $\mathfrak{g}^{(l)}$ is left invariant with respect to the product induced from the group structure on $G^{(l)}$ through the exponential diffeomorphism. In addition, for each $\lambda>0$, there is a dilation operation $\delta_\lambda: T^{(l)}\rightarrow T^{(l)}$ induced by $\delta_\lambda(a)\triangleq \lambda^k a$ if $a\in(\mathbb{R}^d)^{\otimes k}$, which satisfies the relation 
$\delta_\lambda\circ\exp=\exp\circ\,\delta_\lambda$ when restricted on $\mathfrak{g}^{(l)}$. Thanks to the fact that $\delta_\lambda(a)= \lambda^k a$ for any $a\in(\R^{d})^{\otimes k}$,
one can easily show that 
\begin{equation}\label{eq:dilation-lebesgue-on-cal-G}
du\circ\delta_\lambda^{-1}=\lambda^{-\nu}du,
\quad\text{where}\quad
\nu\triangleq\sum_{k=1}^{l}k\dim (\mathcal{L}_k).
\end{equation}

We always fix the Euclidean norm on $\mathbb{R}^d$ in the remainder of the paper. As far as the free nilpotent group $G^{(l)}$ is concerned, there are several useful metric structures. Among them we will use an extrinsic metric $\rho_{\textsc{HS}}$ which can be defined easily due to the fact that $G^{(l)}$ is a subspace of $T^{(l)}$. Namely for $g_{1},g_{2}\in G^{(l)}$ we set:
\beq\label{eq:def-HS-distance}
\rho_{\textsc{HS}}(g_{1},g_{2})\triangleq\|g_{2}-g_{1}\|_{\textsc{HS}},\ \ \ g_{1},g_{2}\in G^{(l)},
\eeq
where the right hand side is induced from the Hilbert-Schmidt norm on $T^{(l)}$.

\subsection{Path signatures and the fractional Brownian rough path.}\label{sec:signature}

The stochastic differential equation \eqref{eq: hypoelliptic SDE} governed by a fractional Brownian motion $B$ is standardly solved in the rough paths sense. In this section we recall some basic facts about this notion of solution. We will also give some elements of rough paths expansions, which are at the heart of our methodology in order to obtain lower bounds for the density.

The link between free nilpotent groups and noisy equations like \eqref{eq: hypoelliptic SDE} is made through the notion of signature. Recall that a continuous map
$\mathbf{x}:\{(s,t)\in[0,1]^{2}: \, s\le t\}\rightarrow T^{(l)}$ is called a
\textit{multiplicative functional} if for $s<u<t$ one has $\mathbf{x}_{s,t}
=\mathbf{x}_{s,u}\otimes\mathbf{x}_{u,t}$. A particular occurrence of this kind of map is given when one considers a path $w$ with finite
variation and sets for $s\le t$,
\begin{equation}\label{eq:def-iterated-intg}
\mathbf{w}_{s,t}^{n}=
 \int_{s<u_{1}<\cdots<u_{n}<t}dw_{u_{1}}\otimes\cdots\otimes dw_{u_{n}} .
\end{equation}
Then the so-called \textit{truncated signature path} of order $l$ associated with $w$ is defined by the following object:
\begin{equation}\label{eq:signature-smooth-x}
S_{l}(w)_{\cdot,\cdot}: \{(s,t)\in[0,1]^{2}; \, s\le t\} \rightarrow T^{(l)} ,
\qquad
(s,t)\mapsto
S_{l}(w)_{s,t}:=1+\sum_{n=1}^{l}\mathbf{w}_{s,t}^{n}.
\end{equation}
It can be shown that the functional $S_{l}(w)_{\cdot,\cdot}$ is multiplicative and takes values in the free nilpotent group $G^{(l)}$. The \textit{truncated signature} of order $l$ for $w$ is the tensor element $S_l(w)_{0,1}\in G^{(l)}$. It is simply denoted as $S_l(w)$.

A rough path can be seen as a generalization of the truncated signature path \eqref{eq:signature-smooth-x} to the non-smooth situation. Specifically, the definition of H\"older rough paths can be summarized as follows.
 \begin{defn}\label{def:RP}
Let $\ga\in(0,1)$. The space of \textit{weakly geometric $\ga$-H\"older rough paths} 
is the set of multiplicative paths $\mathbf{x}:\{(s,t)\in[0,1]^{2}; \, s\le t\}\rightarrow G^{[1/\ga]}$ such that the following norm is finite:
\begin{equation}
\|\mathbf{x}\|_{\ga;\textsc{HS}}
= 
\sup_{0\le s < t \le 1} \frac{\|\mathbf{x}_{s,t}\|_{\textsc{HS}}}{|t-s|^{\ga}} .
\end{equation}
\end{defn}
An important subclass of weakly geometric $\ga$-H\"older rough paths is the set of \textit{geometric} $\ga$-H\"older rough paths. These are multiplicative paths $\mathbf{x}$ with values in $G^{\lfloor
1/\ga\rfloor}$ such that $\|\mathbf{x}\|_{\ga;\textsc{HS}}$ is finite and such that there exists a sequence 
$\{  x_{\ep}; \, \ep>0 \} $ with $x_{\ep}\in C^{\infty}([0,T];\mathbb{R}^{d})$ satisfying 
\begin{equation}\label{eq:closure-smooth-path}
\lim_{\ep\to 0} 
\|\mathbf{x} - S_{[1/\ga]}(x_{\ep})\|_{\ga;\textsc{HS}}
 =0.
\end{equation}

%Otherwise stated, the set of geometric $p$-rough paths is the closure of smooth paths in the space of weakly geometric $\ga$-H\"older rough paths equipped with the norm \eqref{inhomogeneous}.

The notion of signature allows to define a more intrinsic distance (with respect to the HS-distance given by \eqref{eq:def-HS-distance}) on the free group $G^{(l)}$. This metric is known as the \textit{Carnot-Caratheodory metric} and given by 
\[
\rho_{\textsc{CC}}(g_{1},g_{2})\triangleq\|g_{1}^{-1}\otimes g_{2}\|_{\textsc{CC}},\ \ \ g_{1},g_{2}\in G^{(l)},
\]
where the CC-norm $\|\cdot\|_{\textsc{CC}}$ is defined by  
\beq\label{eq:def-norm-CC}
\|g\|_{\textsc{CC}}\triangleq\inf\left\{ \|w\|_{1-{\rm var}}:w\in C^{1-{\rm var}}([0,1];\mathbb{R}^{d})\ {\rm and}\ S_{l}(w)=g\right\}. 
\eeq
It can be shown that the infimum in \eqref{eq:def-norm-CC} is attainable. 

\begin{rem}\label{connecting by smooth path}
It is well-known that for any $g\in G^{(l)}$, one can find a piecewise linear path $w$ such that $S_l(w)=g$ (cf. \cite{FV10} for instance). Moreover, one can do better, and find a smooth path $w$ whose derivative is compactly supported such that $S_l(w)=g$. Indeed, this could be achieved by (i) reparametrizing the piecewise linear path so that the resulting path is smooth but the trajectory itself is still the same piecewise linear one; and (ii) adding trivial pieces to the beginning and the end of the path. This will not change the truncated signature as it is invariant under reparametrization. 
\end{rem}

The HS and CC metrics are equivalent as seen from the following so-called \textit{ball-box estimate}.
\begin{prop}\label{prop: ball-box estimate}
Let $\rho_{\textsc{HS}}$ and $\rho_{\textsc{CC}}$ be the distances on $G^{(l)}$ respectively defined by \eqref{eq:def-HS-distance} and \eqref{eq:def-norm-CC}.
For each $l\geq1$, there exists a constant $C=C_{l}>0$, such that 
\begin{equation}\label{eq:bnd-CC-to-HS}
\rho_{\textsc{CC}}(g_{1},g_{2})\leq C\max\left\{ \rho_{\textsc{HS}}(g_{1},g_{2}),\rho_{\textsc{HS}}(g_{1},g_{2})^{\frac{1}{l}}\cdot\max\left\{ 1,\|g_{1}\|_{\textsc{CC}}^{1-\frac{1}{l}}\right\} \right\} 
\end{equation}
and
\[
\rho_{\textsc{HS}}(g_{1},g_{2})\leq C\max\left\{ \rho_{\textsc{CC}}(g_{1},g_{2})^{l},\rho_{\textsc{CC}}(g_{1},g_{2})\cdot\max\left\{ 1,\|g_{1}\|_{\textsc{CC}}^{l-1}\right\} \right\} 
\]
for all $g_{1},g_{2}\in G^{(l)}$. In particular, 
\[
\|g\|_{\textsc{CC}}\leq1\implies\|g-{\bf 1}\|_{\textsc{HS}}\leq C\|g\|_{\textsc{CC}}
\]
and
\[
\|g-{\bf 1}\|_{\textsc{HS}}\leq1\implies\|g\|_{\textsc{CC}}\leq C\|g-{\bf 1}\|_{\textsc{HS}}^{\frac{1}{l}}.
\]
\end{prop}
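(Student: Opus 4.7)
The plan is to proceed in two stages: first establish the ``centered'' estimates at $g_{1}=\mathbf{1}$ via compactness on the CC unit sphere, then pass to the general case using left-invariance of $\rho_{\textsc{CC}}$ together with an algebraic identity in $T^{(l)}$.

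For Stage 1, I fix the unit sphere $\Sigma \triangleq \{h\in G^{(l)}:\|h\|_{\textsc{CC}}=1\}$, which is compact thanks to the attainability of the infimum in \eqref{eq:def-norm-CC} and an Arzelà--Ascoli argument on $1$-variation paths. Since $\delta_{\lambda} S_{l}(w)=S_{l}(\lambda w)$ and $\|\lambda w\|_{1\text{-var}}=\lambda\|w\|_{1\text{-var}}$, the dilation satisfies $\|\delta_{\lambda}g\|_{\textsc{CC}} = \lambda \|g\|_{\textsc{CC}}$. So any $g\neq\mathbf{1}$ with $r=\|g\|_{\textsc{CC}}$ can be written as $g=\delta_{r}h$ with $h\in\Sigma$, giving $g^{n}=r^{n}h^{n}$. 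Since $h\mapsto\|h^{n}\|$ is continuous on $\Sigma$ and $\max_{n}\|h^{n}\|^{1/n}$ is strictly positive on $\Sigma$ (because $\mathbf{1}\notin\Sigma$), compactness yields constants $c,C>0$ with
\begin{equation*}
c\,\|g\|_{\textsc{CC}} \ \leq\ \max_{1\leq n\leq l}\|g^{n}\|^{1/n}, \qquad \|g^{n}\|\ \leq\ C\,\|g\|_{\textsc{CC}}^{n}\quad(1\leq n\leq l).
\end{equation*}

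For Stage 2, set $g\triangleq g_{1}^{-1}\otimes g_{2}$, $r\triangleq\|g\|_{\textsc{CC}}=\rho_{\textsc{CC}}(g_{1},g_{2})$ (by left-invariance) and $s\triangleq\|g_{1}\|_{\textsc{CC}}=\|g_{1}^{-1}\|_{\textsc{CC}}$ (by path reversal). The key identity in $T^{(l)}$ is $g_{2}-g_{1}=g_{1}\otimes(g-\mathbf{1})$, which splits degree-by-degree as $(g_{2}-g_{1})^{n}=\sum_{k=1}^{n}g_{1}^{n-k}\otimes g^{k}$. Applying the Stage 1 upper bounds to both factors, $\|(g_{2}-g_{1})^{n}\|\leq C_{l}\,n\max(s^{n-1}r,\, r^{n})$; squaring and summing over $n\in\{1,\ldots,l\}$ yields $\rho_{\textsc{HS}}\leq C\max\{r^{l},\, r\max(1,s^{l-1})\}$, which is the second inequality in the proposition.

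For the first inequality, invert the identity to $g-\mathbf{1}=g_{1}^{-1}\otimes(g_{2}-g_{1})$, so that $g^{n}=\sum_{k=1}^{n}(g_{1}^{-1})^{n-k}\otimes(g_{2}^{k}-g_{1}^{k})$ for $n\geq1$. Combining Cauchy--Schwarz with $\|(g_{1}^{-1})^{j}\|\leq C s^{j}$ and $\sum_{k}\|g_{2}^{k}-g_{1}^{k}\|^{2}\leq\rho_{\textsc{HS}}^{2}$ gives
\begin{equation*}
\|g^{n}\|\ \leq\ C_{l}\sqrt{n}\,\max(1,s^{n-1})\,\rho_{\textsc{HS}}, \qquad \|g^{n}\|^{1/n}\ \leq\ C_{l}\,\max(1,s^{(n-1)/n})\,\rho_{\textsc{HS}}^{1/n}.
\end{equation*}
Inserted into the Stage 1 bound $r\leq C\max_{n}\|g^{n}\|^{1/n}$, it remains to maximize over $n\in\{1,\ldots,l\}$. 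Since $\log\bigl(s^{(n-1)/n}\rho_{\textsc{HS}}^{1/n}\bigr)=\log s+(1/n)\log(\rho_{\textsc{HS}}/s)$ is linear in $1/n$ (and in the regime $s\leq1$ the quantity reduces to the monotone $\rho_{\textsc{HS}}^{1/n}$), the maximum is attained at an endpoint, producing exactly $\max\{\rho_{\textsc{HS}},\,\max(1,s^{1-1/l})\rho_{\textsc{HS}}^{1/l}\}$. The two ``in particular'' consequences follow by specializing to $g_{1}=\mathbf{1}$ and restricting the relevant quantity to $\leq 1$.

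The main obstacle is realizing that the right quantity to propagate from Stage 1 to Stage 2 is the degree-by-degree bound $\max_{n}\|g^{n}\|^{1/n}$ rather than the cruder centered inequality $\|g\|_{\textsc{CC}}\leq C\max(\|g-\mathbf{1}\|_{\textsc{HS}},\|g-\mathbf{1}\|_{\textsc{HS}}^{1/l})$: plugging the latter into the Cauchy--Schwarz bound on $\|g-\mathbf{1}\|_{\textsc{HS}}$ produces a superfluous factor $\max(1,s^{l-1})$ and misses the sharp exponent $1-1/l$ appearing in the first inequality. Tracking each homogeneous component of $g$ and optimizing the exponent $n$ is precisely what recovers the stated bound.
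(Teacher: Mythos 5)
The paper does not prove this proposition at all: it is quoted as the standard ball-box estimate, whose content is exactly the equivalence of homogeneous norms on $G^{(l)}$ (cf.\ [FV10, Theorem 7.44, Proposition 7.49], which the paper itself invokes later in the proof of Lemma \ref{lem: estimating S_xy}). So the comparison can only be with that standard input, and here your proposal has a genuine gap. Your Stage 2 is fine and in fact nicely done: the identities $g_2-g_1=g_1\otimes(g-\mathbf{1})$ and $g-\mathbf{1}=g_1^{-1}\otimes(g_2-g_1)$, the degree-by-degree bounds, and the endpoint optimization in $1/n$ correctly reduce both two-point inequalities (with the sharp exponents $1/l$ and $1-1/l$) to the two centered estimates $\|g^n\|\le C\|g\|_{\textsc{CC}}^n$ and $c\,\|g\|_{\textsc{CC}}\le\max_{1\le n\le l}\|g^n\|^{1/n}$. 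The first of these is indeed easy (take a near-minimizing control $w$ and use $\|g^n\|\le\|w\|_{1\text{-var}}^n/n!$). The problem is the second one, which is where the entire difficulty of the ball-box estimate lives, and your compactness argument for it is circular.

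Concretely: attainability of the infimum plus Arzel\`a--Ascoli only give you \emph{lower} semicontinuity of $\|\cdot\|_{\textsc{CC}}$ with respect to HS-convergence (a uniform limit of controls with $\|w_k\|_{1\text{-var}}\le 1$ has $1$-variation $\le 1$), hence that the closed CC-ball is HS-compact. They do \emph{not} show that the level set $\Sigma=\{\|h\|_{\textsc{CC}}=1\}$ is closed, nor that $\inf_{\Sigma}\max_n\|h^n\|^{1/n}>0$. To rule out a sequence $h_k$ with $\|h_k\|_{\textsc{CC}}=1$ and $h_k\to\mathbf{1}$ in HS you would need \emph{upper} semicontinuity of the CC-norm at the identity, i.e.\ precisely the statement ``$\|g-\mathbf{1}\|_{\textsc{HS}}$ small $\Rightarrow\|g\|_{\textsc{CC}}\le C\|g-\mathbf{1}\|_{\textsc{HS}}^{1/l}$'' that you are trying to prove; lower semicontinuity is useless in that direction since $\|\mathbf{1}\|_{\textsc{CC}}=0$. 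No soft compactness argument can supply this: the hard half requires an actual construction of a short control for $g$ near $\mathbf{1}$ (a quantitative Chow--Rashevskii argument, e.g.\ writing $g$ in coordinates of the second kind as a product $\prod_\alpha\exp(c_\alpha\,{\rm e}_{[\alpha]})$ with $|c_\alpha|\lesssim\|g-\mathbf{1}\|_{\textsc{HS}}$ and realizing each factor by commutators of segments of length $\lesssim|c_\alpha|^{1/|\alpha|}$), or else an explicit citation of the equivalence of homogeneous norms on $G^{(l)}$. If you replace your Stage 1 lower bound by that citation or by such a construction, the rest of your argument goes through and yields the proposition.
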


One of the main application of the abstract rough path theory is the ability to extend most stochastic calculus tools to a large class of Gaussian processes. The following result, borrowed from \cite{CQ,FV06}, establishes this link for fractional Brownian motion. 

\begin{prop}\label{prop:fbm-rough-path}
Let $B$ be a fractional Brownian motion with Hurst parameter  $H>1/4$. Then $B$ admits a lift $\mathbf{B}$ as a geometric rough path of order $[1/\ga]$ for any $\ga<H$.
\end{prop}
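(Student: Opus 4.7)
The plan is to construct the lift $\mathbf{B}$ as the almost sure $\gamma$-Hölder limit of truncated signatures of smooth approximations. Set $l=[1/\gamma]$; since $\gamma<H$ and $H>1/4$ we have $l\leq 3$, so only iterated integrals of order at most three need to be defined consistently. I would take $B^{(n)}$ to be the dyadic piecewise linear interpolation of $B$ and form the $G^{(l)}$-valued multiplicative functional $\mathbf{B}^{(n)}_{s,t}\triangleq S_{l}(B^{(n)})_{s,t}$, which is well defined pathwise via \eqref{eq:def-iterated-intg}--\eqref{eq:signature-smooth-x}.

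The analytic core is a covariance computation. The key structural input is that the covariance function $R(s,t)=\tfrac{1}{2}(s^{2H}+t^{2H}-|s-t|^{2H})$ of each coordinate of $B$ has finite two-dimensional $\rho$-variation with $\rho=1/(2H)<2$ precisely when $H>1/4$. Exploiting this, I would establish, for each $k\leq l$, a uniform-in-$n$ second moment estimate
\[
\mathbb{E}\bigl[\|\mathbf{B}^{(n),k}_{s,t}\|^{2}\bigr]\leq C_{k}|t-s|^{2kH},
\]
together with a Cauchy-type estimate $\mathbb{E}\bigl[\|\mathbf{B}^{(n),k}_{s,t}-\mathbf{B}^{(m),k}_{s,t}\|^{2}\bigr]\to 0$ as $n,m\to\infty$. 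Each such bound reduces to a $2k$-fold integral of $R$ (and of its differences across dyadic scales) over a simplex, whose absolute convergence is controlled by the $\rho$-variation of $R$. Hypercontractivity on the Wiener chaos of $B$ then upgrades these $L^{2}$ estimates to $L^{p}$ estimates for every finite $p$.

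With these moment bounds in hand, I would invoke a Kolmogorov-type continuity theorem for $G^{(l)}$-valued multiplicative functionals to extract a $\gamma$-Hölder limit $\mathbf{B}$ of $\mathbf{B}^{(n)}$ in the metric $\|\cdot\|_{\gamma;\textsc{HS}}$ for any $\gamma<H$. Multiplicativity passes to the limit pointwise, and $\mathbf{B}$ is geometric by construction, since the defining approximation property \eqref{eq:closure-smooth-path} is built in. The main obstacle is the level-three estimate near the bottom of the Hurst range: when $H$ is just above $1/4$, the second moment of the triple iterated integral expands into a sixfold Young--Stieltjes integral against $R$, and absolute convergence is secured only by carefully decomposing the simplex according to the interleaving of the integration variables and applying a 2D Young inequality with exponent $\rho=1/(2H)$. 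This is precisely where the hypothesis $H>1/4$ enters; once the bound is obtained, the Cauchy step and the Kolmogorov argument follow the standard template.
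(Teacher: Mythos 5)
Your outline is essentially the proof that the paper itself does not spell out: Proposition \ref{prop:fbm-rough-path} is quoted from Coutin--Qian \cite{CQ} and Friz--Victoir \cite{FV06}, and those references proceed exactly as you do, lifting dyadic piecewise linear interpolations, controlling the iterated integrals of levels $2$ and $3$ through the finite $2$D $\rho$-variation of the covariance $R$ with $\rho=1/(2H)<2$, upgrading $L^2$ bounds by hypercontractivity on a fixed Wiener chaos, and concluding with a Kolmogorov-type criterion for multiplicative functionals, geometricity being automatic from the approximation. One small imprecision: since the statement allows \emph{any} $\ga<H$, the order $[1/\ga]$ need not be $\le 3$ (take $\ga\le 1/4$); the higher-level components are then supplied canonically by Lyons' extension theorem applied to the level-$\le 3$ lift, and similarly the piecewise linear approximants should be smoothed by reparametrization to match the paper's definition \eqref{eq:closure-smooth-path} -- both are routine adjustments, not gaps in the analytic core.
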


Let us now turn to the definition of rough differential equations. There are several equivalent ways to introduce this notion, among which we will choose to work with Taylor type expansions, since they are more consistent with our later developments. To this aim, let us first consider a bounded variation path $w$ and the following ordinary differential equation driven by $w$:
\begin{equation}\label{eq: ode}
dx_t=\sum_{\alpha=1}^d V_\alpha(x_t) \, dw_t^\alpha,
\end{equation}
where the $V_\alpha$'s are $C_b^\infty$ vector fields.
For any given word $\alpha=(i_1,\ldots,i_r)$ over the letters $\{1,\ldots,d\}$, we define the vector field $V_{(\alpha)}\triangleq(V_{i_{1}}\cdots(V_{i_{r-2}}(V_{i_{r-1}}V_{i_{r}})))$, 
\begin{comment}
\hb{(here I have a problem with the order of differentiation: shouldn't it be $V_{(\alpha)}\triangleq(V_{i_{r}}(V_{i_{r-1}}\cdots(V_{i_{2}}V_{i_{1}})))$? See \eqref{eq: formal Taylor expansion} below)}, {\color{red}I think the original ordering is correct. Indeed, let us just look at the expansion up to order two: 
\begin{align*}
x_{s,t} & \sim\int_{s}^{t}V_{j}(x_{v})dw_{v}^{j}\\
 & \sim\int_{s}^{t}\left(V_{j}(x_{s})+\int_{s}^{v}\frac{\partial V_{j}}{\partial x^{p}}(x_{u})dx_{u}^{p}\right)dw_{v}^{j}\\
 & \sim V_{j}(x_{s})\cdot\int_{s}^{t}dw_{v}^{j}+\int_{s<u<v<t}\left(\frac{\partial V_{j}}{\partial x^{p}}(x_{u})V_{i}^{p}(x_{u})dw_{u}^{i}\right)dw_{v}^{j}\\
 & =V_{j}(x_{s})\cdot\int_{s}^{t}dw_{v}^{j}+\int_{s<u<v<t}\left(V_{i}V_{j}(x_{u})dw_{u}^{i}\right)dw_{v}^{j}\\
 & \sim V_{j}(x_{s})\cdot\int_{s}^{t}dw_{v}^{j}+V_{i}V_{j}(x_{s})\cdot\int_{s<u<v<t}dw_{u}^{i}dw_{v}^{j}+\text{3rd order expansion...}
\end{align*}where $V_i^p$ is the $p$-th component of the vector field $V_i$, and repeated indices are summed automatically.}
\end{comment}
where we have identified a vector field with a differential operator, so that $V_iV_j$ means differentiating $V_j$ along direction $V_i$. 
Classically, a \textit{formal Taylor expansion} of the solution $x_t$ to \eqref{eq: ode} is then given by
\begin{equation}\label{eq: formal Taylor expansion}
x_{s,t}\sim\sum_{k=1}^{\infty}\sum_{i_{1},\ldots,i_{k}=1}^{d}V_{(i_{1},\ldots,i_{k})}(x_{s})
\int_{s<u_{1}<\cdots<u_{k}<t}dw_{u_{1}}^{i_{1}}\cdots dw_{u_{k}}^{i_{k}},
\end{equation}
where we have set $x_{s,t}=x_{t}-x_{s}$.  
This expansion can be rephrased in more geometrical terms. Specifically, we define the following Taylor approximation function on $\mathfrak{g}^{(l)}$.
\begin{defn}\label{def: Taylor approximation function}
Let $\{V_\alpha; \, 1\le \al\le d\}$ be a family of $C_b^\infty$ vector fields on $\R^{N}$, and recall that the sets of words $\ca(l),\ca_{1}(l)$ are introduced at the beginning of Section \ref{sec: main results}.
For each $l\geq1$, we define the \textit{Taylor approximation function} $F_l: \mathfrak{g}^{(l)}\times\mathbb{R}^N\rightarrow\mathbb{R}^N$ of order $l$ associated with the ODE (\ref{eq: ode}) by
\[
F_{l}(u,x)\triangleq\sum_{\alpha\in{\cal A}_{1}(l)}V_{(\alpha)}(x)\cdot (\exp u)^\alpha,\ \ \ (u,x)\in\mathfrak{g}^{(l)}\times\mathbb{R}^{N},
\]
where the exponential function is defined on $T^{(l)}$ by \eqref{eq:def-exp-on-T-l} and $(\exp(u))^\alpha$ is the coefficient of $\exp(u)$ with respect to the tensor basis element $\rm e_{(\alpha)}$ (we recall that the notation $\rm e_{(\alpha)}$ is introduced in \eqref{eq:def-basis-algebras}). We also say that $u\in\mathfrak{g}^{(l)}$ \textit{joins} $x$ \textit{to} $y$ \textit{in the sense of Taylor approximation} if $y=x+F_l(u,x)$.
\end{defn}

With Definition \ref{def: Taylor approximation function} in hand, we can recast the formal expansion \eqref{eq: formal Taylor expansion} (truncated at an arbitrary degree $l$) in the following way:
\begin{equation}\label{eq: formal Taylor expansion 2}
x_{s,t}\sim F_{l}\lp \log\lp S(w)_{s,t} \rp,  x_{s}  \rp ,
\end{equation}
where the function $\log$ is the inverse of the exponential map for $G^{(l)}$ which can also be defined by a truncated Taylor's formula on $G^{(l)}$ similar to the exponential, and $S(w)_{s,t}$ is the truncated signature path of $w$ defined by \eqref{eq:signature-smooth-x}. In order to define rough differential equations, a natural idea is to extend this approximation scheme to rough paths. We get a definition which is stated below in the fractional Brownian motion case.

\begin{defn}\label{def:solution-rde}
Let $B$ be a fractional Brownian motion with Hurst parameter  $H>1/4$, and consider its rough path lift $\mathbf{B}$ as in Proposition \ref{prop:fbm-rough-path}. Let $\{V_\alpha; \, 1\le \al\le d\}$ be a family of $C_b^\infty$ vector fields on $\R^{N}$. We say that $X$ is a \textit{solution to the rough differential equation}~\eqref{eq: hypoelliptic SDE} if for all $(s,t)\in[0,1]^{2}$ such that $s<t$ we have
\begin{equation}\label{eq:def-rough-eq}
X_{s,t}
=
F_{[1/\gamma]-1}\lp \log\lp S(\mathbf{B})_{s,t} \rp,  X_{s}  \rp + R_{s,t} \, ,
\end{equation}
where $R_{s,t}$ is an $\R^{N}$-valued remainder such that there exists $\ep>0$ satisfying
\begin{equation*}
\sup_{0\le s < t \le 1} \frac{|R_{s,t}|}{|t-s|^{1+\ep}} < \infty.
\end{equation*}
\end{defn}

Roughly speaking, Definition \ref{def:solution-rde} says that the expansion of the solution $X$ to a rough differential equation should coincide with \eqref{eq: formal Taylor expansion} up to a remainder with H\"older regularity greater than 1. This approach goes back to Davie \cite{Da07}, and it can be shown to coincide with more classical notions of solutions.
We close this section by recalling an existence and uniqueness result which is fundamental in rough path theory. 

\begin{prop}
Under the same conditions as in Definition \ref{def:solution-rde}, there exists a unique solution to equation \eqref{eq: hypoelliptic SDE} considered in the sense of (\ref{eq:def-rough-eq}).
\end{prop}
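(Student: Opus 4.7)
The plan is to establish existence and uniqueness by combining the Davie expansion \eqref{eq:def-rough-eq} with an approximation argument based on the geometric rough path structure given by Proposition \ref{prop:fbm-rough-path}. For existence, I would start from the fact that the lift $\mathbf{B}$ is a geometric $\ga$-Hölder rough path for any $\ga \in (1/4, H)$. By \eqref{eq:closure-smooth-path}, there is a sequence of smooth paths $x_\ep$ such that $S_{[1/\ga]}(x_\ep) \to \mathbf{B}$ in the $\ga$-Hölder rough path topology. For each such smooth driver, the classical ODE \eqref{eq: ode} admits a unique smooth solution $X^{(\ep)}$, which, by the Taylor expansion recalled in \eqref{eq: formal Taylor expansion 2}, satisfies
\begin{equation*}
X^{(\ep)}_{s,t} = F_{[1/\ga]-1}\bigl(\log S_{[1/\ga]}(x_\ep)_{s,t}, X^{(\ep)}_s\bigr) + R^{(\ep)}_{s,t},
\end{equation*}
with a remainder $R^{(\ep)}$ that is $(1+\varepsilon)$-Hölder uniformly in $\ep$ (using that $[1/\ga]\cdot \ga > 1$ since $\ga > 1/4$ and the vector fields are $C_b^\infty$, so higher-order Taylor remainders gain an extra $\ga$ power).

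The next step is to show that $\{X^{(\ep)}\}$ is Cauchy in $\ga$-Hölder norm on $[0,1]$. This follows from a standard rough-path local estimate: one controls $|X^{(\ep)}_{s,t} - X^{(\ep')}_{s,t}|$ in terms of the distance between the two rough path lifts and the $C_b^\infty$ bounds on the $V_\alpha$'s, together with the multiplicativity of the signature, then patches local estimates via a Gronwall-type argument exploiting the uniform bound on $\|\mathbf{B}\|_{\ga;\textsc{HS}}$. Passing to the limit in the Taylor expansion — the nontrivial point being continuity of $F_{[1/\ga]-1}(\log\,\cdot\,,\cdot)$ as a function of the signature increments and the base point — yields a limit $X$ satisfying \eqref{eq:def-rough-eq}.

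For uniqueness, suppose $X$ and $\tilde X$ both satisfy \eqref{eq:def-rough-eq} with remainders $R, \tilde R$ of Hölder order strictly greater than $1$. Subtracting the two expansions on a small interval $[s,t]$, the difference $X_{s,t}-\tilde X_{s,t}$ is controlled by $|X_s-\tilde X_s|\cdot \|\mathbf{B}\|_{\ga;\textsc{HS}}|t-s|^\ga$ (from the Lipschitz dependence of $F$ in its second variable, since the $V_{(\alpha)}$ are $C_b^\infty$) plus a $|t-s|^{1+\ep}$ term from the remainders. A discrete Gronwall argument applied across a partition of $[0,1]$ whose mesh is tuned to $\|\mathbf{B}\|_{\ga;\textsc{HS}}$ forces $X_s = \tilde X_s$ for all $s$, starting from $X_0=\tilde X_0=x$.

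The main obstacle is the quantitative local a priori estimate used in both steps: one needs to show that the Davie-type expansion alone — without an a priori notion of integral — pins down the path up to a remainder small enough to run Gronwall. This is the content of Davie's lemma (also known as the discrete sewing / Lyons extension argument), and its verification hinges on the multiplicativity of the signature $S(\mathbf{B})$ together with the $C_b^\infty$ regularity of the vector fields ensuring that iterated compositions in the Taylor expansion remain bounded. Once this local estimate is in place, both existence (via Cauchy in $\ep$) and uniqueness (via Gronwall on the difference) follow by essentially the same patching argument.
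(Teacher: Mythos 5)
The paper gives no proof of this proposition: it is recalled as a standard fact from rough path theory (Davie \cite{Da07}, Friz--Victoir \cite{FV10}, Lyons \cite{Lyons98}), so your proposal is being measured against the classical argument rather than anything in the text. Your overall scheme --- solve the ODE for smooth approximations of the geometric lift, obtain uniform Davie-type expansions, pass to the limit, and derive uniqueness from the expansion itself --- is indeed the standard route, but two concrete steps do not survive scrutiny as written. First, the exponent bookkeeping is wrong: since $[1/\ga]\leq 1/\ga$ one always has $[1/\ga]\cdot\ga\leq 1$ (e.g.\ $\ga=0.3$ gives $[1/\ga]\ga=0.9$), so the inequality ``$[1/\ga]\cdot\ga>1$ since $\ga>1/4$'' that you invoke is false, and the claimed uniform $(1+\ep)$-H\"older bound on $R^{(\ep)}$ does not follow. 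What is true (cf.\ \cite[Corollary 10.15]{FV10}) is that the Euler expansion taken to the full level $[1/\ga]$ of the lift has remainder of order $([1/\ga]+1)\ga>1$; the exponent must be justified at that level, not one level short.

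Second, the uniqueness argument as described does not close. From the two expansions one only gets, for $Y=X-\tilde X$, an inequality of the form $|Y_t|\leq|Y_s|\left(1+C|t-s|^{\ga}\right)+C|t-s|^{1+\ep}$. Iterating over a partition of mesh $T/n$ produces the compounding factor $\prod_{i}\left(1+C(T/n)^{\ga}\right)\approx\exp\left(CT^{\ga}n^{1-\ga}\right)$, which diverges as $n\to\infty$ --- precisely the regime in which the accumulated remainder $n\,(T/n)^{1+\ep}$ tends to zero --- while for fixed $n$ the remainder contribution does not vanish. Hence no mesh ``tuned to $\|\mathbf{B}\|_{\ga;\textsc{HS}}$'' makes a naive discrete Gronwall work; uniqueness in Davie's framework requires a finer device, e.g.\ Davie's bootstrap on the H\"older exponent of $Y$, the Lipschitz continuity of the It\^o--Lyons map \cite[Theorem 10.26]{FV10}, or Lyons' fixed-point argument. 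Relatedly, the Cauchy property of the smooth approximations that you assert ``follows from a standard rough-path local estimate'' is exactly this continuity statement: you correctly identify Davie's lemma/sewing as the key ingredient, but since it is asserted rather than proved, both the existence and the uniqueness halves of your argument rest on an estimate that is not supplied.
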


\subsection{Malliavin calculus for fractional Brownian motion.}

In this section we review some basic aspects of Malliavin calculus. The reader is referred to~\cite{Nualart06} for further details.

We consider the fractional Brownian motion $B=(B^1,\ldots,B^d)$ as in Definition \eqref{def:fbm}, defined on a complete probability space $(\Omega, \cf, \mathbb{P})$. For sake of simplicity, we assume that $\cf$ is generated by $\{B_{t}; \, t\in[0,T]\}$. An $\mathcal{F}$-measurable real
valued random variable $F$ is said to be \textit{cylindrical} if it can be
written, with some $m\ge 1$, as
\begin{equation*}
F=f\lp  B_{t_1},\ldots,B_{t_m}\rp,
\quad\mbox{for}\quad
0\le t_1<\cdots<t_m \le 1,
\end{equation*}
where $f:\mathbb{R}^m \rightarrow \mathbb{R}$ is a $C_b^{\infty}$ function. The set of cylindrical random variables is denoted by~$\mathcal{S}$. 

\smallskip

The Malliavin derivative is defined as follows: for $F \in \mathcal{S}$, the derivative of $F$ in the direction $h\in\ch$ is given by
\[
\mathbf{D}_h F=\sum_{i=1}^{m}  \frac{\partial f}{\partial
x_i} \left( B_{t_1},\ldots,B_{t_m}  \right) \, h_{t_i}.
\]
More generally, we can introduce iterated derivatives. Namely, if $F \in
\mathcal{S}$, we set
\[
\mathbf{D}^k_{h_1,\ldots,h_k} F = \mathbf{D}_{h_1} \ldots\mathbf{D}_{h_k} F.
\]
For any $p \geq 1$, it can be checked that the operator $\mathbf{D}^k$ is closable from
$\mathcal{S}$ into $\mathbf{L}^p(\oom;\ch^{\otimes k})$. We denote by
$\mathbb{D}^{k,p}(\ch)$ the closure of the class of
cylindrical random variables with respect to the norm
\begin{equation} \label{eq:malliavin-sobolev-norm}
\left\| F\right\| _{k,p}=\left( \mathbb{E}\left[|F|^{p}\right]
+\sum_{j=1}^k \mathbb{E}\left[ \left\| \mathbf{D}^j F\right\|
_{\ch^{\otimes j}}^{p}\right] \right) ^{\frac{1}{p}},
\end{equation}
and we also set $\mathbb{D}^{\infty}(\ch)=\cap_{p \geq 1} \cap_{k\geq 1} \mathbb{D}^{k,p}(\ch)$. %The divergence operator $\delta$ is then defined to be the adjoint operator of $\mathbf{D}$.

\smallskip

Estimates of Malliavin derivatives are crucial in order to get information about densities of random variables, and Malliavin covariance matrices as well as non-degenerate random variables will feature importantly in the sequel.
\begin{defn}\label{non-deg}
Let $F=(F^1,\ldots , F^n)$ be a random vector whose components are in $\mathbb{D}^\infty(\ch)$. Define the \textit{Malliavin covariance matrix} of $F$ by
\begin{equation} \label{malmat}
\gamma_F=(\langle \mathbf{D}F^i, \mathbf{D}F^j\rangle_{\ch})_{1\leq i,j\leq n}.
\end{equation}
Then $F$ is called  {\it non-degenerate} if $\gamma_F$ is invertible $a.s.$ and
$$(\det \gamma_F)^{-1}\in \cap_{p\geq1}L^p(\Omega).$$
\end{defn}
It is a classical result that the law of a non-degenerate random vector $F=(F^1, \ldots , F^n)$ admits a smooth density with respect to the Lebesgue measure on $\mr^n$. 

\section{The elliptic case.}\label{sec: ellip.}

In this section, for a better understanding of our overall strategy, we first prove Theorem~\ref{thm: local comparison} and Theorem~\ref{thm: local lower estimate} in the uniformly elliptic case. The analysis in this case is more explicit and straightforward, and our methodology might be more apparent. More precisely we still consider the SDE (\ref{eq: hypoelliptic SDE}), and we assume that the coefficients satisfy the following hypothesis:
\begin{uell*}
The $\cac_{b}^{\infty}$ vector fields $V=\{V_1,\ldots,V_d\}$ are such that
\beq\label{eq:hyp-elliptic}
\Lambda_{1}|\xi|^{2}\leq\xi^{*}V(x)V(x)^{*}\xi\leq\Lambda_{2}|\xi|^{2},\ \ \ \forall x,\xi\in\mathbb{R}^{N},
\eeq
with some constants $\Lambda_1,\Lambda_2>0$, where $(\cdot)^*$ denotes matrix transpose. 
\end{uell*}

\noindent
Notice that the condition \eqref{eq:hyp-elliptic} can be seen as a special case of the uniform hypoellipticity condition \eqref{eq:unif-hypo-assumption}, where $l_0=1$.

One of the major simplifications of the elliptic (vs. hypoelliptic) situation concerns the control distance $d$. Indeed, recall that for $x,y\in\mathbb{R}^N$, $\Pi_{x,y}$ is the set of Cameron-Martin paths that join $x$ to $y$ in the sense of differential equation (cf. (\ref{eq: Pi_xy})). Under our assumption \eqref{eq:hyp-elliptic} it is easy to construct an $h\in\bar{\mathcal{H}}\in\Pi_{x,y}$ explicitly, which will ease our computations later on.

\begin{lem}\label{lem: explicit construction of joining h}
Let $V=\{V_1,\ldots,V_d\}$ be vector fields satisfying the uniform elliptic assumption~\eqref{eq:hyp-elliptic}. Given $x,y\in\mathbb{R}^N$, define 
\beq\label{eq:def-ht-elliptic-case}
h_{t}\triangleq\int_{0}^{t}V^{*}(z_{s})\cdot(V(z_{s})V^{*}(z_{s}))^{-1}\cdot(y-x)ds ,
\eeq
where $z_{t}\triangleq(1-t)x+ty$ is the line segment from $x$ to $y$. Then $h\in\Pi_{x,y}$, where $\Pi_{x,y}$ is defined by relation \eqref{eq: Pi_xy}. 
\end{lem}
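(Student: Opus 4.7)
The plan is in two steps: first verify that the straight line $z_t=(1-t)x+ty$ solves the ODE \eqref{eq: skeleton ODE} driven by $h$ with initial value $x$, which will give $\Phi_1(x;h)=y$; second, verify that $h$ belongs to $\bar{\mathcal{H}}$.

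For the first step, the uniform ellipticity assumption \eqref{eq:hyp-elliptic} guarantees that $V(z)V^*(z)$ is positive definite with smallest eigenvalue at least $\Lambda_1>0$ for every $z\in\mathbb{R}^N$, so $(V(z)V^*(z))^{-1}$ is well-defined and uniformly bounded, and $V^*(z)(V(z)V^*(z))^{-1}$ acts as a uniformly bounded right inverse of $V(z)$. Consequently the integrand in \eqref{eq:def-ht-elliptic-case} is a smooth bounded function of $s$, so $h\in C^\infty([0,1];\mathbb{R}^d)$ with $h_0=0$. A direct computation then yields, for every $t\in[0,1]$,
\begin{equation*}
\sum_{\alpha=1}^{d} V_\alpha(z_t)\dot h_t^\alpha = V(z_t)V^*(z_t)\bigl(V(z_t)V^*(z_t)\bigr)^{-1}(y-x) = y-x = \dot z_t,
\end{equation*}
so $z_t$ is the unique solution of the Young ODE \eqref{eq: skeleton ODE} driven by $h$ with initial value $x$, and in particular $\Phi_1(x;h)=z_1=y$.

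For the second step, I would split according to the value of the Hurst parameter. When $H\leq 1/2$, since $\dot h\in L^\infty([0,1])\subseteq L^2([0,1])$ we have $h\in W_0^{1,2}$, and the continuous embedding $W_0^{1,2}\subseteq\bar{\mathcal{H}}$ proved in Lemma \ref{lem: continuous embedding when H<1/2} immediately yields $h\in\bar{\mathcal{H}}$. When $H>1/2$, by Theorem \ref{thm:bar-cal-H-as frac-integral} it is enough to verify that $K^{-1}h\in L^2([0,1])$, where $K$ is defined in \eqref{eq: analytic expression of K}. Inverting that relation gives, up to a multiplicative constant depending only on $H$,
\begin{equation*}
K^{-1}h(s)=s^{H-1/2}\bigl(D_{0^+}^{H-1/2}(r^{1/2-H}\dot h(r))\bigr)(s).
\end{equation*}
Applying formula \eqref{eq: formula for fractional derivatives} to $g(r)=r^{1/2-H}\dot h(r)$, using that $\dot h$ is smooth and bounded on $[0,1]$, one verifies by a direct estimation (splitting into a pointwise term and a remainder integral and controlling the mild singularity at $0$ coming from the factor $r^{1/2-H}$) that $K^{-1}h\in L^2([0,1])$, whence $h\in\bar{\mathcal{H}}$.

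The only mildly technical point is the second step in the regime $H>1/2$, where one must carefully track the integrability near $r=0$ of the fractional derivative of the weighted smooth function $g$. Once this is established, Steps 1 and 2 together show $h\in\Pi_{x,y}$ as claimed.
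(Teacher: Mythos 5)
Your proof is correct and takes essentially the same route as the paper: the identical computation showing that the line segment $z_t$ solves the ODE driven by $h$, hence $\Phi_1(x;h)=y$. The only difference is that the paper dismisses the membership $h\in\bar{\mathcal{H}}$ in one line (smooth paths lie in $I_{0^+}^{H+1/2}(L^2([0,1]))$), whereas you verify it in more detail by splitting on $H$ — a harmless elaboration, and the $H>1/2$ fractional-derivative estimate you sketch is exactly the computation the paper carries out later in the proof of Theorem \ref{thm: local comparison in elliptic case}.
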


\begin{proof}
Since $\bar{\mathcal{H}}=I_{0^+}^{H+1/2}(L^2([0,1]))$ contains smooth paths, it is obvious that $h\in\bar{\mathcal{H}}$. As far as $z_{t}$ is concerned, the definition $z_{t}=(1-t)x+ty$ clearly implies that $z_0=x,z_1=y$ and $\dot{z}_{t}=y-x$. In addition, since $VV^{*}(\xi)$ is invertible for all $\xi\in\R^{N}$ under our condition \eqref{eq:hyp-elliptic}, we get
\[
\dot{z}_{t}=y-x=\left(VV^{*}(VV^{*})^{-1}\right)(z_{t})\cdot(y-x)=V(z_{t})\dot{h}_{t},
\] 
where the last identity stems from the definition \eqref{eq:def-ht-elliptic-case} of $h$.
Therefore $h\in\Pi_{x,y}$ according to our definition \eqref{eq: Pi_xy}.
\end{proof}

\begin{rem}
The intuition behind Lemma \ref{lem: explicit construction of joining h} is very simple. Indeed, given any smooth path $x_t$ with $x_0=x,x_1=y$, since the vector fields are elliptic, there exist smooth functions $\lambda^1(t),\ldots,\lambda^d (t)$, such that\[
\dot{x}_{t}=\sum_{\alpha=1}^{d}\lambda^{\alpha}(t)V_{\alpha}(x_{t}),\ \ \ 0\leq t\leq1.
\]In matrix notation, $\dot{x}_t=V(x_t)\cdot\lambda(t)$. A canonical way to construct $\lambda(t)$ is writing it as $\lambda(t)=V^*(x_t)\eta(t)$ so that from ellipticity we can solve for $\eta(t)$ as $$\eta(t)=(V(x_{t})V^{*}(x_{t}))^{-1}\dot{x}_{t}.$$
It follows that the path $h_t\triangleq\int_0^t \lambda(s)ds$ belongs to $\Pi_{x,y}$.
\end{rem}

Now we can prove the following result which asserts that the control distance function is locally comparable with the Euclidean metric, that is  Theorem \ref{thm: local lower estimate} under elliptic assumptions.

\begin{thm}\label{thm: local comparison in elliptic case}
Let $V=\{V_1,\ldots,V_d\}$ be vector fields satisfying the uniform elliptic assumption \eqref{eq:hyp-elliptic}. Consider  the control distance $d=d_{H}$ given in Definition \ref{def: CM bridge} for a given $H>\frac14$.
Then there exist constants $C_1,C_2>0$ depending only on $H$ and the vector fields, such that 
\beq\label{eq:local-euclid-d-elliptic}
C_{1}|x-y|\leq d(x,y)\leq C_{2}|x-y|
\eeq
for all $x,y\in\mathbb{R}^N$ with $|x-y|\leq1.$
\end{thm}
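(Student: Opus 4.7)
The idea is to pair the explicit Cameron–Martin path built in Lemma~\ref{lem: explicit construction of joining h} (for the upper bound) with a Young-integral estimate coupled to the variational embedding (for the lower bound).

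\emph{Upper bound.} I would take the explicit joining path produced by Lemma~\ref{lem: explicit construction of joining h}, namely
\[
h_t \triangleq \int_0^t V^*(z_s)\bigl(V(z_s)V^*(z_s)\bigr)^{-1}(y-x)\, ds, \qquad z_s = (1-s)x + sy.
\]
By uniform ellipticity \eqref{eq:hyp-elliptic}, the matrix $(VV^*)^{-1}$ has norm bounded by $\Lambda_1^{-1}$, so $\dot h$ is smooth with $|\dot h_t| \leq C_V |y-x|$ uniformly in $t$. It then remains to control $\|h\|_{\bch}$ by $\|\dot h\|_\infty$. For $H \leq 1/2$ this is immediate from Lemma~\ref{lem: continuous embedding when H<1/2}, which yields $\|h\|_{\bch} \leq C_H \|h\|_{W^{1,2}} \leq C_H \|\dot h\|_{L^2} \leq C_H \|\dot h\|_\infty$. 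For $H > 1/2$ I would use the standard representation of the Cameron--Martin inner product as a double integral against the kernel $H(2H-1)|s-t|^{2H-2}$, from which $\|h\|_{\bch}^2 \leq \|\dot h\|_\infty^2$ follows from the computation $\int_0^1\!\int_0^1 |s-t|^{2H-2}\,ds\,dt = [H(2H-1)]^{-1}$. In both regimes this gives $d(x,y) \leq \|h\|_{\bch} \leq C_2 |y-x|$, with no restriction on $|y-x|$.

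\emph{Lower bound.} Fix any $h \in \Pi_{x,y}$ and let $x_\cdot = \Phi_\cdot(x;h)$, so that $y - x = \int_0^1 V(x_s)\,dh_s$ as a Young integral. Proposition~\ref{prop: variational embedding} gives $q \in [1,2)$ and a constant $C_H$ such that $\|h\|_{q\text{-var};[0,1]} \leq C_H \|h\|_{\bch}$. Standard Young-ODE continuity estimates (valid because the $V_\alpha$ are $C_b^\infty$ and $q < 2$) then yield, as long as $\|h\|_{q\text{-var}}$ stays in a fixed bounded range $[0,M]$,
\[
|y-x| \leq \|x\|_{q\text{-var};[0,1]} \leq C_{V,H,M}\, \|h\|_{q\text{-var}} \leq C_{V,H,M}\, \|h\|_{\bch}.
\]
To verify that the bounded-range hypothesis can always be arranged, I will invoke the upper bound already proved: for $|y-x| \leq 1$ we have $d(x,y) \leq C_2$, so there exist $h \in \Pi_{x,y}$ with $\|h\|_{\bch} \leq 2C_2$, and fixing $M = 2 C_2 C_H$ allows us to apply the Young estimate only to such $h$. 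Infimizing over these $h$ gives $|y-x| \leq C \, d(x,y)$, which is the desired lower bound with $C_1 = 1/C$.

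\emph{Main obstacle.} The lower-bound chain is essentially a routine assembly of Young theory and the variational embedding. The upper bound is the only step where a genuine choice of techniques is needed: verifying $\|h\|_{\bch} \lesssim \|\dot h\|_\infty$ for our smooth joining path is easy in the regime $H \leq 1/2$ via Lemma~\ref{lem: continuous embedding when H<1/2}, but in the regime $H > 1/2$ the embedding $W^{1,2} \hookrightarrow \bch$ fails and one must instead rely on the double-integral representation of the Cameron--Martin inner product (or equivalently a fractional-derivative computation of $K^{-1}h$). This is the step I expect to require the most care, though it remains elementary.
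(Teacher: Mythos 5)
Your $H\le 1/2$ upper bound and your lower bound are essentially the paper's own argument (Lemma~\ref{lem: continuous embedding when H<1/2} for the former; for the latter, the reduction to $h\in\Pi_{x,y}$ with $\|h\|_{\bch}\le 2C_2$ followed by the variational embedding of Proposition~\ref{prop: variational embedding} and a Young/rough continuity estimate). The gap is in the $H>1/2$ upper bound. The quantity $H(2H-1)\int_0^1\int_0^1 \dot h_s\dot h_t|s-t|^{2H-2}\,ds\,dt$ is the norm of $\dot h$ in the space $\ch$ of \eqref{eq:def-space-H}, \emph{not} the Cameron--Martin norm of the path $h$ in $\bch$; the two are genuinely different when $H>1/2$, and the one you need is the stronger one. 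Indeed, by Theorem~\ref{thm:bar-cal-H-as frac-integral}, $\bch=I_{0^+}^{H+1/2}(L^2([0,1]))$ with $\|h\|_{\bch}=\|K^{-1}h\|_{L^2}$, and $K^{-1}h$ contains (via \eqref{eq: analytic expression of K} and \eqref{eq: formula for fractional derivatives}) the singular term $\int_0^t\big(t^{\frac12-H}\dot h_t-s^{\frac12-H}\dot h_s\big)(t-s)^{-(H+\frac12)}ds$, which cannot be controlled by $\|\dot h\|_\infty$ alone since $H+\frac12>1$. A concrete counterexample to $\|h\|_{\bch}\le C_H\|\dot h\|_\infty$: take $\dot h_s=\sin(ns)$, so $\|\dot h\|_\infty=1$ while $\|h\|_{\bch}\asymp n^{H-\frac12}\to\infty$. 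So no inequality of the form you invoke can hold, and the "elementary" step you flagged is exactly where the argument breaks.

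The conclusion is still true for the specific joining path of Lemma~\ref{lem: explicit construction of joining h}, but the proof must exploit that $\dot h_t=g(z_t)(y-x)$ with $g=V^*(VV^*)^{-1}\in C_b^\infty$ is not merely bounded but Lipschitz, with Lipschitz constant of order $|y-x|^2$. This is what the paper does: writing $h_t=\gamma_t\,(y-x)$ with $\gamma_t=\int_0^t g(z_s)\,ds$, it computes $\varphi=K^{-1}\gamma$ explicitly through the fractional derivative and obtains $|\varphi(t)|\le C_{H,V}\,(t^{\frac12-H}+|y-x|)$, where the second term comes from $\nabla g$; hence $\|\gamma\|_{\bch}\le C_{H,V}$ for $|y-x|\le 1$ and $d(x,y)\le\|h\|_{\bch}\le C_{H,V}|y-x|$. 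To repair your proposal, replace the double-integral step by such a fractional-derivative (or Hölder-in-$\dot h$) estimate using $\|\ddot h\|_\infty$ as well as $\|\dot h\|_\infty$; note also that this is where the restriction $|x-y|\le 1$ enters the upper bound, contrary to your claim that it holds without restriction.
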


\begin{proof}

We first consider the case when $H\leq1/2$, which is simpler due to Lemma \ref{lem: continuous embedding when H<1/2}. Given $x,y\in\mathbb{R}^N$, define $h\in\Pi_{x,y}$ as in Lemma \ref{lem: explicit construction of joining h}. According to Lemma \ref{lem: continuous embedding when H<1/2} 
and Definition \ref{def: CM bridge} we have
\[
d(x,y)^{2}\leq\|h\|_{\bar{\mathcal{H}}}^{2}\leq C_{H}\|h\|_{W^{1,2}}^{2}.\]
Therefore, according to the definition (\ref{eq:def-ht-elliptic-case}) of $h$, we get
\[
d(x,y)^{2}\leq C_{H}\int_{0}^{1}|V^*(z_s)(V(z_{s})V^*(z_s))^{-1}\cdot(y-x)|^{2}ds\leq C_{H,V}|y-x|^{2},
\]
where the last inequality stems from the uniform ellipticity assumption \eqref{eq:hyp-elliptic} and the fact that $V^*$ is bounded. This proves the upper bound in \eqref{eq:local-euclid-d-elliptic}.

We now turn to the lower bound in \eqref{eq:local-euclid-d-elliptic}. To this aim, consider $h\in\Pi_{x,y}$. We assume (without loss of generality) in the sequel that
\begin{equation}\label{b1}
\|h\|_{\bar{\mathcal{H}}}
\leq
2 d(x,y)
\leq
2 C_{2},
\end{equation}
where the last inequality is due to the second part of inequality \eqref{eq:local-euclid-d-elliptic} and the fact that $|x-y|\leq1$.
Then recalling the definition \eqref{eq: Pi_xy} of $\Pi_{x,y}$ we have 
\[
y-x=\int_{0}^{1}V(\Phi_{t}(x;h))dh_{t}.
\]
According to Proposition \ref{prop: variational embedding} (specifically the embedding $\bar{\mathcal{H}}\subseteq C_{0}^{q-\mathrm{var}}([0,1];\mathbb{R}^{d})$ for $q>(H+1/2)^{-1}$) and the pathwise variational estimate given by \cite[Theorem 10.14]{FV10},   we have 
\begin{equation}\label{b2}
|y-x|  \leq C_{H,V} \lp \|h\|_{q-{\rm var}}\vee\|h\|_{q-{\rm var}}^{q} \rp
  \leq C_{H,V} \lp \|h\|_{\bar{\mathcal{H}}}\vee\|h\|_{\bar{\mathcal{H}}}^{q} \rp.
\end{equation}
Since $q\geq1$ and owing to \eqref{b1}, we conclude that 
\[
|y-x|\leq C_{H,V}\|h\|_{\bar{\mathcal{H}}}
\] for all $x,y$ with $|y-x|\leq1$. Since $h\in\Pi_{x,y}$ is arbitrary provided \eqref{b1} holds true,  the lower bound in \eqref{eq:local-euclid-d-elliptic} follows again by a direct application of Definition \ref{def: CM bridge}.

Next we consider the case when $H>1/2$. The lower bound in \eqref{eq:local-euclid-d-elliptic} can be proved with the same argument as in the case $H\leq1/2$, 
the only difference being that in \eqref{b2} we replace $\bar{\mathcal{H}}\subseteq C_{0}^{q-\mathrm{var}}([0,1];\mathbb{R}^{d})$ by $\bar{\mathcal{H}}\subseteq C_{0}^{H}([0,1];\mathbb{R}^{d})$ and the pathwise variational estimate of \cite[Theorem 10.14]{FV10} by a
H\"older estimate borrowed from  \cite[Proposition 8.1]{FH14}. 

For the upper bound in \eqref{eq:local-euclid-d-elliptic}, we again take $h\in\Pi_{x,y}$ as given by Lemma \ref{lem: explicit construction of joining h} and estimate its Cameron-Martin norm. Note that due to our uniform ellipticity assumption \eqref{eq:hyp-elliptic}, one can define the function 
\begin{align}\label{eq: def gamma}
\gamma_t\equiv\int_0^t (V^*(VV^*)^{-1})(z_s)ds=\int_0^t g((1-s)x+sy)ds,\end{align} 
where $g$ is a matrix-valued $C_b^\infty$ function. We will now prove that $\gamma$ can be written as $\gamma=K\varphi$ for $\varphi\in L^2([0,1])$. Indeed, one can solve for $\varphi$ in the analytic expression (\ref{eq: analytic expression of K}) for $H>1/2$ and get 
\begin{align*}
\varphi(t) & =C_{H}t^{H-\frac{1}{2}}\left(D_{0^{+}}^{H-\frac{1}{2}}\left(s^{\frac{1}{2}-H}\dot{\gamma}_{s}\right)\right)(t).
\end{align*}
We now use the expression \eqref{eq:def-frac-deriv} for $D^{H-1/2}_{0^+}$, which yield (after an elementary change of variable)
\begin{align*}
 \varphi(t)& =C_{H}t^{H-\frac{1}{2}}\frac{d}{dt}\int_{0}^{t}s^{\frac{1}{2}-H}(t-s)^{\frac{1}{2}-H}g((1-s)x+sy)ds\\
 & =C_{H}t^{H-\frac{1}{2}}\frac{d}{dt}\left(t^{2-2H}\int_{0}^{1}(u(1-u))^{\frac{1}{2}-H}g((1-tu)x+tuy)du\right)\\
 & =C_{H}t^{\frac{1}{2}-H}\int_{0}^{1}(u(1-u))^{\frac{1}{2}-H}g((1-tu)x+tuy)du\\
 & \ \ \ +C_{H}t^{\frac{3}{2}-H}\int_{0}^{1}(u(1-u))^{\frac{1}{2}-H}u\nabla g((1-tu)x+tuy)\cdot(y-x)du.
\end{align*}
Hence, thanks to the fact that $g$ and $\nabla g$ are bounded plus the fact that $t\leq1$, we get 
\[
|\varphi(t)|\leq C_{H,V}(t^{\frac{1}{2}-H}+|y-x|),
\]from which $\varphi$ is clearly an element of $L^2([0,1])$. Since $|y-x|\leq 1$, we conclude that \[
\|\gamma\|_{\bar{\mathcal{H}}}=\|\varphi\|_{L^{2}([0,1])}\leq C_{H,V}.
\] Therefore, recalling that $h$ is given by \eqref{eq:def-ht-elliptic-case} and $\gamma$ is defined by \eqref{eq: def gamma}, we end up with
\begin{align*}
d(x,y)\leq\|h\|_{\bar{\mathcal{H}}}&=\left\|\left(\int_{0}^{\cdot}(V^*(VV^*)^{-1})(z_{s})ds\right)\cdot(y-x)\right\|_{\bar{\mathcal{H}}}\\
&=\|\gamma\|_{\bar{\mathcal{H}}}|y-x|\leq C_{H,V}|y-x|.
\end{align*}
This concludes the proof.
\end{proof}

%\begin{rem}
%From the proof of Theorem \ref{thm: local comparison in elliptic case}, it is not hard to write down global estimates of $d(x,y)$ but we will not do so here.
%\end{rem}

From Theorem \ref{thm: local comparison in elliptic case}, we know that $|B_d(x,t^H)|\asymp t^{NH}$ when $t$ is small. Therefore, the elliptic version of Theorem \ref{thm: local lower estimate} becomes the following result, which is consistent with the intuition that the density $p(t,x,y)$ of the solution to equation \eqref{eq: hypoelliptic SDE} should behave like the Gaussian kernel \[
p(t,x,y)\asymp\frac{C_{1}}{t^{NH}}\exp\left(-\frac{C_{2}|y-x|^{2}}{t^{2H}}\right).
\]

\begin{thm}\label{thm: local lower estimate in elliptic case}
Let $p(t,x,y)$ be the density of the solution $X_t$ to equation \eqref{eq: hypoelliptic SDE}. Under the uniform ellipticity assumption (\ref{eq:hyp-elliptic}), there exist constants $C_1,C_2,\tau>0$ depending only on $H$ and the vector fields $V$, such that \begin{align}\label{local lower estimate-elliptic}p(t,x,y)\geq\frac{C_1}{t^{NH}}\end{align} for all $(t,x,y)\in(0,1]\times\mathbb{R}^N\times\mathbb{R}^N$ satisfying $|x-y|\leq C_2 t^H$ and $t<\tau$.
\end{thm}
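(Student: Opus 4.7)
The plan is to apply the signature-approximation scheme outlined in Section~\ref{sec: strategy} in the simplified setting where the hypoellipticity constant $l_0$ equals $1$. In this case the order-$1$ truncated approximation from \eqref{eq:def-Fl-intro} reduces to the linear Gaussian $F_{1}(\log S(B)_{0,t},x) = V(x)B_t$. Since $B_t\sim \mathcal{N}(0,t^{2H}\mathrm{Id}_d)$, the vector $x+V(x)B_t$ has Gaussian density with covariance $t^{2H}V(x)V(x)^{*}$, whose determinant is bounded below by $\Lambda_{1}^{N}t^{2NH}$ by \eqref{eq:hyp-elliptic}. Evaluating this Gaussian density at any $y$ with $|y-x|\leq C_{2}t^{H}$ yields a pointwise lower bound of order $t^{-NH}$. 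The task is therefore to transfer this lower bound from the Gaussian $x+V(x)B_t$ to the genuine solution $X_t$.

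To do so I would first absorb the small parameter $t$ via a scaling exploiting the self-similarity of $B$. Setting $\tilde X_s:=t^{-H}(X_{ts}-x)$ and $\tilde B_s:=t^{-H}B_{ts}$, the process $\tilde B$ is again a fractional Brownian motion with Hurst parameter $H$, and $\tilde X$ solves the rough differential equation
\begin{equation*}
d\tilde X_s=\sum_{\alpha=1}^{d}V_{\alpha}(x+t^{H}\tilde X_s)\,d\tilde B_{s}^{\alpha},
\qquad \tilde X_0=0.
\end{equation*}
Writing $\tilde p_t(\tilde y)$ for the density of $\tilde X_1$, the scaling identity $p(t,x,y)=t^{-NH}\tilde p_t((y-x)/t^H)$ reduces the claim \eqref{local lower estimate-elliptic} to a uniform lower bound $\tilde p_t(\tilde y)\geq c>0$ on the compact set $\{|\tilde y|\leq C_2\}$, uniformly in $t\in(0,\tau]$ and $x\in\mathbb{R}^N$. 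The natural frozen candidate at this scale is the Gaussian $\tilde Y:=V(x)\tilde B_1$, whose density satisfies $p_{\tilde Y}(\tilde y)\geq c_0$ on $\{|\tilde y|\leq C_2\}$ uniformly in $x$, again by \eqref{eq:hyp-elliptic}.

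The next step is a Malliavin-type comparison of $\tilde X_1$ with $\tilde Y$. Iterating the rough-path expansion of Definition~\ref{def:solution-rde} applied to $\tilde X$ and using the bound $|X_t-x-V(x)B_t|\lesssim t^{2H}$ for the second-order remainder, one obtains
\begin{equation*}
\tilde X_1=\tilde Y + t^{H}\,Z_t,
\end{equation*}
where $Z_t$ is uniformly bounded in every Malliavin--Sobolev norm $\|\cdot\|_{k,p}$ of \eqref{eq:malliavin-sobolev-norm}, uniformly in $(t,x)\in(0,1]\times\mathbb{R}^N$. These uniform Sobolev bounds follow from the continuity of the It\^o--Lyons map (Proposition~\ref{prop:fbm-rough-path}) together with the fact that the rescaled vector fields $y\mapsto V_\alpha(x+t^H y)$ and all their derivatives are bounded uniformly in $t\leq 1$ and $x\in\mathbb{R}^N$.

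The main obstacle, and the heart of the argument, is the uniform non-degeneracy of $\tilde X_1$: one needs $\mathbb{E}[(\det\gamma_{\tilde X_1})^{-p}]\leq C_p$ uniformly in $(t,x)\in(0,\tau]\times\mathbb{R}^N$. The leading-order contribution to $\gamma_{\tilde X_1}$ is the deterministic matrix $V(x)V(x)^{*}\otimes\langle\mathbf{1}_{[0,1]},\mathbf{1}_{[0,1]}\rangle_{\ch}$, which is uniformly invertible by~\eqref{eq:hyp-elliptic}; the fluctuation around it is of size $t^H$ and must be absorbed through a Norris-type interpolation argument on the fractional Wiener space, in the spirit of the uniform Malliavin estimates of \cite{CHLT15,BNOT16} adapted to the rescaled equation. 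Granting this uniform non-degeneracy, the classical density-continuity principle based on integration by parts on the Wiener space (see e.g.~\cite{Nualart06}) delivers
\begin{equation*}
\sup_{|\tilde y|\leq C_2}\bigl|\tilde p_t(\tilde y)-p_{\tilde Y}(\tilde y)\bigr|\leq C\,t^{H},
\end{equation*}
so that $\tilde p_t(\tilde y)\geq c_0/2$ for all $t<\tau$ small enough. Undoing the scaling yields \eqref{local lower estimate-elliptic} and completes the proof.
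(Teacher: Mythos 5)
Your outline is sound and takes a genuinely different route from the paper. The paper translates on the Wiener space by a Cameron--Martin path $h\in\Pi_{x,y}$ with $\|h\|_{\bar{\mathcal{H}}}\leq 2\varepsilon$ (available through Lemma \ref{lem: explicit construction of joining h} and Theorem \ref{thm: local comparison in elliptic case}), so that the target point $y$ becomes the origin for the blown-up variable $X^{\varepsilon}(h)$ of \eqref{def approx X(h)}, and then bounds $\mathbb{E}[\delta_{0}(X^{\varepsilon}(h))\mathrm{e}^{-I(h/\varepsilon)}]$ from below uniformly over $\|h\|_{\bar{\mathcal{H}}}\leq2\varepsilon$; the decisive input is the uniform non-degeneracy of the deterministic covariance $\Gamma_{\Phi_{1}(x;h)}$ (Lemma \ref{lem: uniform nondegeneracy of Malliavin matrix}), supplemented by the uniform negative moments of $\det\gamma_{X^{\varepsilon}(h)}$ quoted from \cite[Lemma 3.4]{BOZ15}. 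You dispense with the shift altogether: after the self-similar rescaling you compare the density of $\tilde{X}_{1}$ directly with that of the frozen Gaussian $V(x)\tilde{B}_{1}$, whose density is bounded below on $\{|\tilde{y}|\leq C_{2}\}$ by \eqref{eq:hyp-elliptic}. This is a real simplification (no Cameron--Martin construction, no exponential weight $\mathrm{e}^{-I(h/\varepsilon)}$ to control, and the hypothesis $|x-y|\leq C_{2}t^{H}$ is used directly rather than through $d(x,y)\lesssim t^{H}$, which is equivalent here by Theorem \ref{thm: local comparison in elliptic case}), but it is strictly tied to ellipticity: in the hypoelliptic case the frozen first-order Gaussian is degenerate in the missing directions and the comparison collapses, which is precisely why the paper's shift/skeleton mechanism is the one that survives in Sections \ref{sec: control distance} and \ref{sec: local lower bound}.

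The one place where you give no argument is the step you yourself call the heart of the matter: the uniform bound $\mathbb{E}[(\det\gamma_{\tilde{X}_{1}})^{-p}]\leq C_{p}$ for $t\in(0,\tau]$ and $x\in\mathbb{R}^{N}$ (and, to a lesser extent, the uniform $\mathbb{D}^{k,p}$ bound on $Z_{t}$ with rate $t^{H}$, which is the quantitative version of Proposition \ref{th: summary of 4}\,(i)). "Granting" it leaves the proposal incomplete as written, although the statement is true and provable with tools already in the paper; in particular no Norris-type argument is needed under \eqref{eq:hyp-elliptic}. Indeed, writing $D_{r}\tilde{X}_{1}=\tilde{J}_{1}\tilde{J}_{r}^{-1}V(x+t^{H}\tilde{X}_{r})$, for $H\leq1/2$ the embedding of Lemma \ref{lem: continuous embedding when H<1/2} gives $z^{*}\gamma_{\tilde{X}_{1}}z\geq C\int_{0}^{1}|z^{*}\tilde{J}_{1}\tilde{J}_{r}^{-1}V(x+t^{H}\tilde{X}_{r})|^{2}dr\geq C\Lambda_{1}|z|^{2}\bigl(\sup_{r}\|\tilde{J}_{r}\tilde{J}_{1}^{-1}\|\bigr)^{-2}$, while for $H>1/2$ the interpolation inequality \eqref{eq: interpolation inequality} plays the same role with the random H\"older path $r\mapsto V^{*}(\tilde{J}_{r}^{-1})^{*}\tilde{J}_{1}^{*}z$; uniform negative moments then follow from moment bounds for the Jacobian of the rescaled equation, whose coefficients $V_{\alpha}(x+t^{H}\cdot)$ have derivatives bounded uniformly in $(t,x)$. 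This is exactly the stochastic analogue of Lemma \ref{lem: uniform nondegeneracy of Malliavin matrix} and of what the paper imports from \cite[Lemma 3.4]{BOZ15}; spelling it out, or citing it precisely, is what your write-up still owes.
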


The main idea behind the proof of Theorem \ref{thm: local lower estimate in elliptic case} is to translate the small time  estimate in \eqref{local lower estimate-elliptic} into a large deviation estimate. To this aim, we will first recall some preliminary notions taken from \cite{BOZ15}. By a slight abuse of notation with respect to \eqref{eq: skeleton ODE}, we will call  $w\mapsto\Phi_t(x;w)$ the solution map of the SDE (\ref{eq: hypoelliptic SDE}) (or \eqref{eq:def-rough-eq}). From the scaling invariance of fractional Brownian motion, it is not hard to see that \begin{align}\label{scaling property of equation driven by fBm}
\Phi_{t}(x;B)\stackrel{{\rm law}}{=}\Phi_{1}(x;\varepsilon B),
\end{align} where $\varepsilon\triangleq t^H$. Therefore, since the random variable $\Phi_{t}(x;B)$ is nondegenerate under our standing assumption~\eqref{eq:hyp-elliptic}, the density $p(t,x,y)$ can be written as
\begin{equation}\label{b3}
p(t,x,y)=\mathbb{E}\left[\delta_{y}\left(\Phi_{1}(x;\varepsilon B)\right)\right].
\end{equation}

Starting from expression \eqref{b3}, we now label a proposition which gives a lower bound on $p(t,x,y)$ in terms of some conveniently chosen shifts on the Wiener space.
\begin{prop}\label{th: summary of 4}
In this proposition, $\Phi_t$ stands for  the solution map of equation \eqref{eq: hypoelliptic SDE}.  The vector fields $\{V_1,\ldots,V_d\}$ are supposed to satisfy the uniform elliptic assumption~\eqref{eq:hyp-elliptic}. Then the following holds true.\\
(i) Let $\Phi_t$ be the solution map of equation \eqref{eq: hypoelliptic SDE}, $h\in\bar{\mathcal{H}}$, and let
\begin{align}\label{def approx X(h)}
X^{\varepsilon}(h)\triangleq\frac{\Phi_{1}(x;\varepsilon B+h)-\Phi_{1}(x;h)}{\varepsilon}.
\end{align}
Then  $X^\ep(h)$ converges in $\mathbb{D}^\infty$ to X(h),
where $X(h)$ is a $\R^N$-valued centered Gaussian random variable whose covariance matrix will be specified below.\\
(ii) Let $\ep>0$ and consider $x,y\in\R^N$ such that $d(x,y)\leq\ep$, where $d(\cdot,\cdot)$ is the distance considered in Theorem \ref{thm: local comparison in elliptic case}.  Choose $h\in\Pi_{x,y}$ so that 
\begin{equation}\label{eq:local-cdt-on-h}
\|h\|_{\bar{\mathcal{H}}}\leq d(x,y)+\varepsilon\leq2\varepsilon.
\end{equation}
Then we have
\begin{align}\label{eq: CM theorem}
\mathbb{E}\left[\delta_{y}\left(\Phi_{1}(x;\varepsilon B)\right)\right]  \geq C\varepsilon^{-N}\cdot\mathbb{E}\left[\delta_{0}\left(X^\ep(h)\right){\rm e}^{-I\left(\frac{h}{\varepsilon}\right)}\right].
\end{align}
\end{prop}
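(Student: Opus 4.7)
My plan is to prove part (i) by a stochastic Taylor expansion of the It\^o map with respect to the driving noise, and part (ii) by a direct application of the Cameron-Martin formula combined with the scaling of the Dirac mass.

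For part (i), the solution map $w \mapsto \Phi_{1}(x;w)$ is Fr\'echet smooth at the rough path level. Expanding around the deterministic shift $h$, one writes
\begin{equation*}
\Phi_{1}(x;\ep B + h) = \Phi_{1}(x;h) + \ep\, J_{h}B + \ep^{2} R^{\ep},
\end{equation*}
where $J_{h}$ denotes the first order Fr\'echet derivative of $\Phi_{1}(x;\cdot)$ at $h$ applied to the rough path lift of $B$, and $R^{\ep}$ is a remainder which is uniformly bounded in $\mathbb{D}^{\infty}$ as $\ep \to 0$. Dividing by $\ep$ gives $X^{\ep}(h) = J_{h} B + \ep R^{\ep}$, so that $X^{\ep}(h) \to X(h) := J_{h} B$ in $\mathbb{D}^{\infty}$. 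Since $J_{h}$ is a continuous linear operator and $B$ is a centered Gaussian process, $X(h)$ is a centered Gaussian random vector, whose covariance matrix is straightforwardly shown to be non-degenerate under the uniform ellipticity assumption \eqref{eq:hyp-elliptic}.

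For part (ii), I would apply the Cameron-Martin shift by $h/\ep \in \bar{\mathcal{H}}$. This yields
\begin{equation*}
\mathbb{E}\bigl[\delta_{y}(\Phi_{1}(x;\ep B))\bigr]
= \mathbb{E}\Bigl[\delta_{y}\bigl(\Phi_{1}(x;\ep B + h)\bigr)\, e^{-I(h/\ep) - \frac{1}{2}\|h/\ep\|_{\bar{\mathcal{H}}}^{2}}\Bigr],
\end{equation*}
where $I(h/\ep)$ denotes the Wiener integral of the element of $\mathcal{H}$ associated with $h/\ep \in \bar{\mathcal{H}}$ through the isomorphism $\mathcal{R}$ introduced in \eqref{eq:IsoR}. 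Since $h \in \Pi_{x,y}$ we have $\Phi_{1}(x;h) = y$, and the expansion from part (i) gives
\begin{equation*}
\Phi_{1}(x;\ep B + h) = y + \ep\, X^{\ep}(h).
\end{equation*}
The homogeneity identity $\delta_{y}(y+\ep Z) = \ep^{-N}\delta_{0}(Z)$ for $\R^{N}$-valued $Z$ then produces
\begin{equation*}
\mathbb{E}\bigl[\delta_{y}(\Phi_{1}(x;\ep B))\bigr]
= \ep^{-N}\, e^{-\frac{1}{2}\|h/\ep\|_{\bar{\mathcal{H}}}^{2}}\, \mathbb{E}\bigl[\delta_{0}(X^{\ep}(h))\, e^{-I(h/\ep)}\bigr].
\end{equation*}
The condition \eqref{eq:local-cdt-on-h} yields $\|h/\ep\|_{\bar{\mathcal{H}}} \le 2$, so the deterministic prefactor satisfies $e^{-\|h/\ep\|_{\bar{\mathcal{H}}}^{2}/2} \ge e^{-2}$, which can be absorbed into the universal constant $C$, yielding the stated inequality.

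The main technical obstacle I anticipate is making rigorous sense of the Watanabe distribution pairing $\mathbb{E}[\delta_{0}(X^{\ep}(h))\, e^{-I(h/\ep)}]$, since $\delta_{0}(X^{\ep}(h))$ is a generalized Wiener functional rather than a genuine random variable. This requires uniform Malliavin non-degeneracy of $X^{\ep}(h)$ as $\ep \to 0$, which follows from the $\mathbb{D}^{\infty}$ convergence in part (i) together with the non-degeneracy of the Gaussian limit $X(h)$. A secondary difficulty is the rigorous justification of the stochastic Taylor expansion at the $\mathbb{D}^{\infty}$ level, which in our fractional setting relies on the rough path machinery developed for fBm with $H > 1/4$.
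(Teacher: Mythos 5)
Your part (ii) argument --- the Cameron--Martin shift by $h/\varepsilon$, the bound $\|h\|_{\bar{\mathcal{H}}}\le 2\varepsilon$ from \eqref{eq:local-cdt-on-h} to absorb the factor $e^{-\|h\|_{\bar{\mathcal{H}}}^{2}/2\varepsilon^{2}}$ into the constant, the identity $\Phi_{1}(x;h)=y$, and the scaling $\delta_{y}(y+\varepsilon Z)=\varepsilon^{-N}\delta_{0}(Z)$ --- is exactly the paper's proof. For part (i) the paper simply cites \cite{BOZ15}, and your first-order expansion of the It\^o map around $h$ is the standard argument behind that citation, so the proposal is correct and takes essentially the same route as the paper.
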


\begin{proof}
The first statement is proved in \cite{BOZ15}. For the second statement, according to the Cameron-Martin theorem, we have 
\begin{equation*}
\mathbb{E}\left[\delta_{y}\left(\Phi_{1}(x;\varepsilon B)\right)\right]  
=
{\rm e}^{-\frac{\|h\|_{\bar{\mathcal{H}}}^{2}}{2\varepsilon^{2}}}
\mathbb{E}\left[\delta_{y}\left(\Phi_{1}(x;\varepsilon B+h)\right){\rm e}^{-I\left(\frac{h}{\varepsilon}\right)}\right],
\end{equation*}
where we have identified $\bar{\mathcal{H}}$ with $\mathcal{H}$ through $\mathcal{R}$ and recall that $I:\mathcal{H}\rightarrow\mathcal{C}_1$ is the Wiener integral operator introduced in Section \ref{sec: prel.}.
Therefore, thanks to inequality \eqref{eq:local-cdt-on-h}, we get
\begin{equation*}
\mathbb{E}\left[\delta_{y}\left(\Phi_{1}(x;\varepsilon B)\right)\right]  
\geq C\cdot\mathbb{E}\left[\delta_{y}\left(\Phi_{1}(x;\varepsilon B+h)\right){\rm e}^{-I\left(\frac{h}{\varepsilon}\right)}\right].
\end{equation*}
 In addition we have chosen $h\in\Pi_{x,y}$, which means that $\Phi_1(x;h)=y$. Thanks to the scaling property of the Dirac delta function in $\R^N$, we get
 \begin{align*}%\label{eq: CM theorem}
p(t,x,y)=\mathbb{E}\left[\delta_{y}\left(\Phi_{1}(x;\varepsilon B)\right)\right]  
&\ge 
C\varepsilon^{-N}\cdot\mathbb{E}\left[\delta_{0}\left(\frac{\Phi_{1}(x;\varepsilon B+h)-\Phi_{1}(x;h)}{\varepsilon}\right){\rm e}^{-I\left(\frac{h}{\varepsilon}\right)}\right].
\end{align*}
Our claim  \eqref{eq: CM theorem} thus follows from the definition \eqref{def approx X(h)} of $X^{\ep}(h)$.
\end{proof}

Let us now describe the covariance matrix of $X(h)$ introduced in Proposition \ref{th: summary of 4}. For this, we recall again that $\Phi$ is the application defined on $\bar{\mathcal{H}}$ by \eqref{eq: skeleton ODE}. The Jacobian of $\Phi_t(\cdot\,;h): \R^N\to\R^N$ is denoted by $J(\cdot\,;h)$. Then it is standard (cf. \cite{BOZ15}) that the deterministic Malliavin differential of $\Phi$ satisfies
\begin{equation}\label{eq:derivative-Phi-with-Jacobian}
\langle D\Phi_{t}(x;h),l\rangle_{\bar{\mathcal{H}}}=J_{t}(x;h)\cdot\int_{0}^{t}J_{s}^{-1}(x;h)\cdot V(\Phi_{s}(x;h))dl_{s},\ \ \ \mathrm{for\ all}\  l\in\bar{\mathcal{H}}, 
\end{equation} 
where $D$ is the Malliavin derivative operator. According to the pairing (\ref{eq: H-barH pairing}), when viewed as an $\mathcal{H}$-valued functional, we have 
\begin{align}\label{eq: expression Malliavin derivative}(D\Phi_{t}^{i}(x;h))_{s}=\left(J_{t}(x;h)J_{s}^{-1}(x;h)V(\Phi_{s}(x;h))\right)^{i}\mathbf{1}_{[0,t]}(s),\ \ \ 1\leq i\leq N.
\end{align}
Then the $N\times N$ covariance matrix of $X(h)$ admits the following representation taken from the reference \cite{BOZ15}:
\begin{align}\label{Malliavin matrix X(h)}
\mathrm{Cov}(X(h))\equiv \Gamma_{\Phi_1(x;h)}=\langle D\Phi_1(x;h), D\Phi_1(x;h)\rangle_\mathcal{H}.
\end{align}

%where $X(h)$ is an $\mathbb{R}^N$-valued Gaussian random variable in the first order Wiener chaos $\mathcal{C}_1$ with mean zero and covariance matrix being the deterministic Malliavin covariance matrix $M_{\Phi_1(x;h)}$ of $\Phi_1(x;\cdot):\bar{\mathcal{H}}\rightarrow \mathbb{R}^N$ at $h$. Following their argument it is easy to see that the above convergence is uniform when $\|h\|_{\bar{\mathcal{H}}}$ is bounded. Note that the Malliavin covariance matrix of $X(h)$ is also $M_{\Phi_1(x;h)}$.

With \eqref{Malliavin matrix X(h)} in hand, a crucial point for proving Theorem \ref{thm: local lower estimate in elliptic case} is the fact that $\Gamma_{\Phi_1(x;h)}$ is uniformly non-degenerate with respect to all $h$. This is the content of the following result which is another special feature of ellipticity that fails in the hypoelliptic case. Its proof is an adaptation of the argument in \cite{BOZ15} to the deterministic context.

\begin{lem}\label{lem: uniform nondegeneracy of Malliavin matrix}
Let $M>0$ be a localizing constant. Consider the Malliavin covariance matrix $\Gamma_{\Phi_1(x;h)}$ defined by \eqref{Malliavin matrix X(h)}. Under the uniform ellipticity assumption (\ref{eq:hyp-elliptic}), there exist $C_1,C_2>0$ depending only on $H,M$ and the vector fields, such that 
\begin{align}\label{uniform bound for Malliavin matrix}C_{1}\leq\det \Gamma_{\Phi_{1}(x;h)}\leq C_{2}
\end{align}for all $x\in\mathbb{R}^N$ and $h\in\bar{\mathcal{H}}$ with $\|h\|_{\bar{\mathcal{H}}}\leq M$.
\end{lem}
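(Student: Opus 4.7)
The plan is to reduce the matrix inequality \eqref{uniform bound for Malliavin matrix} to a two-sided estimate on an explicit $\ch$-norm, extract the regularity of the underlying path from the ODE structure together with the bound $\|h\|_{\bch}\le M$, and then combine ellipticity with the kernel structure of $\ch$ to conclude.

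Fix $\xi\in\R^N$. Combining \eqref{eq: expression Malliavin derivative} and \eqref{Malliavin matrix X(h)} gives
\[
\xi^{*}\Gamma_{\Phi_1(x;h)}\xi=\|Z_\xi\|_{\ch}^{2}, \qquad Z_\xi(s)\triangleq V(\Phi_s(x;h))^{*}\bigl(J_s^{-1}(x;h)\bigr)^{*}J_1(x;h)^{*}\xi\cdot\mathbf{1}_{[0,1]}(s),
\]
so it suffices to prove $c_1|\xi|^{2}\le\|Z_\xi\|_\ch^{2}\le c_2|\xi|^{2}$ uniformly in $\xi\in\R^N$, $x\in\R^N$ and $\|h\|_{\bch}\le M$. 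By Proposition \ref{prop: variational embedding}, every such $h$ has uniformly bounded $q$-variation for some $q<2$; standard variational/rough-path estimates applied to the skeleton ODE \eqref{eq: skeleton ODE} and to its associated Jacobian equation then yield that $\Phi(x;h)$, $J(x;h)$, $J^{-1}(x;h)$ are Lipschitz in $s$ with constants depending only on $M,V$, and since $V\in C_b^\infty$ this propagates to $\|Z_\xi\|_\infty+[Z_\xi]_{\mathrm{Lip}}\le C_{M,V}|\xi|$.

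The upper bound in \eqref{uniform bound for Malliavin matrix} follows directly from this regularity: for $H>1/2$ use the kernel identity $\|f\|_\ch^{2}=\alpha_H\int_0^1\int_0^1\langle f(s),f(t)\rangle|s-t|^{2H-2}ds\,dt$ together with $\|Z_\xi\|_\infty\le C_{M,V}|\xi|$, and for $H\le 1/2$ use $\|Z_\xi\|_\ch=\|\ck^{*}Z_\xi\|_{L^2}$ from \eqref{eq:def-K-star} and bound the fractional derivative in \eqref{eq: analytic expression for K*} via the Lipschitz control on $Z_\xi$. For the lower bound, the uniform ellipticity condition \eqref{eq:hyp-elliptic} produces the pointwise estimate
\[
|Z_\xi(s)|^{2}=\xi^{*}J_1 J_s^{-1}V(\Phi_s)V(\Phi_s)^{*}(J_s^{-1})^{*}J_1^{*}\xi\ \ge\ \laa_1\bigl|(J_s^{-1})^{*}J_1^{*}\xi\bigr|^{2}\ \ge\ c_{M,V}|\xi|^{2},
\]
uniformly in $s\in[0,1]$. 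When $H\le 1/2$, Lemma \ref{lem: continuous embedding when H<1/2} immediately converts this into $\|Z_\xi\|_\ch^{2}\ge C_H^{-1}\|Z_\xi\|_{L^2}^{2}\ge c'_{M,V}|\xi|^{2}$.

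The main obstacle is the lower bound when $H>1/2$, since then $\ch$ contains distributions and no bound of the form $\|f\|_\ch\gtrsim\|f\|_{L^2}$ is available. I would handle it by decomposing
\[
\|Z_\xi\|_\ch^{2}=\alpha_H\!\int_0^1\!|Z_\xi(s)|^{2}\,\tfrac{s^{2H-1}+(1-s)^{2H-1}}{2H-1}\,ds-\tfrac{\alpha_H}{2}\!\int_0^1\!\!\int_0^1\!|Z_\xi(s)-Z_\xi(t)|^{2}|s-t|^{2H-2}ds\,dt,
\]
noting that the first integral is bounded below by $c|\xi|^{2}$ via the pointwise estimate above, while the second (negative) term is controlled by $[Z_\xi]_{\mathrm{Lip}}\le C_{M,V}|\xi|$. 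The subtle point is ruling out cancellation between these two terms; this is precisely where I would adapt the strategy of \cite{BOZ15}, either by passing to the equivalent norm $\|\ck^{*}Z_\xi\|_{L^{2}}$ and using integration by parts in \eqref{eq: analytic expression for K*} to reduce to the non-vanishing boundary value $|Z_\xi(1)|\ge c|\xi|$, or by a compactness/perturbation argument anchored at $h=0$, where $\Gamma_{\Phi_1(x;0)}=V(x)V(x)^{*}\succeq\laa_1 I$ holds trivially, and propagating this bound to general $\|h\|_{\bch}\le M$ via the uniform regularity established above.
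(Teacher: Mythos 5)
Your reduction of \eqref{uniform bound for Malliavin matrix} to a two-sided bound on $\|Z_\xi\|_{\mathcal H}$, your treatment of the upper bound, and your lower bound for $H\le 1/2$ via Lemma \ref{lem: continuous embedding when H<1/2} all coincide with the paper's argument. The problem is the case you yourself single out as the main obstacle: the lower bound for $H>1/2$ is not actually proved. Your decomposition of $\|Z_\xi\|_{\mathcal H}^2$ into a positive ``diagonal'' term and a negative increment term cannot close the argument, because both terms are of size $|\xi|^2$ with constants $c_1$ and $C_2$ that you have no way of comparing; acknowledging the possible cancellation and then saying you ``would adapt'' one of two strategies is precisely the missing step, not a proof. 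The paper avoids this issue entirely by invoking the interpolation inequality of Baudoin--Hairer (inequality \eqref{eq: interpolation inequality}, from the proof of Lemma 4.4 in \cite{BH07}), which bounds $\int_{[0,1]^2}\langle f_s,f_t\rangle|t-s|^{2H-2}\,ds\,dt$ from below by $C_\gamma\bigl(\int_0^1 v^\gamma(1-v)^\gamma|f_v|^2dv\bigr)^2/\|f\|_\gamma^2$ for $\gamma\in(H-1/2,H)$; this is exactly the tool that converts your two ingredients (pointwise ellipticity lower bound on $|Z_\xi|$ and a H\"older upper bound on $Z_\xi$) into a uniform lower bound with no cancellation to rule out.

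Of your two suggested repairs, the boundary-value route is the more promising one (indeed $Z_\xi(1)=V^*(\Phi_1)\xi$, so $|Z_\xi(1)|^2\ge\Lambda_1|\xi|^2$, and one could try to localize $\|\mathcal K^*Z_\xi\|_{L^2}$ on a small fixed window near $t=1$), but it is only sketched and would still require a quantitative error analysis; the compactness/perturbation alternative does not work as stated, since $M$ is not small (so no perturbation off $h=0$), the ball $\{\|h\|_{\bar{\mathcal H}}\le M\}$ is not compact in any topology in which $h\mapsto\det\Gamma_{\Phi_1(x;h)}$ is continuous with quantitative control, and $x$ ranges over all of $\mathbb R^N$, which is likewise non-compact. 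A secondary inaccuracy: the paths $\Phi_\cdot(x;h)$, $J_\cdot(x;h)$ are not Lipschitz in $s$ under $\|h\|_{\bar{\mathcal H}}\le M$; for $H>1/2$ one only gets $H$-H\"older bounds (and finite $q$-variation for $H\le1/2$), which is what the paper uses through the norm $\|\xi\|_\gamma$ in \eqref{upper bound xi}. H\"older regularity suffices for your purposes, but the Lipschitz claim as written is false.
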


\begin{proof}

%Let $w\mapsto J_t(x;w)$ be the Jacobian of $\Phi_t(x;w)$. It is standard that \[
%\langle D\Phi_{t}(x;h),l\rangle_{\bar{\mathcal{H}}}=J_{t}(x;h)\cdot\int_{0}^{t}J_{s}^{-1}(x;h)\cdot V(\Phi_{s}(x;h))dl_{s},\ \ \ \forall l\in\bar{\mathcal{H}}, 
%\] where $D$ is the Malliavin derivative operator. According to the pairing (\ref{eq: H-barH pairing}), when viewed as an $\mathcal{H}$-valued functional, we have \[
%(D\Phi_{t}^{i}(x;h))_{s}=\left(J_{t}(x;h)J_{s}^{-1}(x;h)V(\Phi_{s}(x;h))\right)^{i}\mathbf{1}_{[0,t]}(s),\ \ \ 1\leq i\leq N.
%\]
We consider the cases of $H>1/2$ and $H\leq 1/2$ separately. We only study the lower bound of $\Gamma_{\Phi_1(x;h)}$ since the upper bound is standard from pathwise estimates by \eqref{eq: expression Malliavin derivative} and \eqref{Malliavin matrix X(h)}, plus the fact that $\|h\|_{\mathcal{H}}\leq M$.\\

\noindent
\emph{(i) Proof of the lower bound when  $H>1/2$.}
According to relation \eqref{Malliavin matrix X(h)} and the expression for the inner product in $\mathcal{H}$ given by \cite[equation (5.6)]{Nualart06}, we have \[
\Gamma_{\Phi_{1}(x;h)}=C_{H}\sum_{\alpha=1}^{d}\int_{[0,1]^{2}}J_{1}J_{s}^{-1}V_{\alpha}(\Phi_{s})V_{\alpha}^{*}(\Phi_{t})(J_{t}^{-1})^{*}J_{1}^{*}|t-s|^{2H-2}dsdt,
\]where we have omitted the dependence on $x$ and $h$ for $\Phi$ and $J$ inside the integral for notational simplicity. It follows that for any $z\in\mathbb{R}^N$, we have
\begin{align}\label{eq: inner product H>1/2}z^{*}\Gamma_{\Phi_{1}(x;h)}z=C_{H}\int_{[0,1]^{2}}\langle\xi_{s},\xi_{t}\rangle_{\mathbb{R}^{d}}|t-s|^{2H-2}dsdt,
\end{align}where
$\xi$ is the function in $\mathcal{H}$ defined by
\begin{align}\label{definition xi}\xi_{t}\triangleq V^{*}(\Phi_{t})(J_{t}^{-1})^{*}J_{1}^{*}z.\end{align}
According to an interpolation inequality proved by Baudoin-Hairer (cf. \cite[Proof of Lemma 4.4]{BH07}), given $\gamma>H-1/2$, we have 
\begin{equation}\label{eq: interpolation inequality}
\int_{[0,1]^{2}}\langle f_{s},f_{t}\rangle_{\mathbb{R}^{d}}|t-s|^{2H-2}dsdt\geq C_{\gamma}\frac{\left(\int_{0}^{1}v^{\gamma}(1-v)^{\gamma}|f_{v}|^{2}dv\right)^{2}}{\|f\|_{\gamma}^{2}}
\end{equation}for all $f\in C^\gamma([0,1];\mathbb{R}^d)$. 
Observe that, due to our uniform ellipticity assumption \eqref{eq:hyp-elliptic} and the non-degeneracy of $J_t$, we have
\begin{align}\label{lower bound xi}
\inf_{0\leq t\leq1}|\xi_{t}|^{2}\geq C_{H,V,M}|z|^{2}.
\end{align}
Furthermore, recall that $\Phi_t$ is defined in \eqref{eq: skeleton ODE} and is driven by $h\in\bar{\mathcal{H}}$. We have also seen that $\bar{\mathcal{H}}\hookrightarrow C^H_0$ whenever $H>1/2$. Thus for $H-1/2<\gamma<H$, we get $\|\Phi_t\|_\gamma\leq C_{H,V}\|h\|_\gamma$; and the same inequality holds true for the Jacobian $J$ in \eqref{definition xi}. Therefore, going back to equation \eqref{definition xi} again, we have
\begin{align}\label{upper bound xi}\|\xi\|_{\gamma}^{2}\leq C_{H,V,M}\|h\|_{\bar{\mathcal{H}}}\,|z|^2\leq C_{H,V,M}|z|^{2},
\end{align}
where the last inequality stems from our assumption $\|h\|_{\bar{\mathcal{H}}}\leq M$. Therefore, taking $f_t=\xi_t$ in (\ref{eq: interpolation inequality}), plugging inequalities \eqref{lower bound xi} and \eqref{upper bound xi} and recalling inequality \eqref{eq: inner product H>1/2}, we conclude that \[
z^{*}\Gamma_{\Phi_{1}(x;h)}z\geq C_{H,V,M}|z|^{2}
\]uniformly for $\|h\|_{\bar{\mathcal{H}}}\leq M$ and the result follows.\\

\noindent
\emph{(ii) Proof of the lower bound when  $H\leq1/2$.}
Recall again that \eqref{Malliavin matrix X(h)} yields
$$z^{*}\Gamma_{\Phi_{1}(x;h)}z  =\|z^{*}D\Phi_{1}(x;h)\|_{\mathcal{H}}^{2}.$$
Then owing to the continuous embedding $\mathcal{H}\subseteq L^2([0,1])$ proved in Lemma \ref{lem: continuous embedding when H<1/2}, and expression~\eqref{eq: expression Malliavin derivative} for $D\Phi_t$, we have for any $z\in\mathbb{R}^N$,
\begin{align*}
z^{*}\Gamma_{\Phi_{1}(x;h)}z  & \geq C_{H}\|z^{*}D\Phi_{1}(x;h)\|_{L^{2}([0,1])}^{2}\\
 & =C_{H}\int_{0}^{1}z^{*}J_{1}J_{t}^{-1}V(\Phi_{t})V^{*}(\Phi_{t})(J_{t}^{-1})^{*}J_{1}^{*}zdt.
 \end{align*}
We can  now invoke the uniform ellipticity assumption \eqref{eq:hyp-elliptic} and the non-degeneracy of $J_t$ in order to obtain
 \begin{align*}
z^{*}\Gamma_{\Phi_{1}(x;h)}z \geq C_{H,V,M}|z|^{2}
\end{align*}uniformly for $\|h\|_{\bar{\mathcal{H}}}\leq M$. Our claim \eqref{uniform bound for Malliavin matrix} now follows as in the case $H>1/2$.
\end{proof}

With the preliminary results of Proposition \ref{th: summary of 4} and Lemma \ref{lem: uniform nondegeneracy of Malliavin matrix} in hand,  we are now able to complete the proof of Theorem \ref{thm: local lower estimate in elliptic case}.

\begin{proof}[Proof of Theorem \ref{thm: local lower estimate in elliptic case}] 

Recall that $X^\ep(h)$ is defined by \eqref{def approx X(h)}. According to our preliminary bound (\ref{eq: CM theorem}), it remains to show that 
\begin{equation}\label{eq: uniform lower estimate for Y^epsilon}
\mathbb{E}\left[\delta_{0}\left(X^\varepsilon(h)\right){\rm e}^{-I\left(\frac{h}{\varepsilon}\right)}\right]\geq C_{H,V}
\end{equation}uniformly in $h$ for $\|h\|_{\bar{\mathcal{H}}}\leq 2\varepsilon$ when $\varepsilon$ is small enough. The proof of this fact consists of the following two steps:\\
\\
(i)  Prove that $\mathbb{E}[\delta_{0}(X(h)){\rm e}^{-I\left({h}/{\varepsilon}\right)}]\geq C_{H,V}$ for all $\varepsilon>0$ and $h\in\bar{\mathcal{H}}$ with $\|h\|_{\bar{\mathcal{H}}}\leq 1$;
\\
(ii) Upper bound the difference \[
\mathbb{E}\left[\delta_{0}\left(X^\varepsilon(h)\right){\rm e}^{-I\left(\frac{h}{\varepsilon}\right)}\right]-\mathbb{E}\left[\delta_{0}(X(h)){\rm e}^{-I\left(\frac{h}{\varepsilon}\right)}\right],
\]and show that it is small uniformly in $h$ for $\|h\|_{\bar{\mathcal{H}}}\leq 2\varepsilon $ when $\varepsilon$ is small.
We now treat the above two parts separately.

\noindent \emph{Proof of item (i):}  Recall that the first chaos $\mathcal{C}_1$ has been defined in Section 2.1. then observe that the random variable $X(h)=(X^1(h),...,X^N(h))$ introduced in Proposition \ref{th: summary of 4} sits in $\mathcal{C}_1$. We decompose the Wiener integral $I(h/\ep)$ as  $$I\left(h/\varepsilon\right)=G_{1}^{\varepsilon}+G_{2}^{\varepsilon},$$ where $G^\varepsilon_1$ and $G^\ep_2$ satisfy
$$G_1^\ep\in\mathrm{Span}\{X^i(h); 1\leq i\leq N\},\quad G^\ep_2\in \mathrm{Span}\{X^i(h); 1\leq i\leq N\}^\bot $$
where the orthogonal complement is considered in $\mathcal{C}_1$. With this decomposition in hand, we get
\begin{align*}
  \mathbb{E}\left[\delta_{0}(X(h)){\rm e}^{-I\left(\frac{h}{\varepsilon}\right)}\right]
 =\mathbb{E}\left[\delta_{0}(X(h)){\rm e}^{-G_{1}^{\varepsilon}}\right]\cdot\mathbb{E}\left[{\rm e}^{-G_{2}^{\varepsilon}}\right].
\end{align*}
Furthermore, $\mathbb{E}[\mathrm{e}^{G}]\geq 1$ for any centered Gaussian random variable $G$. Thus
\begin{align}\label{mid step-lower bound density}
  \mathbb{E}\left[\delta_{0}(X(h)){\rm e}^{-I\left(\frac{h}{\varepsilon}\right)}\right]
 \geq\mathbb{E}\left[\delta_{0}(X(h)){\rm e}^{-G_{1}^{\varepsilon}}\right].
\end{align} 
Next we approximate $\delta_0$ above by a sequence of function $\{\psi_n; n\geq 1\}$ compactly supported in $B(0,1/n)\subset\R^N$. Taking limits in the right hand-side of \eqref{mid step-lower bound density} and recalling that $G^\ep_1\in\mathrm{Span}\{X^i(h); 1\leq i\leq N\}$, we get
 \begin{align*}
  \mathbb{E}\left[\delta_{0}(X(h)){\rm e}^{-I\left(\frac{h}{\varepsilon}\right)}\right] \geq\mathbb{E}[\delta_{0}(X(h))].
\end{align*} 
We now resort to the fact that $X(h)$ is a Gaussian random variable with covariance matrix $\Gamma_{\Phi_1(x;h)}$ by \eqref{Malliavin matrix X(h)}, which satisfies relation \eqref{uniform bound for Malliavin matrix}. This yields
 \begin{align*}
  \mathbb{E}\left[\delta_{0}(X(h)){\rm e}^{-I\left(\frac{h}{\varepsilon}\right)}\right] \geq \frac{1}{(2\pi)^{\frac{N}{2}}\sqrt{\det \Gamma_{\Phi_1(x;h)}}}\geq C_{H,V},
\end{align*} 
uniformly for $\|h\|_{\bar{\mathcal{H}}}\leq1$. This ends the proof of item (i).

\bigskip

\noindent\emph{Proof of item (ii):} By using the integration by parts formula in Malliavin's calculus, the expectation $\mathbb{E}[\delta_0(X^\varepsilon(h))\mathrm{e}^{-I(h/\varepsilon)}]$ can be expressed in terms of the Malliavin derivatives of $I(h/\varepsilon)$, $X^\varepsilon(h)$ and the inverse Malliavin covariance matrix $M_{X^\varepsilon(h)}$ of $X^\varepsilon(h)$, and similarly for $\mathbb{E}[\delta_0(X(h))\mathrm{e}^{-I(h/\varepsilon)}]$. 
In addition, from standard argument (cf. \cite[Lemma 3.4]{BOZ15}), one can show that $\det M_{X^\varepsilon(h)}$ has negative moments of all orders uniformly for all $\varepsilon\in (0,1)$ and bounded $h\in\bar{\mathcal{H}}$. Together with the convergence $\mathbb{D}$\,-$\lim_{\ep\to0}X^\ep(h)=X(h)$ in Proposition \ref{th: summary of 4}, we conclude that \[
\det M_{X^{\varepsilon}(h)}^{-1}\stackrel{L^{p}}{\longrightarrow}\det M_{\Phi_{1}(x;h)}^{-1},\qquad {\rm as}\ \varepsilon\rightarrow0
\]uniformly for $\|h\|_{\bar{\mathcal{H}}}\leq 1$ for each $p\geq1.$ Therefore, the assertion of item (ii) holds.

Once item (i) and (ii) are proved, it is easy to obtain \eqref{eq: uniform lower estimate for Y^epsilon} and the details are omitted. This finishes te proof of Theorem \ref{thm: local lower estimate in elliptic case}. 
\end{proof}

\section{Hypoelliptic case: local estimate for the control distance function.}\label{sec: control distance} 

In this section, we prove Theorem \ref{thm: local comparison}  in the hypoelliptic case. In contrast to the elliptic case, it should be noticed that  one cannot explicitly construct a Cameron-Martin path joining two points in the sense of Definition \ref{def: CM bridge} in any easy way (i.e. no simple analogue of Lemma \ref{lem: explicit construction of joining h} is possible). The analysis of  Cameron-Martin norms also becomes more involved. We detail those steps below, starting with some technical lemmas.

\subsection{Preliminary results.}

As we mentioned above, it is quite difficult to explicitly construct a Cameron-Martin path joining $x$ to $y$ in the sense of differential equation in the hypoelliptic case. However, it is possible to find some $u\in\mathfrak{g}^{(l)}$ joining $x$ to $y$ \textit{in the sense of Taylor approximation}, i.e. $y=x+F_l(u,x)$ as introduced in Definition \ref{def: Taylor approximation function}. This is the content of the following fundamental lemma proved in \cite{KS87}, which will be crucial for us in the proofs of  both Theorem \ref{thm: local comparison} and Theorem \ref{thm: local lower estimate}. Recall that $l_0$ is the hypoellipticity constant in the assumption \eqref{eq:unif-hypo-assumption}.
 
 \begin{lem}\label{lem: the Psi function}
 For each $l\geq l_{0}$, there exist constants $r,A>0$ depending only
on $l$ and the vector fields, and a $C_{b}^{\infty}$-function 
\[
\Psi_l:\left\{ u\in\mathfrak{g}^{(l)}:\|u\|_{\textsc{HS}}<r\right\} \times\mathbb{R}^{N}\times\left\{ \eta\in\mathbb{R}^{N}:|\eta|<r\right\} \rightarrow\mathfrak{g}^{(l)},
\]
such that for all $u,x,\eta$ in the domain of $\Psi_l,$ we have:
\\
(i) $\Psi_l(u,x,0)=u$;
\\
(ii) $\|\Psi_l(u,x,\eta)-u\|_\textsc{HS}\leq A|\eta|;$
\\
(iii) $F_{l}(\Psi_l(u,x,\eta),x)=F_{l}(u,x)+\eta.$
 \end{lem}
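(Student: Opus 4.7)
The plan is to establish Lemma \ref{lem: the Psi function} via a quantitative implicit function theorem, with the uniform hypoellipticity assumption \eqref{eq:unif-hypo-assumption} ensuring that all the parameters can be chosen independently of the non-compact parameter $x\in\mathbb{R}^N$. The key observation is that the linearization of $F_l(\cdot,x)$ at $u=0$ has a clean factorization through the Lie-bracketed vector fields $V_{[\alpha]}$.

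First I would compute the derivative. Since $d\exp|_0 = \mathrm{id}_{T^{(l)}}$, the definition of $F_l$ yields
$$\partial_u F_l(0,x)\cdot v = \sum_{\alpha\in\mathcal{A}_1(l)} V_{(\alpha)}(x)\, v^{\alpha},\qquad v\in\mathfrak{g}^{(l)},$$
where $v^{\alpha}$ is the $\mathrm{e}_{(\alpha)}$-coefficient of $v$ inside $T^{(l)}$. A direct induction matching the recursion $\mathrm{e}_{[i,\beta]} = \mathrm{e}_i\otimes\mathrm{e}_{[\beta]} - \mathrm{e}_{[\beta]}\otimes\mathrm{e}_i$ with $V_{[i,\beta]}=[V_i,V_{[\beta]}]$ then gives $\partial_u F_l(0,x)\cdot\mathrm{e}_{[\alpha]} = V_{[\alpha]}(x)$, so by \eqref{eq:unif-hypo-assumption} the image contains the uniformly spanning family $\{V_{[\alpha]}(x):\alpha\in\mathcal{A}_1(l_0)\}$. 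I would then define an explicit smooth right inverse $L(x):\mathbb{R}^N\to\mathfrak{g}^{(l)}$ by
$$L(x)\xi \triangleq \sum_{\alpha\in\mathcal{A}_1(l_0)} \langle V_{[\alpha]}(x),M(x)^{-1}\xi\rangle_{\mathbb{R}^N}\,\mathrm{e}_{[\alpha]},\qquad M(x)\triangleq \sum_{\alpha\in\mathcal{A}_1(l_0)} V_{[\alpha]}(x) V_{[\alpha]}(x)^{*}.$$
By \eqref{eq:unif-hypo-assumption}, $M(x)$ is uniformly positive definite and $C_b^{\infty}$, hence $L$ is $C_b^{\infty}$ with uniform derivative bounds, and $\partial_u F_l(0,x)\circ L(x) = \mathrm{id}_{\mathbb{R}^N}$.

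Second, I would apply the quantitative implicit function theorem to the auxiliary map
$$G(u,x,\eta,\xi)\triangleq F_l\bigl(u + L(x)\xi,\,x\bigr) - F_l(u,x) - \eta,$$
which satisfies $G(u,x,0,0)=0$ and $\partial_\xi G(0,x,0,0) = \partial_u F_l(0,x)\circ L(x) = \mathrm{id}$. Since the vector fields are $C_b^{\infty}$ and $L$ has uniform derivative bounds, $\partial_\xi G$ stays invertible with a uniformly bounded inverse whenever $\|u\|_{\textsc{HS}}<r$ and $|\xi|<r$ for some $r>0$ depending only on $l$ and $V$. The implicit function theorem then produces a $C^{\infty}$ solution $\xi=\xi(u,x,\eta)$ on the domain $\{\|u\|_{\textsc{HS}}<r\}\times\mathbb{R}^N\times\{|\eta|<r\}$ satisfying $\xi(u,x,0)=0$ and $|\xi(u,x,\eta)|\leq A'|\eta|$. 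Setting $\Psi_l(u,x,\eta)\triangleq u + L(x)\xi(u,x,\eta)$, the three properties follow immediately: (i) from $\xi(u,x,0)=0$, (ii) from $\|L(x)\xi\|_{\textsc{HS}}\leq \|L(x)\|_{\mathrm{op}}\,|\xi|$ combined with the bound on $\xi$, and (iii) from the equation $G=0$.

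The main obstacle is making all the implicit function theorem estimates uniform in the non-compact parameter $x\in\mathbb{R}^N$. This is precisely what \eqref{eq:unif-hypo-assumption} provides, via the uniform lower bound on $M(x)$: combined with the $C_b^{\infty}$ assumption on the $V_\alpha$, it forces both $L(x)$ and all derivatives of $G$ to be bounded uniformly in $x$, so the radius $r$ and constant $A$ in the conclusion depend only on $l$ and the vector fields, as required by the statement.
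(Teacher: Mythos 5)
Your proof is correct, and it follows essentially the same route the paper attributes to Kusuoka--Stroock \cite{KS87}: the uniform non-degeneracy of $JF_l(0,x)$ coming from the hypoellipticity condition \eqref{eq:unif-hypo-assumption} (the content of Lemma \ref{lem: nondegeneracy of F}, which your explicit right inverse $L(x)=JF$-adjoint composed with $M(x)^{-1}$ makes concrete), combined with a parametrized/quantitative implicit function theorem whose constants are uniform in $x$ thanks to the $C_b^{\infty}$ bounds. The only point worth spelling out in a full write-up is the induction showing $\partial_u F_l(0,x)\,\mathrm{e}_{[\alpha]}=V_{[\alpha]}(x)$ (the bracket-compatibility \eqref{eq: Xi respects Lie brackets}), which you assert but is the same ingredient the paper also takes from \cite{KS87}.
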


The intuition behind the function $\Psi_l$ is the following. Let $y\triangleq x+F_l(u,x)$ so that  $u$ joins $x$ to $y$ in the sense of Taylor approximation. Then $v\triangleq\Psi_l(u,x,\eta)$ joins $x$ to $y+\eta$, i.e. $x+F_l(v,x)=y+\eta$. In particular, $\Psi_l(0,x,y-x)$ gives an element in $\mathfrak{g}^{(l)}$ which joins $x$ to $y$ in the sense of Taylor approximation, provided $|y-x|<r$. The proof of this lemma, for which we refer again to \cite{KS87},  is based on a non-degeneracy property of $F_l$ stated in Lemma~\ref{lem: nondegeneracy of F}  due to hypoellipticity, as well as a parametrized version of the classical inverse function theorem.

We begin with some easy preliminary steps toward the proof of Theorem \ref{thm: local comparison}, namely the lower bound on $d(\cdot,\cdot)$ and the upper bound for the case $H<1/2$.
\begin{lem}\label{lem: preliminary case}
Assume that the vector fields in equation \eqref{eq: hypoelliptic SDE} satisfy the uniform hypoellipticity assumption \eqref{eq:unif-hypo-assumption} with constant $l_0$. Let $d=d_{H}$ be the control distance introduced in Definition \ref{def: CM bridge}. Then the following bounds hold true.\\
(i) For all $H\in(1/4,1)$ and $x,y$ such that $|x-y|\leq 1$, we have
$$d(x,y)\geq C_1|x-y|.$$\\
(ii) Whenever $H\in(1/4,1/2)$ we have
$$d(x,y)\leq C_2|x-y|^\frac{1}{l_0}.$$
\end{lem}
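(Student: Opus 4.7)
This follows the same strategy used in the elliptic case of Theorem~\ref{thm: local comparison in elliptic case}, since the ellipticity was not used there. Pick $h\in\Pi_{x,y}$ with $\|h\|_{\bar{\mathcal{H}}}\leq 2d(x,y)$ and write $y-x=\Phi_1(x;h)-x$. The variational embedding of Proposition~\ref{prop: variational embedding} (either $\bar{\mathcal{H}}\hookrightarrow C_0^{q\text{-var}}$ with $q>(H+1/2)^{-1}$ for $H\leq 1/2$, or $\bar{\mathcal{H}}\hookrightarrow C_0^H$ for $H>1/2$) combined with the pathwise ODE estimate (\cite[Theorem 10.14]{FV10} in the variation case, or \cite[Proposition 8.1]{FH14} in the H\"older case) yields $|y-x|\leq C_{H,V}\bigl(\|h\|_{\bar{\mathcal{H}}}\vee\|h\|_{\bar{\mathcal{H}}}^{q}\bigr)$. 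Splitting according to whether $d(x,y)\leq 1$ (in which case the bound reduces to $|y-x|\leq C_{H,V}d(x,y)$) or $d(x,y)>1$ (in which case $d(x,y)\geq 1\geq|y-x|$ is trivial), part (i) follows by taking the infimum over $h$.

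\textbf{Plan for part (ii): the core iteration.} For $|y-x|\leq\delta$ with $\delta>0$ to be fixed, I would construct the Cameron-Martin path joining $x$ to $y$ as an infinite concatenation of smooth ``Taylor-correcting'' pieces. Set $z_0=x$ and $\epsilon_n\triangleq |z_n-y|$. At step $n$, invoke Lemma~\ref{lem: the Psi function} with $l=l_0$ at the base point $u=0$ to define $u_n\triangleq\Psi_{l_0}(0,z_n,y-z_n)\in\mathfrak{g}^{(l_0)}$. Properties (i) and (iii) give $F_{l_0}(u_n,z_n)=y-z_n$ while property (ii) gives $\|u_n\|_{\textsc{HS}}\leq A\epsilon_n$. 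Next, using Remark~\ref{connecting by smooth path} together with the ball-box estimate of Proposition~\ref{prop: ball-box estimate} (applied to $\exp(u_n)\in G^{(l_0)}$, noting that $\|\exp(u_n)-\mathbf{1}\|_{\textsc{HS}}\leq C\|u_n\|_{\textsc{HS}}$ when $\|u_n\|_{\textsc{HS}}\leq 1$), I produce a smooth path $w_n:[0,1]\to\mathbb{R}^d$ with compactly supported derivative, $S_{l_0}(w_n)=\exp(u_n)$, and
\begin{equation*}
\|w_n\|_{1\text{-var}}\leq C\|u_n\|_{\textsc{HS}}^{1/l_0}\leq C\epsilon_n^{1/l_0}.
\end{equation*}
Setting $z_{n+1}\triangleq\Phi_1(z_n;w_n)$, the classical Taylor expansion \eqref{eq: formal Taylor expansion 2} for ODEs driven by smooth paths, together with a uniform remainder estimate of order $\|w_n\|_{1\text{-var}}^{l_0+1}$ (valid since the $V_\alpha$ are $C_b^\infty$), gives $z_{n+1}=z_n+F_{l_0}(u_n,z_n)+R_n=y+R_n$ with $|R_n|\leq C\epsilon_n^{(l_0+1)/l_0}$. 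Hence $\epsilon_{n+1}\leq C\epsilon_n^{r}$ with $r=(l_0+1)/l_0>1$, and choosing $\delta$ so that $C\delta^{(r-1)/l_0}\leq 1/2$ forces $\sum_n \epsilon_n^{1/l_0}\leq C|y-x|^{1/l_0}$ by super-geometric decay.

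\textbf{Concatenation and the $\bar{\mathcal{H}}$-norm.} Splice the $w_n$'s into a single path $w:[0,1]\to\mathbb{R}^d$ by placing a constant-speed reparametrization of $w_n$ on $[\tau_{n-1},\tau_n]$ of length $\Delta_n$, with $\sum\Delta_n=1$ and $\tau_n\uparrow 1$. This produces a piecewise smooth $w\in W_0^{1,2}$ with
\begin{equation*}
\|w\|_{W^{1,2}}^{2}=\sum_{n}\frac{\|w_n\|_{1\text{-var}}^{2}}{\Delta_n},
\end{equation*}
so by Cauchy--Schwarz with the optimal choice $\Delta_n\propto\|w_n\|_{1\text{-var}}$ we obtain $\|w\|_{W^{1,2}}\leq \sum_n\|w_n\|_{1\text{-var}}\leq C|y-x|^{1/l_0}$. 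Since $H<1/2$, Lemma~\ref{lem: continuous embedding when H<1/2} gives $\|w\|_{\bar{\mathcal{H}}}\leq C_H\|w\|_{W^{1,2}}$. Continuity of the ODE flow and $\epsilon_n\to 0$ imply $\Phi_1(x;w)=\lim_n z_n=y$, so $w\in\Pi_{x,y}$ and $d(x,y)\leq\|w\|_{\bar{\mathcal{H}}}\leq C|y-x|^{1/l_0}$. For $\delta\leq|y-x|\leq 1$, the desired bound is obtained by inflating $C_2$, using a crude global boundedness of $d$ on bounded sets (or, equivalently, chaining a bounded number of small-displacement segments).

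\textbf{Main obstacle.} The crux lies in the uniform Taylor remainder bound $|R_n|\leq C\|w_n\|_{1\text{-var}}^{l_0+1}$ with a constant independent of $z_n$ and $n$, which relies on uniform ODE estimates afforded by the $C_b^\infty$ hypothesis. A second technical point is the correct parametrization of the infinite concatenation: one must use that the signature is invariant under reparametrization in order to preserve the constraints $S_{l_0}(w_n|_{[\tau_{n-1},\tau_n]})=\exp(u_n)$ while tuning the $\Delta_n$ to minimize the Sobolev energy; otherwise, naively summing $W^{1,2}$ norms is not enough to recover the exponent $1/l_0$.
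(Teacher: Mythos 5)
Your proposal is correct, and part (i) coincides with the paper's own treatment: the paper simply invokes the same argument as in the elliptic case (Theorem \ref{thm: local comparison in elliptic case}), and your case split $d(x,y)\leq 1$ versus $d(x,y)>1$ is a clean substitute for the a priori bound $\|h\|_{\bar{\mathcal H}}\leq 2C_2$ that was available there only through the elliptic upper bound. For part (ii), however, you take a genuinely different (more self-contained) route. The paper's proof is a two-line reduction: by Lemma \ref{lem: continuous embedding when H<1/2}, $d_H(x,y)\leq\|h\|_{\bar{\mathcal H}}\leq C_H\|h\|_{W^{1,2}}$ for every $h\in\Pi_{x,y}$, hence $d_H\leq C_H\,d_{\mathrm{BM}}$, and the Brownian bound $d_{\mathrm{BM}}(x,y)\leq C|x-y|^{1/l_0}$ is quoted directly from \cite{KS87}. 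You instead rebuild that Brownian-type estimate from the paper's own tools: the $\Psi_{l_0}$-correction of Lemma \ref{lem: the Psi function}, the ball-box estimate, the order-$(l_0+1)$ Euler estimate, and the concatenation with time allocation $\Delta_n\propto\|w_n\|_{1\text{-var}}$, which makes the $W^{1,2}$ energy of the spliced path equal to $\bigl(\sum_n\|w_n\|_{1\text{-var}}\bigr)^2$ and recovers the exponent $1/l_0$. This is essentially the same iteration the paper carries out in Section \ref{sec:proof-thm-12} for the much harder case $H>1/2$, where the Cameron--Martin norm must be estimated directly rather than through $W^{1,2}$; your route buys independence from the external reference at the cost of length, while the paper's route buys brevity. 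Two minor points: the smallness condition on $\delta$ should be $C\delta^{1/l_0}\leq 1/2$ (the relevant exponent is $r-1=1/l_0$, not $(r-1)/l_0$), though your stronger requirement also works; and your handling of $\delta\leq|x-y|\leq 1$ by chaining a bounded number of small-displacement $W^{1,2}$ pieces is fine, as finitely many such pieces concatenate with controlled $W^{1,2}$ norm before applying the embedding.
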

\begin{proof}
Claim (i) follows from the exact same argument as in the proof of Theorem \ref{thm: local comparison in elliptic case}.
Claim (ii) stems from the fact that when $H<1/2$,
\begin{align}\label{distance upper bound}d(x,y)\leq C_H\, d_\mathrm{BM}(x,y)\end{align}
where $d_{BM}$ stands for the distance for the Brownian motion case. Note that \eqref{distance upper bound} can be easily justified by the fact that, according to Lemma \ref{lem: continuous embedding when H<1/2}, 
$$d(x,y)\leq \|h\|_{\bar{\mathcal{H}}}\leq C_H\|h\|_{W^{1,2}},$$
for any $h\in\Pi_{x,y}$. Then, with \eqref{distance upper bound} in hand, our claim (ii) follows from the Brownian hypoelliptic analysis \cite{KS87}.

% In addition, if $1/4<H<1/2$, according to Lemma \ref{lem: continuous embedding when H<1/2}, we have $$d(x,y)\leq C_H d_\mathrm{BM}(x,y)$$for all $x,y$, where $d_\mathrm{BM}$ is the control distance function with respect to Brownian motion. Therefore, the second part of the inequality in the theorem follows directly from the diffusion case. 

\end{proof}

In the remainder of the section, we focus on the case $H>1/2$. It is not surprising that this is the hardest case since the Cameron-Martin subspace $\bch$ gets smaller as $H$ increases.  First, we need to make use of the following scaling property of Cameron-Martin norm. Namely denote $\bar{\mathcal{H}}([0,T])$ (respectively, $d_T(x,y)$) as the Cameron-Martin subspace (respectively, the control distance function) associated with fractional Brownian motion over $[0,T]$. Then the following property holds true.

\begin{lem}\label{lem: CM scaling}
Let $0<T_1<T_2$, and consider $H>1/2$. Given  $h\in\bar{\mathcal{H}}([0,T_1])$, define $\tilde{h}_t\triangleq h_{T_1 t/T_2}$ for $0\leq t\leq T_2$. Then $\tilde{h}\in\bar{\mathcal{H}}[0,T_2]$, and \begin{align}\label{bar-H norm rescaling}
\|\tilde{h}\|_{\bar{\mathcal{H}}([0,T_{2}])}=\left(\frac{T_{1}}{T_{2}}\right)^{H}\|h\|_{\bar{{\cal H}}([0,T_{1}])}.
\end{align}In particular, let $d_T$ be the distance introduced in Definition \ref{def: CM bridge} associated with a fractional Brownian motion over $[0,T]$. Then we have
\begin{align}\label{distance rescaling}
d_{1}(x,y)=T^{H}d_{T}(x,y),\ \ \ \forall T>0,\ x,y\in\mathbb{R}^{N}.
\end{align}
\end{lem}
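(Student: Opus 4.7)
The plan is to bypass any direct fractional-calculus computation and argue via the chaos characterization of $\bar{\mathcal{H}}$ given in Definition~\ref{def:bar-H}, combined with the self-similarity of fractional Brownian motion. The two ingredients are: (a) if $B^{(T_1)}$ is a fBm on $[0,T_1]$, then $B^{(T_2)}_t\triangleq (T_2/T_1)^H B^{(T_1)}_{T_1 t/T_2}$ is a fBm on $[0,T_2]$, which is checked by a one-line covariance computation; (b) the two processes generate the same first Wiener chaos $\mathcal{C}_1$, because each $B^{(T_2)}_t$ is a deterministic scalar multiple of some $B^{(T_1)}_s$ and vice versa, so there is a canonical identification of $\mathcal{C}_1^{(T_1)}$ with $\mathcal{C}_1^{(T_2)}$.

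With those two facts in hand, the proof of \eqref{bar-H norm rescaling} is essentially immediate. Let $h\in\bar{\mathcal{H}}([0,T_1])$ and write $h_s=\mathbb{E}[B^{(T_1)}_s Z]$ with $Z\in\mathcal{C}_1^{(T_1)}$ and $\|h\|_{\bar{\mathcal{H}}([0,T_1])}=\|Z\|_{L^2(\mathbb{P})}$, as granted by Definition~\ref{def:bar-H}. Then for $t\in[0,T_2]$ I would compute
\begin{equation*}
\tilde{h}_t = h_{T_1 t/T_2} = \mathbb{E}\!\left[B^{(T_1)}_{T_1 t/T_2}\,Z\right]
= \left(\frac{T_1}{T_2}\right)^{H}\mathbb{E}\!\left[B^{(T_2)}_t\,Z\right],
\end{equation*}
using (a). Viewing $Z$ as an element of $\mathcal{C}_1^{(T_2)}$ via (b), this exhibits $\tilde{h}$ as $(T_1/T_2)^H$ times an element of $\bar{\mathcal{H}}([0,T_2])$ represented by the same random variable $Z$. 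The definition of the Cameron--Martin norm then yields $\|\tilde{h}\|_{\bar{\mathcal{H}}([0,T_2])}=(T_1/T_2)^H\|Z\|_{L^2(\mathbb{P})}=(T_1/T_2)^H\|h\|_{\bar{\mathcal{H}}([0,T_1])}$.

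For \eqref{distance rescaling} the idea is to pair \eqref{bar-H norm rescaling} with the well-known time-change invariance of Young/rough path ODEs. Namely, if $h\in\Pi_{x,y}^{(1)}$, i.e.\ $h$ drives the ODE in \eqref{eq: skeleton ODE} on $[0,1]$ from $x$ to $y$, and we set $\tilde{h}_t\triangleq h_{t/T}$ on $[0,T]$, then the substitution $s=t/T$ in the ODE shows that $\Phi_T(x;\tilde{h})=\Phi_1(x;h)=y$, so $\tilde{h}\in\Pi_{x,y}^{(T)}$. Applying \eqref{bar-H norm rescaling} with $T_1=1, T_2=T$ gives $\|\tilde{h}\|_{\bar{\mathcal{H}}([0,T])}=T^{-H}\|h\|_{\bar{\mathcal{H}}([0,1])}$, and taking infimum over $h\in\Pi_{x,y}^{(1)}$ produces $d_T(x,y)\le T^{-H}d_1(x,y)$. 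The reverse inequality follows symmetrically by reparametrizing any $h'\in\Pi_{x,y}^{(T)}$ to a path on $[0,1]$ and invoking \eqref{bar-H norm rescaling} with $T_1=T,T_2=1$.

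There is no real analytic obstacle here; the only subtle point to state clearly is the identification in (b) of the two first chaoses, which justifies transporting the representing variable $Z$ between $\bar{\mathcal{H}}([0,T_1])$ and $\bar{\mathcal{H}}([0,T_2])$ without altering its $L^2(\mathbb{P})$ norm. Note that although the statement is phrased for $H>1/2$ (this is the regime where the lemma is needed in the sequel, since the cases $H\le 1/2$ of Theorem~\ref{thm: local comparison} were already dispatched in Lemma~\ref{lem: preliminary case}), the argument above is independent of the value of $H\in(0,1)$.
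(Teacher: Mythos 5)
Your proof is correct, but it follows a genuinely different route from the paper's. The paper proves \eqref{bar-H norm rescaling} analytically: it invokes the identity $\|\tilde h\|_{\bar{\mathcal{H}}([0,T_2])}=\|K^{-1}\tilde h\|_{L^2([0,T_2])}$ from Theorem \ref{thm:bar-cal-H-as frac-integral}, writes out $K^{-1}$ explicitly via the fractional derivative formula valid for $H>1/2$ (which is why the statement is phrased in that regime), and performs an elementary change of variables in the $L^2$ norm; the distance identity \eqref{distance rescaling} is then "easily deduced" exactly in the way you spell out. You instead work with the abstract chaos description of Definition \ref{def:bar-H}: the covariance computation showing that $B^{(T_2)}_t=(T_2/T_1)^H B^{(T_1)}_{T_1t/T_2}$ is again a fBm, the observation that the two processes span the same first chaos (so the representing variable $Z$ transports with unchanged $L^2$ norm, and uniqueness of the representation $h_t=\mathbb{E}[B_tZ]$ makes the norm of $\tilde h$ immediate), and linearity of the norm in $Z$. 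What your approach buys is brevity and generality: it avoids fractional calculus entirely and is valid for every $H\in(0,1)$ in one stroke, which the paper only asserts in a separate remark since its computation uses the $H>1/2$ expression for $K^{-1}$. What the paper's computation buys is consistency with how the norm is actually manipulated later (the same $K^{-1}$ formula reappears in Step 4 of the proof of Theorem \ref{thm: local comparison}), so the scaling is exhibited directly on the analytic representation that is subsequently estimated. Your treatment of \eqref{distance rescaling} via reparametrization invariance of the Young ODE and the bijection $\Pi^{(1)}_{x,y}\leftrightarrow\Pi^{(T)}_{x,y}$ matches the intended deduction; the only cosmetic point is that for one of the two directions you apply the rescaling identity with the roles of the intervals swapped, which is harmless since \eqref{bar-H norm rescaling} is an equality and the two reparametrizations are mutually inverse.
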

\begin{proof}
Recall that, thanks to relation \eqref{eq: inner product in terms of fractional integrals}, we have
\begin{align}\label{bar-H norm}
\|\tilde{h}\|_{\bar{\mathcal{H}}([0,T_2])}=\|K^{-1}\tilde{h}\|_{L^2([0,T_2])}.
\end{align}
Moreover, invoking relation \eqref{eq: analytic expression of K} for $H>1/2$, we get
 \begin{align}\label{expression K inverse}
(K^{-1}h)_{t}=C_{H}\cdot t^{H-\frac{1}{2}}D_{0^{+}}^{H-\frac{1}{2}}\left(s^{\frac{1}{2}-H}\dot{h}_{s}\right)(t)
.\end{align}
Plugging \eqref{expression K inverse} into \eqref{bar-H norm}  and performing an elementary  change of variables, one ends up with \[
\|\tilde{h}\|_{\bar{{\cal H}}([0,T_{2}])}=\|K^{-1}\tilde{h}\|_{L^{2}([0,T_{2}])}=\left(\frac{T_{1}}{T_{2}}\right)^{H}\|K^{-1}h\|_{L^{2}([0,T_{1}])}=\left(\frac{T_{1}}{T_{2}}\right)^{H}\|h\|_{\bar{{\cal H}}([0,T_{1}])},
\]and the assertion \eqref{bar-H norm rescaling} follows. The second claim \eqref{distance rescaling} is now easily deduced. 
\end{proof}
\begin{rem}
In fact Lemma \ref{lem: CM scaling} also holds true for $H\leq1/2$. However, it will only be invoked for the case $H>1/2$.
\end{rem}

We also need the following lemma about the free nilpotent group $G^{(l)}$ which allows us to choose a "regular" path $\gamma$ with $S_l(\gamma)=u$  for all $u\in G^{(l)}$. 
 
\begin{lem}\label{lem: quasi-inverse of signature map}
Let $l\geq1$. For each $M>0$, there exists a constant $C=C_{l,M}>0$, such that for every $u\in G^{(l)}$ with $\|u\|_{\textsc{CC}}\leq M$, one can find a smooth path $\gamma:[0,1]\rightarrow\mathbb{R}^d$ which satisfies: \\
(i) $S_l(\gamma)=u$;
\\
(ii) $\dot{\gamma}$ is supported on $[1/3,2/3]$;
\\
(iii) $\|\ddot{\gamma}\|_{\infty;[0,1]}\leq C$.
\end{lem}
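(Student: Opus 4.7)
The plan is to combine a compactness argument on the CC-ball with local smooth sections of the signature map, built via the implicit function theorem applied to the endpoint map of the canonical control system on $G^{(l)}$. More precisely, the ball-box estimate of Proposition \ref{prop: ball-box estimate} shows that $K_M \triangleq \{u \in G^{(l)} : \|u\|_{\textsc{CC}} \leq M\}$ is contained in a $\rho_{\textsc{HS}}$-ball of the finite-dimensional manifold $G^{(l)}$, and lower semicontinuity of $\|\cdot\|_{\textsc{CC}}$ makes $K_M$ closed, hence compact. It therefore suffices to construct, for every $u_0 \in G^{(l)}$, an open neighbourhood $U_{u_0} \ni u_0$ together with a smooth family of paths $\{\gamma_u\}_{u \in U_{u_0}}$ fulfilling (i)--(iii) with a uniform $C^2$-bound on $U_{u_0}$. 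A finite subcover of $K_M$ together with the maximum of the resulting local bounds then furnishes the constant $C_{l,M}$.

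For the local construction, Remark \ref{connecting by smooth path} supplies a smooth path $\gamma_0$ with $\dot\gamma_0$ compactly supported in $(1/3, 2/3)$ and $S_l(\gamma_0) = u_0$. If needed, I would enrich $\gamma_0$ by inserting short back-and-forth excursions $t \mapsto te_i$ followed by $t \mapsto -te_i$ for each $i=1,\ldots,d$; these preserve the signature exactly (since $\exp(\varepsilon e_i) \otimes \exp(-\varepsilon e_i) = \mathbf{1}$ in $G^{(l)}$ level by level) while ensuring $\dot\gamma_0$ traverses each $\pm e_i$ on an open subinterval of $(1/3, 2/3)$. Consider now the smooth map $\sigma: C^{k}_c((1/3, 2/3); \mathbb{R}^d) \to G^{(l)}$, $u \mapsto S_l(\int_0^\cdot u)$, for some fixed $k \geq 1$. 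A variation-of-parameters computation applied to the ODE $g'(t) = g(t) \otimes u(t)$ on $G^{(l)}$ yields
\[
d\sigma|_{\dot\gamma_0}(v) = \left(\int_{1/3}^{2/3} \mathrm{Ad}(g_0(s))\, v(s)\, ds\right) \cdot u_0,
\]
where $g_0(t) \triangleq S_l(\gamma_0|_{[0,t]})$. As $v$ varies in $C^k_c$, these integrals produce (by concentrating $v$ near arbitrary points, including limits realizing derivatives in $t$) the smallest linear subspace of $\mathfrak{g}^{(l)}$ containing $e_1, \ldots, e_d$ and closed under Lie brackets with the values of $\dot\gamma_0$. The enrichment, combined with the bracket-generating nature of $\{e_1, \ldots, e_d\}$ in $\mathfrak{g}^{(l)}$, forces this subspace to be all of $\mathfrak{g}^{(l)}$; thus $d\sigma|_{\dot\gamma_0}$ is surjective onto the finite-dimensional tangent space $T_{u_0} G^{(l)}$. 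The implicit function theorem then yields a $C^1$ local section $\Sigma: U_{u_0} \to C^k_c$ with $\sigma \circ \Sigma = \mathrm{id}$. Setting $\gamma_u(t) \triangleq \int_0^t \Sigma(u)(s)\, ds$ fulfils (i) and (ii), while continuity of $\Sigma$ into the $C^1$-topology ensures that $\|\ddot\gamma_u\|_\infty = \|\Sigma(u)'\|_\infty$ is bounded uniformly on $U_{u_0}$, giving (iii).

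The technical heart of the argument is the surjectivity of $d\sigma|_{\dot\gamma_0}$. While the variation-of-parameters formula itself is routine, verifying that $\int \mathrm{Ad}(g_0(s))\, v(s)\, ds$ genuinely fills out $\mathfrak{g}^{(l)}$ as $v$ ranges over $C^k_c$ requires the bracket-generating property together with the enrichment device: one must check that the back-and-forth excursions can be placed strictly inside $(1/3, 2/3)$, that they cancel exactly in $G^{(l)}$ (BCH collapses to $\exp(X) \otimes \exp(-X) = \mathbf{1}$ since $X$ and $-X$ commute), and that the iterated-bracket hull generated by the range of $\dot\gamma_0$ matches $\mathfrak{g}^{(l)}$. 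Once this is established, the remaining assembly---passing from the local $C^2$-bounds on each family $\{\gamma_u\}_{u \in U_{u_0}}$ to a global $C = C_{l,M}$ via a finite cover of the compact set $K_M$---is a standard compactness argument and carries no further difficulty.
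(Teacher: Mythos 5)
Your overall architecture (compactness of the CC-ball plus local smooth sections of the endpoint map, glued by a finite subcover) is coherent, and the variation-of-parameters formula you write for $d\sigma$ is correct. The gap is exactly at the step you call the technical heart: the surjectivity of $d\sigma|_{\dot\gamma_0}$ is not established, and the mechanism you invoke is wrong. The image of $d\sigma|_{\dot\gamma_0}$ is precisely $W=\mathrm{span}\{\mathrm{Ad}(g_0(s))\,e_i:\ s\in(1/3,2/3),\ 1\le i\le d\}$; differentiating in $s$ only shows that $W$ contains vectors of the form $\mathrm{Ad}(g_0(s))[\dot\gamma_0(s),e_i]$, i.e.\ brackets \emph{conjugated by} $g_0(s)$, and there is no way to strip off the conjugation. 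So $W$ is not ``the smallest subspace containing $e_1,\dots,e_d$ closed under brackets with the values of $\dot\gamma_0$'', and bracket generation of $\{e_1,\dots,e_d\}$ in $\mathfrak{g}^{(l)}$ does not by itself force $W=\mathfrak{g}^{(l)}$ (this is the familiar phenomenon of singular/abnormal controls for systems in which the control enters linearly). Concretely, your proposed enrichment fails: take $d=2$, $l\ge 4$, $u_0=\mathbf{1}$, $\gamma_0$ the constant path enriched by out-and-back excursions along $\pm e_1$ and $\pm e_2$. Then $g_0(s)$ only takes values $\exp(ae_1)$ and $\exp(be_2)$ with $a,b\in[0,1]$, and since $a\mapsto\mathrm{Ad}(\exp(ae_1))e_2$ is polynomial in $a$, one finds $W=\mathrm{span}\{e_1,e_2\}\oplus\mathrm{span}\{(\mathrm{ad}\,e_1)^k e_2,\ (\mathrm{ad}\,e_2)^k e_1:\ k\ge1\}$. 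At graded level $4$ this is $2$-dimensional while the level-$4$ piece of the free Lie algebra on two generators is $3$-dimensional (the direction of type $[e_1,[e_2,[e_1,e_2]]]$ is missed), so $d\sigma|_{\dot\gamma_0}$ is \emph{not} onto. Hence the local section around such $u_0$ never gets off the ground, and the whole argument collapses at its key point. (A secondary, fixable issue: to apply the implicit function theorem you should first restrict to a finite-dimensional space of perturbations spanned by finitely many $v_j$ realizing surjectivity, rather than invoking it on $C^k_c$ directly.)

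The gap is repairable, but it needs a genuinely different justification of transversality: for instance a Sard-type argument (as the paper itself uses in the proof of Lemma \ref{lem: positivity of rho}) showing that the endpoint map restricted to piecewise-linear controls has a regular value, followed by left translation to move the regular point to a neighbourhood of any prescribed $u_0$ — though then one must track how translation affects the support condition and the $C^2$ bound. Note that the paper's own proof sidesteps transversality entirely: it argues by induction on $l$, reducing to $u\in\exp(\mathcal{L}_k)$ via the projection $\pi^{(l-1)}$ and the Campbell--Baker--Hausdorff formula, and in that special case writes $u=\exp(\sum_i\lambda_i a_i)$, bounds the $|\lambda_i|$ by the ball-box estimate and norm equivalence on $\mathcal{L}_k$, and concatenates the dilated fixed paths $|\lambda_i|^{1/k}\alpha_i^{\mathrm{sgn}(\lambda_i)}$; the uniform bound on $\|\ddot\gamma\|_\infty$ then comes out explicitly from the dilation, with no compactness or implicit function theorem needed.
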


\begin{proof}
We first prove the claim for a generic element $u\in\exp(\mathcal{L}_{k})$, seen as an element of $G^{(k)}$. Let $\{a_{1},\ldots,a_{d_{k}}\}$ be a basis of $\mathcal{L}_{k}$
where $d_{k}\triangleq\dim\mathcal{L}_{k}$\,. Given $u\in\exp(\mathcal{L}_{k})$,  we can write $u=\exp(a)$ with
\begin{align}\label{linear combination}a=\lambda_{1}a_{1}+\cdots+\lambda_{d_{k}}a_{d_{k}}\in\mathcal{L}_{k}\end{align}
for some $\lambda_{1},\ldots,\lambda_{d_{k}}\in\mathbb{R}.$ Since
we assume that $\|u\|_{\text{CC}}\leq M,$ according to the ball-box
estimate (cf. Proposition \ref{prop: ball-box estimate}) and the fact that $a\in\mathcal{L}_{k}$, we have 
\begin{align}\label{distance bound for a}
\|a\|_{\mathrm{HS}}=\|u-\mathbf{1}\|_{\mathrm{HS}}\leq C_{1,l,M}.
\end{align}
Moreover, $\mathcal{L}_{k}$ is a finite dimensional vector space, on which all norms are equivalent.  Thus relation \eqref{distance bound for a} yields 
\begin{equation}\label{eq: boundedness of lambda_i}
\max_{1\leq i\leq d_{k}}|\lambda_{i}|\leq C_{2,l,M}.
\end{equation}

Now recall from Remark \ref{connecting by smooth path} that for each $a_{i}$ in \eqref{linear combination} one can  choose
a smooth path $\alpha_i:[0,1]\rightarrow \mathbb{R}^d$ such
that $S_{k}(\alpha_{i})=\mathrm{exp}(a_{i})$ and $\dot{\alpha}_i$ is supported on $[1/3,2/3]$.  Set 
\[R_{k}\triangleq\max\left\{ \|\ddot{\alpha}_{i}\|_{\infty;[0,1]}:1\leq i\leq d_{k}\right\}.\]Note that $R_{k}$ is a constant depending only on $k$.
We construct a smooth path $\gamma:[0,d_{k}]\rightarrow\mathbb{R}^d$ by
\begin{align}\label{definition gamma}
\gamma\triangleq\left(|\lambda_{1}|^{\frac{1}{k}}\alpha_{1}^{\mathrm{sgn}(\lambda_{1})}\right)\sqcup\cdots\sqcup\left(|\lambda_{d_{k}}|^{\frac{1}{k}}\alpha_{d_{k}}^{\mathrm{sgn}(\lambda_{d_{k}})}\right),
\end{align}
where $\alpha_{i}^{-1}$ denotes the reverse of $\alpha_{i},$ and $\sqcup$
denotes path concatenation. Then $\dot{\gamma}$ is obviously compactly supported, and we also claim that $S_{k}(\gamma)=u$.  Indeed, it follows from \eqref{definition gamma} that
\begin{align}\label{c0}
S_{k}(\gamma) & =S_{k}\left(|\lambda_{1}|^{\frac{1}{k}}\alpha_{1}^{{\rm sgn}(\lambda_{1})}\right)\otimes\cdots\otimes S_{k}\left(|\lambda_{d_{k}}|^{\frac{1}{k}}\alpha_{d_{k}}^{{\rm sgn}(\lambda_{d_{k}})}\right)
\notag\\
 & =\delta_{|\lambda_{1}|^{\frac{1}{k}}}\left(S_{k}\left(\alpha_{1}^{{\rm sgn}(\lambda_{1})}\right)\right)\otimes\cdots\otimes\delta_{|\lambda_{d_{k}}|^{\frac{1}{k}}}\left(S_{k}\left(\alpha_{d_{k}}^{{\rm sgn}(\lambda_{d_{k}})}\right)\right)\notag\\
 & =\delta_{|\lambda_{1}|^{\frac{1}{k}}}\left(\exp({\rm sgn}(\lambda_{1})a_{1})\right)\otimes\cdots\otimes\delta_{|\lambda_{d_{k}}|^{\frac{1}{k}}}\left(\exp({\rm sgn}(\lambda_{d_{k}})a_{d_{k}})\right),
\end{align}
where we have used the properties of the dilation, recalled in Section \ref{sec:free-nilpotent}, and the relation between signatures and $G^{(l)}$ given in \eqref{eq:def-iterated-intg} -- \eqref{eq:signature-smooth-x}. 
In addition, since each element $\exp(\lambda_{i}a_{i})$ above sits in $\exp(\cl_{k})$, the tensor product in $G^{(k)}$ is reduced to
\begin{equation}\label{c1}
S_{k}(\gamma)
=
\exp(\lambda_{1}a_{1})\otimes\cdots\otimes\exp(\lambda_{d_{k}}a_{d_{k}})
 =\exp(a)=u.
\end{equation}
We have thus found a path $\gamma$ with compactly supported derivative such that $S_{k}(\gamma)=u$.
In addition, from the definition of $R_{k}$ and (\ref{eq: boundedness of lambda_i}), we have
\[
\|\ddot{\gamma}\|_{\infty;[0,d_{k}]}\leq R_{k}\cdot\left(\max_{1\leq i\leq d_{k}}|\lambda_{i}|\right)^{\frac{1}{k}}\leq C_{3,l,M}.
\]
By suitable rescaling and adding trivial pieces on both ends if necessary,
we may assume that $\gamma$ is defined on $[0,1]$ and $\dot{\gamma}$ is supported on $[1/3,2/3]$. In this way, we have$$\|\ddot{\gamma}\|_{\infty;[0,1]}\leq C_{k}\cdot C_{3,k,M}
\triangleq C_{4,k,M},$$where $C_{k}$ is the constant coming from the rescaling. Therefore, our assertion (i)--(iii)
holds for  $u$ which are elements of $\exp(\mathcal{L}_{k})$.

With the help of the previous special case, we now prove the lemma
by induction on $l$. The case when $l=1$ is obvious, as we can simply
choose $\gamma$ to be a straight line segment. Suppose now that the claim
is true on $G^{(l-1)}$. We let $M>0$ and $u\in G^{(l)}$ with $\|u\|_{\text{CC}}\leq M.$
Define $v\triangleq\pi^{(l-1)}(u)$ where $\pi^{(l-1)}:\ G^{(l)}\rightarrow G^{(l-1)}$
is the canonical projection. We obviously have 
\[
\|v\|_{\text{CC}}\leq\|u\|_{\text{CC}}\leq M,
\]
where the CC-norm of $v$ is taken on the group $G^{(l-1)}$. According
to the induction hypothesis, there exists a constant $C_{l-1,M},$
such that we can find a smooth path $\alpha:[0,1]\rightarrow\mathbb{R}^d$ which satisfies (i)--(iii) in the assertion of Lemma \ref{lem: quasi-inverse of signature map}, for $v=S_{l-1}(\alpha)$ and constant $C_{l-1,M}$. Define 
\begin{align}\label{def: w}w\triangleq\left(S_{l}(\alpha)\right)^{-1}\otimes u,\end{align}
{where the tensor product is defined on $G^{(l)}$}. Then 
note that owing to the fact that $\|u\|_{\text{CC}}\leq M$, we have
\begin{align*}
\|w\|_{\textsc{CC}} & \leq\|S_{l}(\alpha)\|_{\textsc{CC}}+\|u\|_{\textsc{CC}}\leq\|\alpha\|_{1{\rm -var};[0,1]}+\|u\|_{\textsc{CC}}
  \leq\frac{1}{2}\|\ddot{\alpha}\|_{\infty;[0,1]}+M.
 \end{align*}
 Therefore, thanks to the induction procedure applied to $v=S_{l-1}(\alpha)$, we get
 \begin{align*}
 \|w\|_{\textsc{CC}}\leq \frac{1}{2}C_{l-1,M}+M\triangleq C_{5,l,M}.
\end{align*}
%where the tensor product and CC-norm are taken on $G^{(l)}.$

We claim that $w\in\exp(\mathcal{L}_{l})$. This can be proved in the following way. 

\noindent(i) Write $u=\exp(l_0+l_h)$, where $l_0\in\mathfrak{g}^{(l-1)}$ and $l_h\in\mathcal{L}_l$. Recall $v\triangleq\pi^{(l-1)}(u)$. We argue  that $v=\exp(l_0)\in G^{(l-1)}$ as follows:  since $l_h\in \mathcal{L}_l$, any product of the form $l^p_h\otimes l^q_0=0$ whenever $p,q>0$. Taking into account the definition \eqref{eq:def-exp-on-T-l} of the exponential function, we get that
\begin{align}\label{u imply v}
u=\exp(l_0+l_h)\ \Longrightarrow\ v=\exp(l_0)\in G^{(l-1)}.
\end{align}

\noindent(ii) Recall that our induction hypothesis asserts that $v=S_{l-1}(\alpha)$, thus according to \eqref{u imply v} we have $S_{l-1}(\alpha)=\exp(l_0)$.  Thanks to the same kind of argument as in (i), we get  $S_l(\alpha)=\exp(l_0+l_h')\in G^{(l)}$ for some $l_h'\in\mathcal{L}_l$. 

\noindent(iii) In order to conclude that $w\in\exp(\mathcal{L}_l)$, we go back to relation \eqref{def: w}, which can now be read as
$$w=\left(\exp(l_0+l'_h)\right)^{-1}\otimes\exp{(l_0+l_h)}.$$
According to Campbell-Baker-Hausdorff formula and taking into account the fact that
 \[
[l_{0},l_{0}]=[l_{0},l_{h}]=[l_{0},l_{h}']=[l_{h},l_{h}']=0\in\mathfrak{g}^{(l)},
\] we conclude that $w=\exp(l_h-l_h')$ and thus $w\in\exp(\mathcal{L}_l)$.

%According to the special case we proved at the beginning, we can find
%a constant $C_{6,n,M},$ such that there exists a smooth path $\beta:[0,1]\rightarrow\mathbb{R}^d$ which satisfies (i)-(iii) in the assertion for $w$ and constant $C_{6,n,M}$. 
We are now ready to summarize our information and conclude our induction procedure. Namely, for $u\in G^{(l)}$, we can recast relation \eqref{def: w} as
\begin{align}\label{decomposition u}
u=S_l(\alpha)\otimes w,
\end{align}
and we have just proved that $w\in\exp(\mathcal{L}_l)$. Hence relation \eqref{c1} asserts that $w$ can be written as $w=S_l(\beta)$, where $\beta: [0,1]\to \mathbb{R}^d$ satisfying relation (i)-(iii) in Lemma \ref{lem: quasi-inverse of signature map} with $C=C_{6,l,M}$.
Now set $\gamma\triangleq\alpha\sqcup\beta$ and rescale it so that it is defined on $[0,1]$ and its derivative path is supported on $[1/3,2/3]$. Then, recalling our decomposition \eqref{decomposition u}, we have 
\[
S_{l}(\gamma)=S_{l}(\alpha)\otimes S_{l}(\beta)=S_{l}(\alpha)\otimes w=u,
\]
and,  moreover, the following upper bound holds true
\begin{align*}
\|\ddot{\gamma}\|_{\infty;[0,1]} & \leq36\max\left\{ \|\ddot{\alpha}\|_{\infty;[0,1]},\|\ddot{\beta}\|_{\infty;[0,1]}\right\} \leq C_{7,l,M}. 
\end{align*} Therefore our induction procedure is established, which finishes the proof.

\end{proof}

We conclude this subsection by  stating  a  convention on the group $G^{(l)}$ which will ease notation in our future computations. 

\begin{conv}\label{conv: convention}

Since $\mathfrak{g}^{(l)}$ is a finite dimensional vector space on which differential calculus is easier to manage, we will frequently identify $G^{(l)}$ with $\mathfrak{g}^{(l)}$ through the exponential diffeomorphism without further mention.  This is not too beneficial when proving Theorem \ref{thm: local comparison} but will be very convenient when proving Theorem \ref{thm: local lower estimate}. 
In this way, for instance, $S_l(w)=u$ means $S_l(w)=\exp(u)$ if $u\in\mathfrak{g}^{(l)}$. The same convention will apply to other similar relations when the meaning is clear from context.
For norms on $\mathfrak{g}^{(l)}$, we denote $\|u\|_\mathrm{CC}\triangleq\|\exp(u)\|_{\mathrm{CC}}$. As for the $\rm HS$-norm, note that \[
C_{1,l}\|u\|_{{\rm HS}}\leq\|\exp(u)-{\bf 1}\|_{{\rm HS}}\leq C_{2,l}\|u\|_{{\rm HS}}
\]for all $u\in\mathfrak{g}^{(l)}$ satisfying $\|\exp(u)-{\bf 1}\|_{{\rm HS}}\wedge\|u\|_{{\rm HS}}\leq1$. Therefore, up to a constant depending only on $l$, the notation $\|u\|_{\rm{HS}}$ can either mean the $\rm HS$-norm of $u$ or $\exp(u)-\bf 1$. This will not matter because we are only concerned with local estimates. The same convention applies to the distance functions $\rho_\mathrm{CC}$ and $\rho_\mathrm{HS}$.
\end{conv}

\subsection{Proof of Theorem \ref{thm: local comparison}.}
\label{sec:proof-thm-12}

In this section we give the details in order  to complete the proof of Theorem \ref{thm: local comparison}. 
Notice that thanks to our preliminary Lemma \ref{lem: preliminary case}, we only focus on the upper bound on the distance $d$ for $H>1/2$.

Recall that $\Psi_{l}(u,x,\eta)$ is the function given by Lemma \ref{lem: the Psi function}.
This function allows us to construct elements in $\mathfrak{g}^{(l)}$ joining
two points in the sense of Taylor approximation locally. In the following,
we take $l=l_{0}$ (where $l_0$ stands for the hypoellipticity constant) and we will omit the subscript $l$ for simplicity (e.g., $F=F_l$ and $\Psi=\Psi_l$) .
We will also identify $G^{(l)}$ with $\mathfrak{g}^{(l)}$ in the
way mentioned in Convention \ref{conv: convention}. We now divide our proof in several steps.

\medskip

\noindent{\it Step 1: Construction of an approximating sequence.}
Let $\delta<r$ be a constant to be chosen later on, where $r$ is
the constant appearing in the domain of $\Psi$ in Lemma \ref{lem: the Psi function}. Consider $x,y\in\mathbb{R}^{N}$ with $|x-y|<\delta.$

We are going to construct three sequences $\{x_{m}\}\subseteq\mathbb{R}^{N}$,
$\{u_{m}\}\subseteq\mathfrak{g}^{(l_{0})}$, $\{h_{m}\}\subseteq C^{\infty}([0,1];\mathbb{R}^{d})$
inductively. We start with $x_{1}\triangleq x$ and define the rest
of them by the following general procedure in the order $$u_{1}\rightarrow h_{1}\rightarrow x_{2}\rightarrow u_{2}\rightarrow h_{2}\rightarrow x_{3}\rightarrow\cdots.$$
To this aim, suppose we have already defined $x_{m}.$ Set 
\begin{align}\label{def: um}
u_{m}\triangleq\Psi(0,x_{m},y-x_{m}),\quad\text{and}\quad \bar{u}_{m}\triangleq\delta_{\|u_{m}\|_{\text{CC}}^{-1}}u_{m}.
\end{align}
By Lemma \ref{lem: the Psi function}, the first condition in \eqref{def: um} states that $u_m$ is an element of $\mathfrak{g}^{(l_0)}$ such that \begin{align}\label{xm and y}x_m+F(u_m,x_m)=y,\end{align} while the second condition in \eqref{def: um} ensures that  $\|\bar{u}_{m}\|_{\text{CC}}=1$. Once $u_m$ is defined, we construct $h_m$ in the following way: let $\bar{h}_{m}:[0,1]\rightarrow\mathbb{R}^{d}$
be the smooth path given by Lemma~\ref{lem: quasi-inverse of signature map} such that $S_{l_{0}}(\bar{h}_{m})=\bar{u}_{m}$,
$\dot{\bar{h}}_{m}$ is supported on $[1/3,2/3],$ and $\|\ddot{\bar{h}}_{m}\|_{\infty;[0,1]}\leq C_{l_{0}}$.
Define 
\begin{align}\label{def: hm}
h_{m}\triangleq\|u_{m}\|_{\text{CC}}\bar{h}_{m},
\end{align}
so that the truncated signature of $h_m$ is exactly $u_m$ (here recall the Convention \ref{conv: convention}). More specifically, we have:
\[
S_{l_{0}}(h_{m})=S_{l_{0}}(\|u_{m}\|_{\text{CC}}\cdot\bar{h}_{m})=\delta_{\|u_{m}\|_{\text{CC}}}(S_{l_{0}}(\bar{h}_{m}))=\delta_{\|u_{m}\|_{\text{CC}}}(\overline{u}_{m})=u_{m}.
\]
Taking into account  the definition \eqref{eq:def-norm-CC} of the $\textsc{CC}$-norm, it is immediate that
\begin{equation}
\|u_{m}\|_{\text{CC}}\leq\|h_{m}\|_{1\text{-var};[0,1]}\leq\|u_m\|_{\text{CC}}\|\bar{h}_m\|_{1\text{-var};[0,1]}\leq C_{l_{0}}\|u_{m}\|_{\text{CC}}\, ,\label{eq: controlling h_m in terms of u_m CC}
\end{equation}where the last inequality stems from the fact that $\bar{h}_m$ has a bounded second derivative.
Eventually we define 
\begin{align}\label{def: xm}
x_{m+1}\triangleq\Phi_{1}(x_{m};h_{m}),
\end{align}
where recall that $\Phi_{t}(x;h)$ is the solution flow of the ODE
(\ref{eq: ode}) driven by $h$ over $[0,1].$

%To make sure that the above procedure is legal, we are going to choose
%an a priori constant $\delta_{1}<r$ in the following way so that
%$|x_{m}-y|<\delta_{1}$ for all $m.$ 
\medskip
\noindent{\it Step 2: Checking the condition $|y-x_m|< r$.}
Recall that in Lemma \ref{lem: the Psi function} we have to impose $\|u\|_{\text{HS}}<r$ and $|\eta|<r$ in order to apply $\Psi$. In the context of \eqref{def: um} it means that we should make sure that 
\begin{align}\label{xm close to y}|y-x_m|<r, \quad\text{for\ all} \ m.\end{align}
We will now choose $\delta_1$ small enough such that if $|y-x|<\delta_1$, then \eqref{xm close to y} is satisfied.
This will guarantee that $u_{m}$
is well-defined  by Lemma \ref{lem: the Psi function} and we will also be able to write down several useful estimates
for $x_{m}$ and $u_{m}.$ Our first condition on $\delta_1$ is that $\delta_1\leq r$, so that if $|x-y|<\delta_1$, we can define $u_1$ by a direct application of Lemma \ref{lem: the Psi function}. We will now prove by induction that if $\delta_1$ is chosen small enough, then condition \eqref{xm close to y} is satisfied. To this aim, assume that $|x_m-y|<\delta_1$. Then one can apply Lemma \ref{lem: the Psi function} in order to define $u_m, h_m$ and $x_{m+1}$. We also get the following estimate: 
%First of all, we require $\delta_{1}\in(\delta,r)$
%(recall that we consider $|x-y|<\delta$ but have not chosen $\delta$
%yet), so that $|x_{1}-y|<\delta_{1}$ and $u_{1}$ is well-defined.
%Now suppose that $|x_{m}-y|<\delta_{1}$ and we look for a priori
%condition on $\delta_{1}$ which guarantees $|x_{m+1}-y|<\delta_{1}$
%as well. By induction this will justify the construction of all $x_{m},u_{m},h_{m}.$
%Indeed, note that for this $m,$ the quantities $u_{m},h_{m}$ and
%$x_{m+1}$ are all well-defined. From Lemma \ref{lem: the Psi function}, we have
\begin{align}\label{um bound by xm-y}
\|u_{m}\|_{\textsc{HS}}\leq A|x_{m}-y|<A\delta_{1},
\end{align}
where $A$ is the constant appearing in Lemma \ref{lem: the Psi function}. In addition, let us require $\delta_{1}\leq1/A$ so that
$\|u_{m}\|_{\textsc{HS}}\leq1.$ Recalling relations \eqref{xm and y} and \eqref{def: xm} we get
\begin{align*}
|x_{m+1}-y|=|\Phi_1(x_m,h_m)-x_m-F(S_{l_0}(h_m),x_m)|.
\end{align*}
Thus applying successively the Taylor type estimate of \cite[Proposition 10.3]{FV10} and relation~\eqref{eq: controlling h_m in terms of u_m CC} we end up with
$$|x_{m+1}-y| \leq C_{V,l_{0}}\|h_{m}\|_{{\rm 1}-{\rm var};[0,1]}^{1+l_{0}}\leq C_{V,l_{0}}\|u_{m}\|_{\textsc{CC}}^{1+l_{0}}.$$
The quantity $\|u_m\|_{\textsc{CC}}$ above can be bounded thanks to the ball-box estimate of Proposition~\ref{prop: ball-box estimate}, for which we observe that the dominating term in \eqref{eq:bnd-CC-to-HS} is $\rho_{\text{HS}}(g_1,g_2)^{1/l_0}$ since our element $u_m$ is bounded by one in HS-norm. We get %together with our relation \eqref{um bound by xm-y} yields
\begin{align*}
 |x_{m+1}-y|\leq  C_{V, l_0}\|u_m\|_{\textsc{CC}}^{1+l_0}  \leq C_{V,l_{0}}\|u_{m}\|_{\textsc{HS}}^{1+\frac{1}{l_{0}}}  \leq C_{V,l_{0}}A^{1+\frac{1}{l_{0}}}|x_{m}-y|^{1+\frac{1}{l_{0}}}.
\end{align*}
Summarizing our considerations so far, we have obtained the estimate
\begin{equation}
|x_{m+1}-y|\leq C_{1,V,l_{0}}\|u_{m}\|_{\textsc{CC}}^{1+l_{0}}\leq C_{2,V,l_{0}}|x_{m}-y|^{1+\frac{1}{l_{0}}}.\label{eq: x-u recursive estimate}
\end{equation}
On top of the inequalities $\delta_1<r$ and $\delta_1\leq 1/A$ imposed previously, we will also assume that  
$
C_{2,V,l_{0}}\delta_{1}^{1/l_{0}}\leq{1}/{2},
$
which easily yields the relation
\begin{equation}
|x_{m+1}-y|\leq\frac{1}{2}|x_{m}-y|<\frac{1}{2}\delta_{1}<\delta_{1}.\label{eq: x_m recursive estimate}
\end{equation}
For our future computations we will thus set 
$$\delta_{1}\triangleq r\wedge A^{-1}\wedge(2C_{2,V,l_{0}})^{-l_{0}}.
$$
According to our bound \eqref{eq: x_m recursive estimate}, we can guarantee that if $|x-y|<\delta_1$, then $|x_m-y|<\delta_1<r$ for all $m$. In addition, an easy induction procedure performed on inequality \eqref{eq: x_m recursive estimate} leads to the following relation, valid for all $m\geq 1$:

%we can guarantee that $|x_{m}-y|<\delta_{1}<r$ for all $m$. The
%inequality (\ref{eq: x_m recursive estimate}) also leads to 
\begin{equation}
|x_{m}-y|\leq2^{-(m-1)}|x-y|.\label{eq: exponential decay of x_m-y}
\end{equation}
Together with the second inequality of (\ref{eq: x-u recursive estimate}),
we obtain that 
\begin{equation}
\|u_{m}\|_{\textsc{CC}}\leq C_{3,V,l_{0}}2^{-\frac{m}{l_{0}}}|x-y|^{\frac{1}{l_{0}}},\ \ \ \forall m\geq1.\label{eq: exponential decay of u_m}
\end{equation}

We will now choose a constant $\delta_2\leq\delta_1$ such that the sequence $\{\|u_m\|_{\text{CC}}; m\geq 1\}$ is decreasing with $m$ when $|x-y|<\delta_2$. This property will be useful for our future considerations. Towards this aim, observe that applying successively \eqref{eq:bnd-CC-to-HS}, \eqref{um bound by xm-y} and \eqref{eq: x-u recursive estimate} we get
\begin{equation}\label{d1}
\|u_{m+1}\|_{\textsc{CC}}\leq C_{l_0}\|u_{m+1}\|_{\text{HS}}^{\frac{1}{l_0}}\leq C_{4,V,l_0}\|u_m\|_{\textsc{CC}}^{1+\frac{1}{l_0}}.
\end{equation}
Hence involving the second inequality in (\ref{eq: x-u recursive estimate}) we have
%Next we specify the constant $\delta.$ By using (\ref{eq: x-u recursive estimate})
%again, we see that 
\begin{equation}\label{eq: recursive estimate for u_m}
\|u_{m+1}\|_{\text{CC}}\leq  C_{5,V,l_{0}}|x-y|^{\frac{1}{l^2_{0}}}\|u_{m}\|_{\textsc{CC}}.
\end{equation}
Therefore, let us consider a new constant $\delta_{2}>0$ such that 
\begin{equation*}
C_{5,V,l_0}\delta_2^{\frac{1}{l^2_0}}<1.
\end{equation*}
If we choose $|x-y|<\delta$ with  $\delta\triangleq\delta_{1}\wedge\delta_{2},$  equation \eqref{eq: recursive estimate for u_m} can be recast as
\begin{align}\label{monotone um}
\|u_{m+1}\|_{\text{CC}}\leq\|u_m\|_{\text{CC}}.
\end{align}
Note that $\delta=\delta_1\wedge\delta_2$
%It follows that
%whenever $|x-y|<\delta,$ all the previous estimates hold and we also
%have the extra property that $\|u_{m+1}\|_{\textsc{CC}}\leq\|u_{m}\|_{\textsc{CC}}$
%which will be important for us later on. Note that $\delta$
depends only on $l_0$ and the vector fields, but not on the Hurst parameter $H$. We have thus shown that the application of Lemma \ref{lem: the Psi function} is valid in our context. 

\medskip
\noindent{\it Step 3: Construction of a path joining $x$ and $y$ in the sense of differential equation.} Our next aim is to obtain a path $\tilde{h}$ joining $x$ and $y$ along the flow of equation \eqref{eq: ode}. Our first step in this direction is to rescale $h_{m}$ in a suitable way. Namely, set $a_{1}\triangleq0,$
and for $m\geq1$, define recursively the following sequence:
\[
a_{m+1}\triangleq\sum_{k=1}^{m}\|u_m\|_\textsc{CC},\ \ I_{m}\triangleq[a_{m},a_{m+1}],\ \ I\triangleq\overline{\bigcup_{m=1}^{\infty}I_{m}}.
\]
It is clear that $|I_{m}|=\|u_m\|_\textsc{CC},$ and $I$
is a compact interval since the sequence $\{\|u_m\|_\textsc{CC}; \, m\ge 1\}$ is summable
according to (\ref{eq: exponential decay of u_m}).
We also define a family of function $\{\tilde{h}_m, m\geq1\}$ by
\begin{align}\label{def: tilde hm}
\tilde{h}_{m}(t) & \triangleq h_{m}\left(\frac{t-a_{m}}{a_{m+1}-a_{m}}\right),\ \ t\in I_{m},
\end{align}
and the concatenation of the first $\tilde{h}_m$'s is
\begin{align}\label{def: concatenation hm}
\tilde{h}^{(m)}\triangleq\tilde{h}_{1}\sqcup\cdots\sqcup\tilde{h}_{m}:\ [0,a_{m+1}]\rightarrow\mathbb{R}^{d}.
\end{align}
We will now bound the derivative  of $\tilde{h}_{m}.$ Specifically, we first use equation \eqref{def: tilde hm} to get
\[
\sup_{m\geq1}\|\dot{\tilde{h}}^{(m)}\|_{\infty;[0,a_{m+1}]}=\sup_{m\geq1}\|\dot{\tilde{h}}_{m}\|_{\infty;I_{m}}=\sup_{m\geq1}\frac{1}{|I_{m}|}\cdot\|\dot{{h}}_{m}\|_{\infty;[0,1]}.
\]
Then resort to relation \eqref{def: hm}, which yields
\[
\sup_{m\geq1}\|\dot{\tilde{h}}^{(m)}\|_{\infty;[0,a_{m+1}]}=\sup_{m\geq1}\left\{ \frac{\|u_{m}\|_{\text{CC}}}{|I_{m}|}\cdot\|\dot{\bar{h}}_{m}\|_{\infty;[0,1]}\right\}.
\]
Since $\|u_m\|_{\textsc{CC}}=|I_m|$ we end up with
\begin{align}\label{bound derivative tilde hm}
\sup_{m\geq1}\|\dot{\tilde{h}}^{(m)}\|_{\infty;[0,a_{m+1}]}=\sup_{m\geq1}\left\{\|\dot{\bar{h}}_m\|_{\infty; [0,1]}\right\}\leq C_{l_0},
\end{align}
where the last inequality stems from the fact that $\|\ddot{\bar{h}}_m\|_{\infty;[0,1]}\leq C_{l_0}.$ 

We can now proceed to the construction of the announced path joining $x$ and $y$. Namely, set
\begin{align}\label{def: tilde h}
\tilde{h}\triangleq\sqcup_{m=1}^{\infty}\tilde{h}_{m}:I\rightarrow\mathbb{R}^{d}.
\end{align}
Then according to \eqref{bound derivative tilde hm} we have that $\tilde{h}$ is a smooth function from $I$ to $\mr^d$. We also claim that $\Phi_1(x;\tilde{h})=y$, where $\Phi$ has to be understood in the sense of equation \eqref{eq: skeleton ODE}. Indeed, set $$z_{t}=\Phi_t(x; \tilde{h}),\quad t\in I.$$
From the construction of $x_{m}$ in \eqref{def: xm} and the fact that
$\tilde{h}|_{[0,a_{m+1}]}=\tilde{h}^{(m)}$ asserted in \eqref{def: tilde h}, we have
\begin{align}\label{zt at am}
x_{m+1}=x+\sum_{\alpha=1}^{d}\int_{0}^{a_{m+1}}V_{\alpha}(z_{t})d\tilde{h}_{t}^{\alpha}.
\end{align}
Since $x_{m+1}\rightarrow y$ as $m\rightarrow\infty,$ which can
be easily seen from (\ref{eq: exponential decay of x_m-y}), one can take
limits in~\eqref{zt at am} and we conclude that 
\[
y=x+\sum_{\alpha=1}^{d}\int_{0}^{|I|}V_{\alpha}(z_{t})d\tilde{h}_{t}^{\alpha}.
\]
We have thus proved that $\tilde{h}$ is a smooth path joining $x$ and $y$ in the sense of differential equations.

\medskip

\noindent{\it Step 4: Strategy for the upper bound.}  Let us recall that the family of distances $\{d_T; T>0\}$ has been introduced in Lemma \ref{lem: CM scaling}, and that they satisfy the scaling property \eqref{distance rescaling}.  Therefore we get
\begin{align}
d(x,y) & =|I|^{H}d_{|I|}(x,y)
%\nonumber \\& 
\leq|I|^{H}\|\tilde{h}\|_{\bar{{\cal H}}([0,|I|])}\nonumber \\
 & =\lim_{m\rightarrow\infty}\left(\left(\sum_{k=1}^{m}|I_{k}|\right)^{H}\|\tilde{h}^{(m)}\|_{\bar{{\cal H}}([0,a_{m+1}])}\right),\label{eq: estimating the control metric}
\end{align}
where the last relation stems from the definition \eqref{def: tilde h} of $\tilde{h}$.

In order to estimate the right hand-side of \eqref{eq: estimating the control metric}, we use the definition \eqref{eq: inner product in terms of fractional integrals} of the Cameron-Martin norm to get
$$\|\tilde{h}^{(m)}\|^2_{\bch([0,a_{m+1}])}=\|K^{-1}\tilde{h}^{(m)}\|^2_{L^2([0,a_{m+1}]; dt)}.$$
We now invoke the formula \eqref{eq: analytic expression of K} for $K$, from which a formula for $K^{-1}$ is easily deduced. We end up with
$$\|\tilde{h}^{(m)}\|^2_{\bch([0,a_{m+1}])}=C_H\int_0^{a_{m+1}}\left|t^{H-\frac{1}{2}}D_{0+}^{H-\frac{1}{2}}(s^{\frac{1}{2}-H}\dot{\tilde{h}}^{(m)}(s))(t)\right|^{2}dt.$$
Taking into account formula (\ref{eq: formula for fractional derivatives}) for the fractional derivative, this yields
\begin{align*}
\|\tilde{h}^{(m)}\|_{\bar{{\cal H}}([0,a_{m+1}])}^{2} %& =\|K^{-1}\tilde{h}^{(m)}\|_{L^{2}([0,a_{m+1}],dt)}^{2}\\
 %& =C_{H}\cdot\int_{0}^{a_{m+1}}\left|t^{H-\frac{1}{2}}D_{0+}^{H-\frac{1}{2}}(s^{\frac{1}{2}-H}\dot{\tilde{h}}^{(m)}(s))(t)\right|^{2}dt\\
 & =C_{H}\cdot\int_{0}^{a_{m+1}}\left|t^{H-\frac{1}{2}}\left(t^{1-2H}\dot{\tilde{h}}^{(m)}(t)\right.\right.\\
 & \ \ \ \left.\left.+\left(H-\frac{1}{2}\right)\int_{0}^{t}\frac{t^{\frac{1}{2}-H}\dot{\tilde{h}}^{(m)}(t)-s^{\frac{1}{2}-H}\dot{\tilde{h}}^{(m)}(s)}{(t-s)^{H+\frac{1}{2}}}ds\right)\right|^{2}dt.
 %& =C_{H}\cdot\sum_{k=1}^{m}\int_{I_{k}}\left|t^{H-\frac{1}{2}}\left(t^{1-2H}\dot{\tilde{h}}_{k}(t)\right.\right.\\
 %& \ \ \ +\left(H-\frac{1}{2}\right)\sum_{l=1}^{k-1}\int_{I_{l}}\frac{t^{\frac{1}{2}-H}\dot{\tilde{h}}_{k}(t)-s^{\frac{1}{2}-H}\dot{\tilde{h}}_{l}(s)}{(t-s)^{H+\frac{1}{2}}}ds\\
 %& \ \ \left.\left.+\left(H-\frac{1}{2}\right)\int_{a_{k}}^{t}\frac{t^{\frac{1}{2}-H}\dot{\tilde{h}}_{k}(t)-s^{\frac{1}{2}-H}\dot{\tilde{h}}_{k}(s)}{(t-s)^{H+\frac{1}{2}}}ds\right)\right|^{2}dt.
\end{align*}

We now split the interval $[0,a_{m+1}]$ as $[0, a_{m+1}]=\cup_{k=0}^mI_k$ and use the elementary inequality $(a+b+c)^2\leq 3(a^2+b^2+c^2)$ in order to get
\begin{align}\label{decomposition to Q}\|\tilde{h}^{(m)}\|_{\bar{{\cal H}}([0,a_{m+1}])}^{2}\leq Q_1+Q_2+Q_3,\end{align}
with
\begin{align}
Q_{1} & \triangleq\sum_{k=1}^{m}\int_{I_{k}}\left|t^{H-\frac{1}{2}}\left(t^{1-2H}\dot{\tilde{h}}_{k}(t)\right)\right|^{2}dt\triangleq\sum_{k=1}^m Q_{1,k},\label{Q1}\\
Q_{2} & \triangleq\sum_{k=1}^{m}\int_{I_{k}}\left|t^{H-\frac{1}{2}}\sum_{l=1}^{k-1}\int_{I_{l}}\frac{t^{\frac{1}{2}-H}\dot{\tilde{h}}_{k}(t)-s^{\frac{1}{2}-H}\dot{\tilde{h}}_{l}(s)}{(t-s)^{H+\frac{1}{2}}}ds\right|^{2}dt\triangleq\sum_{k=1}^mQ_{2,k},\label{Q2}\\
Q_{3} & \triangleq\sum_{k=1}^{m}\int_{I_{k}}\left|t^{H-\frac{1}{2}}\int_{a_{k}}^{t}\frac{t^{\frac{1}{2}-H}\dot{\tilde{h}}_{k}(t)-s^{\frac{1}{2}-H}\dot{\tilde{h}}_{k}(s)}{(t-s)^{H+\frac{1}{2}}}ds\right|^{2}dt\triangleq\sum_{k=1}^mQ_{3,k}.\label{Q3}
\end{align}
We now bound the above three terms separately. 

\medskip
\noindent{\it Step 5: Bound for $Q_1$.} In order to bound each $Q_{1,k}$ in the definition \eqref{Q1} of $Q_1$, we just resort to \eqref{def: tilde hm} which allows to write
\begin{align*}
Q_{1,k}&=\int_{I_k}t^{1-2H}\left|\dot{\tilde{h}}_{k}(t)\right|^{2}dt
=\int_{I_{k}}t^{1-2H}\left|\frac{1}{|I_{k}|}\dot{h}_{k}\left(\frac{t-a_{k}}{a_{k+1}-a_{k}}\right)\right|^{2}dt.
\end{align*}
Then the elementary change of variable
$$v=(t-a_k)/(a_{k+1}-a_k)$$
yields
$$Q_{1,k}=\frac{1}{|I_{k}|}\int_{0}^{1}(a_{k}+v|I_{k}|)^{1-2H}\left|\dot{h}_{k}(v)\right|^{2}dv.$$
We now wish to express $Q_1$ in terms of $\bar{h}_m$. To this aim, recall from \eqref{def: hm} that we have
$$Q_{1,k}=\frac{\|u_{k}\|_{\text{CC}}^{2}}{|I_{k}|}\int_{0}^{1}|\varphi_k(v)|^2dv,$$
where we have set
\begin{align}\label{def: phik}
\varphi_k(v)=(a_{k}+v|I_{k}|)^{\frac{1}{2}-H}\dot{\bar{h}}_{k}(v).
\end{align}
We also recall that each $\bar{h}_k$ has bounded second derivative and is supported in $[1/3,2/3]$. In addition, we have seen previously that $\|u_k\|_{\textsc{CC}}=|I_k|$. Thus we have
$$Q_{1,k}=|I_k|\int_{1/3}^{2/3}|\varphi_k(v)|^2dv$$
We now bound the terms $a_k+v|I_k|$ in the definition of $\varphi_k(v)$ uniformly by $\sum_{j=1}^k|I_j|$. We obtain 
\begin{align}
Q_{1,k}\leq C_{H}|I_k|
\left(\sum_{j=1}^k|I_{k}|\right)^{1-2H} \|\dot{\bar{h}}_{k}\|_{\infty;[0,1]}^{2}
\leq C_{H,l_0}\frac{|I_k|}{\left(\sum_{j=1}^m|I_{k}|\right)^{2H-1}},\label{bound for Q1k}
\end{align}
where in the last step we have used the fact that $\dot{\bar{h}}_k$ is bounded.
Therefore, summing relation~\eqref{bound for Q1k} over $k$ and recalling that $Q_1=\sum_{k=1}^mQ_{1,k}$ we get
\begin{align}\label{bound for Q1}
Q_{1} \leq  C_{H,l_0}\frac{\sum_{k=1}^m|I_k|}{\left(\sum_{j=1}^m|I_{k}|\right)^{2H-1}}
  \leq C_{H,l_{0}}\sum_{k=1}^{m}|I_{k}|^{2(1-H)}
\leq C_{H,l_{0}}\lp \sum_{k=1}^{m}|I_{k}| \rp^{2(1-H)}  ,
\end{align}
where we use the relation $2(1-H)<1$ for the last step. This concludes our estimate for the term $Q_1$.

\medskip
\noindent{\it Step 6: Bound for $Q_3$.}  As in the previous step, we first upper bound each term $Q_{3,k}$ separately. To this aim, we perform the same elementary change of variable as for $Q_{1,k}$ above, which allow to express $Q_{3,k}$ in terms of $\bar{h}_k$ instead of $\tilde{h}_k$. We let the patient reader check that we have
\begin{align}
Q_{3,k}&=|I_{k}|^{2(1-H)}\int_{0}^{1}\left|(a_{k}+v_1|I_{k}|)^{H-\frac{1}{2}}
 \cdot\int_{0}^{v_1}\frac{\varphi_k(v_1)-\varphi_k(v_2)}{\left(v_1-v_2\right)^{H+\frac{1}{2}}}dv_2\right|^{2}dv_1,\label{express Q3k}
\end{align}
where we recall that the function $\varphi_k$ has been introduced in \eqref{def: phik}.

%Next we estimate the quantity $Q_{3}$. First note that 
%\begin{align*}
% & \int_{I_{k}}\left|t^{H-\frac{1}{2}}\int_{a_{k}}^{t}\frac{t^{\frac{1}{2}-H}\dot{\tilde{h}}_{k}(t)-s^{\frac{1}{2}-H}\dot{\tilde{h}}_{k}(s)}{(t-s)^{H+\frac{1}{2}}}ds\right|^{2}dt\\
% & =\int_{0}^{1}\left|(a_{k}+u|I_{k}|)^{H-\frac{1}{2}}\right.\\
% & \ \ \ \cdot\left.\int_{0}^{u}\frac{\frac{(a_{k}+u|I_{k}|)^{\frac{1}{2}-H}}{|I_{k}|}\dot{h}_{k}(u)-\frac{(a_{k}+v|I_{k}|)^{\frac{1}{2}-H}}{|I_{k}|}\dot{h}_{k}(v)}{\left((u-v)|I_{k}|\right)^{H+\frac{1}{2}}}|I_{k}|dv\right|^{2}\cdot|I_{k}|du\\
% & =|I_{k}|^{2(1-H)}\int_{0}^{1}\left|(a_{k}+u|I_{k}|)^{H-\frac{1}{2}}\right.\\
% & \ \ \ \cdot\left.\int_{0}^{u}\frac{(a_{k}+u|I_{k}|)^{\frac{1}{2}-H}\dot{\bar{h}}_{k}(u)-(a_{k}+v|I_{k}|)^{\frac{1}{2}-H}\dot{\bar{h}}_{k}(v)}{\left(u-v\right)^{H+\frac{1}{2}}}dv\right|^{2}du.
%\end{align*}
%Let
%\[
%\varphi_{k}(u)\triangleq(a_{k}+u|I_{k}|)^{\frac{1}{2}-H}\dot{\bar{h}}_{k}(u),\ \ \ u\in[0,1].
%\]

Next we express the derivative of each $\varphi_k$ in the following way,
\begin{align}\label{derivative phik}
\frac{d\varphi_{k}}{du}
=
\frac{\ddot{\bar{h}}_{k}(u)}{(a_{k}+u|I_{k}|)^{H-\frac{1}{2}}}
+\left(\frac{1}{2}-H\right)\cdot\frac{|I_{k}| \, \dot{\bar{h}}_{k}(u)}{(a_{k}+u|I_{k}|)^{H+\frac{1}{2}}}.
\end{align}
Hence, since $\bar{h}_{k}$ is supported on $[1/3,2/3]$ and $\|\ddot{\overline{h}}_{k}\|_{\infty;[0,1]}\leq C_{l_{0}},$
it is readily checked from~\eqref{derivative phik} that
\[
\left|\frac{d\varphi_{k}}{du}\right|\leq\frac{C_{H,l_{0}}}{(\sum_{j=1}^k|I_{j}|)^{H-\frac{1}{2}}}.
\]
Plugging this information into \eqref{express Q3k} and bounding all the terms $a_k+v|I_k|$ uniformly by $\sum_{j=1}^k|I_j|$, we end up with
\begin{align*}
Q_{3,k} & \leq C_{H,l_{0}}|I_{k}|^{2(1-H)}\int_{0}^{1}\left(\sum_{j=1}^k|I_{j}|\right)^{2H-1}
 \cdot\left|\int_{0}^{v_1}\frac{dv_2}{(\sum_{j=1}^k|I_{j}|)^{H-\frac{1}{2}}(v_1-v_2)^{H-\frac{1}{2}}}\right|^{2}dv_1\\
 & \leq C_{H,l_{0}}|I_{k}|^{2(1-H)}.
\end{align*}
As for relation \eqref{bound for Q1}, we can now sum the previous bounds over $k$, which yields the following estimate for $Q_3$\,,
\begin{align}\label{bound for Q3}
Q_{3}   \leq C_{H,l_{0}}\sum_{k=1}^{m}|I_{k}|^{2(1-H)}.
\end{align}

\medskip
\noindent{\it Step 7: Bound for $Q_2$.} 
We now turn to the estimation of $Q_{2}$, which is more involved than $Q_{1}$
and $Q_{3}$. We adopt the same strategy as in the previous steps, that is, we handle each $Q_{2,k}$ in \eqref{Q2} separately and we resort to the elementary change of variables
\[
u\triangleq\frac{t-a_{k}}{a_{k+1}-a_{k}},\ \quad\text{and}\ \quad v\triangleq\frac{s-a_{l}}{a_{l+1}-a_{l}}.
\]
We also express the terms $\dot{\tilde{h}}_k$ in \eqref{Q2} in terms of $\dot{\bar{h}}_k$. Thanks to some easy algebraic manipulations, we get
\begin{align}\label{express Q2k}
Q_{2,k}=\int_{0}^{1}\left|\sum_{l=1}^{k-1}\int_{0}^{1}\frac{\frac{\dot{h}_{k}(u)}{|I_{k}|}-\left(\frac{a_{k}+u|I_{k}|}{a_{l}+v|I_{l}|}\right)^{H-\frac{1}{2}}\cdot\frac{\dot{h}_{l}(v)}{|I_{l}|}}{(a_{k}+u|I_{k}|-a_{l}-v|I_{l}|)^{H+\frac{1}{2}}}|I_{l}|dv\right|^{2}|I_{k}|du.
\end{align}
In the expression above, notice that for $l\leq k-1$ we have
$$a_k+u|I_k|-a_l-v|I_l|=q_{k,l}(u,v),$$
where
\begin{equation}\label{d11}
q_{k,l}(u,v)=(1-v)|I_l|+|I_{l+1}|+\cdots+|I_{k-1}|+u|I_k|.
\end{equation}
Therefore, invoking the trivial bounds $a_k+u|I_k|\leq \sum_{j_1=1}^k|I_{j_1}|$ and $a_l+v|I_l|\geq\sum_{j_2=1}^{l-1}|I_{j_2}|$, and bounding trivially the differences by sums, we obtain
\begin{align}\label{bound Q2k}
Q_{2,k}\leq C_{H}\int_{0}^{1}\left|\sum_{l=1}^{k-1}\int_{0}^{1}\frac{\left|\dot{\bar{h}}_{k}(u)\right|+\left(\frac{\sum_{j_1=1}^k|I_{j_1}|}{\sum_{j_2=1}^{l-1}|I_{j_2}|}\right)^{H-\frac{1}{2}}
\cdot\left|\dot{\bar{h}}_{l}(v)\right|}{|q_{k,l}(u,v)|^{H+\frac{1}{2}}}|I_{l}|dv\right|^{2}|I_{k}|du.
\end{align}
In order to obtain a sharp estimate in \eqref{bound Q2k}, we want to take advantage of the fact that $\dot{\bar{h}}_l$ is supported on $[1/3,2/3]$ and therefore avoids the singularities in $u, v$ close to $0$ and $1$. We thus introduce the intervals
\[
J_{1}\triangleq[0,1/3],\ J_{2}\triangleq[1/3,2/3],\ J_{3}\triangleq[2/3,1]
\]
and decompose the expression \eqref{bound Q2k} as follows,
$$Q_{2,k}\leq C_H\sum_{p,q=1}^{3} L_{k,p,q},$$
where the quantity $L_{k,p,q}$ is defined by
\begin{align}\label{def: Lkpq}
L_{k,p,q}\triangleq\int_{J_{p}}\left|\sum_{l=1}^{k-1}\int_{J_{q}}
\frac{\left|\dot{\bar{h}}_{k}(u)\right|+\left(\frac{|I_{1}|+\cdots+|I_{k}|}{|I_{1}|+\cdots+|I_{l-1}|}\right)^{H-\frac{1}{2}}\cdot\left|\dot{\bar{h}}_{l}(v)\right|}
{|q_{k,l}(u,v)|^{H+\frac{1}{2}}}|I_{l}|dv\right|^{2}|I_{k}|du,
\end{align}
for all $p,q=1,2,3.$  Notice again that since all the $\dot{\bar{h}}_k$ are  supported
on $[1/3,2/3]$, the only non-vanishing $L_{k,p,q}$'s are those for which  $p=2$ or $q=2$. Let us show how to handle the terms $L_{k,p,q}$ given by \eqref{def: Lkpq}, according to $q=1, 2$ and $q=3$.

Whenever 
 $q=1$ or $q=2$, regardless of the value of $p,$ it is easily seen from \eqref{d11} that we can  bound $q_{k,l}(u,v)$ from below uniformly by $C\sum_{j=l}^{k-1}|I_j|$. Thanks again to the fact that $\dot{\bar{h}}_k$ is uniformly bounded for all $k$, we obtain
\begin{align*}
 & \frac{\left|\dot{\bar{h}}_{k}(u)\right|+\left(\frac{|I_{1}|+\cdots+|I_{k}|}{|I_{1}|+\cdots+|I_{l-1}|}\right)^{H-\frac{1}{2}}\cdot\left|\dot{\bar{h}}_{l}(v)\right|}{q_{k,l}(u,v)^{H+\frac{1}{2}}}\\
 & \leq\frac{C_{H,l_{0}}}{(|I_{l}|+\cdots+|I_{k-1}|)^{H+\frac{1}{2}}}\cdot\left(\frac{|I_{1}|+\cdots+|I_{k}|}{|I_{1}|+\cdots+|I_{l-1}|}\right)^{H-\frac{1}{2}}.
\end{align*}
Summing the above quantity over $l$ and integrating over $[0,1]$, we end up with
\begin{align*}
L_{k,p,q}\leq C_{H,l_{0}}|I_{k}|\cdot\left(\sum_{l=1}^{k-1}\frac{|I_{l}|}{(|I_{l}|+\cdots+|I_{k-1}|)^{H+\frac{1}{2}}}\cdot\left(\frac{|I_{1}|+\cdots+|I_{k}|}{|I_{1}|+\cdots+|I_{l-1}|}\right)^{H-\frac{1}{2}}\right)^{2}.
\end{align*}
By lower bounding the quantity $|I_1|+\cdots+| I_{l-1}|$ above uniformly by $|I_1|$, we get
\begin{align}\label{bound Lkpq}
L_{k,p,q}\leq C_{H,l_{0}}|I_{k}|\cdot\left(\frac{\sum_{j=1}^k|I_{j}|}{|I_{1}|}\right)^{2H-1}\cdot\left(\sum_{l=1}^{k-1}|I_{l}|^{\frac{1}{2}-H}\right)^{2}.
\end{align}
Recall that we have shown in \eqref{monotone um} that $m\mapsto\|u_m\|_{\text{CC}}$ is a decreasing sequence. Since $\|u_m\|_{\textsc{CC}}=|I_m|$ we can bound uniformly $\sum_{l=1}^{k-1}|I_l|^{1/2-H}$ by $k|I_1|^{1/2-H}$ and $|I_1|^{-1}\sum_{j=1}^k|I_j|$ by $k$. Plugging this information into \eqref{bound Lkpq} we obtain,
\begin{align}\label{Lkpq q12}L_{k,p,q}\leq C_{H,l_0}k^{2H+1}|I_k|^{2(1-H)},\end{align}
which is our bound for $L_{k,p,q}$ when $q\in\{1,2\}$.

%\begin{align}
%I_{p,q} & \leq C_{H,l_{0}}|I_{k}|\cdot\left(\sum_{l=1}^{k-1}\frac{|I_{l}|}{(|I_{l}|+\cdots+|I_{k-1}|)^{H+\frac{1}{2}}}\cdot\left(\frac{|I_{1}|+\cdots+|I_{k}|}{|I_{1}|+\cdots+|I_{l-1}|}\right)^{H-\frac{1}{2}}\right)^{2}\nonumber \\
% & \leq C_{H,l_{0}}|I_{k}|\cdot\left(\frac{|I_{1}|+\cdots+|I_{k}|}{|I_{1}|}\right)^{2H-1}\cdot\left(\sum_{l=1}^{k-1}|I_{l}|^{\frac{1}{2}-H}\right)^{2}\nonumber \\
% & \leq C_{H,l_{0}}|I_{k}|\cdot\left(\frac{|I_{1}|+\cdots+|I_{k}|}{|I_{1}|}\right)^{2H-1}\cdot\left(k|I_{k}|^{\frac{1}{2}-H}\right)^{2}\ \ \ \ \ (\|u_{l}\|_{\textsc{CC}}\ \text{is decreasing})\nonumber \\
% & =C_{H,l_{0}}k^{2}|I_{k}|^{2(1-H)}\cdot\left(\frac{|I_{1}|+\cdots+|I_{k}|}{|I_{1}|}\right)^{2H-1}\nonumber \\
%& \leq C_{H,l_{0}}k^{2H+1}|I_{k}|^{2(1-H)}.\label{eq: I_pq case}
%\end{align}

Let us now bound $L_{k,p,q}$ for  $q=3$ and $p=2.$ In this case, going back to the definition~\eqref{def: Lkpq} of $L_{k,p,q}$, we have that $\dot{\bar{h}}_l(v)=0$ for $v\in J_q$. Thus we get
\begin{align}
L_{k,2,3} & =\int_{J_{2}}\left|\sum_{l=1}^{k-1}\int_{J_{3}}\frac{\left|\dot{\bar{h}}_{k}(u)\right||I_{l}|dv}{((1-v)|I_{l}|+|I_{l+1}|+\cdots+|I_{k-1}|+u|I_{k}|)^{H+\frac{1}{2}}}\right|^{2}|I_{k}|du\nonumber\\
 & \leq C_{H,l_{0}}\left|\sum_{l=1}^{k-1}\int_{J_{3}}\frac{|I_{l}|dv}{((1-v)|I_{l}|+|I_{l+1}|+\cdots+|I_{k}|)^{H+\frac{1}{2}}}\right|^{2}\cdot|I_{k}|,\label{Lk23}
\end{align}
where we have used the boundedness of $\dot{\bar{h}}_k$ for the second inequality.
We can now evaluate the above $v$-integral  explicitly, which yields
\begin{align*}
 & \int_{J_{3}}\frac{|I_{l}|dv}{((1-v)|I_{l}|+|I_{l+1}|+\cdots+|I_{k}|)^{H+\frac{1}{2}}}\\
= & \frac{1}{\left(H-\frac{1}{2}\right)}\left(\frac{1}{(|I_{l+1}|+\cdots+|I_{k}|)^{H-\frac{1}{2}}}-\frac{1}{\left(\frac{1}{3}|I_{l}|+|I_{l+1}|+\cdots+|I_{k}|\right)^{H-\frac{1}{2}}}\right)
\leq  \frac{C_{H}}{|I_{k}|^{H-\frac{1}{2}}},
\end{align*}
where the second inequality is obtained by lower bounding trivially $|I_{l+1}+\cdots+|I_k|$ by $|I_k|$.
Summing this inequality over $l$ and plugging this information into \eqref{Lk23}, we get
\begin{align}
L_{k,2,3} & \leq C_{H,l_{0}}|I_{k}|\left(\frac{k}{|I_{k}|^{H-\frac{1}{2}}}\right)^{2}\leq C_{H,l_{0}}k^{2H+1}|I_{k}|^{2(1-H)}.\label{eq: I_23 case}
\end{align}
Summarizing our considerations in this step, we have handled the cases $q=1,2$ and $(q,p)=(3,2)$ in \eqref{Lkpq q12} and \eqref{eq: I_23 case} respectively. %{\color{red}The cases $p=1, p=2$ and $p=3, q=2$ can be tracked along the same lines. }
Therefore, we obtain
\begin{align}\label{bound for Q2}
Q_{2}\leq\sum_{k=1}^{m}k^{2H+1}|I_{k}|^{2(1-H)}.
\end{align}

\medskip
\noindent{\it Step 8: Conclusion.} Let us go back to the decomposition \eqref{decomposition to Q} and plug our bounds \eqref{bound for Q1}, \eqref{bound for Q2} and \eqref{bound for Q3} on $Q_1, Q_2$ and $Q_3$.
We get\begin{align*}
\|\tilde{h}^{(m)}\|_{\bar{{\cal H}}([0,a_{m+1}])}^{2} & \leq C_{H}(Q_{1}+Q_{2}+Q_{3})\leq C_{H,l_{0}}\sum_{k=1}^{m}k^{2H+1}|I_{k}|^{2(1-H)}.
\end{align*}
In addition, we have  $|I_{k}|=\|u_{k}\|_{\textsc{CC}}$ and relation (\ref{eq: exponential decay of u_m}) asserts that $k\mapsto\|u_k\|_{\textsc{CC}}$ decays exponentially. Thus we get

\begin{eqnarray}
 &  & \left(\sum_{k=1}^m|I_k|\right)^{2H}\|\tilde{h}^{(m)}\|_{\bar{{\cal H}}([0,a_{m+1}])}^{2}
  \leq C_{H,l_{0}}\left(\sum_{k=1}^{m}|I_{k}|\right)^{2H}
  \left(\sum_{k=1}^{m}k^{2H+1}|I_{k}|^{2(1-H)}\right)\label{eq: estimating d by I_m} \nonumber\\
 &  & \leq C_{H,V,l_{0}}\left(\sum_{k=1}^{m}2^{-\frac{k}{l_{0}}}\right)^{2H}\cdot\left(\sum_{k=1}^{m}k^{2H+1}2^{-\frac{2(1-H)}{l_{0}}k}\right)\cdot|x-y|^{\frac{2}{l_{0}}}\\
 &  & \leq C_{H,V,l_0}|x-y|^{\frac{2}{l_0}},\nonumber
\end{eqnarray}
where we have trivially bounded the partial geometric series for the last step.  Hence the left hand-side of \eqref{eq: estimating d by I_m} converges to a quantity which is lower bounded by $d^2(x,y)$ as $m\to\infty$, thanks to \eqref{eq: estimating the control metric}.  Therefore, letting $m\to\infty$ in \eqref{eq: estimating d by I_m} we have obtained
%Apparently, as $m\rightarrow\infty,$ the right hand side of the above
%inequality converges to a constant. Therefore, according to (\ref{eq: estimating the control metric}),
%we conclude that 
\begin{equation}\label{d2}
d(x,y)^{2}\leq C_{H,V,l_{0}}|x-y|^{\frac{2}{l_{0}}},
\end{equation}
which concludes our proof of Theorem \ref{thm: local comparison}.

\section{Hypoelliptic case: local lower estimate for the density of solution.}\label{sec: local lower bound}

In this section, we develop the proof of Theorem \ref{thm: local lower estimate} under the uniform hypoellipticity assumption (\ref{eq:unif-hypo-assumption}). As in Section \ref{sec: control distance}, one faces a much more complex situation than in the elliptic case. More specifically, the deterministic Malliavin covariance matrix of $X_{t}^{x}$  will not be uniformly non-degenerate (i.e. Lemma \ref{lem: uniform nondegeneracy of Malliavin matrix} is no longer true). Without this key ingredient, the whole elliptic argument will break down and one needs new approaches. Our strategy follows the main philosophy of Kusuoka-Stroock~\cite{KS87} in the diffusion case. However, as we will see when we develop the analysis, there are several non-trivial challenges in several key steps for the fractional Brownian setting, which require new ideas and methods. In particular, we shall see how to marry Kusuoka-Stroock's approach and the rough paths formalism.

To increase readability, we first summarize the main strategy of the proof. 
Our analysis starts from the existence of the truncated signature of order $l$ for the fractional Brownian motion, as asserted in Proposition \ref{prop:fbm-rough-path}. Specifically, with our notation~\eqref{eq:signature-smooth-x} in mind, we will write $\Gamma_t\equiv S_l(B)_{0,t}\in G^{(l)}$ as
\begin{align}\label{def: signature of B}
\Gamma_t=S_l(B)_{0,t}=1+\sum_{i=1}^l\int_{0<t_1<\cdots<t_i<t}dB_{t_1}\otimes \cdots\otimes dB_{t_i}.
\end{align}
In the sequel we will also use the truncated $\mathfrak{g}^{(l)}$-valued {\it log-signature} of $B$, defined by
\begin{align}\label{def: log signature of B}
U^{(l)}_t\triangleq\log S_l(B)_{0,t}.
\end{align}
Notice that $U^{(l)}_t$ features in relation \eqref{eq: formal Taylor expansion}, and more precisely the process
\begin{equation}\label{eq:def-X-l}
X_l(t,x)\triangleq x+F_l(U_{t}^{(l)},x)
\end{equation}
is the Taylor approximation of order $l$ for the solution of the rough equation \eqref{eq: hypoelliptic SDE} in small time (cf. relation \eqref{eq: formal Taylor expansion 2}).

With those preliminary notation in hand, we decompose the strategy towards the proof of Theorem \ref{thm: local lower estimate}  into three major steps.

\noindent
\textit{Step One}. According to the scaling property of fractional Brownian motion, a precise local lower estimate on the density of $U^{(l)}_t$ can be easily obtained from a general positivity property.

\noindent
\textit{Step Two}. When $l\geq l_0$, the hypoellipticity of the vector fields allows us to obtain a precise local lower estimate on the density of the process $X_l(t,x)$ defined by \eqref{eq:def-X-l} from the estimate on $U^{(l)}_t$ derived in step one.

\noindent
\textit{Step Three}. When $t$ is small, the density of $X_l(t,x)$ is close to the density of the actual solution in a reasonable sense, and the latter inherits the lower estimate obtained in step two. 

The above philosophy was first proposed by Kusuoka-Stroock \cite{KS87} in the diffusion case. However, in the fractional Brownian setting, there are several difficulties when implementing these steps precisely. Conceptually the main challenge arises from the need of respecting the fractional Brownian scaling and the Cameron-Martin structure in each step in order to obtain sharp estimates. More specifically, for Step 1 we need a new idea to prove the positivity for the density of $U_t^{(l)}$ when the Markov property is not available. For Step 2 we rely on the technique we used for proving Theorem \ref{thm: local comparison} in Section \ref{sec: control distance}, which yields sharp estimates for the density of $X_l(t,x)$. In Step 3,  a new ingredient is needed to prove uniformity for an upper estimate for the density of $X_l(t,x)$ with respect to the degree $l$ of expansion. In the following, we develop the above three steps mathematically.

\subsection{Step one: local lower estimate for the signature density of fractional Brownian motion.}\label{section: step 1}

We fix $l\geq1$. Recall that the truncated signature $\Gamma$ is defined by \eqref{def: signature of B}. We will now write $\Gamma$ as the solution of a simple enough rough differential equation. To this aim, let $\{{\rm e}_{1},\ldots,{\rm e}_{d}\}$ be
the standard basis of $\mathbb{R}^{d}.$ By viewing this family as vectors
in $\mathfrak{g}^{(l)}\cong T_{\mathbf{1}}G^{(l)}$, we denote the
associated left invariant vector fields on $G^{(l)}$ by $\{\tilde{W}_{1},\ldots,\tilde{W}_{d}\}$. It is standard
(cf. \cite[Remark 7.43]{FV10}) that $\Gamma_{t}$ satisfies
the following intrinsic stochastic differential equation on $G^{(l)}$:
\begin{equation}\label{eq: SDE for signature}
\begin{cases}
d\Gamma_{t}=\sum_{\alpha=1}^{d}\tilde{W}_{\alpha}(\Gamma_{t})dB_{t}^{\alpha},\\
\Gamma_{0}=\mathbf{1}.
\end{cases}
\end{equation}
Let $U_{t}\triangleq\log\Gamma_{t}\in\mathfrak{g}^{(l)}$ be the truncated
log-signature path, as defined in \eqref{def: log signature of B}. Since $\{\tilde{W}_{1},\ldots,\tilde{W}_{d}\}$ satisfies Hörmander's
condition by the definition of $\mathfrak{g}^{(l)}$, we know that
$U_{t}$ admits a smooth density with respect to the Lebesgue measure
$du$ on $\mathfrak{g}^{(l)}$. Denote this density by $\rho_{t}(u).$ 

\begin{comment}
\begin{rem}
If one wants to be really careful, the induced vector fields $W_\alpha$ on $\mathfrak{g}^{(l)}$ which generate the SDE for $U_t$ are not of class $C_b^\infty$. Indeed, $W_\alpha$ has polynomial growth so that a localization argument is needed to show existence and smoothness of density if one wants to apply general results in the literature. However, a direct independent proof is also given by Lemma \ref{lem: estimating eigenvalue of M matrix for signature} and Remark \ref{rem: estimating eigenvalue of M matrix for signature} in Section \ref{sec: step three}.
\end{rem}
\end{comment}

Next we show that the density function $\rho_t$ is everywhere strictly positive. This fact will be important for us. In the Brownian case, this was proved in \cite{KS87} using support theorem and the semigroup property (or the Markov property). In the fractional Brownian setting, the argument breaks down although general support theorems for Gaussian rough paths are still available. It turns out that there is a simple neat proof based on Sard's theorem and a general positivity criteria of Baudoin-Nualart-Ouyang-Tindel \cite{BNOT16}. We mention that Baudoin-Feng-Ouyang \cite{BFO19} also has an independent proof of this fact. 

We first recall the classical Sard's theorem, and we refer the reader to \cite{Milnor97} for a beautiful presentation. Let $f:M\rightarrow N$ be a smooth map between two finite dimensional differentiable manifolds $M$ and $N$. A point $x\in M$ is said to be a \textit{critical point} of $f$ if the differential $df_x:T_x M\rightarrow T_{f(x)}N$ is not surjective. A \textit{critical value} of $f$ in $N$ is the image of a critical point in $M$. Also recall that a subset $E\subseteq N$ is a \textit{Lebesgue null set} if its intersection with any coordinate chart has zero Lebesgue measure in the corresponding coordinate space. 

\begin{thm}[Sard's theorem]\label{thm: Sard's theorem}
Let $f:M\rightarrow N$ be a smooth map between two finite dimensional differentiable manifolds. Then the set of critical values of $f$ is a Lebesgue null set in $N$.
\end{thm}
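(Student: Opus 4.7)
The plan is to prove Sard's theorem in its classical form, following the strategy of Milnor. First I would reduce to the Euclidean setting: since $M$ and $N$ are second countable, one can cover $M$ by countably many coordinate charts and $N$ by countably many charts, and a countable union of Lebesgue null sets is still Lebesgue null. So it suffices to prove the theorem for a smooth map $f\colon U\to\mathbb{R}^n$ with $U\subseteq\mathbb{R}^m$ open.

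Next I would split into two regimes. When $m<n$, every point is critical and one shows $f(U)$ is null directly: cover $U$ by countably many compact cubes, on each of which $f$ is Lipschitz, and note that $\mathbb{R}^m\times\{0\}\subset\mathbb{R}^n$ has measure zero, so the image under a Lipschitz extension has measure zero as well. When $m\ge n$, I would proceed by induction on $m$, the base case $m=0$ being trivial. Denote the critical set by $C\subseteq U$ and introduce the filtration $C\supseteq C_1\supseteq C_2\supseteq\cdots$, where $C_k$ is the set of points at which every partial derivative of $f$ of order $\le k$ vanishes. Then decompose
\[
C=(C\setminus C_1)\;\cup\;\bigcup_{i\ge 1}(C_i\setminus C_{i+1})\;\cup\;C_k
\]
for $k$ to be chosen later, and treat the three types of pieces separately.

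The main work lies in three claims. First, $f(C\setminus C_1)$ is null: at any point of $C\setminus C_1$ some first-order partial derivative of some component of $f$ is non-zero, so the implicit function theorem lets one straighten coordinates and write $f$ locally as $(x_1,\ldots,x_m)\mapsto (x_1,g(x_1,\ldots,x_m))$; Sard applied to each slice $g(x_1,\cdot)$ in dimension $m-1$, together with Fubini, gives the null image conclusion. Second, for each $i\ge 1$, the same argument applied to a non-vanishing higher-order partial derivative gives that $f(C_i\setminus C_{i+1})$ is null. Third, if $k$ is chosen with $k>\tfrac{m}{n}-1$, then $f(C_k)$ is null by a direct Taylor estimate: the image of a cube of side $\delta$ contained in $C_k$ is contained in a box of side $\lesssim\delta^{k+1}$, and covering a compact subset of $C_k$ by $O(\delta^{-m})$ such cubes yields total image volume $\lesssim\delta^{(k+1)n-m}\to 0$ as $\delta\to 0$.

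The main obstacle is the careful orchestration of the filtration-plus-induction argument, in particular guaranteeing that the implicit function theorem step genuinely reduces the dimension from $m$ to $m-1$ and that the Fubini-type slicing is set up so the inductive hypothesis applies to each slice, without circularity. The remaining parts (the $m<n$ reduction and the Taylor estimate on $C_k$) are comparatively routine once the filtration is in place.
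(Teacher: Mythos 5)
The paper does not actually prove this statement---it quotes Sard's theorem as a classical result and refers to Milnor's book---and your sketch is exactly the standard Milnor/Pontryagin argument from that reference: reduction to Euclidean charts, the separate treatment of $m<n$, the filtration $C\supseteq C_1\supseteq C_2\supseteq\cdots$ with induction on $m$, and the Taylor estimate on $C_k$ for $k>\frac{m}{n}-1$, so it is correct and in line with the proof the paper points to. The only compressed spots are the $C_i\setminus C_{i+1}$ step, where the usual argument takes a nonvanishing first derivative of an $i$-th order partial as a new coordinate so that $C_i$ lands locally in a hyperplane and the inductive hypothesis is applied to the restriction (rather than a Fubini slicing as in the $C\setminus C_1$ step), and the measurability needed for Fubini, which is routine since the sets involved are $\sigma$-compact.
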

  
We now prove the positivity result announced above, which will be important for our future considerations.

\begin{lem}\label{lem: positivity of rho}
For each $t>0$, the density $\rho_t$ of the truncated signature path $U_t$ is everywhere strictly positive.
\end{lem}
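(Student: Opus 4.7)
The plan is to combine Sard's theorem with the general positivity criterion of \cite{BNOT16}, which in the present setting asserts that $\rho_t(u_0)>0$ as soon as there exists a Cameron--Martin path $h\in\bar{\mathcal H}$ with $\Psi(h)=u_0$ and with surjective differential $d\Psi(h):\bar{\mathcal H}\to\mathfrak g^{(l)}$, where $\Psi$ denotes the deterministic skeleton map $h\mapsto\log S_l(h)_{0,t}$ associated with the intrinsic signature equation \eqref{eq: SDE for signature}. My task therefore reduces to producing, for every $u_0\in\mathfrak g^{(l)}$, such a ``regular'' Cameron--Martin path.

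First I would show that the set $R\subseteq\mathfrak g^{(l)}$ of $u_0$'s admitting such a regular pre-image is dense. For this I restrict $\Psi$ to a sufficiently rich finite-dimensional family $E\subset\bar{\mathcal H}$ of smooth paths whose signatures cover all of $G^{(l)}$; such an $E$ exists by the surjectivity of the signature map on smooth paths with compactly supported derivatives (Remark~\ref{connecting by smooth path}). Applying Sard's theorem (Theorem~\ref{thm: Sard's theorem}) to the smooth finite-dimensional map $\Psi|_E:E\to\mathfrak g^{(l)}$ then forces the critical values to form a Lebesgue null set, so $R$ is dense in $\mathfrak g^{(l)}$.

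To upgrade density to positivity everywhere, I would exploit the group structure of $G^{(l)}$ by a concatenation trick. Fix an arbitrary $u_0\in\mathfrak g^{(l)}$ and pick a regular value $u_1\in R$ together with a regular Cameron--Martin path $h_1$ on $[0,t/2]$ (up to time rescaling) realising it. By surjectivity of the signature map on smooth paths one can construct a smooth $h_2$ on $[t/2,t]$ with $S_l(h_2)_{t/2,t}=\exp(u_1)^{-1}\otimes\exp(u_0)$, and with derivative supported away from the endpoints so that $h\triangleq h_1\sqcup h_2$ lies in $\bar{\mathcal H}$ and satisfies $\Psi(h)=u_0$. Chen's identity $S_l(h)_{0,t}=S_l(h_1)_{0,t/2}\otimes S_l(h_2)_{t/2,t}$ shows that perturbing $h_1$ alone produces all tangent directions in $T_{\exp(u_0)}G^{(l)}$ via right translation by $S_l(h_2)_{t/2,t}$. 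Since right translation is a diffeomorphism of $G^{(l)}$, the surjectivity of $d\Psi_{t/2}(h_1)$ is transferred to surjectivity of $d\Psi(h)$, so $h$ is a regular point of $\Psi$ over $u_0$, and the positivity criterion delivers $\rho_t(u_0)>0$.

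The main obstacle I expect is a technical one: verifying that the BNOT16 positivity criterion, typically stated for SDEs with $C_b^\infty$ coefficients, is actually applicable to the signature SDE \eqref{eq: SDE for signature}, whose left-invariant vector fields $\tilde W_\alpha$ on $G^{(l)}$ have polynomial growth. Handling this will require either a standard localization procedure or a direct verification of the nondegeneracy conditions underlying the criterion (non-degeneracy of $U_t$ and uniform control of its Malliavin derivatives on bounded sets). Once this point is secured, the geometric heart of the argument --- Sard's theorem followed by the concatenation-plus-right-translation trick --- is essentially formal and will yield the positivity of $\rho_t$ on all of $\mathfrak g^{(l)}$.
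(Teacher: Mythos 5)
Your proposal is correct and takes essentially the same route as the paper: the positivity criterion of \cite{BNOT16}, Sard's theorem applied to the finite-dimensional maps induced by concatenations of line segments to produce a regular pre-image, and a concatenation (group translation) argument to propagate regularity to every point of $\mathfrak{g}^{(l)}$ --- the paper prepends a path (left translation) where you append a correcting one (right translation), an immaterial difference, and it likewise invokes \cite{BFO19} for the non-degeneracy of $U_t$ to justify applying the criterion despite the polynomial-growth vector fields. The only small point to tighten is your use of a single finite-dimensional family $E$ covering all of $G^{(l)}$, since the number of linear pieces in Remark \ref{connecting by smooth path} depends on the group element; replacing $E$ by the countable union of the families $H_n((\mathbb{R}^d)^n)$, as the paper does, fixes this at no cost.
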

\begin{proof}
We only consider the case when $t=1$. The general case follows from the scaling property (\ref{eq: formula for rho_t}) below. 
Our strategy relies on the fact that $\Gamma_t=\exp(U_t)$ solves equation \eqref{eq: SDE for signature}. In addition, recall our Convention \ref{conv: convention} about the identification of $\mathfrak{g}^{(l)}$ and $G^{(l)}$. Therefore we can get the desired positivity by applying \cite[Theorem 1.4]{BNOT16}. To this aim, recall that the standing assumptions in \cite[Theorem 1.4]{BNOT16} are the following:\\
\\
(i) The Malliavin covariance matrix of $U_t$ is invertible with inverse in $L^p(\Omega)$ for all $p>1$;\\
(ii) The skeleton of equation \eqref{eq: SDE for signature}, defined similarly to \eqref{eq: skeleton ODE}, generates a submersion. More specifically, we need to show that for any $u\in\mathfrak{g}^{(l)}$, there exists $h\in\bch$ such that $\log S_l(h)=u$ and
\begin{align}\label{nondegenerate map}
(d\log S_l)_h: \bch\to \mathfrak{g}^{(l)}\quad \text{is\ surjective,}
\end{align}
where $S_l(h)\triangleq S_l(h)_{0,1}$ is the truncated map.\\
\\
Notice that item (i) is proved in \cite{BFO19}. We will thus focus on condition (ii) in the remainder of the proof.

In order to prove relation \eqref{nondegenerate map} in item (ii) above, let us introduce some additional notation. First we shall write $G\triangleq G^{(l)}$ for the sake of simplicity. Then for all $n\geq 1$ we introduce a linear map
$
H_{n}:(\mathbb{R}^{d})^{n}\rightarrow\bar{{\cal H}}
$ in the following way. Given $y=(y_1,\ldots,y_n)$, the function $H_n(y)$ is defined to be the piecewise linear path obtained by concatenating the vectors $y_1,\ldots,y_n$ successively. We also define a set $\bch_0$ of piecewise linear paths by
\[
\bar{{\cal H}}_{0}\triangleq\bigcup_{n=1}^{\infty}H_{n}\left((\mathbb{R}^{d})^{n}\right)\subseteq\bar{\cal H}.
\]Note that $\bar{\mathcal{H}}_0$ is closed under concatenation, and $S_l(\bar{\cal H}_0)=G$ by the Chow-Rashevskii theorem (cf. Remark \ref{connecting by smooth path}). Now we claim that:\\
\\
(\textbf{P}) For any $g\in G$, there exists $h\in\bar{\cal H}_0$ such that $S_l(h)=g$ and the differential $(dS_l)_h|_{\bar{\cal H}_0}:\bar{\cal H}_0\rightarrow T_g G$ is surjective. \\

Note that the property  (\textbf{P}) is clearly stronger than the original desired claim \eqref{nondegenerate map}. To prove (\textbf{P}), let $\cal P$ be the set of elements in $G$ which satisfy (\textbf{P}). We first show that $\cal P$ is either $\emptyset$ or $G$.  The main idea behind our strategy is that if there exists $g_0\in\cal{P}$, such that $(dS_l)_{h_0}$ is a submersion for some $h_0\in\bch_0$ satisfying $S_l(h_0)=g_0$, then one can obtain every point $g\in G$ by a left translation $L_a$, since $dL_a$ is an isomorphism. To be more precise, suppose that $g_{0}\in G$ is an element satisfying (\textbf{P}). By definition, there exists a path $h_{0}\in\bar{{\cal H}}_{0}$ such
that $S_{l}(h_{0})=g_{0}$ and $(dS_{l})_{h_{0}}|_{\bar{{\cal H}}_{0}}$
is surjective. Now pick a generic element $a\in G$ and choose a path
$\alpha\in\bar{{\cal H}}_{0}$ so that $S_{l}(\alpha)=a$. Then $S_{l}(\alpha\sqcup h_{0})=a\otimes g_{0}.$
We want to show that $(dS_{l})_{\alpha\sqcup h_{0}}:\bar{{\cal H}}_{0}\rightarrow T_{a\otimes g_{0}}G$
is surjective. For this purpose, let $\xi\in T_{a\otimes g_{0}}G$
and set 
\[
\xi_{0}\triangleq dL_{a^{-1}}(\xi)\in T_{g_{0}}G.
\]
By the surjectivity of $(dS_{l})_{h_{0}}|_{\bar{{\cal H}}_{0}},$
there exists $\gamma\in\bar{{\cal H}}_{0}$ such that $(dS_{l})_{h_{0}}(\gamma)=\xi_{0}.$
It follows that, for $\varepsilon>0$ we have 
\[
S_{l}(\alpha\sqcup(h_{0}+\varepsilon\cdot\gamma))=a\otimes S_{l}(h_{0}+\varepsilon\cdot\gamma).
\]
By differentiation with respect to $\varepsilon$ at $\varepsilon=0$, we obtain that
\[
(dS_{l})_{\alpha\sqcup h_{0}}(0\sqcup\gamma)=(dL_{a})_{S_{l}(h_{0})}\circ(dS_{l})_{h_{0}}(\gamma)=(dL_{a})_{g_{0}}(\xi_{0})=\xi.
\]
Therefore, $(dS_{l})_{\alpha\sqcup h_{0}}|_{\bar{{\cal H}}_{0}}$
is surjective. Since $a$ is arbitrary, we conclude that if ${\cal P}$
is non-empty, then ${\cal P}=G$.

\begin{comment}

To be more precise, we first introduce some notation. Namely, we first pick some generic elements 
$$g_0\in{\cal P}, \quad a\in G,\quad\text{and}\  \xi\in T_{a\otimes g_0}G.$$ Then we define $\alpha, h_0\in\bch_0$ and $\xi_0\in T_{g_0}G$ by
\begin{align}\label{notations g0 a etc}
S_l(h_0)=g_0, \quad S_l(\alpha)=a,\quad\text{and}\ \ \xi_0=d L_{a^{-1}}(\xi).
\end{align}
We also assume that $(dS_l)_{h_0}$ is surjective. Therefore here exists $\gamma\in\bch_0$ such that
$$(dS_l)_{h_0}(\gamma)=\xi_0.$$
%Indeed, consider an element $g_0\in\cal P$. We call $h_0$ the piecewise linear path in $\bch_0$ such that $S_l(h_0)=g_0$ and $(dS_l)_{h_0}$ is surjective (note that $h_0$ is not necessarily uniquely defined). Now let $a$ be a generic element of $G$, and choose some $\alpha\in\bar{\cal H}_0$ such that $S_l(\alpha)=a$. Given $\xi \in T_{a\otimes g_0}G$, we denote by $\xi_0\in T_{g_0}G$ the left translation of $\xi$ by $a^{-1}$.  Due to the fact that $(dS_l)_{h_0}:\bch_0\to T_{g_0}G$ is surjective by assumption, there exists $\gamma\in\bch_0$ such that $(dS_l)_{h_0}(\gamma)=\xi_0$.
With these preliminaries in mind, we now claim that our generic $a\in G$ satisfies $(\textbf{P})$. Namely, consider $\alpha, h_0$ and $\gamma$ defined as above. Then similarly to \eqref{c0}, for $t\in(-\varepsilon,\varepsilon)$ we have
 \begin{align}\label{perturb h0}
S_{l}(\alpha\sqcup(h_{0}+t\cdot\gamma))=a\otimes S_{l}(h_{0}+t\cdot\gamma).
\end{align} We know that $S_{l}(\alpha\sqcup h_{0})=a\otimes g_{0}$
Hence, differentiating \eqref{perturb h0} in $t$ and evaluating at $t=0$, we get

\[
(dS_{l})_{\alpha\sqcup h_{0}}(0\sqcup\gamma)=(dL_{a})_{S_l(h_0)}\circ(dS_{l})_{h_{0}}(\gamma)=(dL_{a})_{g_{0}}(\xi_{0})=\xi,
\]
where we have used the facts that $S_l(h_0)=g_0$ and $d L_a(\xi_0)=\xi$ (which stems directly from \eqref{notations g0 a etc}. Since we have chosen a generic $\xi\in T_{a\otimes g_0}G$ and found a $\gamma$ such that $(dS_l)_{\alpha\sqcup h}(\gamma)=\xi$, we get that $a\otimes g_0\in{\cal{P}}$ for any $a\in G$. We have thus obtained that if ${\cal P}$ is non-empty, then ${\cal P}=G$.
\end{comment}

To complete the proof, it remains to show that $\cal P\neq\emptyset$. This will be a simple consequence of Sard's theorem. Indeed, for each $n\geq1$,  define
\begin{align}\label{def: fn}
f_{n}\triangleq S_{l}\circ H_{n}:(\mathbb{R}^{d})^{n}\rightarrow G,
\end{align}
where we recall that $H_n(y)$ is the piecewise linear path obtained by concatenating $y_1,..., y_n$.
The map $f_n$ is simply given by \[
f_{n}(y_{1},\ldots,y_{n})=\exp(y_{1})\otimes\cdots\otimes\exp({y_{n}}),
\]
where we recall that the exponential maps is defined by \eqref{eq:def-exp-on-T-l}. It is readily checked that $f_n$ is a smooth map.
According to Sard's theorem (cf. Theorem \ref{thm: Sard's theorem}), the set of critical values of $f_n$, denoted as $E_n$, is a Lebesgue null set in $G$. It follows that $E\triangleq\cup_{n=1}^{\infty}E_{n}$ is also a Lebesgue null set in $G$. We have thus obtained that, \[
G\backslash E=\left(\bigcup_{n=1}^{\infty}f_{n}((\mathbb{R}^{d})^{n})\right)\backslash E\neq\emptyset,
\]
where the first equality is due to the fact that $S_l(\bch_0)=G$ by the Chow-Rashevskii theorem. 
Pick any element $g\in G\backslash E$. Then for some $n\geq1$, we have $g\in f_n((\mathbb{R}^d)^n)\backslash E_n$. In particular, there exists $y\in (\mathbb{R}^d)^n$ such that $f_n(y)=g$ and $(df_n)_y$ is surjective.  We claim that $g\in\cal P$ with $h\triangleq H_n(y)\in\bar{\cal H}_0$ being the associated path.  Indeed, it is apparent that $S_l(h)=g$. In addition, let $\xi\in T_g G$ and $w\in (\mathbb{R}^d)^n$ be such that $(df_n)_y(w)=\xi$. The existence of $w$ follows from the surjectivity of $(df_n)_y$. Since $H_n$ is linear, we obtain that \begin{align*}
(dS_{l})_{h}(H_{n}(w)) & =\left.\frac{d}{d\varepsilon}\right|_{\varepsilon=0}S_{l}(H_{n}(y)+\varepsilon\cdot H_{n}(w))
  =\left.\frac{d}{d\varepsilon}\right|_{\varepsilon=0}S_{l}(H_{n}(y+\varepsilon\cdot w))\\
 & =\left.\frac{d}{d\varepsilon}\right|_{\varepsilon=0}f_{n}(y+\varepsilon\cdot w)
  =(df_{n})_{y}(w)
  =\xi.
\end{align*}Therefore, the pair $(h,g)$ satisfies property (\textbf{P}) and thus $\cal P$ is non-empty.
\begin{comment}
Notice that technically we have proved that $f_n$ defines a submersion as a function on $\mr^n$, while we should differentiate with respect to function like $\gamma$.  We let the patient reader check the details for this last step, or refer to \cite{BFO18} for more details.
\end{comment}
\end{proof}

\begin{rem}
Some mild technical care is needed in the above proof which we have postponed until now so not to distract the reader from getting the key idea in the proof. One point is that, Theorem 1.4 in \cite{BNOT16} was stated for SDEs in which the vector fields are of class $C_b^\infty$. Nevertheless, that theorem relies on properties of the skeleton of a non-degenerate random variable $F$, which are clearly satisfied for the truncated signature path $\Gamma_t$ defined by \eqref{def: signature of B}.\end{rem}

\begin{rem}
  Another point is that, when $H>1/2$ it is not clear whether $\bar{\cal H}$ contains the space of piecewise linear paths. It is though obvious from $\bar{\cal H}=I_{0+}^{H+1/2}(L^2([0,1]))$ that it contains all smooth paths. One simple way to fix this issue is to reparametrize the piecewise linear path $y_1\sqcup\cdots\sqcup y_n$ in a way depending only on $n$, so that the resulting path is smooth but the trajectory remains unchanged. This will not change the truncated signature as it is invariant under reparametrization. For instance, one can define $H_n(y)$ in a way that on $[(i-1)/n,i/n]$ it is given by \[
H_{n}(y)_{t}=y_{1}+\cdots y_{i-1}+\left(\int_{0}^{t}\eta_{i}(s)ds\right)y_{i},
\]where $\eta_i$ is a positive smooth function supported on $[\frac{i-1}{n}+\frac{1}{3n},\frac{i}{n}-\frac{1}{3n}]$ with $\int_{\frac{i-1}{n}}^{\frac{i}{n}}\eta_{i}(t)dt=1$.   See also \cite{BFO19} for a direct strategy.
\end{rem}

Essentially the same amount of effort allows us to adapt the argument in the proof of Lemma \ref{lem: positivity of rho} to establish the general positivity result for hypoelliptic SDEs as stated in Theorem \ref{prop: positivity-intro} which is of independent interest. This complements the result of \cite[Theorem 1.4]{BNOT16} by affirming that Hypothesis 1.2 in that theorem is always verified under hypoellipticity.  

\begin{comment}
\begin{prop}\label{prop: positivity}
Let $\{V_{1},\ldots,V_{d}\}$ be a family of $C_{b}^{\infty}$-vector
fields on $\mathbb{R}^{N},$ which is assumed to span a distribution
over $\mathbb{R}^{N}$ and satisfies the uniform hypoellipticity assumption \eqref{eq:unif-hypo-assumption}. Let $X_{t}^{x}$
be the solution to the SDE 
\begin{align}\label{general hypoelliptic SDE}
\begin{cases}
dX_{t}=\sum_{\alpha=1}^{d}V_{\alpha}(X_{t})dB_{t}, & 0\leq t\leq1,\\
X_{0}=x,
\end{cases}
\end{align}
where $B_{t}$ is a $d$-dimensional fractional Brownian motion with
Hurst parameter $H>1/4$. Then for each $t\in(0,1]$, the density
of $X_{t}$ is everywhere strictly positive.
\end{prop}
\end{comment}

\begin{proof}[Proof of Theorem \ref{prop: positivity-intro}]
Without loss of generality we only consider $t=1$.  Continuing to denote by $\Phi_t(x;h)$ the skeleton of equation \eqref{eq: hypoelliptic SDE}, defined by \eqref{eq: skeleton ODE}, 
let $F:\bar{{\cal H}}\rightarrow\mathbb{R}^{N}$ be the end point
map defined by $F(h)\triangleq\Phi_{1}(x;h)$.  As in the proof of Lemma \ref{lem: positivity of rho}, we wish to check the assumptions of \cite[Theorem 1.4]{BNOT16}. Recall that this means that we should prove that the Malliavin covariance matrix of $X_1$ admits an inverse in $L^p(\Omega)$, and that \eqref{nondegenerate map} holds for the map $F$. Furthermore, under our standing assumptions, the fact that the Malliavin covariance matrix of $X_1$ is in $L^p(\Omega)$ is already proved in \cite{CHLT15}. We will thus focus on an equivalent of condition \eqref{nondegenerate map} in the remainder of the proof. Summarizing our considerations so far, we wish to prove that for any $y\in\mr^N$ there exists $h\in\bch$ such that 
\begin{align}\label{surjectivity general case}
F(h)=y,\quad\text{and}\ \ \ (dF)_h: \bch\to\mr^N \ \ \textnormal{is surjective.}
\end{align}
Along the same lines as in the proof of Lemma \ref{lem: positivity of rho}, we define ${\cal P}$ to be the set of points $y\in\mr^N$ satisfying \eqref{surjectivity general case} for some $h\in\bar{\cal H}$. We first show that ${\cal P}$ is non-empty which then implies $\mathcal{P}=\mathbb{R}^N$ again by a translation argument.

To show that $\cal P$ is non-empty, we first define $H_{n}:(\mathbb{R}^{d})^{n}\rightarrow\bar{{\cal H}}$
and $\bar{{\cal H}}_{0}\subseteq\bar{{\cal H}}$ in the same way as
in the proof of Lemma \ref{lem: positivity of rho}. Also define a map $F_n$ by
\[
F_{n}\triangleq F\circ H_{n}:(\mathbb{R}^{d})^{n}\rightarrow\mathbb{R}^{N}.
\]
According to Sard's theorem, the set of critical values of $F_{n},$
 again denoted as $E_{n}$, is a Lebesgue null set in $\mathbb{R}^{N},$
and so is $E\triangleq\cup_{n}E_{n}$. 

Next consider a given $q\in\mr^N$. Thanks to the hypoellipticity assumption \eqref{eq:unif-hypo-assumption}, we can equip a neighborhood $U_q$ of $q$ with a sub-Riemannian metric, {by requiring that a certain subset of $\{V_1,...,V_d\}$ is an orthonormal frame near $q$}. Then according to the Chow-Rashevskii theorem (cf. \cite{Montgomery02},
Theorem 2.1.2), every point in $U_{q}$ is reachable from $q$ by
a horizontal path. And if one examines the proof of the theorem in
Section 2.4 of \cite{Montgomery02} carefully, this horizontal path
is controlled by a piecewise linear path in $\mathbb{R}^{d}$, i.e.
$U_{q}\subseteq\cup_{n}\Phi_{1}(q;H_{n}((\mathbb{R}^{d})^{n}))$.
Now for given $y\in\mathbb{R}^{N},$ choose an arbitrary continuous
path $\gamma$ joining $x$ to $y$. By compactness, we can cover
the image of $\gamma$ by finitely many open sets of the form $U_{q_{i}}$
such that $U_{q_{i}}\cap U_{q_{i+1}}\neq\emptyset$ for all $i$ where
$q_{i}\in{\rm Im}(\gamma).$ It follows that $y$ can be reached from
$x$ by a horizontal path controlled by a piecewise linear path in
$\mathbb{R}^{d}.$ In other words, we have $y\in F_{n}((\mathbb{R}^{d})^{n})$
for some $n.$ This establishes the property that $\mathbb{R}^{N}=\cup_{n}F_{n}((\mathbb{R}^{d})^{n})$.

\begin{comment}
{\color{red}Revision comment: The condition that the vector fields ${V_1,\ldots,V_d}$ generate a distribution (namely, the tangent subspace they linearly span at every point has the same dimension) is used when performing the above sub-Riemannian geometric argument. This may only become clearer when one really follows the argument of Theorem 2.1.2. in \cite{Montgomery02} carefully. Perhaps Chow's theorem can be generalized to some extent without assuming that ${V_1,\ldots,V_d}$ generates a distribution, but I am not entirely sure because I am not an expert of sub-Riemannian geometry. In any case I think it is a mild assumption and is interesting enough.}
\end{comment}

Now the same argument as in the proof of Lemma \ref{lem: positivity of rho} allows us to conclude that 
\[
\mathbb{R}^{N}\backslash E=\bigcup_{n=1}^{\infty}F_{n}((\mathbb{R}^{d})^{n})\backslash E\subseteq\mathcal{P},
\]showing that $\cal P$ is non-empty since $E$ is a Lebesgue null set.

Finally, we show that $\mathcal{P}=\mathbb{R}^N$. To this aim, first note that, for any $h_0,\gamma,\alpha\in\bar{\cal H}$ and $\varepsilon>0$, we have \[
\Phi_{1}(x;(h_{0}+\varepsilon\cdot\gamma)\sqcup\alpha)=\Phi_{1}\left(\Phi_{1}(x;h_{0}+\varepsilon\cdot\gamma);\alpha\right),
\]where paths are always assumed to be parametrized on $[0,1]$. Therefore, by differentiating with respect to $\varepsilon $ at $\varepsilon=0$, we obtain that \[
(dF)_{h_{0}\sqcup\alpha}(\gamma\sqcup0)=J_{1}(F(h_{0});\alpha)\circ(dF)_{h_{0}}(\gamma),
\]where recall that $J_t(\cdot;\cdot)$ is the Jacobian of the flow $\Phi_t$. This shows that 
\begin{equation}\label{eq:Surj}
(dF)_{h_{0}\sqcup\alpha}=J_{1}(F(h_{0});\alpha)\circ(dF)_{h_{0}}.
\end{equation}

Now pick any fixed $y_0\in\cal P$ with an associated $h_0\in\bar{\cal H}$ satisfying (\ref{surjectivity general case}). For any $\eta\in\mathbb{R}^N$, choose $\alpha\in\bar{\cal H}$ such that $F(\alpha)=\eta.$ Then $F(h_0\sqcup\alpha)=y+\eta$ and the surjectivity of $(dF)_{h_0\sqcup\alpha}$ follows from (\ref{eq:Surj}), the surjectivity of $(dF)_{h_0}$ and the invertibility of the Jacobian. In particular, $y+\eta\in\cal P$. Since $\eta$ is arbitrary, we conclude that $\mathcal{P}=\mathbb{R}^N$.

\end{proof}

\begin{rem}
A general support theorem for hypoelliptic SDEs allows one to show that the support of the density $p_t(x,y)$ is dense. In the diffusion case, together with the semigroup property
 \[
p(s+t,x,y)=\int_{\mathbb{R}^{N}}p(s,x,z)p(t,z,y)dz
\]one immediately sees that $p(t,x,y)$ is everywhere strictly positive. This argument clearly breaks down in the fractional Brownian setting.
\end{rem}

{Finally, we present the main result in this part which gives a precise local lower estimate for the density $\rho_t(u)$. }In order to get this estimate a first idea wold be to use the stochastic differential equation for  $U_t$, which is obtained by taking logarithm in relation \eqref{eq: SDE for signature}. Instead of following this strategy, we will resort to some more elementary scaling properties, which stems from the left invariance of the vector fields $\tilde{W}_\alpha$ in \eqref{eq: SDE for signature}.
%Although $U_t$ is also described by an hypoelliptic SDE as a special case of (\ref{eq: hypoelliptic SDE}), the estimate is almost an immediate consequence of the scaling property of fractional Brownian motion, due to the left invariance of the vector fields which we do not have for general SDEs. 
This is why dealing with $U_t$ is considerably easier than studying the solution to the general SDE (\ref{eq: hypoelliptic SDE}).

\begin{prop}\label{prop: step one}
For each $M>0,$ define $\beta_{M}\triangleq\inf\left\{ \rho_{1}(u):\|u\|_{\textsc{CC}}\leq M\right\}$. Then $\beta_M$ is strictly positive and for all $(u,t)\in\mathfrak{g}^{(l)}\times(0,1)$ with $\|u\|_{\textsc{CC}}\leq Mt^{H},$ %according to Lemma \ref{lem: positivity of rho}.
 we have 
\begin{align}\label{signature density lower bound}
\rho_{t}(u)\geq\beta_{M}t^{-H\nu},
\end{align}
where the constant $\nu$ is given by $\nu\triangleq\sum_{k=1}^{l}k\dim{\cal L}_{k},$ and ${\cal L}_k$ is introduced in Definition \ref{def:free-algebra}.
\end{prop}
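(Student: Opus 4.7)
The plan is to reduce the lower estimate \eqref{signature density lower bound} to the positivity statement of Lemma \ref{lem: positivity of rho} applied at time $t=1$, by exploiting the self-similarity of fractional Brownian motion. The key observation is that for any $\lambda > 0$, the process $\{B_{\lambda t}; \, t\geq 0\}$ has the same law as $\{\lambda^{H} B_{t}; \, t\geq 0\}$. Since the truncated signature is multilinear in the increments of $B$, this scaling lifts to the signature as
\begin{equation*}
\Gamma_{\lambda t} \stackrel{\mathrm{law}}{=} \delta_{\lambda^{H}}(\Gamma_{t}),
\end{equation*}
where $\delta_\lambda$ is the dilation operator on $T^{(l)}$ introduced in Section \ref{sec:free-nilpotent}. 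Passing to the log-signature via Convention \ref{conv: convention}, and using that the restriction of $\delta_{\lambda^{H}}$ to $\mathfrak{g}^{(l)}$ commutes with $\exp$, we obtain $U_{t} \stackrel{\mathrm{law}}{=} \delta_{t^{H}}(U_{1})$.

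Combining this identity with the transformation rule \eqref{eq:dilation-lebesgue-on-cal-G} for the Lebesgue measure under $\delta_\lambda$, I would derive the key scaling relation for the density
\begin{equation}\label{eq: formula for rho_t}
\rho_{t}(u) = t^{-H\nu}\,\rho_{1}\bigl(\delta_{t^{-H}} u\bigr),
\qquad u\in\mathfrak{g}^{(l)},\ t>0.
\end{equation}
This reduces the problem to a uniform lower bound for $\rho_{1}$ on a bounded set, since $\|\delta_{t^{-H}}u\|_{\textsc{CC}} = t^{-H}\|u\|_{\textsc{CC}} \leq M$ whenever $\|u\|_{\textsc{CC}} \leq M t^{H}$; plugging this into \eqref{eq: formula for rho_t} immediately yields $\rho_{t}(u) \geq \beta_{M} t^{-H\nu}$.

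It thus remains to show that $\beta_{M} > 0$. For this, I would use three ingredients: first, the density $\rho_{1}$ is smooth, hence continuous on $\mathfrak{g}^{(l)}$, thanks to the H\"ormander property of the vector fields $\tilde{W}_{\alpha}$ which drive equation \eqref{eq: SDE for signature}; second, $\rho_{1}(u) > 0$ for every $u\in\mathfrak{g}^{(l)}$ by Lemma \ref{lem: positivity of rho}; and third, the set $K_{M}\triangleq\{u\in\mathfrak{g}^{(l)} : \|u\|_{\textsc{CC}}\leq M\}$ is compact. The compactness follows from the ball-box estimate of Proposition \ref{prop: ball-box estimate}, which gives the inclusion $K_{M}\subseteq\{u\in\mathfrak{g}^{(l)} : \|u\|_{\textsc{HS}}\leq C_{l,M}\}$, together with the fact that $\|\cdot\|_{\textsc{CC}}$ is continuous on the finite-dimensional space $\mathfrak{g}^{(l)}$, so that $K_{M}$ is closed and bounded. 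A continuous strictly positive function on a compact set attains a positive minimum, giving $\beta_{M}>0$.

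The only delicate point in the whole argument is not the scaling, which is essentially algebraic, but making sure that the identifications in Convention \ref{conv: convention} are consistent: one has to verify that the dilation $\delta_{\lambda}$ acts compatibly on both sides of $\exp$ when computing $\log \Gamma_{\lambda t}$, and that the Jacobian factor in the change of variables formula for \eqref{eq: formula for rho_t} is indeed $\lambda^{-\nu}$ with the explicit exponent $\nu=\sum_{k=1}^{l}k\dim\mathcal{L}_{k}$ as in \eqref{eq:dilation-lebesgue-on-cal-G}. Once these bookkeeping checks are in place, the proof is a one-line consequence of \eqref{eq: formula for rho_t} and the positivity of $\beta_{M}$.
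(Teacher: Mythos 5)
Your proposal is correct and follows essentially the same route as the paper: reduce to $t=1$ via the scaling identity $\rho_{t}(u)=t^{-H\nu}\rho_{1}(\delta_{t^{-H}}u)$ together with the change-of-variables factor \eqref{eq:dilation-lebesgue-on-cal-G}, and get $\beta_{M}>0$ from the everywhere-positivity of $\rho_{1}$ (Lemma \ref{lem: positivity of rho}), its continuity, and compactness of the CC-ball. The only (harmless) difference is in how the law identity $\Gamma_{t}\stackrel{\mathrm{law}}{=}\delta_{t^{H}}\Gamma_{1}$ is justified: you invoke the self-similarity of $B$ and the homogeneity of the (rough-path) iterated integrals — which the paper itself uses elsewhere — whereas the paper derives it from the intrinsic equation \eqref{eq: SDE for signature} and the dilation identity $(\delta_{\lambda})_{*}\tilde{W}=\lambda\tilde{W}$ for the left-invariant vector fields.
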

\begin{proof}
First observe that the strict positivity of $\beta_M$ is an easy consequence of Lemma \ref{lem: positivity of rho} plus the fact that the set $\{u\in\frak{g}^{(l)}; \|u\|_{\textsc{CC}}\leq M\}$ is compact. Next recall that if $\tilde{W}$ is a left invariant vector field on $G^{(l)}$, the push-forward of $\tilde{W}$ by $\delta_\lambda$ (denoted by $(\delta_\lambda)_*\tilde{W}$) is defined by
\begin{align*}
\left[(\delta_\lambda)_*\tilde{W}\right](\delta_\lambda u)=\left.\frac{d}{d\varepsilon}\right|_{\varepsilon=0}\delta_\lambda (u\otimes\exp(\varepsilon\cdot\tilde{W}(\mathbf{1}))).
\end{align*}
Using this definition, it is readily checked that 
\begin{align}\label{scaling property W}
(\delta_{\lambda})_{*}\tilde{W}=\lambda\cdot \tilde{W}.
\end{align}
Therefore, applying a change of variable formula to \eqref{eq: SDE for signature} with $f(\Gamma_t)=\delta_\lambda \Gamma_t$ and resorting to \eqref{scaling property W} we obtain
\begin{align}
d(\delta_{\lambda}\Gamma_{t}) & =\sum_{\alpha=1}^{d}\left((\delta_{\lambda})_{*}\tilde{W}_{\alpha}\right)(\delta_{\lambda}\Gamma_{t})dB_{t}^{\alpha}
  =\lambda\sum_{\alpha=1}^{d}\tilde{W}_{\alpha}(\delta_{\lambda}\Gamma_{t})dB_{t}^{\alpha}.\label{eq: d.e. for dilated path}
\end{align}
On the other hand, for $\lambda>0$ set $\Gamma_t^\lambda\triangleq\Gamma_{\lambda^{1/H}t}$. Then we have the following series of identities:
$$d\Gamma^\lambda_t=\lambda^{1/H}d\Gamma_{\lambda^{1/H}t}=\lambda^{1/H}\sum_{\alpha=1}^d\tilde{W}_\alpha(\Gamma^\lambda_t)dB^\alpha_{\lambda^{1/H}t}.$$
Therefore, setting $B_t^{\alpha,\lambda}=B^\alpha_{\lambda^{1/H}t}$, we get
\begin{equation}
d\Gamma^\lambda_{t}  =\sum_{\alpha=1}^{d}\tilde{W}_{\alpha}(\Gamma^\lambda_{t})d(B_{t}^{\alpha,\lambda})  
=
\lambda\sum_{\alpha=1}^{d}\tilde{W}_{\alpha}(\Gamma^\lambda_{t})d\left(\lambda^{-1} B^{\alpha,\lambda}_{t}\right)_t.\label{eq: d.e. for rescaled path}
\end{equation}
Now observe that the usual scaling for the fractional Brownian motion yields
$$\{\lambda^{-1}B^\lambda_t; t\geq 0\}\stackrel{d}{=}\{B_t; t\geq 0\}.$$
We have thus obtained that $\Gamma^{\lambda}\stackrel{d}{=}{\hat{\Gamma}}^\lambda$, where $\hat{\Gamma}^\lambda$ solves the system
\begin{align}
d\hat{\Gamma}^\lambda_{t}  =\lambda\sum_{\alpha=1}^{d}\tilde{W}_{\alpha}(\Gamma^\lambda_{t})dB^{\alpha}_t.\label{eq for gamma hat}
\end{align}
Comparing \eqref{eq for gamma hat} and \eqref{eq: d.e. for dilated path},
we conclude that $\delta_{\lambda}\Gamma_{\cdot}\stackrel{{\rm law}}{=}\Gamma_{\lambda^{\frac{1}{H}}\cdot}.$
If we define $Q_{t}$ to be the law of $U_{t}$ on $\mathfrak{g}^{(l)}$,
it follows that $Q_{s}\circ\delta_{\lambda}^{-1}=Q_{\lambda^{\frac{1}{H}}s}$ for all $s>0$.
In particular, by setting $s=1$ and $\lambda=t^{H},$ we obtain that
\begin{equation}\label{eq: scaling property of U_t}
Q_{t}=Q_{1}\circ\delta_{t^{H}}^{-1}.
\end{equation}

Now suppose that $\rho_{t}(u)$ is the density of $Q_{t}$ with respect
to the Lebesgue measure $du$ on $\frak{g}^{(l)}$. Then for any $f\in C_{b}^{\infty}(\mathfrak{g}^{(l)}),$
we have 
\begin{align*}
\int_{\mathfrak{g}^{(l)}}f(u)\rho_{t}(u)du & =\int_{\mathfrak{g}^{(l)}}f(u)Q_{t}(du)
  =\int_{\mathfrak{g}^{(l)}}f(\delta_{t^{H}}u)Q_{1}(du)\\
 & =\int_{\mathfrak{g}^{(l)}}f(\delta_{t^{H}}u)\rho_{1}(u)du
  =\int_{\mathfrak{g}^{(l)}}t^{-H\nu}f(u)\rho_{1}(\delta_{t^{-H}}u)du,
\end{align*}
where the equality follows from the change of variables $u\leftrightarrow\delta_{t^{-H}}u$
and the fact that $du\circ\delta_{t^{H}}^{-1}=t^{-H\nu}du$ (cf. relation \eqref{eq:dilation-lebesgue-on-cal-G}). Therefore,
we conclude that 
\begin{equation}\label{eq: formula for rho_t}
\rho_{t}(u)=t^{-H\nu}\rho_{1}(\delta^{-1}_{t^{H}}u),\ \ \text{for\ all}\ \ (u,t)\in\mathfrak{g}^{(l)}\times(0,1),
\end{equation}
from which our result \eqref{signature density lower bound} follows.
\end{proof}

\subsection{Step two: local lower estimate for the density of the Taylor approximation process.}\label{sec: step two}

Recall that  according to our definition \eqref{eq:def-rough-eq}, $X_l(t,x)=x+F_l(U_t^{(l)},x)$ is the Taylor approximation process of order $l$ for the actual solution of the SDE (\ref{eq: hypoelliptic SDE}). Due to hypoellipticity, it is natural to expect that when $l\geq l_0$, $F_l$ is "non-degenerate" in certain sense. In addition, $X_l(t,x)$ should have a density, and a precise local lower estimate for the density should follow from Proposition \ref{prop: step one} in Step One,  combined with the "non-degeneracy" of $F_l$. Here the main subtlety and challenge lies in finding a way of respecting the fractional Brownian scaling and Cameron-Martin structure so that the estimate we obtain on $X_l(t,x)$ is sharp. In this part, we always fix $l\geq l_0$.

\paragraph*{I. Non-degeneracy of $F_l$ and a disintegration formula.}
\label{sec: non-deg}
\addcontentsline{toc}{subsubsection}{\nameref{sec: non-deg}}

$\ $\\
\\
We first review a basic result in \cite{KS87} on the (local) non-degeneracy of $F_l$, which then allows us to obtain a formula for the (localized) density of $X_l(t,x)$ by disintegration. This part is purely analytic and does not rely on the structure of the underlying process.

Let $JF_l(u,x):\mathfrak{g}^{(l)}\rightarrow\mathbb{R}^N$ be the Jacobian of $F_l$ with respect to $u$. Since $\mathfrak{g}^{(l)}$ has a canonical Hilbert structure induced from $T^{(l)}(\mathbb R^d)$, we can also consider the adjoint map $JF_l(u,x)^*:\mathbb{R}^N\rightarrow\mathfrak g^{(l)}$.  The non-degeneracy of $JF_l$ is summarized in the following lemma.

\begin{lem}\label{lem: nondegeneracy of F}Let $F_l$ be the approximation map given in Definition \ref{def: Taylor approximation function} and Let $JF_l(u,x): \frak{g}^{(l)}\to \mr^N$ be its Jacobian. Then there exists a constant $c>0$ depending only on $l_0$ and the vector fields, such that \[
JF_l(0,x)\cdot JF_l(0,x)^{*}\geq c\cdot\mathrm{Id}_{\mathbb{R}^{N}}
\]for all $l\geq l_0$ and $x\in \mathbb R^N$.
\end{lem}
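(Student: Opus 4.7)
The plan is to identify $JF_l(0,x)$ explicitly on $\mathfrak{g}^{(l)}$ and then reduce the non-degeneracy bound directly to the uniform H\"ormander condition~\eqref{eq:unif-hypo-assumption}. First I would differentiate the defining relation $F_l(u,x)=\sum_{\alpha\in\mathcal{A}_1(l)}V_{(\alpha)}(x)(\exp u)^{\alpha}$ at $u=0$. Since $\exp(tv)=\mathbf{1}+tv+O(t^{2})$ in $T^{(l)}$ for every $v\in\mathfrak{g}^{(l)}$, the directional derivative at $u=0$ along $v$ is
$$ JF_l(0,x)\,v \;=\; \sum_{\alpha\in\mathcal{A}_1(l)}V_{(\alpha)}(x)\,v^{\alpha}, \qquad v\in\mathfrak{g}^{(l)}, $$
where $v^{\alpha}$ denotes the coordinate of $v$ along the tensor basis element ${\rm e}_{(\alpha)}$.

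The main step, and the only genuinely non-trivial ingredient, is the algebraic identity
$$ JF_l(0,x)\,{\rm e}_{[\beta]} \;=\; V_{[\beta]}(x), \qquad \beta\in\mathcal{A}_1(l). $$
Equivalently, the linear map $v\mapsto\sum_{\alpha}V_{(\alpha)}(x)\,v^{\alpha}$, restricted to the Lie subspace $\mathfrak{g}^{(l)}\subseteq T^{(l)}$, realizes the unique Lie algebra homomorphism from the free Lie algebra into vector fields on $\mathbb{R}^{N}$ extending ${\rm e}_i\mapsto V_i$. I would prove this by induction on the bracket depth of ${\rm e}_{[\beta]}$. The base case $V_{(i,j)}-V_{(j,i)}=V_{[i,j]}$ is immediate from the paper's convention that $V_iV_j$ means ``differentiate $V_j$ along $V_i$''. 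The inductive step reduces, after expanding $[a,b]=a\otimes b-b\otimes a$ and invoking the recursive definition of $V_{(\alpha)}$, to the cancellation of all higher-order partial derivatives via the symmetry of mixed partials; this is the same mechanism that turns $V_iV_j-V_jV_i$ into the Lie bracket $[V_i,V_j]$ in the base case.

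With this identity in hand, the conclusion follows from a single Cauchy--Schwarz estimate. For any $\eta\in\mathbb{R}^{N}$ and any $\beta\in\mathcal{A}_1(l_0)$, the definition of adjoint gives
$$ |JF_l(0,x)^{*}\eta|^{2}\,\|{\rm e}_{[\beta]}\|_{\textsc{HS}}^{2} \;\geq\; \bigl|\langle JF_l(0,x)^{*}\eta,\,{\rm e}_{[\beta]}\rangle\bigr|^{2} \;=\; \bigl|\langle\eta,\,V_{[\beta]}(x)\rangle_{\mathbb{R}^{N}}\bigr|^{2}. $$
Summing over $\beta\in\mathcal{A}_1(l_0)$ and invoking~\eqref{eq:unif-hypo-assumption} yields
$$ |JF_l(0,x)^{*}\eta|^{2}\sum_{\beta\in\mathcal{A}_1(l_0)}\|{\rm e}_{[\beta]}\|_{\textsc{HS}}^{2} \;\geq\; \sum_{\beta\in\mathcal{A}_1(l_0)}\bigl|\langle\eta,V_{[\beta]}(x)\rangle\bigr|^{2} \;\geq\; c_{0}\,|\eta|^{2}, $$
with $c_0>0$ the uniform hypoellipticity constant. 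Rearranging, $JF_l(0,x)\cdot JF_l(0,x)^{*}\geq c\,\mathrm{Id}_{\mathbb{R}^{N}}$ with $c=c_{0}\,\bigl(\sum_{\beta\in\mathcal{A}_1(l_0)}\|{\rm e}_{[\beta]}\|_{\textsc{HS}}^{2}\bigr)^{-1}$. Because only words of length at most $l_0$ enter this sum and the norms $\|{\rm e}_{[\beta]}\|_{\textsc{HS}}$ are purely combinatorial quantities fixed by $l_0$, the constant $c$ depends only on $l_0$ and the vector fields and is uniform in $l\geq l_0$ and $x\in\mathbb{R}^{N}$, as claimed.
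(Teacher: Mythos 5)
Your proposal is correct and follows essentially the same route as the paper: the heart of the matter is the identity $JF_l(0,x)\,\mathrm{e}_{[\beta]}=V_{[\beta]}(x)$ (i.e.\ that the linear map $\mathrm{e}_{(\alpha)}\mapsto V_{(\alpha)}$ respects Lie brackets), which is exactly the Kusuoka--Stroock ingredient the paper's sketch invokes, after which the uniform hypoellipticity condition \eqref{eq:unif-hypo-assumption} finishes the job. Your Cauchy--Schwarz conclusion over $\beta\in\mathcal{A}_1(l_0)$ is a slightly more explicit variant of the paper's orthonormal-basis identity $JF_l(0,x)\cdot JF_l(0,x)^{*}=\sum_{\mu}V_{\mu}(x)\otimes V_{\mu}(x)$, with the small merit of making the independence of the constant $c$ from $l$ completely transparent.
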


\begin{proof}[Sketch of proof]
This is Lemma 3.13 of \cite{KS87}. Because of its importance, we outline the idea of the proof so that one may see how the hypoellipticity property comes into play.
Recall the definitions of $\mathrm{e}_{(\alpha)},\mathrm{e}_{[\alpha]},V_{(\alpha)}$ from equation \eqref{eq:def-basis-algebras} and $V_{[\alpha]}$ from Section \ref{sec: main results}.
Define a linear map $\Xi:T_0^{(l)}\rightarrow C^\infty_b(\mathbb{R}^N;\mathbb{R}^N)$ by setting $\Xi(\mathrm {e}_{(\alpha)})\triangleq V_{(\alpha)}$ for each $\alpha\in\mathcal{A}_1(l)$. A crucial property  is that $\Xi$ respects Lie brackets, i.e. 
\begin{equation}\label{eq: Xi respects Lie brackets}\Xi(\mathrm{e}_{[\alpha]})=V_{[\alpha]},\ \ \ \forall\alpha\in\mathcal{A}_1(l).
\end{equation}
Now let $\{u_\mu:1\leq\mu\leq m_l\}$ be an orthonormal basis of $\mathfrak{g}^{(l)}$, where $m_l\triangleq\dim\mathfrak{g}^{(l)}$, and set $V_\mu=\Xi(u_\mu)$. Based on (\ref{eq: Xi respects Lie brackets}), it is not hard to show that 
\[
{\rm Span}\{V_{[\alpha]}(x):\alpha\in{\cal A}_{1}(l)\}={\rm Span}\{V_{\mu}(x):1\leq\mu\leq m_{l}\},
\]
for each $x\in\mathbb{R}^N$. 
Let us now relate these notions to the non-degeneracy of $JF_l$. To this aim, taking Definition \ref{def: Taylor approximation function} into account, it is easily seen that 
$$JF_l(0,x)(u)=\sum_{\alpha\in{\cal A}_1(l)}V_{(\alpha)}(x)u^\alpha.$$
In particular we have $JF_l(0,x)(e_{(\alpha)})=V_{(\alpha)}(x)$. Hence invoking relation \eqref{eq: Xi respects Lie brackets} we end up with
$$JF_l(0,x)(e_{[\alpha]})=V_{[\alpha]}(x).$$
By definition of our orthonormal basis $\{u_\mu; 1\leq \mu\leq m_l\}$ we thus get
 \[
JF_l(0,x)\cdot JF_l(0,x)^{*}=\sum_{\mu=1}^{m_{l}}V_{\mu}(x)\otimes V_{\mu}(x).
\] Therefore, the non-degeneracy of $JF_l(0,x)$ follows from the hypoellipticity assumption \eqref{eq:unif-hypo-assumption} of the vector fields $V_\alpha$. \end{proof}

An immediate corollary of Lemma \ref{lem: nondegeneracy of F} is the following.

\begin{cor}\label{cor: local submersion}
Given $l\geq l_0,$ there exists $r>0$ depending on $l$ and the vector fields, such that $\det(JF_l(u,x)\cdot JF_l(u,x)^*)$ is uniformly positive on $\{u\in\mathfrak{g}^{(l)}:\|u\|_\mathrm{HS}< r\}\times\mathbb{R}^N$. In particular, the map
\[
\{u\in\mathfrak{g}^{(l)}:\|u\|_{\textsc{HS}}<r\}\rightarrow\mathbb{R}^{N},\qquad u\mapsto x+F_{l}(u,x),
\]is a submersion in the sense of differential geometry. 
\end{cor}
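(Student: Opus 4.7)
The plan is to leverage Lemma \ref{lem: nondegeneracy of F} and extend its conclusion from $u=0$ to a small $\mathrm{HS}$-neighborhood of the origin, uniformly in $x\in\mathbb{R}^N$. The extension is essentially a perturbation argument combined with the observation that $F_l(u,x)$ is polynomial in the components of $u$, so $JF_l(u,x)$ depends smoothly (in fact polynomially) on $u$.

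Concretely, I would first recall the explicit form
\[
F_{l}(u,x)=\sum_{\alpha\in\ca_{1}(l)}V_{(\alpha)}(x)\,(\exp u)^{\alpha},
\]
from Definition \ref{def: Taylor approximation function}. Since $\exp:\mathfrak{g}^{(l)}\to T^{(l)}$ is a polynomial map (the sum in \eqref{eq:def-exp-on-T-l} is finite on each fixed graded component), the Jacobian $JF_l(u,x)$ is polynomial in $u$ with coefficients of the form $V_{(\alpha)}(x)$ multiplied by finite polynomial coefficients. In particular, writing $JF_l(u,x)=JF_l(0,x)+R(u,x)$, the remainder $R(u,x)$ is a polynomial in $u$ with no constant term whose coefficients are bounded uniformly in $x$ by $\max_{\alpha\in\ca_1(l)}\|V_{(\alpha)}\|_{\infty}$, which is finite because the $V_\alpha$ are $C_b^\infty$ and $l$ is fixed.

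Consequently there is a constant $C=C_{l,V}>0$ such that
\[
\|R(u,x)\|_{\mathrm{op}}\le C\,\|u\|_{\textsc{HS}},\qquad \|u\|_{\textsc{HS}}\le 1,\; x\in\mathbb{R}^N,
\]
where $\|\cdot\|_{\mathrm{op}}$ is the operator norm on linear maps $\mathfrak{g}^{(l)}\to\mathbb{R}^N$. Combining this with the uniform lower bound $JF_l(0,x)JF_l(0,x)^*\ge c\,\mathrm{Id}_{\mathbb{R}^N}$ from Lemma \ref{lem: nondegeneracy of F} and expanding
\[
JF_l(u,x)JF_l(u,x)^*=JF_l(0,x)JF_l(0,x)^*+JF_l(0,x)R(u,x)^*+R(u,x)JF_l(0,x)^*+R(u,x)R(u,x)^*,
\]
a straightforward application of the triangle inequality together with the uniform bound $\|JF_l(0,x)\|_{\mathrm{op}}\le C'_{l,V}$ produces
\[
JF_l(u,x)JF_l(u,x)^*\ge \lp c-C''_{l,V}\|u\|_{\textsc{HS}} \rp\mathrm{Id}_{\mathbb{R}^N}.
\]
Choosing $r\triangleq \min\{1,\,c/(2C''_{l,V})\}$ then yields $JF_l(u,x)JF_l(u,x)^*\ge (c/2)\mathrm{Id}_{\mathbb{R}^N}$ uniformly on $\{\|u\|_{\textsc{HS}}<r\}\times\mathbb{R}^N$, from which the claim on $\det(JF_l JF_l^*)$ follows. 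The submersion statement is then immediate: the positivity of $JF_l(u,x)JF_l(u,x)^*$ means $JF_l(u,x):\mathfrak{g}^{(l)}\to\mathbb{R}^N$ has full rank $N$, hence is surjective, so $u\mapsto x+F_l(u,x)$ is a submersion on the specified neighborhood.

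There is no genuine obstacle here — the only care required is to verify that the bound on $R(u,x)$ is uniform in $x$, which just boils down to the $C_b^\infty$ assumption on the $V_\alpha$'s and the fact that $l$ is fixed so the polynomial in $u$ has bounded degree and a controlled number of terms.
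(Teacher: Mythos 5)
Your proof is correct and is essentially the argument the paper leaves implicit: the corollary is stated there as an immediate consequence of Lemma \ref{lem: nondegeneracy of F}, obtained exactly by perturbing the uniform bound $JF_l(0,x)JF_l(0,x)^*\geq c\,\mathrm{Id}$ using the fact that $JF_l(u,x)$ depends polynomially on $u$ with coefficients bounded uniformly in $x$ thanks to the $C_b^\infty$ assumption. Your quantitative choice of $r$ and the deduction of the submersion property from the full rank of $JF_l(u,x)$ match the intended reasoning.
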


\begin{rem}\label{rem: consistent choice of r}
Note that the map $F_l$ and the constant $r$ in Corollary \ref{cor: local submersion} depend on $l$. For technical reasons, we will assume that $r$ is chosen (still depending on $l$) so that for all $l_0\leq l'\leq l$, the map $JF_{l'}(\pi^{(l')}(u),x)$ has full rank whenever $(u,x)\in\mathfrak{g}^{(l)}\times\mathbb{R}^N$ with $\|u\|_{\mathrm{HS}}<r$, where $\pi^{(l')}:\mathfrak{g}^{(l)}\rightarrow\mathfrak{g}^{(l')}$ is the canonical projection. This property will be used in the proof of Lemma \ref{lem: upper estimate for approximating density} in Step Three below.
\end{rem}

Now let $r$ be the constant given in Remark \ref{rem: consistent choice of r} . It is standard from differential geometry that for each $x\in\mathbb{R}^N$ and $y\in\{x+F_l(u,x):\|u\|_\mathrm{HS}<r\}$, the "bridge space"
\begin{align}\label{Mxy}M_{x,y}\triangleq \{u\in\mathfrak{g}^{(l)}:\|u\|_\mathrm{HS}<r\ \mathrm{and}\ x+F_l(u,x)=y\}\end{align} is a submanifold of $\{u\in\mathfrak{g}^{(l)}:\|u\|_{\mathrm{HS}}<r\}$ with dimension $\dim \mathfrak{g}^{(l)}-N$. In addition, since both of $\mathfrak{g}^{(l)}$ and $\mathbb{R}^N$ are oriented Riemannian manifolds, we know from differential topology that $M_{x,y}$ carries a natural orientation and hence a volume form which we denote as $m_{x,y}$. The following result is the standard disintegration formula in Riemannian geometry (cf. Appendix for a proof).

\begin{comment}
{\color{red}Revision comment: Unfortunately I am not aware of any reference containing a precise proof of this formula. We could of course include the proof which is available from our base file, but I am not sure if we need to do so since I think it is a pretty standard exercise for differential geometers.}
\end{comment}

\begin{prop}\label{th: disintegration}For any $\varphi\in C_c^\infty(\{u\in\mathfrak{g}^{(l)}:\|u\|_\mathrm{HS}<r\})$, we have 
\begin{equation}\label{eq: disintegration}
\int_{\mathfrak{g}^{(l)}}\varphi(u)du=\int_{\mathbb{R}^{N}}dy\int_{M_{x,y}}K(u,x)\varphi(u)m_{x,y}(du),
\end{equation}where the kernel $K$ is given by
\begin{align}\label{kernel in disintegration}
K(u,x)\triangleq\left(\det(JF_{l}(u,x)\cdot JF_{l}(u,x)^{*})\right)^{-\frac{1}{2}},
\end{align}
and we define $m_{x,y}\triangleq0$ if $M_{x,y}=\emptyset.$
\end{prop}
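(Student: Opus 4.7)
The plan is to recognize \eqref{eq: disintegration} as the smooth coarea (submersion) formula applied, for each fixed $x\in\mathbb{R}^{N}$, to the map
\[
\Phi_{x}:U\to\mathbb{R}^{N},\qquad \Phi_{x}(u)\triangleq x+F_{l}(u,x),
\]
where $U\triangleq\{u\in\mathfrak{g}^{(l)}:\|u\|_{\textsc{HS}}<r\}$. By Corollary~\ref{cor: local submersion} together with Remark~\ref{rem: consistent choice of r}, $\Phi_{x}$ is a smooth submersion with $d\Phi_{x}(u)=JF_{l}(u,x)$. Consequently each non-empty fiber $\Phi_{x}^{-1}(y)=M_{x,y}$ is an embedded submanifold of codimension $N$ in $U$, and the splitting $T_{u}\mathfrak{g}^{(l)}=T_{u}M_{x,y}\oplus (T_{u}M_{x,y})^{\perp}$ together with the fixed orientations on $\mathfrak{g}^{(l)}$ and $\mathbb{R}^{N}$ (the latter pulled back through $d\Phi_{x}$ on the normal factor) pins down an intrinsic volume form on $M_{x,y}$; this is exactly the $m_{x,y}$ of the statement.

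First I would establish the local version of the identity. Pick any $u_{0}\in U$, set $y_{0}=\Phi_{x}(u_{0})$, and choose an orthonormal basis of $\mathfrak{g}^{(l)}$ adapted to the above splitting, i.e.\ vectors $(e_{1},\ldots,e_{N})$ orthonormal in $(T_{u_{0}}M_{x,y_{0}})^{\perp}$ and vectors $(f_{1},\ldots,f_{m_{l}-N})$ orthonormal in $T_{u_{0}}M_{x,y_{0}}$, where $m_{l}=\dim\mathfrak{g}^{(l)}$. In the resulting linear coordinates $(\xi,\zeta)\in\mathbb{R}^{N}\times\mathbb{R}^{m_{l}-N}$ centered at $u_{0}$, the submersion theorem (implicit function theorem applied to $(\xi,\zeta)\mapsto (\Phi_{x}(\xi,\zeta),\zeta)$) yields a diffeomorphism $\Psi$ from a neighborhood $V$ of $u_{0}$ onto an open set $W\subseteq\mathbb{R}^{N}\times\mathbb{R}^{m_{l}-N}$, with $\Psi(u)=(\Phi_{x}(u),\zeta(u))$, and such that $\Psi$ carries fibers of $\Phi_{x}$ onto slices $\{y\}\times(\cdots)$ by orientation-preserving isometries. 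A short linear-algebra computation based on the polar decomposition $JF_{l}(u_{0},x)=(JF_{l}JF_{l}^{*})^{1/2}(u_{0},x)\,P$ with $P$ a partial isometry gives
\[
|\det d\Psi(u_{0})|=\bigl|\det(d\Phi_{x}(u_{0})|_{(T_{u_{0}}M_{x,y_{0}})^{\perp}})\bigr|=\sqrt{\det(JF_{l}(u_{0},x)\cdot JF_{l}(u_{0},x)^{*})}=K(u_{0},x)^{-1}.
\]
Applying the ordinary Euclidean change of variables for $\Psi$ on $V$ to any $\varphi\in C_{c}^{\infty}(V)$, and then Fubini in the $(y,\zeta)$ variables, produces
\[
\int_{V}\varphi(u)\,du=\int_{\mathbb{R}^{N}}dy\int_{M_{x,y}\cap V}K(u,x)\varphi(u)\,m_{x,y}(du),
\]
because the factor $|\det d\Psi|^{-1}=K$ converts the slice Lebesgue measure $d\zeta$ into the intrinsic volume form $m_{x,y}$ on the corresponding piece of $M_{x,y}$.

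To conclude, I would cover $\operatorname{supp}\varphi$ by finitely many such submersion-adapted neighborhoods $V_{\alpha}$, pick a smooth partition of unity $\{\chi_{\alpha}\}$ subordinate to this cover, write $\varphi=\sum_{\alpha}\chi_{\alpha}\varphi$, apply the local identity to each $\chi_{\alpha}\varphi$, and sum; both sides of \eqref{eq: disintegration} are linear in $\varphi$, and the right-hand side is well-defined independently of the cover since $m_{x,y}$ is intrinsic to $M_{x,y}$. Values of $y$ outside $\Phi_{x}(U)$ contribute nothing, consistently with the convention $m_{x,y}\equiv 0$ when $M_{x,y}=\emptyset$.

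The only real technical point -- which is bookkeeping rather than a conceptual obstacle -- is the Jacobian identity $|\det d\Psi(u_{0})|=\sqrt{\det(JF_{l}JF_{l}^{*})}$ and the compatibility of the orientation/metric structure used to define $m_{x,y}$ with that induced by the ambient Hilbert structure on $\mathfrak{g}^{(l)}$. Both reduce to the polar decomposition of $JF_{l}(u_{0},x):\mathfrak{g}^{(l)}\to\mathbb{R}^{N}$, whose non-degeneracy on $(\ker JF_{l})^{\perp}$ is guaranteed by Corollary~\ref{cor: local submersion}. Once these are pinned down, the coarea formula with kernel $K=(\det(JF_{l}JF_{l}^{*}))^{-1/2}$ follows directly.
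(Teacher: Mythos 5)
Your overall strategy — fix $x$, view $u\mapsto x+F_l(u,x)$ as a submersion on $\{\|u\|_{\textsc{HS}}<r\}$, straighten it locally by the implicit function theorem, apply the Euclidean change of variables plus Fubini, and glue with a partition of unity — is exactly the route taken in the paper's appendix, where the statement is proved as a general disintegration formula for a nondegenerate map $F:\mathbb{R}^m\to N$. However, there is a genuine gap in your local step. You claim that the straightening diffeomorphism $\Psi(u)=(\Phi_x(u),\zeta(u))$, built from coordinates adapted to the orthogonal splitting $T_{u_0}\mathfrak{g}^{(l)}=T_{u_0}M_{x,y_0}\oplus(T_{u_0}M_{x,y_0})^{\perp}$, ``carries fibers of $\Phi_x$ onto slices by orientation-preserving isometries,'' and you compute $|\det d\Psi|$ only at the single point $u_0$. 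Neither assertion holds on a whole neighborhood: away from $u_0$ the fibers $M_{x,y}$ are curved and tilted relative to the fixed subspace $Z=T_{u_0}M_{x,y_0}$, so the projection $\zeta$ restricted to a fiber contracts volume by a Gram factor $\sqrt{\det G(u)}$ of the fiber parametrization (equal to $1$ only at $u_0$), and likewise $|\det d\Psi(u)|=|\det(d\Phi_x(u)|_{(T_{u_0}M_{x,y_0})^{\perp}})|$ no longer equals $\sqrt{\det(JF_l(u,x)JF_l(u,x)^{*})}$, since $(T_{u_0}M_{x,y_0})^{\perp}$ is not the orthogonal complement of $\ker JF_l(u,x)$ at nearby $u$. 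The polar decomposition argument you invoke is a pointwise statement at $u_0$ and does not bridge this.

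What the proof actually requires is the pointwise identity, valid at every $u$ in the chart,
\begin{equation*}
\bigl|\det\bigl(d\Phi_x(u)\big|_{\text{transverse slice}}\bigr)\bigr|\cdot\sqrt{\det G(u)}=\sqrt{\det\bigl(JF_l(u,x)\,JF_l(u,x)^{*}\bigr)},
\end{equation*}
so that the change-of-variables determinant and the slice-to-fiber Gram factor combine to produce exactly the kernel $K(u,x)$ under the integral. This is precisely the computation carried out in the paper's appendix: there the slices are coordinate slices $\{x_2=\text{const}\}$ with $\partial y/\partial x_1$ invertible, the fiber Gram matrix $\bigl(\langle\partial_i z,\partial_j z\rangle\bigr)$ is computed explicitly, and Sylvester's determinant identity $\det(\mathrm{I}+AB)=\det(\mathrm{I}+BA)$ is what collapses the product into $\sqrt{\det\bigl(\tfrac{\partial y}{\partial x}(\tfrac{\partial y}{\partial x})^{*}\bigr)}$. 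Your write-up treats this as ``bookkeeping,'' but as stated your local formula would be wrong off the center point unless this compensation is proved; supplying that pointwise determinant identity (or citing/reproducing the appendix computation) is the missing piece. The partition-of-unity assembly and the convention for empty fibers are fine as you describe them.
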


\begin{comment}
\begin{rem}
The left hand-side of \eqref{eq: disintegration} does not depend on $x$, while the right hand side might seem to get this kind of dependence. However, some cancelation occurs in the integrals over $\mr^N$ and $M_{x,y}$, and \eqref{eq: disintegration} is valid for all $x\in\mr^N$.
\end{rem}
\end{comment}

The disintegration formula (\ref{eq: disintegration}) immediately leads to a formula for the (localized) density of the Taylor approximation process $X_l(t,x)$. We summarize this fact in the folloiwng proposition.

\begin{prop}\label{prop: representation of density} Let $\eta\in C_c^\infty(\{u\in\mathfrak{g}^{(l)}:\|u\|_\mathrm{HS}<r\})$ be a bump function so that $0\leq\eta\leq1$ and $\eta=1$ when $\|u\|_\mathrm{HS}<r/2$, where $r$ is the constant featuring in Proposition \ref{th: disintegration}. Define $\mathbb{P}^\eta_l(t,x,\cdot)$ to be the measure \[
\mathbb{P}_{l}^{\eta}(t,x, A)\triangleq\mathbb{E}\left[\eta(U_{t}){\bf 1}_{\{X_{l}(t,x)\in A\}}\right],\ \ \ A\in{\cal B}(\mathbb{R}^{N}),
\]where $U_t=\log \Gamma_t$, $\Gamma_t$ is defined by \eqref{eq: SDE for signature} and $X_{l}(t,x)=x+F_{l}(U_{t},x)$ is the approximation given by \eqref{eq:def-rough-eq}.
The measure $\mathbb{P}_{l}^{\eta}(t,x,\cdot)$ is absolutely continuous with
respect to the Lebesgue measure, and its density is given by 
\begin{equation}\label{eq: formula for p^eta_l}
p_{l}^{\eta}(t,x,y)\triangleq\int_{M_{x,y}}\eta(u)K(u,x)\rho_{t}(u)m_{x,y}(du).
\end{equation}
where $\rho_t$ is the density of $U_t$ alluded to in Lemma \ref{lem: positivity of rho} and $K$ is given by \eqref{kernel in disintegration}.
\end{prop}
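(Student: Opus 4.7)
The plan is to test the measure $\mathbb{P}_{l}^{\eta}(t,x,\cdot)$ against arbitrary $f\in C_c^\infty(\mathbb{R}^N)$ and use the disintegration formula of Proposition \ref{th: disintegration} to read off the density. First I would use the fact that $U_t$ admits a smooth density $\rho_t$ on $\mathfrak{g}^{(l)}$ (cf. Lemma \ref{lem: positivity of rho}) together with the relation $X_l(t,x)=x+F_l(U_t,x)$ to express
\begin{equation*}
\int_{\mathbb{R}^N} f(y)\,\mathbb{P}_{l}^{\eta}(t,x,dy)=\mathbb{E}\lc \eta(U_t)f(X_l(t,x))\rc=\int_{\mathfrak{g}^{(l)}}\eta(u)f(x+F_l(u,x))\rho_t(u)\,du.
\end{equation*}

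Next I would apply the disintegration formula \eqref{eq: disintegration} to the test function $\varphi(u)\triangleq\eta(u)f(x+F_l(u,x))\rho_t(u)$. This function lies in $C_c^\infty(\{u:\|u\|_{\textsc{HS}}<r\})$ because $\eta$ does, so Proposition \ref{th: disintegration} applies. Crucially, on the submanifold $M_{x,y}$ defined by \eqref{Mxy} we have $x+F_l(u,x)=y$ by definition, so $f(x+F_l(u,x))=f(y)$ can be pulled out of the inner integral. This yields
\begin{equation*}
\int_{\mathbb{R}^N} f(y)\,\mathbb{P}_{l}^{\eta}(t,x,dy)=\int_{\mathbb{R}^N}f(y)\left(\int_{M_{x,y}}\eta(u)K(u,x)\rho_t(u)\,m_{x,y}(du)\right)dy.
\end{equation*}
Since $f\in C_c^\infty(\mathbb{R}^N)$ is arbitrary, the measure $\mathbb{P}_{l}^{\eta}(t,x,\cdot)$ is absolutely continuous with respect to the Lebesgue measure with density given by \eqref{eq: formula for p^eta_l}.

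The only subtlety I anticipate is verifying that the disintegration formula applies cleanly here; this is fine because the truncation by $\eta$ confines $u$ to the region $\|u\|_{\textsc{HS}}<r$ where $F_l(\cdot,x)$ is a submersion by Corollary \ref{cor: local submersion}, so the Jacobian determinant $\det(JF_l \cdot JF_l^*)$ is uniformly positive and the kernel $K(u,x)$ is smooth on the support of $\varphi$. The formula \eqref{eq: formula for p^eta_l} then identifies $p_l^\eta(t,x,y)$ as the coarea-type fibre integral, which is a measurable (in fact smooth) function of $y$ thanks to standard smoothness of $\rho_t$ and of the coarea machinery. No further technical difficulty is expected.
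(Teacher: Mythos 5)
Your argument is correct and is essentially the paper's own proof: both rewrite $\mathbb{P}_l^\eta(t,x,\cdot)$ as an integral against the density $\rho_t$ of $U_t$ over $\mathfrak{g}^{(l)}$ and then apply the disintegration formula of Proposition \ref{th: disintegration}, using that $x+F_l(u,x)=y$ on the fibre $M_{x,y}$. The only cosmetic difference is that you test against smooth compactly supported $f$ rather than indicator functions ${\bf 1}_A$, which if anything fits the $C_c^\infty$ hypothesis of Proposition \ref{th: disintegration} more directly.
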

\begin{proof}
Since $X_l((t,x)=F_l(U_t,x)$, we have
\[
\mathbb{P}_l^\eta(t,x,A)=\mathbb{E}\left[\eta(U_{t}){\bf 1}_{\{F_l(U_t,x)\in A\}}\right],
\]
and one can recast this expression in terms of the density of $U_t$, namely
\[
\mathbb{P}_l^\eta(t,x,A)=\int_{\frak{g}^{(l)}}\eta(u){\bf 1}_{\{F_l(u,x)\in A\}}\rho_t(u)du.
\]
Then our conclusion \eqref{eq: formula for p^eta_l} stems from a direct application of \eqref{eq: disintegration}.
\end{proof}

\paragraph*{II. Estimating the volume form $m_{x,y}$.}
\label{sec: volume}
\addcontentsline{toc}{subsubsection}{\nameref{sec: volume}}

$\ $\\
\\
To obtain a sharp lower estimate on $p_{l}^{\eta}(t,x,y)$ from 
formula \eqref{eq: formula for p^eta_l} and the lower estimate of $\rho_t(u)$ given by \eqref{signature density lower bound}, one needs to estimate the volume form $m_{x,y}$ precisely.
For this purpose, we apply a change of variables $S_{x,y}:M_{x,y}\rightarrow M_{x,x}$
introduced in \cite{KS87}. In contrast to the Brownian motion case, our main challenge lies in respecting
the Cameron-Martin structure in order to obtain sharp estimates. In this section, we will
use the technique implemented in the proof of Theorem \ref{thm: local comparison}
and pathwise estimates for Cameron-Martin paths to achieve this.

The construction of the function $S_{x,y}$ alluded to above is based on the simple idea that in order to transform a loop $\alpha$ from $x$ to $x$ into a path from $x$ to $y$, we just concatenate a generic path from $x$ to $y$ to the loop $\alpha$. However, we are looking at this construction from the Taylor approximation point of view. 
More specifically, we define the operation $\times$ to be the multiplication induced from $G^{(l)}$ through the exponential map, namely
\begin{equation}\label{multiplication on Lie algebra}
v\times u\triangleq\log( \exp(v)\otimes \exp(u)),\ \ \ v,u\in\mathfrak{g}^{(l)}.
\end{equation}
As mentioned above, we would ideally like the operation $\times$ to transform elements $v\in M_{x,x}$ into elements $v\times u\in M_{x,y}$ where $u\in M_{x,y}$ is fixed. However, { due to the fact that $F_{l}$ is only an approximation of the flow $\Phi$}, this property will in general not be fulfilled. Nevertheless, if $\|u\|_{\text{HS}}$ and $\|v\|_{\text{HS}}$ are small enough the product $v\times u$ is close to an element of $M_{x,y}$, so that the function $\Psi_l$ of Lemma~\ref{lem: the Psi function} can be applied. We summarize those heuristic considerations in the following lemma (cf. \cite[Lemma 3.23]{KS87}), which gives the precise
construction of the change of variables $S_{x,y}:M_{x,y}\rightarrow M_{x,x}$.

\begin{lem}\label{lem: construction of S_xy}
{For $x\in\mr^N$ and $h\in\bch$, we set $y=\Phi_1(x;h)$ where recall that $\Phi$ is the flow defined by \eqref{eq: skeleton ODE}. Let $r$ be the constant arising in Lemma \ref{lem: the Psi function}. For $u\in M_{x,y}$ and $v\in M_{x,x}$, recall that $v\times u$ is defined by (\ref{multiplication on Lie algebra}). Then the following holds true:}

\noindent(i) There exist $\varepsilon,\rho_{1}>0$
and $\rho_{2}\in(0,r)$, such that for any given $x\in\mathbb{R}^{N}$
and $h\in\bar{{\cal H}}$ with $\|h\|_{\bar{{\cal H}}}<\rho_{1},$
the map 
\[
\tilde{\Psi}_{x,h}(v)\triangleq\Psi_{l}\left(v\times u,x, y-x-F_{l}(v\times u,x)+F_{l}(v,x)\right)
\]
defines a diffeomorphism from an open neighbourhood $V_{x,h}\subset\frak{g}^{(l)}$ of $0$ containing the ball $\{v\in\mathfrak{g}^{(l)}:\|v\|_{\textsc{HS}}<\varepsilon\}$ onto 
$W\triangleq\{w\in\mathfrak{g}^{(l)}:\|w\|_{\textsc{HS}}<\rho_{2}\}$,
where recall that the function $\Psi_l$ is defined in Lemma~\ref{lem: the Psi function}.
In addition,
$$v\in V_{x,h}\cap M_{x,x}\qquad  \text{iff}\qquad w\triangleq\tilde{\Psi}_{x,h}(v)\in W\cap M_{x,y}.$$
\begin{comment}
{\color{red}Revision comment: It is not possible to make $\tilde{\Psi}_{x,h}$ a diffeomorphism between two balls, so at least one of $V_{x,h}$ and $W$ cannot be a ball.}
\end{comment}
(ii) Recall that $d(x,y)$ is the control distance
function associated with the SDE (\ref{eq: hypoelliptic SDE}) and that the set $\Pi_{x,y}$ is defined by~\eqref{eq: Pi_xy}. Given $x,y\in\mathbb{R}^{N}$ with $d(x,y)<\rho_{1}/2$, choose
$h\in\Pi_{x,y}$ satisfying 
\begin{align}\label{picking h}
d(x,y)\leq\|h\|_{\bar{{\cal H}}}\leq2d(x,y)<\rho_{1},
\end{align}
and define 
\begin{align}\label{def of Sxy}
S_{x,y}\triangleq\left.\tilde{\Psi}_{x,h}^{-1}\right|_{W\cap M_{x,y}}:W\cap M_{x,y}\rightarrow V_{x,h}\cap M_{x,x}.
\end{align}
Then there exist a constant $C>0$$,$ such that 
\begin{align}\label{compare vol mxx and mxy}
\frac{1}{C}\cdot m_{x,x}(\cdot)\leq m_{x,y}\circ S_{x,y}^{-1}(\cdot)\leq C\cdot m_{x,x}(\cdot)
\end{align}
on $V_{x,h}\cap M_{x,x}.$
\end{lem}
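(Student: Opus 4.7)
The plan is to use the function $\Psi_l$ from Lemma~\ref{lem: the Psi function} as a corrector that compensates for the error made by replacing the true flow by the Taylor approximation $F_l$. For part~(i), I would specify the element $u$ entering the formula as $u \triangleq \Psi_l(0,x,y-x)$, which by Lemma~\ref{lem: the Psi function} is well-defined whenever $|y-x|<r$ and satisfies both $F_l(u,x)=y-x$ (so $u\in M_{x,y}$) and $\|u\|_{\textsc{HS}}\leq A|y-x|$. Applying the defining property $F_l(\Psi_l(w,x,\eta),x)=F_l(w,x)+\eta$ with $w=v\times u$ then gives
\begin{equation*}
F_l\bigl(\tilde{\Psi}_{x,h}(v),x\bigr) = y - x + F_l(v,x),
\end{equation*}
from which the equivalence $v\in M_{x,x}\iff \tilde{\Psi}_{x,h}(v)\in M_{x,y}$ is immediate. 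The local diffeomorphism property follows from $\tilde{\Psi}_{x,h}(0)=\Psi_l(u,x,0)=u$ and a computation showing $D\tilde{\Psi}_{x,h}(0)$ is a perturbation of the identity on $\mathfrak{g}^{(l)}$ of size $O(|y-x|)$, via a Campbell-Baker-Hausdorff expansion of $v\mapsto v\times u$ combined with the smoothness of $\Psi_l$ and $F_l$; the quantitative inverse function theorem then produces the $\varepsilon$-ball mapping into the ball $W$ of radius $\rho_2$, with $V_{x,h}$ taken as the full preimage $\tilde{\Psi}_{x,h}^{-1}(W)$.

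To calibrate the constants $\rho_1,\rho_2,\varepsilon$ uniformly in $x$ and $h$, I would bound $|y-x|$ in terms of $\|h\|_{\bar{\mathcal{H}}}$ through Proposition~\ref{prop: variational embedding} and the pathwise estimate~\eqref{b2} borrowed from \cite[Theorem~10.14]{FV10}, which yields $|y-x|\leq C_{H,V}\|h\|_{\bar{\mathcal{H}}}$ for $\|h\|_{\bar{\mathcal{H}}}$ bounded. Taking $\rho_1$ small enough forces $\|u\|_{\textsc{HS}}<r/2$, after which the smallness constraints required to apply $\Psi_l$ at the argument $v\times u$ (namely $\|v\times u\|_{\textsc{HS}}<r$ together with $|y-x-F_l(v\times u,x)+F_l(v,x)|<r$) are satisfied uniformly whenever $\|v\|_{\textsc{HS}}<\varepsilon$ for some $\varepsilon$ depending only on $l$ and the vector fields.

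For part~(ii), the choice~\eqref{picking h} together with Theorem~\ref{thm: local comparison} gives $|y-x|\leq C d(x,y)<C\rho_1/2$, keeping every perturbation parameter small uniformly in $x$ and $y$. The push-forward $m_{x,y}\circ S_{x,y}^{-1}$ differs from $m_{x,x}$ by the Jacobian determinant of $D\tilde{\Psi}_{x,h}(v)$ restricted to $T_v M_{x,x}\to T_{\tilde{\Psi}_{x,h}(v)} M_{x,y}$, read in orthonormal frames. The expansion from part~(i) shows this restricted Jacobian is within $O(d(x,y))$ of an orthogonal transformation, which yields~\eqref{compare vol mxx and mxy} with constants depending only on $l$ and the vector fields. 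The main obstacle will be producing \emph{uniform} constants despite the fact that $T_v M_{x,x}$ varies with $(v,x)$; I would address this by adapting orthonormal frames to the decomposition $\mathfrak{g}^{(l)} = \ker JF_l(v,x) \oplus (\ker JF_l(v,x))^\perp$, whose smoothness and non-degeneracy in $(v,x)$ are guaranteed by Lemma~\ref{lem: nondegeneracy of F} and Corollary~\ref{cor: local submersion}, so that every intermediate quantity can be bounded using only the $C_b^\infty$ norms of the vector fields.
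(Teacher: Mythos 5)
The paper itself does not prove this lemma; it is imported from \cite[Lemma 3.23]{KS87}, so there is no internal argument to compare with line by line. Judged on its own terms, your mechanism is the right one: the identity $F_l(\tilde{\Psi}_{x,h}(v),x)=y-x+F_l(v,x)$ coming from Lemma \ref{lem: the Psi function}(iii) gives the equivalence $v\in M_{x,x}\Leftrightarrow\tilde{\Psi}_{x,h}(v)\in M_{x,y}$, a quantitative inverse function theorem around a differential of the form $\mathrm{Id}+O(\text{small})$ gives part (i), and for part (ii) it suffices that the differential is a uniformly small perturbation of the identity: its restriction $T_vM_{x,x}\to T_wM_{x,y}$ then has singular values uniformly close to $1$, which is exactly what \eqref{compare vol mxx and mxy} needs (you do not even need your frame construction, and the constant only has to be uniform, not $1+O(d(x,y))$).

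The genuine problem is your identification of $u$. You take $u\triangleq\Psi_l(0,x,y-x)$, but the map is indexed by $h$ precisely because the intended $u$ is the truncated log-signature $u=\log S_l(h)$ of the chosen Cameron--Martin path (the statement's phrase ``$u\in M_{x,y}$'' is loose: $\log S_l(h)$ joins $x$ to $y$ only up to the Taylor remainder). This choice is what the later proof of Lemma \ref{lem: estimating S_xy} relies on: there one identifies $v\times u=\log S_l(\gamma\sqcup h)$ for a path $\gamma$ with signature $\exp(v)$, controls $y-x-F_l(v\times u,x)$ by the rough-path Euler estimate for $\Phi_1(x;\gamma\sqcup h)$, and uses $\|u\|_{\textsc{CC}}\leq C_{H,l}\|h\|_{\bar{\mathcal H}}\leq 2C_{H,l}\,d(x,y)$. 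With your $u$ one only gets $\|u\|_{\textsc{CC}}\lesssim\|u\|_{\textsc{HS}}^{1/l}\lesssim|y-x|^{1/l}\sim d(x,y)^{1/l}$, so the two-sided bound \eqref{eq: estimating S_xy} would degrade to $\|v\|_{\textsc{CC}}+d(x,y)^{1/l}$ and the ensuing volume and density estimates (Lemmas \ref{lem: lower estimate in terms of m_xx} and \ref{lem: estimate vol m_xx}) would lose their sharpness. Your argument does survive the correct choice with minor edits: smallness of $\|u\|_{\textsc{HS}}$ now comes from $\|u\|_{\textsc{CC}}\leq C\|h\|_{q\text{-var}}\leq C\|h\|_{\bar{\mathcal H}}<C\rho_1$ (Lyons extension plus Proposition \ref{prop: variational embedding}, then the ball-box estimate) rather than from $A|y-x|$, and $\tilde{\Psi}_{x,h}(0)=\Psi_l(u,x,y-x-F_l(u,x))$ is only close to $u$, the offset being the Taylor remainder, which is harmless in the perturbation computation. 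Finally, taking $V_{x,h}$ to be the full preimage $\tilde{\Psi}_{x,h}^{-1}(W)$ does not by itself guarantee injectivity; take the connected component of $0$ (or intersect with a fixed ball on which $\|D\tilde{\Psi}_{x,h}-\mathrm{Id}\|\leq 1/2$), which still contains a uniform $\varepsilon$-ball as required.
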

%
%The following two lemmas, which respects the Cameron-Martin structure,
%are crucial for obtaining a sharp lower estimate on the density $p_{l}^{\eta}(t,x,y)$. 
The previous lemma sets the stage for a useful change of variable in \eqref{eq: formula for p^eta_l}. Our next step is to provide some useful bounds for the integral in~\eqref{eq: formula for p^eta_l}. We first need the following crucial lemma.
\begin{lem}\label{lem: d respect cc}
\label{lem: d < CC}There exist constants $C,\kappa>0$ such that
for any $u\in\mathfrak{g}^{(l)}$ with $\|u\|_{\textsc{HS}}<\kappa$,
we have 
\begin{align}\label{d respect cc}
d(x,x+F_{l}(u,x))\leq C\|u\|_{\textsc{CC}}.
\end{align}
\end{lem}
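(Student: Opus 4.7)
The plan is to adapt the iterative construction from Section~\ref{sec:proof-thm-12} (the proof of Theorem~\ref{thm: local comparison}) to produce, for every $u\in\mathfrak{g}^{(l)}$ with $\|u\|_{\textsc{HS}}$ sufficiently small, a Cameron--Martin path $\tilde h$ joining $x$ to $y:=x+F_l(u,x)$ whose norm is bounded by a universal constant times $\|u\|_{\textsc{CC}}$. The only modification of the original scheme lies in the initialization: rather than setting $u_1:=\Psi_l(0,x_1,y-x_1)$, which there produced the suboptimal starting bound $\|u_1\|_{\textsc{CC}}\leq C|x-y|^{1/l_0}$ after an application of the ball--box inequality, I would directly take $u_1:=u$, since the identity $y-x=F_l(u,x)$ already says that $u$ joins $x$ to $y$ in the sense of Taylor approximation. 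This yields the sharp starting value $\|u_1\|_{\textsc{CC}}=\|u\|_{\textsc{CC}}$, while for $m\geq 2$ the definition $u_m:=\Psi_l(0,x_m,y-x_m)$ is kept unchanged.

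Starting from $u_1=u$, I would lift it via Lemma~\ref{lem: quasi-inverse of signature map} to a smooth path $h_1=\|u_1\|_{\textsc{CC}}\bar h_1$ with $S_l(h_1)=\exp(u_1)$ and $\|h_1\|_{1\text{-var}}\leq C\|u_1\|_{\textsc{CC}}$, and then set $x_2:=\Phi_1(x_1;h_1)$. The Taylor remainder bound of \cite[Proposition 10.3]{FV10} combined with the ball--box inequality applied in $G^{(l)}$ delivers, exactly as in Step 2 of Section~\ref{sec:proof-thm-12}, the recursive estimates $|x_{m+1}-y|\leq C\|u_m\|_{\textsc{CC}}^{l+1}$ and $\|u_{m+1}\|_{\textsc{CC}}\leq C\|u_m\|_{\textsc{CC}}^{1+1/l}$. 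Since $1+1/l>1$, these furnish a uniform geometric decay $\|u_m\|_{\textsc{CC}}\leq q^{m-1}\|u\|_{\textsc{CC}}$ with some $q\in(0,1)$ as soon as $\|u\|_{\textsc{CC}}$ is taken sufficiently small; by another application of ball--box this is guaranteed by the hypothesis $\|u\|_{\textsc{HS}}<\kappa$ provided $\kappa=\kappa(l,V)$ is chosen small enough, and the same smallness ensures that $\|u_m\|_{\textsc{HS}}$ and $|x_m-y|$ remain in the domain of $\Psi_l$ afforded by Lemma~\ref{lem: the Psi function} throughout the iteration.

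With the sequence $\{u_m,h_m,x_m\}_{m\geq 1}$ in place, I would concatenate rescaled copies of the $h_m$ on intervals $I_m$ of length $|I_m|=\|u_m\|_{\textsc{CC}}$ to form $\tilde h$, and estimate its Cameron--Martin norm via the decomposition $\|\tilde h^{(m)}\|_{\bar{\mathcal H}}^2\leq Q_1+Q_2+Q_3$ from Steps 5--7 of Section~\ref{sec:proof-thm-12}. The bounds obtained there, namely $Q_i\leq C\sum_k k^{2H+1}|I_k|^{2(1-H)}$, only use the shape of the rescaled profiles $\bar h_k$ (support in $[1/3,2/3]$, bounded second derivative) and are therefore insensitive to the new starting value, so they carry over verbatim. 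Combining them with the scaling identity $d(x,y)\leq|I|^H\|\tilde h\|_{\bar{\mathcal H}([0,|I|])}$ of Lemma~\ref{lem: CM scaling} (valid for every $H\in(1/4,1)$ by the remark thereafter) and the geometric decay of $|I_k|$, one obtains
\begin{equation*}
d(x,y)^2\leq C\Bigl(\sum_k|I_k|\Bigr)^{2H}\sum_k k^{2H+1}|I_k|^{2(1-H)}\leq C\,\|u\|_{\textsc{CC}}^{2H}\cdot\|u\|_{\textsc{CC}}^{2(1-H)}=C\,\|u\|_{\textsc{CC}}^2,
\end{equation*}
which is precisely \eqref{d respect cc}. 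The main obstacle is not analytic but combinatorial: one must carefully track how the constants from the Taylor remainder, the ball--box conversion and the $\Psi_l$-domain restriction interact so that a single choice of $\kappa=\kappa(l,V)$ simultaneously secures the contraction ratio $q<1$ and the invariance $|x_m-y|<r$, $\|u_m\|_{\textsc{HS}}<r$ along the orbit. Once this bookkeeping is done, the rest is a cosmetic repetition of the argument already spelled out for Theorem~\ref{thm: local comparison}.
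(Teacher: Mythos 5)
Your treatment of the case $H>1/2$ is exactly the paper's own proof: the entire point of the paper's argument for Lemma \ref{lem: d respect cc} is the initialization $u_1=u$ (instead of $u_1=\Psi_l(0,x,y-x)$), which makes $|I_1|=\|u\|_{\textsc{CC}}$ and turns the bound \eqref{eq: estimating d by I_m} into $d(x,y)^2\leq C\bigl(\sum_k|I_k|\bigr)^{2H}\sum_k k^{2H+1}|I_k|^{2(1-H)}\leq C\|u\|_{\textsc{CC}}^2$, using the (super)geometric decay of $|I_k|$ started from $|I_1|=\|u\|_{\textsc{CC}}$. Your recursion, the smallness bookkeeping via the ball--box inequality, and the conclusion coincide with Steps 1--8 of Section \ref{sec:proof-thm-12} run with this new starting value, so for $H>1/2$ the proposal is correct and identical in approach.

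The gap is your claim that the $Q_1+Q_2+Q_3$ estimates ``carry over verbatim'' for every $H\in(1/4,1)$. That decomposition rests on the representation $\|h\|_{\bar{\mathcal H}}=\|K^{-1}h\|_{L^2}$ with $K^{-1}$ expressed through the fractional derivative $D_{0^+}^{H-1/2}$ (see \eqref{eq: analytic expression of K} and \eqref{eq: formula for fractional derivatives}), which is the correct inversion formula only for $H>1/2$; moreover several estimates in Steps 5--7 use $H>1/2$ quantitatively, e.g.\ the inequality $2(1-H)<1$ invoked in \eqref{bound for Q1} and the explicit evaluation of the $v$-integral following \eqref{Lk23}, which produces the factor $(H-\tfrac12)^{-1}$. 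Only the scaling identity of Lemma \ref{lem: CM scaling} is valid for all $H$. For $H\in(1/4,1/2]$ you therefore need a separate (much easier) argument: the paper reduces this case to the diffusion setting via the continuous embedding $W_0^{1,2}\subseteq\bar{\mathcal H}$ of Lemma \ref{lem: continuous embedding when H<1/2} together with the corresponding Kusuoka--Stroock estimate \cite{KS87}; alternatively, note that your concatenated path satisfies $\|\dot{\tilde h}\|_{\infty}\leq C_{l_0}$, so after reparametrizing it to $[0,1]$ its $W^{1,2}$-norm is bounded by $C\sum_k|I_k|\leq C\|u\|_{\textsc{CC}}$, and the same embedding yields $d(x,y)\leq C\|u\|_{\textsc{CC}}$. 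With that case added, your proof matches the paper's.
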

\begin{proof}
We only consider the case when $H>1/2$, as the other case follows
from Lemma \ref{lem: continuous embedding when H<1/2} and the result
for the diffusion case proved in \cite{KS87}. When $H>1/2,$ the
argument is an adaptation of the proof of Theorem \ref{thm: local comparison}.
We use the same notation as in that proof, except for the fact that $l_{0}$ is replaced
by a general $l\geq l_{0}$ exclusively.

We set up an inductive procedure as in the proof of Theorem \ref{thm: local comparison}. Namely,  denote $u_{1}\triangleq u,$ $x_{1}\triangleq x$,  and $y\triangleq x+F_{l}(u,x).$
Choose $\kappa_{1}>0$ so that 
\begin{align}\label{cc small imply x close to y}
\|u\|_{\textsc{CC}}<\kappa_{1}\implies|y-x|<\delta,
\end{align}
where $\delta$ is again the constant arising in the proof of Theorem \ref{thm: local comparison}. By constructing successively
elements $u_{m}\in\mathfrak{g}^{(l)}$ and intervals $I_{m}$, we obtain exactly as in \eqref{d1} that
\begin{equation}
|I_{m}|=\|u_{m}\|_{\textsc{CC}}\leq C_{V,l}\|u_{m-1}\|^{1+\frac{1}{l}}=C_{V,l}|I_{m-1}|^{1+\frac{1}{l}} .\label{eq: recursive estimate of I_m}
\end{equation}
In addition, along the same lines as (\ref{eq: estimating d by I_m}) and \eqref{d2}, we have 
\begin{align}\label{bound d by pieces}
d(x,x+F_{l}(u,x))^{2}\leq 
C_{H,l}
\lim_{m\to\infty}
\left(\sum_{k=1}^{m}|I_{k}|\right)^{2H}\left(\sum_{k=1}^{m}k^{2H+1}|I_{k}|^{2(1-H)}\right).
\end{align}
We will now bound the right hand side of \eqref{bound d by pieces}.

Let us set $\alpha\triangleq1+1/l$. By iterating (\ref{eq: recursive estimate of I_m}),
we obtain that 
\[
|I_{m}|\leq\left(C_{V,l}|I_{1}|\right)^{\alpha^{m-1}},\ \ \ \forall m\geq1.
\]
Therefore, we can bound the two terms on the right hand-side of \eqref{bound d by pieces} as follows:
\begin{align}\label{first term in bound d by pieces}
\sum_{k=1}^{m}|I_{k}|\leq C_{V,l}|I_{1}|\cdot\left(\sum_{k=1}^{m}(C_{V,l}|I_{1}|)^{\alpha^{k-1}-1}\right)
\end{align}
and 
\begin{align}\label{second term in bound d by pieces}
\sum_{k=1}^{m}k^{2H+1}|I_{k}|^{2(1-H)}\leq(C_{V,l}|I_{1}|)^{2(1-H)}\cdot\left(\sum_{k=1}^{m}k^{2H+1}(C_{V,l}|I_{1}|)^{2(1-H)(\alpha^{k-1}-1)}\right).
\end{align}
Let us now bound the term $|I_1|$ in \eqref{first term in bound d by pieces} and \eqref{second term in bound d by pieces}. To this aim, recall that we have chosen $u_1=u$. Therefore one can choose $\kappa_{2}>0$ so that 
\begin{align}\label{u cc norm small imply I1 small}
\|u\|_{\textsc{CC}}<\kappa_{2}\implies C_{V,l}|I_{1}|=C_{V,l}\|u_1\|_{\textsc{CC}}\leq\frac{1}{2}.
\end{align}
We will assume that both \eqref{cc small imply x close to y} and \eqref{u cc norm small imply I1 small} are satisfied in the sequel, under the condition that $\|u\|_{\textsc{CC}}<\kappa$ with $\kappa=\kappa_1\wedge\kappa_2$.
In order to bound \eqref{first term in bound d by pieces} and \eqref{second term in bound d by pieces} above, also
observe that the series 
\begin{align}\label{two series}
\sum_{m=1}^{\infty}\left(\frac{1}{2}\right)^{\alpha^{m-1}-1},\ \quad\text{and}\ \quad \sum_{m=1}^{\infty}m^{2H+1}\left(\frac{1}{2}\right)^{2(1-H)(\alpha^{m-1}-1)}
\end{align}
are convergent. Therefore plugging \eqref{two series} and \eqref{u cc norm small imply I1 small}, and then \eqref{first term in bound d by pieces} and \eqref{second term in bound d by pieces} into \eqref{bound d by pieces}, we get that 
for $\|u\|_{\textsc{CC}}<\kappa$ we have
\begin{equation*}
d(x,x+F_{l}(u,x))^{2}  \leq C_{H,V,l}|I_{1}|^{2H}\cdot|I_{1}|^{2(1-H)}
 =C_{H,V,l}|I_{1}|^{2}
 =C_{H,V,l}\|u\|_{\textsc{CC}}^{2}.
\end{equation*}
Therefore, our result \eqref{d respect cc} follows.
\end{proof}
Lemma \ref{lem: d respect cc} yields the following two-sided estimate on the transformation $S_{x,y}$.

\begin{lem}
\label{lem: estimating S_xy}Keeping the same notation as in Lemma
\ref{lem: construction of S_xy}, let $x,y\in\mr^N$ be such that $d(x,y)<\rho_1/2$ and let $h\in\Pi_{x,y}$ fulfilling condition \eqref{picking h}. Then the function $S_{x,y}$ defined by \eqref{def of Sxy} satisfies the following inequality for all $v\in V_{x,h}\cap M_{x,x}$,
\begin{equation}
\frac{1}{\Lambda}\cdot\left(\|v\|_{\textsc{CC}}+d(x,y)\right)\leq\|S_{x,y}^{-1}(v)\|_{\textsc{CC}}\leq \Lambda\cdot\left(\|v\|_{\textsc{CC}}+d(x,y)\right)\label{eq: estimating S_xy}
\end{equation}
{for  some constant $\Lambda>0.$}

\end{lem}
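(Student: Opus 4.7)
The plan is the following. Let $w\triangleq S_{x,y}^{-1}(v)=\tilde{\Psi}_{x,h}(v)$, so that $w\in M_{x,y}$ and $v\in M_{x,x}$, and let $u$ denote the element of $M_{x,y}$ appearing in the definition of $\tilde{\Psi}_{x,h}$. The starting observation is that, by the ball--box estimate applied to $S_{l}(h)$ together with the variational embedding of Proposition~\ref{prop: variational embedding} and a small smooth correction via $\Psi_{l}$, one may arrange
\begin{equation*}
\|u\|_{\textsc{CC}}\le C\,\|h\|_{\bar{\mathcal H}}\le 2C\,d(x,y).
\end{equation*}
The contribution of $d(x,y)$ to the lower bound in \eqref{eq: estimating S_xy} is then essentially free: since $w\in M_{x,y}$, Lemma~\ref{lem: d respect cc} yields
$d(x,y)=d\bigl(x,\,x+F_{l}(w,x)\bigr)\le C\,\|w\|_{\textsc{CC}}$,
as soon as $\|w\|_{\textsc{HS}}$ stays below the constant $\kappa$ of Lemma~\ref{lem: d respect cc}.

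The technical heart of the proof is a near-multiplicativity estimate that compares $w$ with $v\times u$. By Lemma~\ref{lem: the Psi function}(ii) we already have $\|w-v\times u\|_{\textsc{HS}}\le A|\eta_{v}|$ where $\eta_{v}\triangleq y-x-F_{l}(v\times u,x)+F_{l}(v,x)$. Using that $y-x=F_{l}(u,x)$, the defect $\eta_{v}$ can be rewritten as
\begin{equation*}
\eta_{v}=F_{l}(u,x)+F_{l}(v,x)-F_{l}(v\times u,x),
\end{equation*}
which measures how far $F_{l}(\cdot,x)$ is from being a group homomorphism with respect to $\times$. A direct inspection shows that $\eta_{v}$ vanishes both when $v=0$ and when $u=0$, so the smoothness of $F_{l}$ combined with the standard mixed second-derivative identity gives the bilinear bound
\begin{equation*}
|\eta_{v}|\le C\,\|v\|_{\textsc{HS}}\,\|u\|_{\textsc{HS}}.
\end{equation*}
Converting HS to CC on small elements via Proposition~\ref{prop: ball-box estimate}, this translates into
$\|w-v\times u\|_{\textsc{CC}}\le C'\bigl(\|v\|_{\textsc{CC}}\,\|u\|_{\textsc{CC}}\bigr)^{1/l}$,
which is a lower-order perturbation of the additive decomposition $\|v\times u\|_{\textsc{CC}}\le\|v\|_{\textsc{CC}}+\|u\|_{\textsc{CC}}$ as long as $d(x,y)$ is small.

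From this near-multiplicativity, the upper bound in \eqref{eq: estimating S_xy} follows from
\begin{equation*}
\|w\|_{\textsc{CC}}\le \|v\times u\|_{\textsc{CC}}+\|(v\times u)^{-1}\times w\|_{\textsc{CC}}\le \|v\|_{\textsc{CC}}+\|u\|_{\textsc{CC}}+C'(\|v\|_{\textsc{CC}}\|u\|_{\textsc{CC}})^{1/l},
\end{equation*}
combined with $\|u\|_{\textsc{CC}}\le 2Cd(x,y)$. For the remaining part of the lower bound, I would invert the approximate identity $w\approx v\times u$ to obtain $v=w\times u^{-1}+(\text{small})$, leading after ball--box conversion to
\begin{equation*}
\|v\|_{\textsc{CC}}\le \|w\|_{\textsc{CC}}+\|u\|_{\textsc{CC}}+C''\bigl(\|v\|_{\textsc{CC}}\|u\|_{\textsc{CC}}\bigr)^{1/l},
\end{equation*}
and then absorb the last term into the left-hand side using that $\|u\|_{\textsc{CC}}\le Cd(x,y)$ with $d(x,y)<\rho_{1}/2$ chosen small enough. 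Together with $\|u\|_{\textsc{CC}}\le C\|w\|_{\textsc{CC}}$ this gives $\|v\|_{\textsc{CC}}\le\Lambda\|w\|_{\textsc{CC}}$, completing \eqref{eq: estimating S_xy}.

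The main obstacle is making the bilinear estimate on $\eta_{v}$ quantitative with constants uniform in $(x,h)$, which requires exploiting both the polynomial structure of $F_{l}$ and the boundedness of $V_{x,h}$ guaranteed by the diffeomorphism property in Lemma~\ref{lem: construction of S_xy}(i); the absorption argument on the lower-bound side then works only after possibly shrinking $\rho_{1}$, but this adjustment is compatible with all previous constructions.
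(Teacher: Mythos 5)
There is a genuine quantitative gap at the final "absorption'' step, and it is fatal for $l\geq 3$ (the case one actually needs, since later in Step Three $l$ must be taken large). Your HS-level bound on the homomorphism defect, $|\eta_v|\leq C\|v\|_{\textsc{HS}}\|u\|_{\textsc{HS}}$, is correct but far too weak once you pass to the \textsc{CC} scale: the ball--box estimate only gives $\rho_{\textsc{CC}}(w,v\times u)\leq C\|w-v\times u\|_{\textsc{HS}}^{1/l}\leq C\bigl(\|v\|_{\textsc{CC}}\|u\|_{\textsc{CC}}\bigr)^{1/l}$, and this remainder is \emph{not} of lower order than $\|v\|_{\textsc{CC}}+d(x,y)$. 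For instance, in the regime $\|v\|_{\textsc{CC}}\sim\|u\|_{\textsc{CC}}\sim d(x,y)=d\to 0$ the remainder is of order $d^{2/l}\gg d$ when $l\geq 3$, so neither $\|w\|_{\textsc{CC}}\leq\Lambda(\|v\|_{\textsc{CC}}+d)$ nor the reverse inequality follows. Your Young-inequality absorption makes this explicit: it produces a term $C\|u\|_{\textsc{CC}}^{1/(l-1)}\sim d^{1/(l-1)}$, which cannot be dominated by $\Lambda d(x,y)$ or by $\Lambda\|w\|_{\textsc{CC}}$ (via Lemma \ref{lem: d respect cc}, $d\leq C\|w\|_{\textsc{CC}}$ only gives $d^{1/(l-1)}\leq C\|w\|_{\textsc{CC}}^{1/(l-1)}$, which dominates $\|w\|_{\textsc{CC}}$ for small $w$). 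The paper's proof avoids exactly this trap: instead of the generic bilinear bound, it estimates the defect by $|\eta_v|\leq Q_1+Q_2\leq C(\|v\|_{\textsc{CC}}+\|h\|_{\bar{\mathcal H}})^{\bar l}$ with $\bar l>l$, by comparing with the \emph{exact} flow $\Phi_1(x;\gamma\sqcup h)$ and invoking the Euler/Taylor estimates of Friz--Victoir together with the variational embedding $\|h\|_{q\text{-var}}\leq C_H\|h\|_{\bar{\mathcal H}}$; after the $1/l$-th root the exponent $\bar l/l>1$ makes the remainder genuinely absorbable. In other words, the true size of the defect is $O\bigl((\|v\|_{\textsc{CC}}+\|h\|_{\bar{\mathcal H}})^{\bar l}\bigr)$, much smaller than bilinear, and this improvement is precisely what the lemma needs; a purely algebraic "$F_l$ is almost a homomorphism'' argument cannot see it.

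A secondary issue: your rewriting $\eta_v=F_l(u,x)+F_l(v,x)-F_l(v\times u,x)$ uses $y-x=F_l(u,x)$, i.e.\ $u\in M_{x,y}$, whereas the $u$ entering the definition of $\tilde\Psi_{x,h}$ (and hence the bound $\|w-v\times u\|_{\textsc{HS}}\leq A|\eta_v|$ from Lemma \ref{lem: the Psi function}(ii)) is $\log S_l(h)$, which only joins $x$ to $y$ up to a Taylor remainder; if you correct $u$ by $\Psi_l$ as you suggest, you change the element relative to which $w$ is defined. This particular discrepancy is harmless in size (it is $O(d(x,y)^{\bar l})$ and could be patched with the same Euler estimates), but it is symptomatic of the exact-flow versus Taylor-approximation distinction that the paper's $Q_1/Q_2$ decomposition is designed to handle and that your argument bypasses. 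The parts of your plan that do match the paper are the bounds $\|u\|_{\textsc{CC}}\leq C\|h\|_{\bar{\mathcal H}}\leq Cd(x,y)$ and the use of Lemma \ref{lem: d respect cc} to recover $d(x,y)\leq C\|w\|_{\textsc{CC}}$ for the lower bound.
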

\begin{proof}
We prove the two inequalities in our claim separately. 

\smallskip

\noindent{\it Step 1: proof of the upper bound in \eqref{eq: estimating S_xy}}.   To this aim, we
set $w\triangleq S_{x,y}^{-1}(v)$ and $u\triangleq\log S_{l}(h)$. Note that according to the definition of $S_{x,y}$ given in Lemma \ref{lem: construction of S_xy} we have  $w=\tilde{\Psi}_{x,h}(v)$. We wish to upper bound $\|w\|_{\textsc{CC}}$ in order to get the second part of \eqref{eq: estimating S_xy}.

According to Lemma \ref{lem: construction of S_xy}, $\|w\|_{\textsc{CC}}$ is close to $\|v\times u\|_{\textsc{CC}}$. Specifically, 
Proposition \ref{prop: ball-box estimate} entails %From the ball-box estimate, since $\rho_{1}$ and $r$ are small (so
%are $u,v,w$), we have 
\[
\left|\|w\|_{\textsc{CC}}-\|v\times u\|_{\textsc{CC}}\right|\leq\rho_{\textsc{CC}}(w,v\times u)\leq C_{1}\|w-v\times u\|_{\textsc{HS}}^{\frac{1}{l}}.
\]
Let $\gamma\in C^{1-{\rm var}}([0,1];\mathbb{R}^{d})$ be such that
$v=\log (S_{l}(\gamma))$ and $\|v\|_{\textsc{CC}}=\|\gamma\|_{1-{\rm var}}.$
By the definition of $\tilde{\Psi}_{x,h}$, the fact that $w=\tilde{\Psi}_{x,h}(v)$ and Lemma \ref{lem: the Psi function}
(ii), we have 
\begin{align}
 & \|w-v\times u\|_{\textsc{HS}}\nonumber\\
 & \leq A|y-x-F_{l}(v\times u,x)|\nonumber\\
 & \leq A\left(|y-\Phi_{1}(x;\gamma\sqcup h)|+|\Phi_{1}(x;\gamma\sqcup h)-x-F_{l}(v\times u,x)|\right)\triangleq A(Q_1+Q_2).\label{Q1 and Q2}
\end{align}
We will now bound the terms $Q_1$ and $Q_2$ respectively.  

In order to estimate the term $Q_1$ in \eqref{Q1 and Q2}, let us recall that $y=\Phi_1(x;h)$. Therefore using the flow property of $\Phi$ we get
$$Q_1=|\Phi_1(x;h)-\Phi_1(\Phi_1(x,\gamma);h)|.$$
We can now use some standard estimate on the flow of differential equations (cf. \cite[Theorem 10.26]{FV10}) plus the fact that $\|h\|_{\bch}$ is bounded by $\rho_1$ (cf. condition \eqref{picking h}) in order to get
$$Q_1\leq C_2|\Phi_1(x;\gamma)-x|.$$
In addition, $v$ is assumed to be an element of $M_{x,x}$. According to \eqref{Mxy}, this means in particular that $F_l(v,x)=0$. Hence we obtain
$$Q_1\leq C_2|\Phi_1(x;\gamma)-x-F_l(v,x)|.$$
Thanks to the Euler estimate of \cite[Corollary 10.15]{FV10}, and the fact that $v=\log (S_l(\gamma))$, we end up with
\begin{align}\label{estimate Q1}Q_1\leq C_2 C_{V,l}\|\gamma\|_{1-\rm var}^{\bar{l}}=C_2C_{V,l}\|v\|_{\textsc{CC}}^{\bar{l}},\end{align}
where $\bar{l}$ is any given number in $(l,l+1)$.

In order to handle the term $Q_2$ in \eqref{Q1 and Q2}, let us observe that whenever $H>1/4$, Proposition \ref{prop: variational embedding} entails that $h\in C^{q-\rm var}$ for $q\in[1,2)$, for all $h\in\bch$. Moreover, the following inequality holds true,
\begin{equation}
\|h\|_{q-{\rm var}}\leq C_{H}\|h\|_{\bar{{\cal H}}}\label{eq: variational estimate for CM path}.
\end{equation}
Therefore, since $v\times u=\log (S_l(\gamma\sqcup h))$, the rough path estimates in \cite[Corollary 10.15]{FV10} yield
\begin{align*}
Q_2\leq C_{V,l}\|\gamma\sqcup h\|_{q-{\rm var}}^{\bar{l}}\leq C_{V,l}2^{\bar{l}-\frac{\bar{l}}{q}}\left(\|\gamma\|_{1-{\rm var}}+\|h\|_{q-{\rm var}}\right)^{\bar{l}},
\end{align*}
where the last inequality stems from the simple relation

%where 
%\[
%[1,2)\ni q\triangleq\begin{cases}
%\frac{1}{H}, & {\rm if}\ H>1/2;\\
%\frac{1}{2H}, & {\rm if}\ H\leq1/2.
%\end{cases}
%\]
%Therefore, 
%\begin{eqnarray}
%  |\Phi_{1}(x;\gamma\sqcup h)-x-F_{l}(v\times u,x)|
% & \leq &C_{V,l}\|\gamma\sqcup h\|_{q-{\rm var}}^{\bar{l}}\nonumber \\
% & \leq &C_{V,l}2^{\bar{l}-\frac{\bar{l}}{q}}\left(\|\gamma\|_{1-{\rm var}}+\|h\|_{q-{\rm var}}\right)^{\bar{l}}\nonumber \\
% & \leq& C_{H,V,l}\left(\|v\|_{\textsc{CC}}+\|h\|_{\bar{{\cal H}}}\right)^{\bar{l}},\label{eq: second term estimate for w-vu}
%\end{eqnarray}
\[
\|\gamma\sqcup h\|_{q-{\rm var}}\leq2^{1-\frac{1}{q}}\left(\|\gamma\|_{q-{\rm var}}+\|h\|_{q-{\rm var}}\right),\quad\mathrm{and}\quad \|\gamma\|_{q-\rm var}\leq \|\gamma\|_{1-\rm var}.
\]
Furthermore, since we have chosen $v$ such that $v=\log(S_l(\gamma))$ and $\|v\|_{\textsc{CC}}=\|\gamma\|_{1-\rm var}$, we get
\begin{align}\label{estimate Q2}
Q_2\leq C_{H,V,l}(\|v\|_{\textsc{CC}}+\|h\|_{\bch})^{\bar{l}},
\end{align}
where we have also  invoked \eqref{eq: variational estimate for CM path} in order to upper bound $\|h\|_{q-\rm var}$.

Summarizing our considerations so far, if we plug our estimate \eqref{estimate Q1} on $Q_1$ and our bound \eqref{estimate Q2} on $Q_2$ into relation \eqref{Q1 and Q2}, we obtain the following inequality
% (\ref{eq: first term estimate for w-vu}) and (\ref{eq: second term estimate for w-vu}),
\begin{equation}
\left|\|w\|_{\textsc{CC}}-\|v\times u\|_{\textsc{CC}}\right|\leq C_{3}\left(\|v\|_{\textsc{CC}}+\|h\|_{\bar{{\cal H}}}\right)^{1+\frac{\bar{l}-l}{l}}.\label{estimate of w-vu}
\end{equation}

Next, we claim that $\|u\|_{\textsc{CC}}\leq C_{H,l}\|h\|_{\bar{{\cal H}}}.$
Indeed, recall that $u=\log(S_l(h))$ and set $S_l(h)=g$. This means that $g=\exp(u)$. Since $h\in C^{q-\text{var}}$ with $q\in[1,2)$, Lyons' extension theorem  (cf. \cite[Theorem 2.2.1]{Lyons98}) implies that for all $i=1,...,l$ we have
\[
\|g_{i}\|_{\textsc{HS}}\leq C_{H,l}\|h\|_{q-{\rm var}}^{i}\, ,
\]
where $g_{i}$ is the $i$-th component of $g.$ If we define the
homogeneous norm $\interleave\cdot\interleave$ on $G^{(l)}$ by 
\[
\interleave\xi\interleave\triangleq\max_{1\leq i\leq l}\|\xi_{i}\|_{\textsc{HS}}^{\frac{1}{i}},\ \ \ \xi\in G^{(l)},
\]
we get the following estimate:
\begin{align}\label{bound u cc norm by qvar of h}
\interleave g\interleave\leq C_{H,l}\|h\|_{q-\text{var}},\quad\text{and}\quad \|u\|_{\textsc{CC}}\leq C_{H,l}\interleave g\interleave,
\end{align}
where the second inequality stems from the equivalence of homogeneous norms in $G^{(l)}$ (cf. \cite[Theorem 7.44]{FV10}).
Now combining the two inequalities in \eqref{bound u cc norm by qvar of h} and relation \eqref{eq: variational estimate for CM path}, we end up with
\begin{align}\label{bound cc norm of u by H bar norm of h}
\|u\|_{\textsc{CC}}\leq C_{H,l}\|h\|_{q-{\rm var}}\leq C_{H,l}\|h\|_{\bch}.
\end{align}
%Now the claim follows from (\ref{eq: variational estimate for CM path}). 

Let us now go back to (\ref{estimate of w-vu}), from which we easily deduce
\begin{align*}
\|w\|_{\textsc{CC}} & \leq\|v\times u\|_{\textsc{CC}}+C_{3}\left(\|v\|_{\textsc{CC}}+\|h\|_{\bar{{\cal H}}}\right)^{1+\frac{\bar{l}-l}{l}}\\
 & \leq\|v\|_{\textsc{CC}}+\|u\|_{\textsc{CC}}+C_{3}\left(\|v\|_{\textsc{CC}}+\|h\|_{\bar{{\cal H}}}\right)^{1+\frac{\bar{l}-l}{l}}
 \end{align*}
 Plugging \eqref{bound cc norm of u by H bar norm of h} into this inequality and resorting to the fact that $v\in V_{x,h}$ and $h$ satisfies~\eqref{picking h}, we end up with
 \begin{align*}
\|w\|_{\textsc{CC}} & \leq\|v\|_{\textsc{CC}}+C_{H,l}\|h\|_{\bar{{\cal H}}}+C_{3}\left(\|v\|_{\textsc{CC}}+\|h\|_{\bar{{\cal H}}}\right)^{1+\frac{\bar{l}-l}{l}}\\
 & \leq C_{4}\left(\|v\|_{\textsc{CC}}+\|h\|_{\bar{{\cal H}}}\right)\\
 & \leq C_{4}\left(\|v\|_{\textsc{CC}}+2d(x,y)\right).
\end{align*}
Recalling that we have set $w=S_{xy}^{-1}(v)$, this proves the upper bound in (\ref{eq: estimating S_xy}).

\smallskip

\noindent{\it Step 2: proof of the lower bound in \eqref{eq: estimating S_xy}.} We start from inequality \eqref{estimate of w-vu}, which yields
\begin{align*}
\|w\|_{\textsc{CC}} & \geq\|v\times u\|_{\textsc{CC}}-C_{3}\left(\|v\|_{\textsc{CC}}+\|h\|_{\bar{{\cal H}}}\right)^{1+\frac{\bar{l}-l}{l}}\\
 & \geq\|v\|_{\textsc{CC}}-\|u\|_{\textsc{CC}}-C_{3}\left(\|v\|_{\textsc{CC}}+\|h\|_{\bar{{\cal H}}}\right)^{1+\frac{\bar{l}-l}{l}}
 \end{align*}
 Furthermore, thanks to \eqref{bound cc norm of u by H bar norm of h} we obtain
 \begin{align}\label{lower bound w cc norm 1}
\|w\|_{\textsc{CC}} & \geq\|v\|_{\textsc{CC}}-C_{H,l}\|h\|_{\bar{{\cal H}}}-C_{3}\left(\|v\|_{\textsc{CC}}+\|h\|_{\bar{{\cal H}}}\right)^{1+\frac{\bar{l}-l}{l}}.
\end{align}
We now invoke the fact that $\|v\|_{\textsc{HS}}<r$ and $\|h\|_{\bar{{\cal H}}}<\rho_{1},$
thus by possibly shrinking $r$ and $\rho_{1},$ we may assume that 
\[
C_{3}\left(\|v\|_{\textsc{CC}}+\|h\|_{\bar{{\cal H}}}\right)^{\frac{\bar{l}-l}{l}}<\frac{1}{2}.
\]
{Putting this information into \eqref{lower bound w cc norm 1}, we thus obtain }
\begin{align}\label{lower bound w cc norm 2}
\|w\|_{\textsc{CC}}\geq\frac{1}{2}\|v\|_{\textsc{CC}}-\left(\frac{1}{2}+C_{H,l}\right)\|h\|_{\bar{{\cal H}}}\geq\frac{1}{2}\|v\|_{\textsc{CC}}-(1+2C_{H,l})d(x,y).
\end{align}
On the other hand, according to Lemma \ref{lem: d < CC} (one may
further shrink $r$ so that $\|w\|_{\textsc{CC}}<\kappa$, where $\kappa$ is the constant featuring in Lemma \ref{lem: d respect cc}), we have 
\begin{align}\label{d bound by w cc norm}
d(x,y)=d(x,x+F_{l}(w,x))\leq C\|w\|_{\textsc{CC}}.
\end{align}
Putting together inequalities \eqref{lower bound w cc norm 2} and \eqref{d bound by w cc norm}, we easily get the lower bound in \eqref{eq: estimating S_xy}, which finishes our proof.
\end{proof}
Before proceeding to analyze the density $p_l^\eta(t,x,y)$, with all the preparations above we take a short detour to prove the local equivalence of controlling distances claimed in Theorem \ref{thm:equiv-distances}. 

\begin{proof}[Proof of Theorem \ref{thm:equiv-distances}] For each fixed $x\in\mr^N$, define a function on  $\mr^N$ by
$$g(x,y)=\inf\{\|u\|_{\textsc{CC}}: u\in\frak{g}^{(l)}\ \text{and}\ x+F_l(u,x)=y\}.$$
Observe that $g(x,y)$ is an intrinsic quantity that does not depend on $H$. In order to prove Theorem \ref{thm:equiv-distances}, it suffices to show that $d(x,y)$ is equivalent to $g(x,y)$ in a Euclidean neighborhood of $x$. 

To this aim, first note that by Corollary \ref{cor: local submersion} the map $G(\cdot): u\mapsto x+F_l(u,x)$ is a submersion in a neighborhood of $0\in \frak{g}^{(l)}$. Recall that the set $W\cap M_{x,y}$ is introduced in Lemma~\ref{lem: construction of S_xy}. We can thus choose a small enough $\delta>0$ such that $\{G^{-1}(y): |x-y|\leq \delta\}\subset W\cap M_{x,y}$ and that both Lemma \ref{lem: d respect cc} and Lemma \ref{lem: estimating S_xy} can be applied. 

Now we fix  such a choice of $\delta$.  For any $y\in \mr^N$ with $|x-y|\leq\delta$, we can first apply Lemma \ref{lem: d respect cc} to conclude that
$$d(x,y)\leq Cg(x,y).$$
Next, we use the second inequality in Lemma \ref{lem: estimating S_xy} for $v=0$ to conclude that 
$$g(x,y)\leq \Lambda d(x,y).$$
The proof is thus completed.
\end{proof}

\bigskip
Now we come back to the main goal of this part. Namely starting from Proposition \ref{prop: representation of density}, we will apply a change of variables involving $S_{x,y}$ and express the density $p_l^\eta$ in terms of the measure $m_{x,x}$ not depending on $y$.

\begin{lem}\label{lem: lower estimate in terms of m_xx}
\label{lem: lower estimate in terms of m_xx}Let $p_l^\eta(t,x,y)$ be the density defined by \eqref{eq: formula for p^eta_l}, and recall that the exponent $\nu$ is defined by $\nu\triangleq\sum_{k=1}^{l}k\dim{\cal L}_{k}.$ Then there exist constants
$C,\tau>0$, such that for all $x,y,t$ with $d(x,y)\leq t^{H}$
and $0<t<\tau$, we have 
\begin{equation}
p_{l}^{\eta}(t,x,y)\geq Ct^{-H\nu}m_{xx}\left(\{v\in M_{x,x}:\|v\|_{\textsc{CC}}\leq t^{H}\}\right),\label{eq: lower estimate in terms of m_xx}
\end{equation}
where $m_{x,x,}$ is the volume form on $M_{x,x}$ given by \eqref{Mxy}
\end{lem}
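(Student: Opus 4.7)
The plan is to start from the integral representation
\[
p_l^\eta(t,x,y)=\int_{M_{x,y}}\eta(u)K(u,x)\rho_t(u)\,m_{x,y}(du)
\]
provided by Proposition \ref{prop: representation of density}, and then transport the integral to the $y$-independent manifold $M_{x,x}$ through the change of variables $S_{x,y}$ constructed in Lemma \ref{lem: construction of S_xy}. Choose first $\tau>0$ small enough so that $d(x,y)\leq t^{H}<\rho_1/2$ whenever $t<\tau$; this allows us to pick $h\in\Pi_{x,y}$ satisfying \eqref{picking h} and to define $S_{x,y}$ as a diffeomorphism from $W\cap M_{x,y}$ onto $V_{x,h}\cap M_{x,x}$.

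The core of the argument then goes as follows. After the change of variables $u=S_{x,y}^{-1}(v)$, the measure $m_{x,y}$ transforms into a measure comparable (uniformly) to $m_{x,x}$ by \eqref{compare vol mxx and mxy}. I restrict the integration to the subset
\[
A_t\triangleq\{v\in M_{x,x}:\|v\|_{\textsc{CC}}\leq t^{H}\}.
\]
By possibly shrinking $\tau$ further and invoking the ball-box inequality of Proposition \ref{prop: ball-box estimate}, one can ensure that $A_t\subset V_{x,h}\cap M_{x,x}$ for all $t<\tau$ and that the corresponding $u=S_{x,y}^{-1}(v)$ lies in the region $\|u\|_{\textsc{HS}}<r/2$ where $\eta\equiv 1$ and the kernel $K(u,x)$ is uniformly bounded below (thanks to Corollary \ref{cor: local submersion}). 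The key input for the lower bound on $\rho_t$ is then Lemma \ref{lem: estimating S_xy}, which, combined with $d(x,y)\leq t^{H}$ and $\|v\|_{\textsc{CC}}\leq t^{H}$, gives
\[
\|S_{x,y}^{-1}(v)\|_{\textsc{CC}}\leq\Lambda(\|v\|_{\textsc{CC}}+d(x,y))\leq 2\Lambda t^{H},
\]
so Proposition \ref{prop: step one} applies with $M=2\Lambda$ and yields $\rho_t(u)\geq\beta_{2\Lambda}\,t^{-H\nu}$ uniformly for $v\in A_t$.

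Combining these three pieces (uniform lower bounds on $\eta$, $K$, and $\rho_t$, together with the comparison between $m_{x,y}\circ S_{x,y}^{-1}$ and $m_{x,x}$) inside the transformed integral produces
\[
p_l^\eta(t,x,y)\geq C\,t^{-H\nu}\,m_{x,x}(A_t),
\]
which is exactly \eqref{eq: lower estimate in terms of m_xx}. The main subtlety I expect in fleshing out this sketch is the bookkeeping on the various smallness thresholds: one must simultaneously enforce that $t^H$ is small enough to make $d(x,y)<\rho_1/2$, to embed $A_t$ into $V_{x,h}$, to push all relevant $u$'s into the region where $\eta=1$ and $K(u,x)$ is non-degenerate, and to fit within the range of validity of Lemma \ref{lem: estimating S_xy}. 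All of these can be met by a single constant $\tau$ depending only on $H,l_0$, and the vector fields, but the verification is the only delicate point; the rest is a direct assembly of the previously established ingredients.
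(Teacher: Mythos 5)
Your proposal is correct and follows essentially the same route as the paper: start from the representation of $p_l^\eta$ in Proposition \ref{prop: representation of density}, transfer the integral to $M_{x,x}$ via the diffeomorphism $S_{x,y}$ of Lemma \ref{lem: construction of S_xy} together with the volume comparison \eqref{compare vol mxx and mxy}, restrict to the set $\{\|v\|_{\textsc{CC}}\leq t^H\}$, and use the upper bound in Lemma \ref{lem: estimating S_xy} to get $\|S_{x,y}^{-1}v\|_{\textsc{CC}}\leq 2\Lambda t^H$ so that Proposition \ref{prop: step one} yields the uniform lower bound $\rho_t\geq\beta_{2\Lambda}t^{-H\nu}$. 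The only deviation is cosmetic bookkeeping (e.g.\ your explicit use of the ball-box estimate to force $\|S_{x,y}^{-1}v\|_{\textsc{HS}}<r/2$ where $\eta\equiv 1$, and attributing the lower bound on $K$ to Corollary \ref{cor: local submersion} rather than to boundedness of $JF_l$), which matches or slightly sharpens the paper's own argument.
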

\begin{proof} Lemma \ref{lem: construction of S_xy} asserts that there exists $\rho_1>0$, such that if $d(x,y)<\rho_1/2$, then $S_{x,y}$ given by \eqref{def of Sxy} defines a change of variables (i.e. a diffeomorphism) for (\ref{eq: formula for p^eta_l}). Specifically we have
\begin{align*}
p_{l}^{\eta}(t,x,y) & \geq\int_{M_{x,y}\cap W}\eta(u)K(u,x)\rho_{t}(u)m_{x,y}(du)\\
 & =\int_{M_{x,x}\cap V_{x,h}}\eta(S_{x,y}^{-1}v)K(S_{x,y}^{-1}v,x)\rho_{t}(S_{x,y}^{-1}v)
 \, m_{x,y}\circ S_{x,y}^{-1}(dv).
 \end{align*}
In addition, since $V_{x,h}$ contains the ball $\{v\in\mathfrak{g}^{(l)}:\|v\|_{\mathrm{HS}}<\varepsilon\}$, owing to relation \eqref{compare vol mxx and mxy} and thanks to the fact that $K$ defined by \eqref{kernel in disintegration} is bounded below, we obtain
 \begin{align*}
 p_l^\eta(t,x,y) \geq C_{H,V,l}\int_{M_{x,x}\cap\{v\in\mathfrak{g}^{(l)}:\|v\|_{\textsc{HS}}<\varepsilon\}}\rho_{t}(S_{x,y}^{-1}v) \, m_{x,x}(dv).
\end{align*}
Now choose $\tau<(\rho_{1}/2)^{\frac{1}{H}}$ to be such that 
\[
0<t<\tau\implies\{v\in\mathfrak{g}^{(l)}:\|v\|_{\textsc{CC}}\leq t^{H}\}\subseteq\left\{ v\in\mathfrak{g}^{(l)}:\|v\|_{\textsc{HS}}<\varepsilon\right\} .
\]
We will thus lower bound $p_l^\eta(t,x,y)$ as follows
 \begin{align}\label{lower bound p_l^eta 1}
 p_l^\eta(t,x,y) \geq C_{H,V,l}\int_{M_{x,x}\cap\{v\in\mathfrak{g}^{(l)}:\|v\|_{\textsc{CC}}<t^{H}\}}
 \rho_{t}(S_{x,y}^{-1}v)m_{x,x}(dv).
\end{align}

Next, according to the second inequality of (\ref{eq: estimating S_xy}),
if $d(x,y)\leq t^{H}$ and $t<\tau$ (so that $d(x,y)<\rho_{1}/2$),
then 
\[
\|S_{x,y}^{-1}v\|_{\textsc{CC}}\leq2Ct^{H},
\]
provided that $v\in M_{x,x}$ with $\|v\|_{\textsc{CC}}\leq t^{H}.$
For such $x,y,t,v$, by Proposition \ref{prop: step one} we have 
\[
\rho_{t}(S_{x,y}^{-1}v)\geq\beta_{2C}t^{-H\nu}.
\]
Plugging this inequality into \eqref{lower bound p_l^eta 1}, we arrive at 
\[
p_{l}^{\eta}(t,x,y)\geq C_{H,V,l}\beta_{2C}t^{-H\nu}m_{x,x}\left(\{v\in M_{x,x}:\|v\|_{\textsc{CC}}\leq t^{H}\}\right),
\]
which is our claim \eqref{eq: lower estimate in terms of m_xx}.

\end{proof}
The next result tells us that the right hand side of (\ref{eq: lower estimate in terms of m_xx})
is comparable with the inverse volume of $B_{d}(x,t^{H}).$ This seems
to be surprising as the first quantity does not capture the Gaussian
structure at all while the second quantity relies crucially on the
Cameron-Martin structure. The key reason behind this lies in the precise
two-sided estimate (\ref{eq: estimating S_xy}) of $S_{x,y}$ in terms
of $d(x,y)$, which is also the key point leading to the local equivalence of all the control distance functions as we just proved.
\begin{lem}\label{lem: estimate vol m_xx}Let $M_{x,x}$ be the set defined by \eqref{Mxy} and recall that $m_{x,x}$ is the volume measure on $M_{x,x}$.
There exist constants $C,\tau>0,$ such that 
\begin{align}\label{estimate vol m_xx}
\frac{1}{C|B_{d}(x,t^{H})|}\leq t^{-H\nu}m_{x,x}\left(\{v\in M_{x,x}:\|v\|_{\textsc{CC}}\leq t^{H}\}\right)\leq\frac{C}{|B_{d}(x,t^{H})|}
\end{align}
for all $x\in\mathbb{R}^{N}$ and $0<t<\tau$.
\end{lem}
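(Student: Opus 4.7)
The plan is to apply the disintegration formula of Proposition \ref{th: disintegration} to the indicator $\varphi(u) = \mathbf{1}_{\{\|u\|_{\textsc{CC}} \leq t^H\}}$ and then transfer the $m_{x,y}$-mass estimates onto $m_{x,x}$ via the change of variables $S_{x,y}$ established in Lemma \ref{lem: construction of S_xy} and Lemma \ref{lem: estimating S_xy}. First I would observe that by the dilation property \eqref{eq:dilation-lebesgue-on-cal-G} together with the homogeneity $\|\delta_\lambda u\|_{\textsc{CC}} = \lambda \|u\|_{\textsc{CC}}$, the Lebesgue volume of a CC--ball is exactly
\begin{equation*}
\int_{\mathfrak{g}^{(l)}} \mathbf{1}_{\{\|u\|_{\textsc{CC}} \leq t^H\}}\, du = c_0\, t^{H\nu},
\qquad c_0 \triangleq |\{u : \|u\|_{\textsc{CC}} \leq 1\}| > 0.
\end{equation*}
On the other hand, since Lemma \ref{lem: nondegeneracy of F} and Corollary \ref{cor: local submersion} ensure that the kernel $K(u,x)$ of \eqref{kernel in disintegration} is uniformly bounded above and below on the relevant local region, Proposition \ref{th: disintegration} yields
\begin{equation*}
c_0\, t^{H\nu} \;\asymp\; \int_{\mathbb{R}^N} m_{x,y}\bigl(\{u \in M_{x,y} : \|u\|_{\textsc{CC}} \leq t^H\}\bigr)\, dy.
\end{equation*}
By Lemma \ref{lem: d respect cc}, whenever $u \in M_{x,y}$ with $\|u\|_{\textsc{CC}} \leq t^H$ one has $d(x,y) \leq C_* t^H$, so the effective domain of integration is $y \in B_d(x, C_* t^H)$.

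Next I would use the two-sided estimate of Lemma \ref{lem: estimating S_xy}, namely $\Lambda^{-1}(\|v\|_{\textsc{CC}} + d(x,y)) \leq \|u\|_{\textsc{CC}} \leq \Lambda(\|v\|_{\textsc{CC}} + d(x,y))$ for $u = S_{x,y}^{-1}(v)$, together with the volume comparison \eqref{compare vol mxx and mxy}. For the \emph{upper bound} of the lemma, any $y$ with $d(x,y) \leq C_* t^H$ satisfies: the image of $\{u \in M_{x,y} : \|u\|_{\textsc{CC}} \leq t^H\}$ under $S_{x,y}$ lies in $\{v \in M_{x,x} : \|v\|_{\textsc{CC}} \leq \Lambda t^H\}$, so
\begin{equation*}
t^{H\nu} \;\lesssim\; \bigl| B_d(x, C_* t^H) \bigr| \cdot m_{x,x}\bigl(\{v : \|v\|_{\textsc{CC}} \leq \Lambda t^H\}\bigr).
\end{equation*}
After rescaling $t \to \Lambda^{-1/H} t$, this gives $f_x(t^H) \geq c \cdot t^{H\nu}/|B_d(x, (C_*/\Lambda) t^H)|$, which is the lower bound of \eqref{estimate vol m_xx} up to taming the radius $C_*/\Lambda$. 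Symmetrically, for the \emph{lower bound} of the lemma, for $y$ with $d(x,y) \leq t^H/(2\Lambda)$ the preimage under $S_{x,y}$ of $\{v : \|v\|_{\textsc{CC}} \leq t^H/(2\Lambda)\}$ is contained in $\{u : \|u\|_{\textsc{CC}} \leq t^H\}$, giving
\begin{equation*}
t^{H\nu} \;\gtrsim\; \bigl| B_d(x, t^H/(2\Lambda)) \bigr| \cdot m_{x,x}\bigl(\{v : \|v\|_{\textsc{CC}} \leq t^H/(2\Lambda)\}\bigr),
\end{equation*}
which after rescaling delivers $f_x(t^H) \leq C t^{H\nu}/|B_d(x, t^H)|$.

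The main obstacle, as anticipated by the Kusuoka--Stroock treatment of the diffusion case, is that the rescaling arguments produce inner radii of the form $(C_*/\Lambda) t^H$ and $(2\Lambda) t^H$ inside the $d$-balls, whereas the statement requires $|B_d(x, t^H)|$ exactly. I would resolve this by invoking a local \emph{doubling} property of $r \mapsto |B_d(x,r)|$: by Theorem \ref{thm:equiv-distances}, $d$ is locally equivalent to the sub-Riemannian distance $d_{\textsc{BM}}$, and sub-Riemannian balls satisfy Nagel--Stein--Wainger volume estimates (hence doubling uniformly in $x$). This allows us to absorb the multiplicative constants $C_*/\Lambda$ and $2\Lambda$ into the radius $t^H$ at the cost of adjusting the constant $C$ in \eqref{estimate vol m_xx}. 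Finally, the constant $\tau$ must be chosen small enough that for $0 < t < \tau$ all of the previous ingredients apply simultaneously: namely $t^H < \rho_1/2$ so that $S_{x,y}$ is defined for every $y$ in $B_d(x, C_* t^H)$, $t^H$ lies below the thresholds of Lemma \ref{lem: d respect cc} and Corollary \ref{cor: local submersion}, and the doubling argument is valid on the relevant scale. Putting these pieces together yields \eqref{estimate vol m_xx}.
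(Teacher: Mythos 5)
Your argument is correct in substance, but it takes a genuinely different route from the paper. The paper works with the law of $U_t$: it introduces an extra cutoff $\psi(\Lambda\|U_t\|_{\textsc{CC}}/t^H)$, shows via the change of variables $S_{x,y}$, the scaling identity \eqref{eq: formula for rho_t} and the boundedness/positivity of $\rho_1$ that the localized density satisfies $p_l^{\eta,\psi}(t,x,y)\leq C t^{-H\nu}m_{x,x}(\{\|v\|_{\textsc{CC}}\leq t^H\})$ whenever $d(x,y)\leq t^H$, and then bounds $\int_{B_d(x,t^H)}p_l^{\eta,\psi}(t,x,y)\,dy$ from below by $\mathbb{P}(\|U_t\|_{\textsc{CC}}\leq\gamma t^H)$, a positive constant by scaling. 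You instead disintegrate the Lebesgue measure of the CC-ball $\{\|u\|_{\textsc{CC}}\leq t^H\}$ (whose volume is exactly $c_0t^{H\nu}$ by dilation), observe via Lemma \ref{lem: d respect cc} that the fibers carrying mass satisfy $d(x,y)\leq C_*t^H$, and transfer $m_{x,y}$ to $m_{x,x}$ through Lemma \ref{lem: estimating S_xy} and \eqref{compare vol mxx and mxy}. This is a viable alternative resting on the same core lemmas, and it even uses fewer probabilistic ingredients for this particular statement (no positivity of $\rho_1$ is needed); the indicator is not $C_c^\infty$, but sandwiching by smooth cutoffs and shrinking $\tau$ so that all CC-balls sit inside the relevant HS-balls and $C_*t^H<\rho_1/2$ takes care of the technical requirements, as you indicate.

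The one place where you reach outside the paper is the doubling property of $r\mapsto|B_d(x,r)|$ invoked to tame the radius $C_*/\Lambda$ in the lower bound of \eqref{estimate vol m_xx}. This detour is unnecessary, and it carries a hidden burden: you would need the Nagel--Stein--Wainger doubling constant to be uniform in $x$ over all of $\mathbb{R}^N$, which requires an argument (and references) not present in the paper. A purely monotonic fix suffices: both $r\mapsto|B_d(x,r)|$ and $r\mapsto m_{x,x}(\{\|v\|_{\textsc{CC}}\leq r\})$ are nondecreasing and appear as a product on the same side of your inequality, so run your estimate at the smaller scale $s$ defined by $s^H=t^H/\max(C_*,\Lambda)$. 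Then $|B_d(x,C_*s^H)|\leq|B_d(x,t^H)|$ and $m_{x,x}(\{\|v\|_{\textsc{CC}}\leq\Lambda s^H\})\leq m_{x,x}(\{\|v\|_{\textsc{CC}}\leq t^H\})$, whence
\begin{equation*}
c_0\,\bigl(\max(C_*,\Lambda)\bigr)^{-\nu}t^{H\nu}\leq C\,|B_d(x,t^H)|\;m_{x,x}\bigl(\{v\in M_{x,x}:\|v\|_{\textsc{CC}}\leq t^H\}\bigr),
\end{equation*}
which is exactly the lower bound of \eqref{estimate vol m_xx}; your upper bound already comes out with radius $t^H$ after the outward rescaling, as you note. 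With this replacement your proof is complete and self-contained relative to the paper.
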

\begin{proof}
The upper and lower bounds in \eqref{estimate vol m_xx} follow the same pattern, therefore we focus on the proof of the lower bound. To this aim, let $\psi\in C_{c}^{\infty}((-1,1);\mathbb{R}^{1})$
be such that $0\leq\psi\leq1$ and $\psi(\xi)=1$ when $|\xi|\leq1/2.$
We further localize the measure $P^\eta_l$ defined in Proposition \ref{prop: representation of density} by considering the following measure
\[
P_{l}^{\eta,\psi}(t,x,\Gamma)\triangleq\mathbb{E}\left[\eta(U_{t})\psi\left(\frac{\Lambda\|U_{t}\|_{\textsc{CC}}}{t^{H}}\right)\mathbf{1}_{\left\{ X_{l}(t,x)\in\Gamma\right\} }\right],\ \ \ \Gamma\in\mathcal{B}(\mathbb{R}^{N}),
\]
where $\Lambda$ is the constant appearing in Lemma \ref{lem: estimating S_xy}.
Along the same lines as for the disintegration formula \eqref{eq: disintegration}, the measure $P_{l}^{\eta,\psi}(t,x,\cdot)$
has a density given by 
\begin{align*}
p_{l}^{\eta,\psi}(t,x,y) & =\int_{M_{x,y}}\eta(u)\psi\left(\frac{\Lambda\|u\|_{\textsc{CC}}}{t^{H}}\right)K(u,x)\rho_{t}(u)m_{x,y}(du)\\
 & =\int_{M_{x,y}\cap W}\eta(u)\psi\left(\frac{\Lambda\|u\|_{\textsc{CC}}}{t^{H}}\right)K(u,x)\rho_{t}(u)m_{x,y}(du),
\end{align*}
where the kernel $K$ is given by \eqref{kernel in disintegration}  and provided that $\tau$ further satisfies
\[
0<t<\tau\implies\left\{ u\in\mathfrak{g}^{(l)}:\Lambda\|u\|_{\textsc{CC}}\leq t^{H}\right\} \subseteq W.
\]
As in the proof of Lemma \ref{lem: lower estimate in terms of m_xx}, we now apply the change of variables $S_{x,y}u=v$ and the fact that $\psi(\xi=0)$ if $|\xi|\ge 1$ in order to get
\begin{multline*}
 p_{l}^{\eta,\psi}(t,x,y)
 =\int_{V_{x,h}\cap\left\{ v\in M_{x,x}:\Lambda\|S_{x,y}^{-1}v\|_{\textsc{CC}}\leq t^{H}\right\} }
 \eta(S_{x,y}^{-1}v) \, \psi\left(\frac{\Lambda\|S_{x,y}^{-1}v\|_{\textsc{CC}}}{t^{H}}\right)  \\
 \times K(S_{x,y}^{-1}v,x) \, \rho_{t}(S_{x,y}^{-1}v) \, m_{x,y}\circ S_{x,y}^{-1}(dv).
 \end{multline*}
 Furthermore, due to the fact that $\psi$ is supported on $(-1,1)$, $K$ is bounded owing to~\eqref{kernel in disintegration} and according to the upper bound in \eqref{compare vol mxx and mxy}, we get
 \begin{align*}
 p_l^{\eta,\psi}(t,x,y) \leq C_{H,V,l}\int_{V_{x,h}\cap\left\{ v\in M_{x,x}:\Lambda\|S_{x,y}^{-1}v\|_{\textsc{CC}}\leq t^{H}\right\} }\rho_{t}(S_{x,y}^{-1}v) \, m_{x,x}(dv).
 \end{align*}
 Therefore identity (\ref{eq: formula for rho_t}) yields
 \begin{align*}
 p_l^{\eta,\psi}(t,x,y) \leq C_{H,V,l}t^{-H\nu}m_{x,x}\left(\{v\in M_{x,x}:\Lambda\|S_{x,y}^{-1}v\|_{\textsc{CC}}\leq t^{H}\}\right).
 \end{align*}
 Finally the lower bound on $\|S_{x,y}^{-1}(v)\|_{\textsc{CC}}$ in \eqref{eq: estimating S_xy} implies that whenever $d(x,y)\leq t^H$ and $0<t<\tau$ we have
 \begin{align}
 p_l^{\eta,\psi}(t,x,y) \leq C_{H,V,l}t^{-H\nu}m_{x,x}\left(\{v\in M_{x,x}:\|v\|_{\textsc{CC}}\leq t^{H}\}\right). \label{p^psi < m_xx}
\end{align}
%provided that $d(x,y)\leq t^{H}$ and $0<t<\tau$. 

We now lower bound the density $p_l^{\eta,\psi}$ on the ball $B_d(x,t^H)$. To this aim we first write
\begin{align}\label{lower bound density mid step 1}
 & \int_{B_{d}(x,t^{H})}p_{l}^{\eta,\psi}(t,x,y)dy\nonumber \\
 & =\mathbb{E}\left[\eta(U_{t})\psi\left(\frac{\Lambda\|U_{t}\|_{\textsc{CC}}}{t^{H}}\right)\mathbf{1}_{\left\{ d(x,x+F(U_{t},x))<t^{H}\right\} }\right] \nonumber\\
 & \geq\mathbb{P}\left(d(x,x+F(U_{t},x))<t^{H},\ \|U_{t}\|_{\textsc{CC}}\leq\frac{t^{H}}{2\Lambda},\ \|U_{t}\|_{\textsc{HS}}<\kappa\wedge\frac{r}{2}\right),
\end{align}
where the second inequality stems from the fact that in Proposition \ref{prop: representation of density} we have assumed that $\eta=1$ where $\|u\|_{\textsc{HS}}<r/2$ and we also have $\psi(r)=1$ if $|r|\leq 1/2$. Next we resort to Lemma \ref{lem: d respect cc}, which can be rephrased as follows: there exist $C, \kappa, \tau>0$ such that if $t<\tau$ and $\|u\|_{\textsc{CC}}<\gamma t^H$ with $\gamma\triangleq(\max\{C,2\Lambda\})^{-1},$ then we have
%Recall that $C,\kappa$ are the constants appearing in Lemma \ref{lem: d < CC}.
%Now let $\gamma\triangleq1/\max\{C,2\Lambda\},$ and further shrink
%$\tau$ when necessary, so that whenever $0<t<\tau$, we have 
\begin{align}\label{lower bound density mid step 2}
\|u\|_{\textsc{CC}}\leq\gamma t^{H}\implies d(x,x+F_{l}(u,x))<t^{H},\ \|u\|_{\textsc{CC}}\leq\frac{t^{H}}{2\Lambda},\ \|u\|_{\textsc{HS}}<\kappa\wedge\frac{r}{2}.
\end{align}
Plugging \eqref{lower bound density mid step 2} into \eqref{lower bound density mid step 1}, we thus get that for $t<\tau$ we have
\begin{align*}
  \int_{B_{d}(x,t^{H})}p_{l}^{\eta,\psi}(t,x,y)dy
 %& =\mathbb{E}\left[\eta(U_{t})\psi\left(\frac{\Lambda\|U_{t}\|_{\textsc{CC}}}{t^{H}}\right)\mathbf{1}_{\left\{ d(x,x+F(U_{t},x))<t^{H}\right\} }\right] \nonumber\\
% & \geq\mathbb{P}\left(d(x,x+F(U_{t},x))<t^{H},\ \|U_{t}\|_{\textsc{CC}}\leq\frac{t^{H}}{2\Lambda},\ \|U_{t}\|_{\textsc{HS}}<\kappa\wedge\frac{r}{2}\right)\nonumber\\
 \geq\mathbb{P}\left(\|U_{t}\|_{\textsc{CC}}\leq\gamma t^{H}\right)
  =\mathbb{P}\left(\|\delta_{t^{H}}^{-1}U_{t}\|_{\textsc{CC}}\leq\gamma\right).
 \end{align*}
 Eventually, owing to the scaling property of $U_t$ alluded to in \eqref{eq: scaling property of U_t}, we end up with
 \begin{align}
 \int_{B_{d}(x,t^{H})}p_{l}^{\eta,\psi}(t,x,y)dy 
   \ge
 \int_{\{u\in\mathfrak{g}^{(l)}:\|u\|_{\textsc{CC}}\leq\gamma\}}\rho_{1}(u)du
  \triangleq C_{\gamma,l}.\label{eq: int p^psi>1}
\end{align}
Now the lower bound in \eqref{estimate vol m_xx} follows from integrating both sides of (\ref{p^psi < m_xx}) over
$B_{d}(x,t^{H})$ and (\ref{eq: int p^psi>1}).
\end{proof}
Summarizing the content of Lemma \ref{lem: lower estimate in terms of m_xx} and Lemma \ref{lem: estimate vol m_xx},  we have obtained the following lower bound on $p_l^\eta(t,x,y)$, which finishes the second step of the main strategy.

\begin{cor}Let $p_l^\eta(t,x,y)$ be the density given by \eqref{eq: formula for p^eta_l}, and recall the notations of Lemma \ref{lem: lower estimate in terms of m_xx}. Then there exist constants  $C,\tau>0$
depending only on $H,l$ and the vector fields, such that 
\begin{equation}\label{eq: lower estimate for approximating density}
p_{l}^{\eta}(t,x,y)\geq\frac{C}{|B_{d}(x,t^{H})|}
\end{equation}
for all $x,y,t$ satisfying $d(x,y)\leq t^{H}$ and $0<t<\tau$.
\end{cor}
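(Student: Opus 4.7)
The proof proposal is to simply chain together the two preceding lemmas, which have already done all the heavy lifting. There is essentially no new ingredient required at this stage; the corollary is a bookkeeping combination of Lemma \ref{lem: lower estimate in terms of m_xx} and Lemma \ref{lem: estimate vol m_xx}.

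First, I would choose $\tau>0$ to be the smaller of the two constants $\tau_1,\tau_2$ appearing in Lemma \ref{lem: lower estimate in terms of m_xx} and Lemma \ref{lem: estimate vol m_xx}, so that both lemmas are simultaneously applicable whenever $0<t<\tau$. Next, fix $x,y\in\mathbb{R}^N$ with $d(x,y)\leq t^H$ and $0<t<\tau$. Applying Lemma \ref{lem: lower estimate in terms of m_xx} directly yields the inequality
\begin{equation*}
p_{l}^{\eta}(t,x,y)\geq C_{1}\, t^{-H\nu}\, m_{x,x}\!\left(\{v\in M_{x,x}:\|v\|_{\textsc{CC}}\leq t^{H}\}\right),
\end{equation*}
where the constant $C_1$ depends only on $H$, $l$, and the vector fields. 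Observe that the hypothesis $d(x,y)\leq t^H$ of the corollary is exactly what is needed to invoke Lemma~\ref{lem: lower estimate in terms of m_xx}, while the right-hand side no longer depends on $y$.

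Then I would invoke the lower bound in \eqref{estimate vol m_xx} of Lemma \ref{lem: estimate vol m_xx}, which holds uniformly in $x\in\mathbb{R}^N$ and for all $0<t<\tau$, and asserts
\begin{equation*}
t^{-H\nu}\, m_{x,x}\!\left(\{v\in M_{x,x}:\|v\|_{\textsc{CC}}\leq t^{H}\}\right)\geq \frac{1}{C_{2}\, |B_{d}(x,t^{H})|},
\end{equation*}
with $C_2$ depending only on $H,l$ and the vector fields. Substituting this bound into the previous display yields
\begin{equation*}
p_{l}^{\eta}(t,x,y)\geq \frac{C_{1}}{C_{2}}\cdot\frac{1}{|B_{d}(x,t^{H})|}=\frac{C}{|B_{d}(x,t^{H})|},
\end{equation*}
which is exactly the desired inequality \eqref{eq: lower estimate for approximating density}, with $C\triangleq C_1/C_2$.

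Since the real work has been carried out in Lemmas \ref{lem: lower estimate in terms of m_xx} and \ref{lem: estimate vol m_xx}, there is no genuine obstacle in this step, only a sanity check that the constants $\tau$ from both lemmas can indeed be reconciled into a single $\tau$. The only point requiring minimal vigilance is to ensure that the condition $d(x,y)\leq t^H$ (used for the first lemma) together with the uniformity in $x$ of the second lemma are compatible, which they obviously are, as the second lemma holds for every $x$ and does not involve $y$ at all.
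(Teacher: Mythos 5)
Your proposal is correct and is exactly how the paper obtains this corollary: the statement is presented there as an immediate summary of Lemma \ref{lem: lower estimate in terms of m_xx} and the lower bound in Lemma \ref{lem: estimate vol m_xx}, with $\tau$ taken small enough for both. Nothing further is needed.
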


\subsection{Step three: comparing approximating and actual densities.}\label{sec: step three}

The last step towards the proof of Theorem \ref{thm: local lower estimate}
will be to show that the approximating density $p_{l}^{\eta}(t,x,y)$
and the actual density $p(t,x,y)$ of $X_t^x$ are close to each other when $t$
is small. For this part, we combine the Fourier transform approach
developed in \cite{KS87} with general estimates for Gaussian rough differential
equations. As we will see, there is a quite subtle point related to
the uniformity in $l$ (the degree of approximation) when obtaining upper
bound of $p_{l}^{\eta}(t,x,y)$ which is the main challenge for this
part. In our modest opinion, we believe that there is a gap in the argument in~\cite{KS87}
for the diffusion case, and we therefore propose an alternative proof in the
fractional Brownian setting which also covers the diffusion result. As before,
we assume that $l\geq l_{0}.$

Recall that the Fourier transform of a function $f(y)$ on $\mathbb{R}^{N}$
is defined by 
\[
\cf f(\xi)=\hat{f}(\xi)\triangleq\int_{\mathbb{R}^{N}}f(y){\rm e}^{2\pi i\langle\xi,y\rangle} \, dy,\ \ \ \xi\in\mathbb{R}^{N},
\]
where we highlight the fact that $\cf f$ and $\hat{f}$ are used indistinctly to designate our Fourier transform.
In the sequel we will consider the Fourier transform $\hat{p}(t,x,\xi)$ (respectively, $\hat{p}_{l}^{\eta}(t,x,\xi)$)
of the density $p(t,x,y)$ (respectively, $p_{l}^{\eta}(t,x,y)$)
with respect to the $y$-variable. We will invoke the following trivial bound on $p-p_{l}$ in terms of $\hat{p}$ and $\hat{p}_{l}^{\eta}$:
\begin{equation}
|p(t,x,y)-p_{l}^{\eta}(t,x,y)|\leq\int_{\mathbb{R}^{N}}\left|\hat{p}(t,x,\xi)-\hat{p}_{l}^{\eta}(t,x,\xi)\right|d\xi.\label{eq: Fourier estimate}
\end{equation}
Therefore our aim in this section will be to estimate the right hand side of \eqref{eq: Fourier estimate} by considering two regions $\{|\xi|\leq R\}$ and $\{|\xi|>R\}$ separately in the integral, where $R$ is some large number to
be chosen later on.

\paragraph*{I. Integrating relation \eqref{eq: Fourier estimate} in a neighborhood of the origin.}
\label{sec: integrate-small}
\addcontentsline{toc}{subsubsection}{\nameref{sec: integrate-small}}

$\ $\\
\\
We first integrate our Fourier variable $\xi$ in \eqref{eq: Fourier estimate} over the region $\{|\xi|\leq R\}$.
In this case, we make use of a
tail estimate for the error of the Taylor approximation of $X_{t}^{x}$ which is provided below. 
\begin{lem}
\label{lem: tail estimate for Taylor error}
Let $X_{t}^{x}$
be the solution to the SDE (\ref{eq: hypoelliptic SDE}) and consider its approximation $X_{l}(t,x)$ of order $l\ge l_{0}$, as given in \eqref{eq:def-X-l}. Fix $\bar{l}\in(l,l+1)$ and assume that the vector fields $V_\alpha$ are $C_b^\infty$.
There exist constants $C_{1},C_{2}$ depending only on $H,l$ and
the vector fields, such that for all $t\in(0,1]$ and $x,y\in\R^{N}$ we have
\begin{align}\label{eq: tail estimate of difference}
\mathbb{P}\left(|X_{t}^{x}-X_{l}(t,x)|\geq\lambda\right)\leq C_{1}\exp\left(-\frac{C_{2}\lambda^{\frac{2}{\bar{l}}}}{t^{2H}}\right),\ \ \ \text{ for all }\lambda>0.
\end{align}
\end{lem}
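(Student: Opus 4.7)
The plan is to combine a pathwise Taylor-type estimate for the rough differential equation \eqref{eq: hypoelliptic SDE} with a Fernique-type Gaussian concentration bound for the fractional Brownian rough path. This reduces the probabilistic statement to a deterministic control followed by a single tail estimate.

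First, I would invoke the pathwise Euler/Davie estimate for rough differential equations. Since $X_l(t,x) = x + F_l(U_t^{(l)}, x)$ is, by construction (cf.\ Definition \ref{def: Taylor approximation function} and \eqref{eq: formal Taylor expansion 2}), precisely the $l$-th order Euler approximation at time $t$ for the RDE driven by $\mathbf{B}$, the Friz-Victoir Euler estimate (e.g.\ \cite[Corollary 10.15]{FV10}) applied with a H\"older exponent $\gamma \in (1/\bar l, H)$ produces, almost surely,
\[
|X_t^x - X_l(t,x)| \;\le\; C_{V,l}\,\|\mathbf{B}\|_{\gamma;[0,t]}^{\bar l}\,t^{\gamma\bar l}.
\]
Here the exponent $\bar l \in (l,l+1)$ is obtained from the standard exponent $l+1$ by the elementary inequality $(\|\mathbf{B}\|_{\gamma;[0,t]}\,t^\gamma)^{l+1} = (\|\mathbf{B}\|_{\gamma;[0,t]}\,t^\gamma)^{\bar l}\cdot(\|\mathbf{B}\|_{\gamma;[0,t]}\,t^\gamma)^{l+1-\bar l}$, absorbing the extra fractional power into the Gaussian tail at the next step.

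Second, by the self-similarity of fractional Brownian motion, the process $\tilde B_s := t^{-H}B_{ts}$ is again a fractional Brownian motion on $[0,1]$, and its rough path lift satisfies $\tilde{\mathbf{B}}^k_{u,v} = t^{-kH}\mathbf{B}^k_{tu,tv}$ at each level $k$. After transferring to the homogeneous Carnot-Carath\'eodory norm on $G^{(\lfloor 1/\gamma\rfloor)}$ (which scales cleanly) and back to the HS-norm via the ball-box equivalence of Proposition~\ref{prop: ball-box estimate}, this yields, up to a universal constant,
\[
\|\mathbf{B}\|_{\gamma;[0,t]} \;\stackrel{d}{=}\; t^{H-\gamma}\,\|\tilde{\mathbf{B}}\|_{\gamma;[0,1]}.
\]
Combined with the classical Borell-type Gaussian integrability of the $\gamma$-H\"older rough path norm of fBm (cf.\ \cite{FV10}),
\[
\mathbb{P}\bigl(\|\tilde{\mathbf{B}}\|_{\gamma;[0,1]} > R\bigr) \;\le\; C_1\,\exp(-C_2 R^2),
\]
this produces the time-dependent tail
\[
\mathbb{P}\bigl(\|\mathbf{B}\|_{\gamma;[0,t]} > R\bigr) \;\le\; C_1\,\exp\bigl(-C_2\,R^2\,t^{-2(H-\gamma)}\bigr).
\]

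Third, the two ingredients combine directly: for every $\lambda > 0$,
\begin{align*}
\mathbb{P}\bigl(|X_t^x - X_l(t,x)| \ge \lambda\bigr)
&\le \mathbb{P}\Bigl(\|\mathbf{B}\|_{\gamma;[0,t]} \ge (\lambda/(C_{V,l}\,t^{\gamma\bar l}))^{1/\bar l}\Bigr)\\
&\le C_1\exp\!\Bigl(-C_2\,(\lambda/(C_{V,l}\,t^{\gamma\bar l}))^{2/\bar l}\,t^{-2(H-\gamma)}\Bigr)\\
&= C_1\exp\!\Bigl(-C'_2\,\lambda^{2/\bar l}\,t^{-2\gamma-2(H-\gamma)}\Bigr)
 = C_1\exp\!\bigl(-C'_2\,\lambda^{2/\bar l}\,t^{-2H}\bigr),
\end{align*}
which is \eqref{eq: tail estimate of difference}. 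The main technical difficulty lies in Step 1: the classical Davie/Friz-Victoir bound is naturally stated with exponent $l+1$ rather than $\bar l \in (l,l+1)$, so one must carefully trade a fractional power to reach the stated form. A secondary subtlety is the faithful transfer of the self-similarity scaling from the homogeneous norm to the HS-norm of Definition \ref{def:RP}, which requires the ball-box estimate uniformly in the interval $[0,t]$.
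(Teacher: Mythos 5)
Your proposal is in substance the same proof as the paper's: a pathwise Euler estimate from \cite[Corollary 10.15]{FV10}, the scaling invariance of the fractional Brownian rough path, and Gaussian integrability of its homogeneous norm; working in $\gamma$-H\"older rather than $p$-variation, and using a Borell-type tail instead of Markov's inequality combined with the Fernique moment of \cite[Theorem 15.33]{FV10}, are cosmetic differences, and your exponent bookkeeping ($t^{-2\gamma-2(H-\gamma)}=t^{-2H}$) is correct. One point should be corrected, though: the ``main technical difficulty'' you locate in Step 1 is not actually there, and the workaround you sketch would fail if it were needed. The Friz--Victoir Euler estimate is not naturally tied to the exponent $l+1$: for $C_b^\infty$ vector fields it holds for any non-integer exponent $\theta$, the Euler scheme then being of step $\lfloor\theta\rfloor$, so taking $\theta=\bar l\in(l,l+1)$ gives exactly the step-$l$ approximation $X_l(t,x)$ with error $C\|\mathbf{B}\|_{p-{\rm var};[0,t]}^{\bar l}$ --- this is precisely why $\bar l$ is chosen non-integer, and it is how the paper invokes the corollary. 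By contrast, starting from an exponent-$(l+1)$ bound, the factorization $(\|\mathbf{B}\|_{\gamma;[0,t]}t^{\gamma})^{l+1}=(\|\mathbf{B}\|_{\gamma;[0,t]}t^{\gamma})^{\bar l}\,(\|\mathbf{B}\|_{\gamma;[0,t]}t^{\gamma})^{l+1-\bar l}$ cannot simply be ``absorbed into the Gaussian tail'': it would only produce a bound of the form $C_1\exp(-C_2\lambda^{2/(l+1)}t^{-2H})$, and since $2/(l+1)<2/\bar l$ this does not dominate $C_1\exp(-C_2\lambda^{2/\bar l}t^{-2H})$ for large $\lambda$, so \eqref{eq: tail estimate of difference} would not follow for all $\lambda>0$. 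Finally, your requirement $\gamma\in(1/\bar l,H)$ (equivalently $\bar l>p>1/H$ in the variation formulation) is the same implicit constraint as in the paper's proof and is harmless, since the lemma is ultimately applied with $l$ large.
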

\begin{proof}
According to \cite[Corollary 10.15]{FV10}, we have the
following almost sure pathwise estimate
\begin{align}\label{eq: rough path estimate Euler}
|X(t,x)-X_{l}(t,x)|\leq C\cdot\|{\bf B}\|_{p-{\rm var};[0,t]}^{\bar{l}},
\end{align}
with $C=C_{H,V,l}>0$, and where ${\bf B}$ is the rough path
lifting of $B$ alluded to in Proposition \ref{prop:fbm-rough-path}. In equation \eqref{eq: rough path estimate Euler}, the parameter $p$ is any number greater that $1/H$ and the $p$-variation norm is defined with respect to the CC-norm. It follows
from \eqref{eq: rough path estimate Euler} that for any $\lambda>0$ and $\eta>0,$
we have 
\begin{align*}
  \mathbb{P}\left(|X_{t}^{x}-X_{l}(t,x)|\geq\lambda\right)
 \leq\mathbb{P}\left(\|{\bf B}\|_{p-{\rm var};[0,t]}^{\bar{l}}\geq\lambda/C\right).
 \end{align*}
 In addition, the fBm signature satisfies the identity in law
 $$
 ({\bf B}_s)_{0\leq s\leq t}
 \stackrel{d}{=}
 \left(\delta_{t^H}\circ{\bf B}_{\frac{s}{t}}\right)_{0\leq s\leq t}.
 $$
 Owing to the scaling properties of the CC-norm, we thus get that for an arbitrary $\zeta>0$ we have
 \begin{align}
 \mathbb{P}(|X^x_t-X_l(t,x)\geq \lambda)& 
 \leq 
 \mathbb{P}\left(\|{\bf B}\|_{p-{\rm var};[0,1]}\geq\frac{(\lambda/C)^{\frac{1}{\bar{l}}}}{t^{H}}\right)
 \nonumber \\
 & \leq
 \exp\left(-\frac{\zeta(\lambda/C)^{\frac{2}{\bar{l}}}}{t^{2H}}\right)\cdot
 \mathbb{E}\left[{\rm e}^{\zeta\|{\bf B}\|_{p-{\rm var};[0,1]}^{2}}\right],\label{eq: tail estimate for Taylor error}
\end{align}
where we have simply involved Markov's inequality for the last inequality. Now notice that a  Fernique type estimate holds for the fractional Brownian rough path (cf. \cite[Theorem 15.33]{FV10}), namely there exists $\zeta=\zeta_{H}>0$ such that
\[
\mathbb{E}\left[{\rm e}^{\zeta\|{\bf B}\|_{p-{\rm var};[0,1]}^{2}}\right]<\infty.
\]
Plugging this inequality into \eqref{eq: tail estimate for Taylor error}, our conclusion \eqref{eq: tail estimate of difference} is easily obtained. 
\end{proof}

We are now ready to derive a Fourier transform estimate for small values of $\xi$.
\begin{lem}\label{lem: bounded region part}
Keep the same notation and hypothesis as in Lemma \ref{lem: tail estimate for Taylor error}, and also assume that the uniform hypoellipticity condition \eqref{eq:unif-hypo-assumption} is fulfilled. Let $p(t,x,y)$ be the density of the random variable $X_t^x$ and denote by $p_l^\eta$ the approximating density defined by \eqref{eq: formula for p^eta_l}. Then the Fourier transforms $\hat{p}=\cf p$ and $\hat{p}_l^\eta=\cf p_l^\eta$ satisfy the following inequality over the region $\{|\xi|\leq R\}$,
\begin{align}\label{eq: bounded region part}
 \left|\hat{p}(t,x,\xi)-\hat{p}_{l}^{\eta}(t,x,\xi)\right|\leq C_{H,V,l}(1+|\xi|)t^{H\bar{l}},
\end{align}provided that $t<\tau_1$ for some constant $\tau_1$ depending on $H,l$ and the vector fields.
\end{lem}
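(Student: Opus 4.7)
The plan is to decompose the Fourier-transform difference as $\hat{p}(t,x,\xi)-\hat{p}_{l}^{\eta}(t,x,\xi) = I_1(\xi) + I_2(\xi)$, where
\begin{align*}
I_1(\xi) &\triangleq \mathbb{E}\!\left[e^{2\pi i\langle\xi,X_{t}^{x}\rangle}-e^{2\pi i\langle\xi,X_{l}(t,x)\rangle}\right],\\
I_2(\xi) &\triangleq \mathbb{E}\!\left[(\eta(U_{t})-1)\,e^{2\pi i\langle\xi,X_{l}(t,x)\rangle}\right].
\end{align*}
Here $I_1$ measures the gap between the genuine solution $X_t^x$ and its truncated Taylor approximation $X_l(t,x)$, while $I_2$ measures the cost of the localization weight $\eta$ introduced in the definition of $p_l^\eta$. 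The two contributions will be handled independently, each producing a bound of size $t^{H\bar{l}}$ up to the correct dependence on $\xi$.

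For the first piece I would use the elementary inequality $|e^{ia}-e^{ib}|\leq|a-b|$ to pick up a factor of $2\pi|\xi|$, reducing the question to controlling the first moment $\mathbb{E}[|X_t^x - X_l(t,x)|]$. The layer-cake formula combined with the tail bound from Lemma~\ref{lem: tail estimate for Taylor error} yields
$$\mathbb{E}[|X_t^x - X_l(t,x)|] = \int_0^\infty \mathbb{P}(|X_t^x - X_l(t,x)|\geq\lambda)\,d\lambda \leq \int_0^\infty C_1 \exp\!\left(-\frac{C_2 \lambda^{2/\bar{l}}}{t^{2H}}\right)d\lambda,$$
and after the change of variable $u = C_2 \lambda^{2/\bar{l}}/t^{2H}$ the right-hand side collapses to $C_{H,V,l}\,t^{H\bar{l}}\cdot\Gamma(\bar{l}/2)$. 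Consequently $|I_1(\xi)| \leq C_{H,V,l}\,|\xi|\,t^{H\bar{l}}$, which already matches the right-hand side of \eqref{eq: bounded region part} modulo the additive $1$ in the factor $1+|\xi|$.

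For the second piece, bounding the modulus of the complex exponential by $1$ gives $|I_2(\xi)| \leq \mathbb{P}(\eta(U_t)\neq 1) \leq \mathbb{P}(\|U_t\|_{\textsc{HS}} \geq r/2)$, since by construction $\eta\equiv 1$ on $\{\|u\|_{\textsc{HS}}<r/2\}$. The scaling identity $U_t \stackrel{d}{=} \delta_{t^H} U_1$ from \eqref{eq: scaling property of U_t}, together with the elementary bound $\|\delta_{t^H}u\|_{\textsc{HS}} \leq t^H \|u\|_{\textsc{HS}}$ for $t\leq 1$, reduces this further to $\mathbb{P}(\|U_1\|_{\textsc{HS}} \geq r/(2t^H))$. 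A Fernique-type estimate for the fractional Brownian rough path of the kind invoked at the end of the proof of Lemma~\ref{lem: tail estimate for Taylor error} then delivers Gaussian tails for $\|U_1\|_{\textsc{HS}}$, yielding $|I_2(\xi)| \leq C_1\exp(-C_2 r^2/t^{2H})$. Since this quantity decays faster than any power of $t$ as $t\to 0^+$, there exists $\tau_1>0$ such that $|I_2(\xi)| \leq t^{H\bar{l}}$ for all $t<\tau_1$, uniformly in $\xi$. Adding the bounds on $I_1$ and $I_2$ produces \eqref{eq: bounded region part}. The only mildly delicate point is to verify that Fernique's estimate transfers from the norm $\|\mathbf{B}\|_{p-{\rm var};[0,1]}$ to $\|U_1\|_{\textsc{HS}}$, but this is routine since each component of the log-signature $U_1$ is bounded by a polynomial in $\|\mathbf{B}\|_{p-{\rm var};[0,1]}$ by the definition of the signature.
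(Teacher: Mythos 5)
Your proposal follows essentially the same route as the paper's own proof: the same splitting of $\hat{p}-\hat{p}_l^\eta$ into an approximation term and a localization term, the same bound $2\pi|\xi|\,\mathbb{E}[|X_t^x-X_l(t,x)|]$ treated by the layer-cake formula and Lemma \ref{lem: tail estimate for Taylor error}, and the same estimate $\mathbb{E}[1-\eta(U_t)]\leq\mathbb{P}(\|U_t\|_{\textsc{HS}}\geq r/2)$ shown to be $o(t^{H\bar l})$ for small $t$. One small correction: $\|U_1\|_{\textsc{HS}}$ does not have Gaussian tails, since its level-$k$ component is only bounded by a constant times $\|\mathbf{B}\|_{p\text{-}\mathrm{var};[0,1]}^{k}$, so Fernique yields $\mathbb{P}(\|U_1\|_{\textsc{HS}}\geq R)\leq C\exp\left(-cR^{2/l}\right)$ rather than $\exp\left(-cR^{2}\right)$; accordingly your bound $\exp\left(-C_2 r^2/t^{2H}\right)$ should be the stretched exponential $\exp\left(-C/t^{2H/l}\right)$, which is exactly the paper's $C_4\,\mathrm{e}^{-C_5/t^{\alpha_{H,l}}}$ and still decays faster than any power of $t$, so your conclusion is unaffected.
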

\begin{proof}Notice that according to our definition \eqref{eq: formula for p^eta_l} of $p^\eta_l$, we have
\begin{align*}
\hat{p}(t,x,\xi)=\mathbb{E}\left[e^{2\pi i\langle \xi, X_t^x\rangle}\right],\quad\text{and}\quad \hat{p}_l^\eta(t,x,\xi)=\mathbb{E}\left[\eta(U_t)e^{2\pi i\langle \xi, X_l(t,x)\rangle}\right].
\end{align*}
Hence it is easily seen that
\begin{eqnarray}
  \left|\hat{p}(t,x,\xi)-\hat{p}_{l}^{\eta}(t,x,\xi)\right| 
 &\leq&
 \mathbb{E}\left[\left|{\rm e}^{-2\pi i\langle\xi,X_{t}^{x}\rangle}-{\rm e}^{-2\pi i\langle\xi,X_{l}(t,x)\rangle}\right|\right]+\mathbb{E}[1-\eta(U_{t})]\nonumber\\
 & \leq&
 2\pi|\xi|\cdot\mathbb{E}\left[|X_{t}^{x}-X_{l}(t,x)|\right]+\mathbb{E}[1-\eta(U_{t})].\label{Fourier difference bound mid step1}
\end{eqnarray}
Now in order to bound the right hand-side of \eqref{Fourier difference bound mid step1}, we first invoke Lemma \ref{lem: tail estimate for Taylor error}. This yields
\begin{eqnarray}
  \mathbb{E}\left[|X_{t}^{x}-X_{l}(t,x)|\right]
 & =&
 \int_{0}^{\infty}\mathbb{P}\left(|X_{t}^{x}-X_{l}(t,x)|\geq\lambda\right)d\lambda\nonumber\\
 & \leq& 
 C_{1}\int_{0}^{\infty}\exp\left(-\frac{C_{2}\lambda^{\frac{2}{\bar{l}}}}{t^{2H}}\right)d\lambda
  =
 C_{3}t^{H\bar{l}}.\label{Fourier difference bound mid step2}
\end{eqnarray}
On the other hand, using a similar argument to the proof of Lemma
\ref{lem: tail estimate for Taylor error}, there exists a strictly positive exponent $\al_{H,l}$ such that
\begin{align}\label{Fourier difference bound mid step3}
\mathbb{E}[1-\eta(U_{t})]\leq\mathbb{P}\left(\|U_{t}\|_{\textsc{HS}}\geq\frac{r}{2}\right)\leq C_{4}\cdot{\rm e}^{-\frac{C_{5}}{t^{\alpha_{H,l}}}}.%\leq C_{6}t^{H\bar{l}}
\end{align}
Therefore taking $t$ small enough, we can make the right hand-side of \eqref{Fourier difference bound mid step3} smaller than $C_{6}t^{H\bar{l}}$.  Hence there exists $\tau_1>0$ such that if $t\leq \tau_1$ we have
\begin{align}\label{Fourier difference bound mid step4}
\mathbb{E}[1-\eta(U_{t})]\leq C_{6}t^{H\bar{l}}.
\end{align}
Now combining \eqref{Fourier difference bound mid step2} and \eqref{Fourier difference bound mid step4}, we easily get our conclusion \eqref{eq: bounded region part}.
\end{proof}

\paragraph*{II. Integrating relation \eqref{eq: Fourier estimate} for large Fourier modes.}
\label{sec: integrate-large}
\addcontentsline{toc}{subsubsection}{\nameref{sec: integrate-large}}

$\ $\\
\\
We now integrate the Fourier variable $\xi$ over the region $\{|\xi|>R\}$.
In this case, we make use of certain upper estimates
for $p(t,x,y)$ and $p_{l}^{\eta}(t,x,y)$. We start with a bound on the density of $X_{t}^{x}$ which is also of independent
interest. The main ingredients of the proof are basically known in
the literature, but to our best knowledge the result (for the hypoelliptic
case) has not been formulated elsewhere.
\begin{prop}
\label{prop: Gaussian upper bound for p}Let $p(t,x,y)$ be the density of the random variable $X_t^x$ and as in Lem\-ma~\ref{lem: bounded region part} we assume that the uniform hypoellipticity condition  \eqref{eq:unif-hypo-assumption} is satisfied. Then for each $n\geq1,$
there exist constants $C_{1,n},C_{2,n},\nu_{n}>0$ depending on $n,H$
and the vector fields such that 
\begin{align}\label{general density upper bound for hypoellitpic SDE}
|\partial_{y}^{n}p(t,x,y)|\leq C_{1,n}t^{-\nu_{n}}\exp\left(-\frac{C_{2,n}|y-x|^{2\wedge(2H+1)}}{t^{2H}}\right),
\end{align}
for all $(t,x,y)\in(0,1]\times\mathbb{R}^{N}\times\mathbb{R}^{N}$, where $\partial_y^n$ denotes the $n$-th order derivative operator with respect to the $y$ variable.
\end{prop}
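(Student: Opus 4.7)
The plan is to combine iterated Malliavin integration by parts with Fernique-type integrability of the fractional Brownian rough path, following the philosophy of \cite{BNOT16} and extending it to derivatives of the density. Under the uniform hypoellipticity assumption (\ref{eq:unif-hypo-assumption}), the random variable $X_t^x$ is non-degenerate in the sense of Definition \ref{non-deg} by \cite{CHLT15}. Iterated integration by parts on the Wiener space will therefore produce, for each multi-index $\beta$ with $|\beta|=n$, a representation
$$\partial_y^\beta p(t,x,y) = \mathbb{E}\!\left[\mathbf{1}_{\{X_t^{x}\geq y\}}\, H_\beta\right],$$
where the indicator is understood componentwise and the Malliavin weight $H_\beta$ is a universal polynomial in the iterated Malliavin derivatives $\mathbf{D}^k X_t^x$, $1\leq k\leq n+1$, and in the entries of the inverse Malliavin matrix $\gamma_{X_t^x}^{-1}$.

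Next I would extract the polynomial factor $t^{-\nu_n}$ by a H\"older argument based on two families of estimates, both uniform in $x\in\R^N$. Picard iteration on the linear equations satisfied by the iterated Malliavin derivatives, together with the scaling $B_{t\cdot}\stackrel{d}{=}t^H B_\cdot$, should give
$$\|\mathbf{D}^k X_t^x\|_{L^p(\oom;\ch^{\otimes k})} \leq C_{p,k}\, t^{kH}, \qquad 1\leq k\leq n+1.$$
The hypoellipticity assumption together with the quantitative Norris-type estimate of \cite{CHLT15} will then produce negative moment bounds of the form $\|(\det \gamma_{X_t^x})^{-1}\|_{L^p(\oom)} \leq C_p\, t^{-\gamma}$ for some $\gamma=\gamma(H,l_0)>0$. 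Tracking these powers across the integration by parts yields $\|H_\beta\|_{L^p(\oom)} \leq C_{p,n}\, t^{-\nu_n/2}$.

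For the exponential decay in $|y-x|$, I will localize the representation on the event $\{|X_t^x - x| \geq |y-x|/2\}$, on whose complement the indicator in the representation vanishes as soon as $|y-x|$ is sufficiently large. A H\"older inequality with conjugate exponents $(q,q')$, combining the $L^q$ bound on $H_\beta$ with the Fernique-type tail estimate
$$\mathbb{P}\!\left(|X_t^x - x| \geq \lambda\right) \leq C_1 \exp\!\left(-\frac{C_2\, \lambda^{2\wedge(2H+1)}}{t^{2H}}\right),$$
will then produce the announced bound (\ref{general density upper bound for hypoellitpic SDE}). The tail estimate itself should follow from an Euler-type pathwise expansion of $X_t^x - x$ in $\mathbf{B}$ up to order $[1/H]$, the rescaling identity $\mathbf{B}_{t\cdot}\stackrel{d}{=}\delta_{t^H}\mathbf{B}$, and the Gaussian integrability of $\|\mathbf{B}\|_{p\text{-var};[0,1]}$ proved in \cite[Theorem 15.33]{FV10}.

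The main obstacle will be obtaining the sharp polynomial exponent $\nu_n$: one has to track carefully how the quantitative Norris estimates of \cite{CHLT15} interact with the fBm self-similarity, so that the powers of $t^H$ compound correctly through every layer of integration by parts while remaining uniform in $x$. A secondary subtlety is the precise source of the exponent $2\wedge(2H+1)$ in the tail bound, which in the rough regime $H<1/2$ requires isolating the contributions of each iterated integral up to level $[1/H]$ and identifying the slowest Gaussian decay among them.
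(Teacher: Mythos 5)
Your overall strategy coincides with the paper's: the proof there is exactly a (sketched) integration by parts bound of the form $|\partial_y^n p(t,x,y)|\le C_{1,n}\,\mathbb{P}(|X_t^x-x|\ge |y-x|)^{1/2}\,\|\gamma_{X_t^x}^{-1}\|_{\alpha,p}^{\alpha}\,\|\mathbf{D}X_t^x\|_{\beta,q}^{\beta}$, with the exponential factor imported from the tail estimate of \cite[relation (25)]{BNOT16}, the positive powers of $t$ from the scaling of the Malliavin derivatives and the negative powers from the inverse covariance matrix, both quoted from \cite[Lemma 3.5]{BOZ15}; your representation with the weight $H_\beta$, the $t^{kH}$ bounds for $\mathbf{D}^k X_t^x$, the $t^{-\gamma}$ negative moments of $\det\gamma_{X_t^x}$ via \cite{CHLT15}, and the H\"older/Cauchy--Schwarz step against the tail reproduce this scheme. (One routine point: to make the indicator representation force $|X_t^x-x|\ge c\,|y-x|$ you need the usual quadrant trick, replacing $\mathbf{1}_{\{X_t^{x,i}\ge y^i\}}$ by $\mathbf{1}_{\{X_t^{x,i}\le y^i\}}$ in the coordinates where $y^i<x^i$; the event $\{X_t^x\ge y\}$ alone does not vanish on $\{|X_t^x-x|<|y-x|/2\}$.)

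The genuine gap is in your proposed derivation of the tail estimate. An Euler-type expansion plus the Fernique bound $\mathbb{E}[\exp(\zeta\|\mathbf{B}\|^2_{p\text{-var};[0,1]})]<\infty$ cannot yield the exponent $2\wedge(2H+1)$: for bounded vector fields the pathwise RDE estimate is $|X_t^x-x|\le C\left(\|\mathbf{B}\|_{p\text{-var};[0,t]}\vee\|\mathbf{B}\|^p_{p\text{-var};[0,t]}\right)$ with $p>1/H$, so after rescaling the large-$\lambda$ tail obtained this way is of order $\exp\left(-C\lambda^{2/p}/t^{2H}\right)$ with $2/p<2H$, strictly weaker than the exponent claimed in the proposition; this is exactly the mechanism of Lemma \ref{lem: tail estimate for Taylor error}, which only produces exponents of the form $2/\bar{l}$. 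The sharp exponent does not come from isolating the contributions of the individual iterated integrals either; it comes from the Cass--Litterer--Lyons integrability estimates for the accumulated local $p$-variation (the greedy partition count), combined with Borell's inequality and the Cameron--Martin variational embedding $\bar{\mathcal{H}}\hookrightarrow C^{q\text{-var}}$ with $q=(H+1/2)^{-1}$ when $H\le 1/2$ (whence $2/q=2H+1$), and Gaussian tails when $H>1/2$. This is precisely the route behind \cite[relation (25)]{BNOT16}, which the paper invokes; without it, your argument proves the proposition only with a weaker exponential factor, not the stated bound \eqref{general density upper bound for hypoellitpic SDE}.
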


\begin{proof}
Elaborating on the integration by parts invoked for example in \cite[Relation (24)]{BNOT16}, there exist exponents $\alpha, \beta, p,q>1$ such that 
\begin{align}\label{IBP density bound}
|\partial_{y}^{n}p(t,x,y)|\leq C_{1,n}\mathbb{P}(|X_t^x-x|\geq|y-x|)^{\frac{1}{2}}\cdot\|\gamma^{-1}_{X_t^x}\|_{\alpha,p}^\alpha\cdot\|\mathbf{D}X_t^x\|_{\beta,q}^\beta,
\end{align}
where the Malliavin covariance matrix $\gamma_{X_t^x}$ is defined by \eqref{malmat} and the Sobolev norm $\|\cdot\|_{k, p}$ is introduced in \eqref{eq:malliavin-sobolev-norm}.  Then with \eqref{IBP density bound} in hand, we proceed in the following way:\\
\\
(i) An exponential tail estimate for $X_t^x$ yield the exponential term in \eqref{general density upper bound for hypoellitpic SDE}. This step is achieved as in \cite[Relation (25)]{BNOT16}.\\
(ii) The Malliavin derivatives of $X_t^x$ are estimated as in \cite[Lemma 3.5 (1)]{BOZ15}. This produces some positive powers of $t$ in \eqref{general density upper bound for hypoellitpic SDE}.\\
(iii) The inverse of the Malliavin covariance matrix is bounded as in \cite[Lemma 3.5 (2)]{BOZ15}. It gives some negative powers of $t$ in \eqref{general density upper bound for hypoellitpic SDE}.\\
\\
For the sake of conciseness, we will not detail the steps outlined as above. We refer the reader to \cite{BNOT16, BOZ15} for the details. 
\end{proof}
We now state a lemma which parallels Proposition \ref{prop: Gaussian upper bound for p} for the approximation $X_l$. Its proof is somehow delicate and is thus postponed to a separate paragraph.
\begin{lem}
\label{lem: upper estimate for approximating density}Assume the same hypothesis as in Proposition \ref{prop: Gaussian upper bound for p}. Recall that the approximating density $p_l^\eta$ is defined by \eqref{eq: formula for p^eta_l}.  { Fix $l\geq l_0$}. Then
for each $n\geq1$ there exists constants $C_n=C_n(H,l)$ and $\gamma_n=\gamma_n(H,l_0)$ such that for all $(t,x)\in(0,1]\times\mathbb{R}^{N}$ the following bound holds true
\begin{align}\label{eq: upper bound approximating density}
\|\partial_{y}^{n}p_{l}^{\eta}(t,x,\cdot)\|_{C_{b}^{n}(\mathbb{R}^{N})}\leq C_{n}\cdot t^{-\gamma_{n}}.
\end{align}
Moreover, the function $\partial_{y}^{n}p_{l}^{\eta}(t,x,\cdot)$ is compactly supported in $\R^{N}$.
\end{lem}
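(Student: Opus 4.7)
The plan is to prove Lemma \ref{lem: upper estimate for approximating density} via Fourier analysis, exploiting the fact that, on the support of $\eta$, the map $u \mapsto F_l(u,x)$ is a genuine submersion. First, the compact support claim is immediate: on the support of $\eta$ one has $\|u\|_{\textsc{HS}} < r$, so the polynomial $F_l(u,x)$ in $u$ (with coefficients bounded in $x$ since $V_{(\alpha)} \in C_b^\infty$) is uniformly bounded by some $M = M(V,l)$. Hence $X_l(t,x) = x + F_l(U_t,x)$ lies in $\bar{B}(x,M)$ whenever $\eta(U_t) \neq 0$, whence $p_l^\eta(t,x,\cdot)$ is compactly supported in $\bar{B}(x,M)$.

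For the derivative bound, I would take the Fourier transform in $y$ and use the disintegration of the law of $U_t$:
\begin{equation*}
\hat{p}_l^\eta(t,x,\xi) = \mathbb{E}\bigl[\eta(U_t) e^{2\pi i \langle \xi, X_l(t,x)\rangle}\bigr] = \int_{\mathfrak{g}^{(l)}} \eta(u)\rho_t(u) e^{2\pi i \langle \xi, F_l(u,x)\rangle} du.
\end{equation*}
By Corollary \ref{cor: local submersion} and our choice of $r$, the phase gradient $JF_l(u,x)^* \xi$ satisfies $|JF_l(u,x)^* \xi| \geq c|\xi|$ uniformly in $(u,x)$ with $u$ in the support of $\eta$. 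Introducing the first-order operator
\begin{equation*}
L_\xi \, \phi = \frac{1}{2\pi i |JF_l(u,x)^* \xi|^2} \langle JF_l(u,x)^* \xi, \nabla_u \phi \rangle,
\end{equation*}
one has $L_\xi \bigl[e^{2\pi i \langle \xi, F_l(u,x)\rangle}\bigr] = e^{2\pi i \langle \xi, F_l(u,x)\rangle}$. Integrating by parts $k$ times yields $|\hat{p}_l^\eta(t,x,\xi)| \leq C_k(t) |\xi|^{-k}$ on $\{|\xi| \geq 1\}$, with $C_k(t)$ controlled by derivatives of $\eta \rho_t$ and by the Lipschitz norm of the coefficients of $L_\xi^*$. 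The Fourier inversion formula $\partial_y^n p_l^\eta(t,x,y) = \int (2\pi i \xi)^n e^{-2\pi i \langle \xi, y\rangle} \hat{p}_l^\eta(t,x,\xi) d\xi$, combined with a split of the integral at a scale $|\xi| \sim R$ (using the trivial bound $|\hat{p}_l^\eta| \leq 1$ for $|\xi| \leq R$ and the oscillatory decay for $|\xi| > R$), then yields $\|\partial_y^n p_l^\eta(t,x,\cdot)\|_{C_b^n} \leq C_n t^{-\gamma_n}$ after optimizing in $R$.

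The principal obstacle is the required uniformity of the exponent $\gamma_n$ with respect to $l$. A direct use of the scaling identity $\rho_t(u) = t^{-H\nu} \rho_1(\delta_{t^H}^{-1} u)$ from Proposition \ref{prop: step one} inevitably produces exponents that grow with $\nu = \sum_{k=1}^l k \dim \mathcal{L}_k$: each $u$-derivative of $\rho_t$ along the degree-$k$ component of $\mathfrak{g}^{(l)}$ carries a factor $t^{-Hk}$, and $k$ ranges up to $l$. To obtain an exponent depending only on $H$ and $l_0$, I expect one has to bypass this brute-force differentiation and instead perform the integration by parts on Wiener space along the lines of \cite{BOZ15,BNOT16}. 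The key analytic input is then a lower bound of the form $c \, t^{2H\kappa}$, with $\kappa = \kappa(l_0)$, on the smallest eigenvalue of the Malliavin covariance matrix of $X_l(t,x)$ localized by $\eta(U_t)$; hypoellipticity guarantees such a bound through iterated brackets of length at most $l_0$, and it does not deteriorate as $l$ grows beyond $l_0$. Plugging this eigenvalue estimate, together with Malliavin-norm estimates for $X_l(t,x)$ that can be made uniform in $l$ on the event $\{\eta(U_t) > 0\}$ (since $\|U_t\|_{\textsc{HS}} < r$ there), into the standard Malliavin integration-by-parts representation of $\partial_y^n p_l^\eta$ yields the claimed exponent $\gamma_n = \gamma_n(H,l_0)$.
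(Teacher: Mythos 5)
Your compact-support argument and your overall framework (Malliavin integration by parts for the localized density, with the crux being a negative-moment bound for the smallest eigenvalue of the Malliavin covariance matrix of $X_l(t,x)$ on the event $\{\|U_t^{(l)}\|_{\textsc{HS}}<r\}$, with a rate depending only on $l_0$) correctly identify where the difficulty lies — indeed you rightly discard the stationary-phase route because differentiating $\rho_t$ in $u$ produces powers $t^{-H\nu}$ that grow with $l$. But at that crucial point your proposal contains a genuine gap: the statement that ``hypoellipticity guarantees such a bound through iterated brackets of length at most $l_0$, and it does not deteriorate as $l$ grows beyond $l_0$'' is asserted, not proved, and it is exactly the claim whose naive justification is problematic. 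The Malliavin matrix $\gamma_{X_l(t,x)}=JF_l(U_t^{(l)},x)\,\gamma_{U_t^{(l)}}\,JF_l(U_t^{(l)},x)^*$ involves the full truncated log-signature, and one cannot simply discard the components of degree $>l_0$: the seemingly obvious monotonicity step — projecting $\gamma_{U_t^{(l)}}$ onto $\mathfrak{g}^{(l_0)}$ as in \eqref{eq: KS inequality for M matrices} — amounts to claiming nonnegativity of a matrix built from the off-diagonal blocks of $\gamma_{U_t^{(l)}}$, which the paper explicitly identifies as an unjustified step (the gap it attributes to Kusuoka--Stroock). So the heart of the lemma, namely the uniform bound \eqref{eq: uniform sharp bound on inverse Malliavin matrix}, is left unestablished in your proposal.

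The paper's proof supplies precisely this missing piece. It writes $X_l(t,x)=X_{l_0}(t,x)+R_t$ with $R_t$ given by \eqref{eq: R_t} (signature components of order at least $l_0+1$), and uses the elementary inequality $\lambda_t^{(l)}\geq\tfrac12\lambda_t^{(l_0)}-\|\gamma_{R_t}\|_{\mathrm F}$ for the smallest eigenvalues. On the ``good'' event where the remainder is dominated, the non-degeneracy of $JF_{l_0}$ on $\{\|u\|_{\textsc{HS}}<r\}$ (Corollary \ref{cor: local submersion} together with the consistent choice of $r$ in Remark \ref{rem: consistent choice of r}) reduces the estimate to the negative-moment bound for $\gamma_{U_t^{(l_0)}}$ of Lemma \ref{lem: estimating eigenvalue of M matrix for signature}, yielding the rate $t^{2Hl_0}$. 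On the ``bad'' event one uses H\"older's inequality, the crude $l$-dependent bound $t^{-4qH(l-l_0)}$, and the key scaling observation that $\|\gamma_{R_t}\|_{\mathrm F}$ is of order $t^{2H(l_0+1)}$, so that the probability of the bad event carries a compensating factor $t^{4qH(l-l_0)}$. Without an argument of this kind (or some substitute for it), your claimed exponent $\gamma_n=\gamma_n(H,l_0)$ does not follow; you would only obtain an exponent depending on $l$, which is insufficient for the comparison step in \eqref{eq: bound difference of densities mid step 2}, where $l$ must be taken large after $\mu$ is fixed.
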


\begin{rem}
Let us highlight the fact that $\gamma_n$ in \eqref{eq: upper bound approximating density} depends on $l_0$ instead of $l$. This subtle technical point is crucial and requires a non-trivial amount of analysis, which is carried out in the next paragraph.
\end{rem}

We now take Lemma \ref{lem: upper estimate for approximating density} for granted and we come back to the estimate (\ref{eq: Fourier estimate}) for
the region $|\xi|>R$.  We are able to state the following result.

\begin{lem}\label{lem: large xi part}
Using the same notation and hypothesis  as in Lemma \ref{lem: bounded region part}, the Fourier transforms $\hat{p}$ and $\hat{p}_l^\eta$ are such that for all $|\xi| > R$ we have
\begin{equation}
|\xi|^{N+2}\left(\left|\widehat{p}(t,x,\xi)\right|+\left|\widehat{p}_{l}^{\eta}(t,x,\xi)\right|\right)\leq C\cdot t^{-\mu},
\label{eq: outer ball part}
\end{equation}
for some strictly positive constants $C=C_{N,H,V,l}$ and $\mu=\mu_{N,H,V,l_{0}}$.
\end{lem}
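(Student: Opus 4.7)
\emph{Strategy.} The plan is to reduce the decay estimate to $L^{1}$ bounds on $y$-derivatives of the two densities via the standard Fourier identity $(2\pi i\xi)^{\alpha}\hat f(\xi)=\widehat{\partial_y^{\alpha}f}(\xi)$. For a function $f\in C_b^{N+2}(\mathbb{R}^{N})$ whose derivatives of order $\leq N+2$ are integrable, this identity yields
\begin{equation*}
|\xi|^{N+2}|\hat f(\xi)|\leq C_{N}\sum_{|\alpha|\leq N+2}\|\partial_y^{\alpha}f\|_{L^{1}(\mathbb{R}^{N})}.
\end{equation*}
Applied in turn to $f=p(t,x,\cdot)$ and $f=p_{l}^{\eta}(t,x,\cdot)$, it suffices to prove that, for every multi-index $\alpha$ with $|\alpha|\leq N+2$, the $L^{1}$ norm of $\partial_y^{\alpha}f(t,x,\cdot)$ is bounded by a negative power of $t$ with exponent depending only on $N,H,V,l_0$.

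\emph{Step 1 (treatment of $\hat p_{l}^{\eta}$).} I would combine two ingredients already recorded in Lemma \ref{lem: upper estimate for approximating density}: first, the pointwise bound $\|\partial_y^{\alpha}p_{l}^{\eta}(t,x,\cdot)\|_{\infty}\leq C_{|\alpha|}t^{-\gamma_{|\alpha|}}$ with $\gamma_{|\alpha|}$ depending only on $H,l_0$; second, the fact that each $\partial_y^{\alpha}p_{l}^{\eta}(t,x,\cdot)$ is compactly supported. The latter point is visible directly from \eqref{eq: formula for p^eta_l}: since $\eta$ is supported in $\{\|u\|_{\textsc{HS}}<r\}$, the density vanishes outside the image $\{x+F_{l}(u,x):\|u\|_{\textsc{HS}}\leq r\}$, whose Lebesgue measure is bounded uniformly in $t,x$ by a constant depending only on $V$ and $l$. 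Multiplying the sup-norm bound by the volume of the support and summing over $|\alpha|\leq N+2$ produces a bound $|\xi|^{N+2}|\hat p_{l}^{\eta}(t,x,\xi)|\leq C t^{-\mu_{1}}$ with $\mu_{1}$ depending only on $N,H,V,l_{0}$.

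\emph{Step 2 (treatment of $\hat p$).} For the true density I would invoke Proposition \ref{prop: Gaussian upper bound for p}, which furnishes the pointwise Gaussian-type bound
\begin{equation*}
|\partial_y^{\alpha}p(t,x,y)|\leq C_{1,|\alpha|}t^{-\nu_{|\alpha|}}\exp\Bigl(-C_{2,|\alpha|}|y-x|^{2\wedge(2H+1)}/t^{2H}\Bigr).
\end{equation*}
The rescaling $y-x=t^{H}z$ (respectively $y-x=t^{2H/(2H+1)}z$ when $H<1/2$) turns the $y$-integral into a dimensionless convergent integral times an explicit power of $t$, so that $\|\partial_y^{\alpha}p(t,x,\cdot)\|_{L^{1}(\mathbb{R}^{N})}\leq C t^{-\nu_{|\alpha|}'}$ with $\nu_{|\alpha|}'$ depending only on $N,H,V$. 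Summing over $|\alpha|\leq N+2$ and reapplying the Fourier identity gives $|\xi|^{N+2}|\hat p(t,x,\xi)|\leq C t^{-\mu_{2}}$.

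\emph{Conclusion and main obstacle.} Adding the two contributions yields \eqref{eq: outer ball part} with $\mu=\max(\mu_{1},\mu_{2})$. The key observation is that the whole argument outlined above is essentially routine Fourier analysis: the genuine technical difficulty has been moved to the input estimate \eqref{eq: upper bound approximating density}, where the exponent $\gamma_{n}$ must be shown to depend only on $l_{0}$ and not on the approximation order $l$. As the remark following Lemma \ref{lem: upper estimate for approximating density} emphasizes, this uniformity in $l$ is the real technical obstacle and will require a careful disintegration/Malliavin-calculus analysis of the non-degeneracy of $F_{l}$; once granted, Lemma \ref{lem: large xi part} follows from the two-step scheme above.
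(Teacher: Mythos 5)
Your proposal is correct and is essentially the paper's own argument: both reduce $|\xi|^{N+2}|\hat f(\xi)|$ to an $L^1$ bound on $(N+2)$-th order $y$-derivatives via the Fourier–differentiation identity, then control $\partial_y^{N+2}p$ by the Gaussian-type bound of Proposition \ref{prop: Gaussian upper bound for p} and $\partial_y^{N+2}p_l^{\eta}$ by the sup-norm bound of Lemma \ref{lem: upper estimate for approximating density} together with its compact support. Your added remarks (explicit rescaling of the Gaussian integral, tracking that only the constant, not the exponent $\mu$, depends on $l$) just flesh out details the paper leaves implicit.
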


\begin{proof}According to standard compatibility rules between Fourier transform and differentiation, we have (recall that $\mathcal{F}f$ and $\hat{f}$ are both used to designate the Fourier transform of a function $f$):
\begin{align*}
&|\xi|^{N+2}(|\hat{p}(t,x,\xi)|+|\hat{p}^\eta_l(t,x,\xi)|)\\
&\leq C_N\left(|\mathcal{F}(\partial_y^{N+2}p(t,x,y)|+|\mathcal{F}(\partial_y^{N+2}p^\eta_l(t,x,y)|\right).
\end{align*}
Plugging \eqref{general density upper bound for hypoellitpic SDE} and \eqref{eq: upper bound approximating density} into this relation and using the fact that $\partial_y^{N+2}p^\eta_l(t,x,\cdot)$ is compactly supported, our claim \eqref{eq: outer ball part} is easily proved.
\end{proof}

\paragraph*{III. Comparison of the densities.}
\label{sec: com-den}
\addcontentsline{toc}{subsubsection}{\nameref{sec: com-den}}

$\ $\\
\\
Combining the previous preliminary results on Fourier transforms we get the following uniform bound on the difference $p-p^\eta_l$.
\begin{prop}
We still keep the same notation and assumptions of Lemma \ref{lem: bounded region part}. Then there exists $\tau>0$ such that for all $t\leq \tau$ and $x,y\in\mr^N$ we have
\begin{align}\label{eq: comparison estimate}
|p(t,x,y)-p_l^\eta(t,x,y)|\leq C_{H,V,l} \,t.
\end{align}
\end{prop}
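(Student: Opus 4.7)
The plan is to feed the two Fourier estimates from Lemma \ref{lem: bounded region part} and Lemma \ref{lem: large xi part} into the inversion bound \eqref{eq: Fourier estimate} and optimize in the cutoff radius $R$. First, I would split
\begin{equation*}
\int_{\mathbb{R}^N}\left|\hat{p}(t,x,\xi)-\hat{p}_l^\eta(t,x,\xi)\right|d\xi
=\int_{|\xi|\le R}+\int_{|\xi|>R}
\triangleq I_1(R)+I_2(R).
\end{equation*}
For $I_1(R)$, Lemma \ref{lem: bounded region part} gives the pointwise bound $C_{H,V,l}(1+|\xi|)t^{H\bar{l}}$, so integrating over the Euclidean ball $B(0,R)$ yields $I_1(R)\le C_{H,V,l}\,R^{N+1}t^{H\bar{l}}$. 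For $I_2(R)$, Lemma \ref{lem: large xi part} gives $(|\hat p|+|\hat p_l^\eta|)\le C t^{-\mu}/|\xi|^{N+2}$, and a direct integration over $\{|\xi|>R\}$ produces $I_2(R)\le C_{H,V,l}\,t^{-\mu}/R$.

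Next, I would balance the two contributions by choosing $R=R(t)$ so that $R^{N+1}t^{H\bar{l}}=t^{-\mu}/R$, namely $R=t^{-(\mu+H\bar l)/(N+2)}$. Plugging this back gives
\begin{equation*}
|p(t,x,y)-p_l^\eta(t,x,y)|
\le
C_{H,V,l}\,t^{\alpha},
\qquad
\alpha\triangleq\frac{H\bar{l}-\mu(N+1)}{N+2}.
\end{equation*}
The crucial observation, and the real content of the proof, is that the exponent $\mu$ supplied by Lemma \ref{lem: upper estimate for approximating density} depends only on $H,V$ and $l_0$, whereas $\bar{l}\in(l,l+1)$ can be enlarged freely by increasing the approximation order $l$. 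Consequently, by taking $l$ large enough that $H\bar{l}\ge N+2+\mu(N+1)$, we force $\alpha\ge 1$, which delivers the desired estimate \eqref{eq: comparison estimate} once $t<\tau$ with $\tau$ as in Lemma \ref{lem: bounded region part}.

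The only genuinely delicate point is hidden in Lemma \ref{lem: upper estimate for approximating density}: without the uniformity of $\mu$ in $l$ (that is, if $\mu$ itself grew with $l$), the optimization above could not be closed, since increasing $\bar{l}$ would be offset by a deteriorating $\mu$. This is precisely the subtle uniformity asserted in the excerpt and what separates the present argument from a naive Fourier comparison. Granting that lemma, the rest of the proof is a one-parameter optimization, so the overall argument reduces to the two lines of Fourier analysis above.
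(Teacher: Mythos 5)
Your proposal is correct and follows essentially the same route as the paper: the same Fourier splitting at radius $R$, the same two input lemmas, and the same crucial use of the $l$-independence of $\mu$ to push the exponent above $1$ by enlarging $l$. The only cosmetic difference is the choice of cutoff ($R=t^{-(\mu+H\bar l)/(N+2)}$ balancing the two terms versus the paper's $R=t^{-(\mu+1)}$, which makes the tail term exactly $t$), and both lead to the bound $C_{H,V,l}\,t$ for $t\le\tau$.
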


\begin{proof}
Thanks to \eqref{eq: Fourier estimate} we can write 
\begin{align*}
|p(t,x,y)-p_{l}^{\eta}(t,x,y)|\leq\left(\int_{|\xi|\leq\mr}+\int_{|\xi|>\mr}\right)\left|\hat{p}(t,x,\xi)-\hat{p}_{l}^{\eta}(t,x,\xi)\right|d\xi.
\end{align*}
Next we invoke the bounds (\ref{eq: bounded region part}) and (\ref{eq: outer ball part}), which allows to write
\begin{align*}
  \left|p(t,x,y)-p_{l}^{\eta}(t,x,y)\right|
% & \leq\left(\int_{|\xi|\leq R}+\int_{|\xi|>R}\right)\left|\widehat{p}(t,x,\xi)-\widehat{p}_{l}^{\eta}(t,x,\xi)\right|d\xi\\
  \leq C_{1}\left(R^{N+1}t^{H\bar{l}}+t^{-\mu}\int_{|\xi|>R}|\xi|^{-N-2}d\xi\right),
% & \leq C_{1}\left(R^{N+1}t^{H\bar{l}}+t^{-\mu}R^{-1}\int_{|\xi|\geq1}|\xi|^{-(N+1)}d\xi\right)\\
% & \leq C_{2}\left(R^{N+1}t^{H\bar{l}}+t^{-\mu}R^{-1}\right)
\end{align*}
whenever $t\in(0,1]$, and where we recall that $\bar{l}$ is a fixed number in $[l,l+1]$ introduced in Lemma \ref{lem: tail estimate for Taylor error}. Now an elementary change of variable yields
\begin{align}
 & \left|p(t,x,y)-p_{l}^{\eta}(t,x,y)\right|\nonumber\\
% & \leq\left(\int_{|\xi|\leq R}+\int_{|\xi|>R}\right)\left|\widehat{p}(t,x,\xi)-\widehat{p}_{l}^{\eta}(t,x,\xi)\right|d\xi\\
% & \leq C_{1}\left(R^{N+1}t^{H\bar{l}}+t^{-\mu}\int_{|\xi|>R}|\xi|^{-N-2}d\xi\right)\\
 & \leq C_{1}\left(R^{N+1}t^{H\bar{l}}+t^{-\mu}R^{-1}\int_{|\xi|\geq1}|\xi|^{-(N+1)}d\xi\right)\nonumber\\
 & \leq C_{2}\left(R^{N+1}t^{H\bar{l}}+t^{-\mu}R^{-1}\right).\label{eq: bound difference of densities mid step1}
\end{align}
We can easily optimize expression \eqref{eq: bound difference of densities mid step1} with respect to $R$ by choosing  $R=t^{-(\mu+1)}$ . It follows that
\begin{align}\label{eq: bound difference of densities mid step 2}
\left|p(t,x,y)-p_{l}^{\eta}(t,x,y)\right|\leq C_{2}t^{-(N+1)(\mu+1)+H\overline{l}}+t,
\end{align}
for all  $t\in(0,1]$.  In addition, recall that a crucial point in our approach is that the exponent $\mu$ in \eqref{eq: bound difference of densities mid step 2} does not depend on $l$. Therefore  we can choose
$l\geq l_{0}$ large enough, so that 
\[
-(N+1)(\mu+1)+H\overline{l}\geq1.
\]
For this value of $l$, the upper bound \eqref{eq: comparison estimate} is easily deduced from \eqref{eq: bound difference of densities mid step 2}.

%\begin{equation}
%\left|p(t,x,y)-p_{l}^{\eta}(t,x,y)\right|\leq C_{H,V,l}\cdot t\label{eq: comparison estimate}
%\end{equation}
%when $t$ is small.

\end{proof}
\paragraph*{IV. A new proof of Lemma \ref{lem: upper estimate for approximating density}.}
\label{sec: uniform}
\addcontentsline{toc}{subsubsection}{\nameref{sec: uniform}}

$\ $\\
\\
Before proving Lemma \ref{lem: upper estimate for approximating density},
we mention that the independence on $l$ for the exponent $\gamma_{n}$
was already observed in \cite{KS87} for the
diffusion case. However, in our modest opinion we believe that there
is a gap in the argument. We explain the reason as follows. Recall that for a
differentiable random vector $Z=(Z^{1},\ldots,Z^{n})$ in the sense
of Malliavin, we use the notation $\gamma_{Z}\triangleq(\langle DZ^{i},DZ^{j}\rangle_{\cal H}){}_{1\leq i,j\leq n}$
to denote its Malliavin covariance matrix. By the definition \eqref{eq:def-X-l} of
$X_{l}(t,x),$ it is immediate that 
\begin{align}\label{equ: Malliavin matrix X_l}
\gamma_{X_{l}(t,x)}=JF_{l}(U_{t}^{(l)},x)\cdot \gamma_{U_{t}^{(l)}}\cdot JF_{l}(U_{t}^{(l)},x)^{*}.
\end{align}
Next, let $\pi_{l,l_{0}}:\mathfrak{g}^{(l)}\rightarrow\mathfrak{g}^{(l_{0})}\subseteq\mathfrak{g}^{(l)}$
be the canonical orthogonal projection. The matrix form of $\pi_{l,l_0}$ as a linear function on $\frak{g}{(l)}$ is given by 
\[
\pi_{l,l_{0}}=\left(\begin{array}{cc}
{\rm Id}_{\mathfrak{g}^{(l_{0})}} & 0\\
0 & 0
\end{array}\right).
\]
 In \cite[Page 420]{KS87}, it was asserted that 
\begin{equation}
JF_{l}(U_{t}^{(l)},x)\cdot \gamma_{U_{t}^{(l)}}\cdot JF_{l}(U_{t}^{(l)},x)^{*}\geq JF_{l}(U_{t}^{(l)},x)\cdot\pi_{l,l_{0}}\cdot \gamma_{U_{t}^{(l)}}\cdot\pi_{l,l_{0}}\cdot JF_{l}(U_{t}^{(l)},x)^{*},\label{eq: KS inequality for M matrices}
\end{equation}
which we believe was crucial for proving the $l$-independence in the argument. Now if we write 
\[
\gamma_{U_{t}^{(l)}}=\left(\begin{array}{cc}
\gamma_{U_{t}^{(l_{0})}} & P\\
Q & R
\end{array}\right),
\]
then it is readily checked that (\ref{eq: KS inequality for M matrices}) is equivalent to 
\[
JF_{l}(U_{t}^{(l)},x)\cdot\left(\begin{array}{cc}
0 & P\\
Q & R
\end{array}\right)\cdot JF_{l}(U_{t}^{(l)},x)^{*}\geq0.
\]
However, we do not see a reason why this nonnegative definiteness property
should hold even if we know that the Malliavin covariance matrices are always nonnegative
definite. Therefore the considerations below  are devoted to an alternative proof of
Lemma \ref{lem: upper estimate for approximating density} in the
fractional Brownian setting, which also covers the diffusion case.

In view of the decomposition \eqref{equ: Malliavin matrix X_l}, we first need the following lemma from \cite{BFO19},
which gives an estimate of the Malliavin covariance matrix of $U_t^{(l)}$. 

\begin{lem}\label{lem: estimating eigenvalue of M matrix for signature}
Given $l\geq1,$ let $U_t^{(l)}$ be the truncated log-signature defined by \eqref{def: log signature of B}. We consider the Malliavin covariance matrix $\gamma_{U_t^{(l)}}$ of the random variable $U_t^{(l)}$, and denote by  $\mu_{t}^{(l)}$  the smallest eigenvalue. Then for any $q>1,$ we have 
\begin{align}\label{eq: Malliavin matrix U_t^k order}
\sup_{t\in(0,1]}\left\Vert \frac{t^{2Hl}}{\mu_{t}^{(l)}}\right\Vert _{q}<\infty.
\end{align}
\end{lem}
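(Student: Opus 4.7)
The plan is to exploit the same scaling invariance of the truncated log-signature that was crucial in Proposition \ref{prop: step one}, and reduce the bound to the non-degeneracy of the Malliavin matrix at the fixed time $t=1$. Specifically, we established that $\Gamma_\cdot \stackrel{\mathrm{law}}{=} \delta_{t^H}\Gamma_{\cdot/t}$, so setting $\tilde B_s\triangleq t^{-H}B_{ts}$ (again a standard fBm on $[0,1]$) and denoting by $\tilde\Gamma$ its signature, we have $\Gamma_t\stackrel{\mathrm{law}}{=}\delta_{t^H}\tilde\Gamma_1$, and hence
\begin{equation*}
U_t^{(l)}\stackrel{\mathrm{law}}{=}\delta_{t^H}\tilde U_1^{(l)},\qquad \tilde U_1^{(l)}\triangleq\log\tilde\Gamma_1.
\end{equation*}
Fix a graded orthonormal basis $\{u_\mu:1\le\mu\le m_l\}$ of $\mathfrak g^{(l)}$ adapted to the decomposition $\mathfrak g^{(l)}=\oplus_{k=1}^l\mathcal L_k$, so that each $u_\mu$ lies in some $\mathcal L_{k_\mu}$ with $1\le k_\mu\le l$. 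Under this basis $\delta_{t^H}$ is the diagonal operator $D_{t^H}=\mathrm{diag}(t^{Hk_\mu})$.

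The second step is to promote the scaling identity from the random variable $U_t^{(l)}$ to its Malliavin covariance matrix. This is the main technical point, because the Cameron-Martin space $\mathcal H=\mathcal H([0,1])$ itself is sensitive to the time rescaling $s\mapsto ts$. However, the map $\iota_t:\mathcal H([0,1])\to\mathcal H([0,t])$, $f\mapsto f(\cdot/t)$ is, up to the constant $t^H$, an isometry by an argument parallel to Lemma \ref{lem: CM scaling} (applied on $\mathcal H$ rather than on $\bar{\mathcal H}$, via the adjoint identification $\mathcal K^\ast=K^\ast\circ\mathcal R$). Writing the Malliavin derivative of iterated integrals explicitly and tracking how each level of the signature transforms under $B\mapsto\tilde B$, one obtains the identity
\begin{equation*}
\gamma_{U_t^{(l)}}\stackrel{\mathrm{law}}{=}D_{t^H}\,\gamma_{\tilde U_1^{(l)}}\,D_{t^H}.
\end{equation*}
In particular, for any unit vector $\xi\in\mathfrak g^{(l)}$,
\begin{equation*}
\xi^{\ast}\gamma_{U_t^{(l)}}\xi = (D_{t^H}\xi)^{\ast}\gamma_{\tilde U_1^{(l)}}(D_{t^H}\xi)\ge \lambda_{\min}(\gamma_{\tilde U_1^{(l)}})\,|D_{t^H}\xi|^2\ge t^{2Hl}\,\lambda_{\min}(\gamma_{\tilde U_1^{(l)}}),
\end{equation*}
since $t\le 1$ and the smallest entry of $D_{t^H}$ is $t^{Hl}$. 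Consequently $\mu_t^{(l)}\ge t^{2Hl}\lambda_{\min}(\gamma_{\tilde U_1^{(l)}})$, so the claim \eqref{eq: Malliavin matrix U_t^k order} reduces to showing that $\lambda_{\min}(\gamma_{\tilde U_1^{(l)}})^{-1}\in L^q(\Omega)$ for every $q>1$.

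The last step is a finite-dimensional non-degeneracy result. Because $\tilde\Gamma$ solves \eqref{eq: SDE for signature} driven by the left-invariant vector fields $\tilde W_1,\ldots,\tilde W_d$ on $G^{(l)}$ (whose Lie algebra is by construction all of $\mathfrak g^{(l)}$), Hörmander's condition holds at every point, and the fBm Malliavin calculus estimates of \cite{CHLT15,CF10} yield negative moments of all orders for $\det\gamma_{\tilde U_1^{(l)}}$; since we work in the finite-dimensional space $\mathfrak g^{(l)}$, control of the determinant is equivalent to control of the smallest eigenvalue. I expect the main difficulty to lie in step two, namely establishing the scaling identity for $\gamma_{U_t^{(l)}}$ with complete rigor: one has to reconcile the time-rescaling $B\leadsto\tilde B$ with the intrinsic definition of the Malliavin derivative as an $\mathcal H$-valued object, and keep track of how the weights $t^{Hk_\mu}$ distribute across the off-diagonal blocks. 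Once this identity is in place, the proof collapses into the two short estimates displayed above.
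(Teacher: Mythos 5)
Your Steps 1--2 are sound: the pathwise identity $U_t^{(l)}=\delta_{t^H}\tilde U_1^{(l)}$ with $\tilde B_s=t^{-H}B_{ts}$, together with the covariance scaling $R(ts,tu)=t^{2H}R(s,u)$ (which makes $\mathbf 1_{[0,s]}\mapsto t^H\mathbf 1_{[0,s/t]}$ an isometry of $\mathcal H([0,t])$ onto $\mathcal H([0,1])$ and intertwines the Malliavin derivatives with respect to $B$ and $\tilde B$), does give $\gamma_{U_t^{(l)}}\stackrel{\mathrm{law}}{=}D_{t^H}\,\gamma_{U_1^{(l)}}\,D_{t^H}$, hence $\mu_t^{(l)}\geq t^{2Hl}\lambda_{\min}(\gamma_{U_1^{(l)}})$ in law for $t\in(0,1]$. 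This scaling reduction is in the same spirit as the argument behind the paper's statement (the paper itself simply quotes the lemma from \cite{BFO19}; the underlying mechanism there is likewise a fractional Brownian scaling, applied to the SDE \eqref{eq: SDE for signature} so as to produce an $\varepsilon$-parametrized family with vector fields $\varepsilon W_\alpha$, and your formulation via the dilation acting directly on the Malliavin matrix is a clean way to package it).

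The gap is in your last step. The negative moments of all orders for $\lambda_{\min}(\gamma_{U_1^{(l)}})$ do \emph{not} follow from \cite{CF10} or \cite{CHLT15}: those results are proved for $C_b^{\infty}$ vector fields, whereas the vector fields $W_\alpha$ governing $U^{(l)}$ in exponential coordinates on $\mathfrak g^{(l)}$ (the coordinate expression of the left-invariant fields $\tilde W_\alpha$) have polynomial growth, so neither the uniform hypoellipticity framework nor the integrability estimates of those papers apply off the shelf. This is precisely the obstruction that makes the fixed-time non-degeneracy of the signature Malliavin matrix a non-trivial result in its own right: it is the content of \cite{BFO19} (cited in this paper exactly for this fact, e.g. item (i) in the proof of Lemma \ref{lem: positivity of rho}), and the bespoke argument there exploits the nilpotent structure — an explicit formula for the inverse Jacobian of the flow $u\mapsto\log(\mathrm e^{u}\otimes\mathrm e^{U_t})$ as a polynomial in signature coefficients, combined with the structural moment estimates of \cite{BOZ16} — rather than a general H\"ormander theorem. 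So your reduction to $t=1$ is correct and essentially equivalent to the paper's route, but the terminal non-degeneracy claim needs either a citation of \cite{BFO19} or a genuine proof; as written, the appeal to \cite{CF10,CHLT15} would fail.
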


Now we are able to give the proof of Lemma \ref{lem: upper estimate for approximating density}.

\begin{proof}[Proof of Lemma \ref{lem: upper estimate for approximating density}]

As mentioned earlier, the uniform upper bound for the derivatives
of $p_{l}^{\eta}(t,x,y)$ follows from the same lines as in the proof
of Proposition \ref{prop: Gaussian upper bound for p} (with the same three
main ingredients (i)-(ii)-(iii)), based on the integration by parts formula. In the
remainder of the proof, we show that the exponent $\gamma_{n}$ can be chosen depending
only on $l_{0}$ but not on $l$ (note, however, that it also depends on $H,n$
and the vector fields). We now divide the proof in several steps.

\noindent{\it Step 1: A decomposition based on lowest eigenvalue.} As recalled in \eqref{IBP density bound} and the strategy of proof of Proposition \ref{prop: Gaussian upper bound for p}, the exponent $\gamma_{n}$ in \eqref{eq: upper bound approximating density} comes from integrability estimates for
the inverse of the Malliavin covariance matrix of $X_{l}(t,x)$. To prove the claim,
by the definition of $\mathbb{P}_{l}^{\eta}(t,x,\cdot),$ it is sufficient
to establish the following property: for each $q>1,$ we have
\begin{equation}
\sup_{t\in(0,1]}\mathbb{E}\left[\left|\frac{t^{2Hl_{0}}}{\lambda_{t}^{(l)}}\right|^{q};\|U_{t}^{(l)}\|_{\textsc{HS}}<r\right]<\infty,\label{eq: uniform sharp bound on inverse Malliavin matrix}
\end{equation}
where the random variable $\lambda_t^{(l)}$ is defined by
\[
\lambda_{t}^{(l)}\triangleq\inf_{\eta\in S^{N-1}}\langle\eta,\gamma_{X_{l}(t,x)}\eta\rangle_{\mathbb{R}^{N}},
\]
that is, $\lambda_t^{(l)}$ is the smallest eigenvalue of $\gamma_{X_{l}(t,x)}.$  In order to lower bound $\lambda_t^{(l)}$, we write $X_{l}(t,x)=X_{l_{0}}(t,x)+R_{t},$ where by the definition \eqref{eq:def-X-l}
of $X_{l}(t,x)$ we have 
\begin{align}\label{eq: R_t}
R_{t}\triangleq\sum_{l_{0}<|\alpha|\leq l}V_{(\alpha)}(x)(\exp U_{t}^{(l)})^{\alpha}.
\end{align}
Then for every $\eta\in S^{N-1},$ we have
\begin{align*}
  \langle\eta,\gamma_{X_{l}(t,x)}\eta\rangle_{\mathbb{R}^{N}}
 & =\left\Vert D\left(\langle\eta,X_{l}(t,x)\rangle_{\mathbb{R}^{N}}\right)\right\Vert _{\bar{{\cal H}}}^{2}\\
 & =\left\Vert D\left(\langle\eta,X_{l_{0}}(t,x)\rangle_{\mathbb{R}^{N}}\right)+D\left(\langle\eta,R_{t}\rangle_{\mathbb{R}^{N}}\right)\right\Vert _{\bar{{\cal H}}}^{2}.
\end{align*}
By invoking the definition \eqref{malmat} of $\gamma_{X_{l_0}}$, we get
\begin{align*}
  \langle\eta,\gamma_{X_{l}(t,x)}\eta\rangle_{\mathbb{R}^{N}} & \geq\frac{1}{2}\left\Vert D\left(\langle\eta,X_{l_{0}}(t,x)\rangle_{\mathbb{R}^{N}}\right)\right\Vert _{\bar{{\cal H}}}^{2}-\left\Vert D\left(\langle\eta,R_{t}\rangle_{\mathbb{R}^{N}}\right)\right\Vert _{\bar{{\cal H}}}^{2}\\
 & =\frac{1}{2}\langle\eta,\gamma_{X_{l_{0}}(t,x)}\eta\rangle_{\mathbb{R}^{N}}-\langle\eta,\gamma_{R_{t}}\eta\rangle_{\mathbb{R}^{N}}\\
 & \geq\frac{1}{2}\langle\eta,\gamma_{X_{l_{0}}(t,x)}\eta\rangle_{\mathbb{R}^{N}}-\|\gamma_{R_{t}}\|_{{\rm F}},
\end{align*}
where $\|\cdot\|_\mathrm{F}$ denotes the Frobenius norm of a matrix, and we have used the simple inequality $\|a+b\|_{E}^{2}\geq\frac{1}{2}\|a\|_{E}^{2}-\|b\|_{E}^{2}$
which is valid in any Hilbert space $E.$ It follows that 
\begin{equation}
\lambda_{t}^{(l)}\geq\frac{1}{2}\lambda_{t}^{(l_{0})}-\|M_{R_{t}}\|_{{\rm F}}.\label{eq: lower bound on smallest eigenvalue}
\end{equation}

In order to go from \eqref{eq: lower bound on smallest eigenvalue} to our desired estimate \eqref{eq: uniform sharp bound on inverse Malliavin matrix}, consider $q>1$ and the following decomposition:
\begin{align}\label{eq: decomposition I_t+J_t}
\mathbb{E}\left[\left|\frac{t^{2Hl_{0}}}{\lambda_{t}^{(l)}}\right|^{q};\|U_{t}^{(l)}\|_{\textsc{HS}}<r\right]=I_t+J_t,
 \end{align}
 where $I_t$ and $J_t$ are respectively defined by
  \begin{align*}
 & I_t=\mathbb{E}\left[\left|\frac{t^{2Hl_{0}}}{\lambda_{t}^{(l)}}\right|^{q};\frac{1}{2}\lambda_{t}^{(l_{0})}-\|M_{R_{t}}\|_{{\rm F}}\geq\frac{1}{4}\lambda_{t}^{(l_{0})},\ \|U_{t}^{(l)}\|_{\textsc{HS}}<r\right],\\
 & J_t=\mathbb{E}\left[\left|\frac{t^{2Hl_{0}}}{\lambda_{t}^{(l)}}\right|^{q};\frac{1}{2}\lambda_{t}^{(l_{0})}-\|M_{R_{t}}\|_{\mathrm{F}}<\frac{1}{4}\lambda_{t}^{(l_{0})},\ \|U_{t}^{(l)}\|_{\textsc{HS}}<r\right].
\end{align*}
Now we estimate $I_{t}$ and $J_{t}$ separately. 

\bigskip

\noindent{\it Step 2: Upper bound for $I_t$.} To estimate $I_{t}$, observe that according to (\ref{eq: lower bound on smallest eigenvalue}) we have
\begin{align}\label{eq: upper bound I_t 1}
I_{t}\leq\mathbb{E}\left[\left|\frac{4t^{2Hl_{0}}}{\lambda_{t}^{(l_{0})}}\right|^{q};\|U_{t}^{(l)}\|_{\textsc{HS}}<r\right].
\end{align}
Furthermore, since $X_{l_{0}}(t,x)=x+F_{l_{0}}(U_{t}^{(l_{0})},x)$, we know that
\[
\gamma_{X_{l_{0}}(t,x)}=JF_{l_{0}}(U_{t}^{(l_{0})},x)\cdot \gamma_{U_{t}^{(l_{0})}}\cdot JF_{l_{0}}(U_{t}^{(l_{0})},x)^{*}.
\]
Therefore, for each $\eta\in S^{N-1},$
\begin{align}
 & \langle\eta,\gamma_{X_{l_{0}}(t,x)}\eta\rangle_{\mathbb{R}^{N}}\nonumber \\
 & =\eta^{*}\cdot JF_{l_{0}}(U_{t}^{(l_{0})},x)\cdot \gamma_{U_{t}^{(l_{0})}}\cdot JF_{l_{0}}(U_{t}^{(l_{0})},x)^{*}\eta\nonumber\\
 & \geq\mu_{t}^{(l_{0})}\cdot\left(\eta^{*}\cdot JF_{l_{0}}(U_{t}^{(l_{0})},x)\cdot JF_{l_{0}}(U_{t}^{(l_{0})},x)^{*}\cdot\eta\right),\label{eq: eigenvalue of JF}
\end{align}
where recall that $\mu_{t}^{(l_{0})}$ denotes the smallest eigenvalue
of $\gamma_{U_{t}^{(l_{0})}}.$ In addition, we choose the constant $r$ in \eqref{eq: upper bound I_t 1} as in Corollary \ref{cor: local submersion} and Remark \ref{rem: consistent choice of r}. We hence know that the matrix
\[
JF_{l_{0}}(\pi^{(l_{0})}(u),x)\cdot JF_{l_{0}}(\pi^{(l_{0})}(u),x)
\]
is uniformly positive definite on $\{(u,x)\in\mathfrak{g}^{(l)}\times\mathbb{R}^{N}:\|u\|_{\textsc{HS}}<r\}.$
In particular, there exists a constant $c_{V,l}>0,$ such that on
the event $\{\|U_{t}^{(l)}\|_{\textsc{HS}}<r\}$, we have
\[
\eta^{*}\cdot JF_{l_{0}}(U_{t}^{(l_{0})},x)\cdot JF_{l_{0}}(U_{t}^{(l_{0})},x)^{*}\cdot\eta\geq c_{V,l}|\eta|^{2},\ \ \forall\eta\in\mathbb{R}^{N},
\]
Therefore, according to (\ref{eq: eigenvalue of JF}), we conclude
that on the event $ \{\|U_{t}^{(l)}\|_{\textsc{HS}}<r\}$ we have
\begin{align}\label{eq: lower bound lambda_t^l_0}
\lambda_{t}^{(l_{0})}=\inf_{\eta\in S^{N-1}}\langle\eta,M_{X_{l_{0}}(t,x)}\eta\rangle_{\mathbb{R}^{N}}\geq c_{V,l}\mu_{t}^{(l_{0})},
\end{align}
where we recall that $\mu_t^{(l_0)}$ is the smallest eigenvalue of $\gamma_{U_t^{(l_0)}}$.  Putting \eqref{eq: lower bound lambda_t^l_0} into \eqref{eq: upper bound I_t 1} it follows that
\[
I_{t}\leq\mathbb{E}\left[\left|\frac{4t^{2Hl_{0}}}{c_{V,l}\mu_{t}^{(l_{0})}}\right|^{q}:\|U_{t}^{(l)}\|_{\textsc{HS}}<r\right].
\]
Hence a direct application of Lemma \ref{lem: estimating eigenvalue of M matrix for signature} yields
\[
\sup_{t\in(0,1]}I_{t}<\infty.
\]

\noindent{\it Step 3: Upper bound for $J_t$.} 
To estimate $J_{t}$, according to H\"older's inequality, we have 
\begin{align}
J_{t} & \leq
\left(\mathbb{E}\left[\left|\frac{t^{2Hl_{0}}}{\lambda_{t}^{(l)}}\right|^{2q}\right];\|U_{t}^{(l)}\|_{\textsc{HS}}<r\right)^{\frac{1}{2}}\cdot
\left(\mathbb{P}\left(\frac{1}{2}\lambda_{t}^{(l_{0})}-\|M_{R_{t}}\|_{\mathrm{F}}<\frac{1}{4}\lambda_{t}^{(l_{0})}
;\|U_{t}^{(l)}\|_{\textsc{HS}}<r
\right)\right)^{\frac{1}{2}}\nonumber\\
 & =\left(\mathbb{E}\left[\left|\frac{t^{2Hl_{0}}}{\lambda_{t}(l)}\right|^{2q}\right];\|U_{t}^{(l)}\|_{\textsc{HS}}<r\right)^{\frac{1}{2}}\cdot\left(\mathbb{P}\left(\|M_{R_{t}}\|_{\mathrm{F}}>\frac{1}{4}\lambda_{t}^{(l_{0})},\ \|U_{t}^{(l)}\|_{\textsc{HS}}<r\right)\right)^{\frac{1}{2}}.\label{eq: upper bound J_t 1}
\end{align}
On the one hand, according to (\ref{eq: lower bound lambda_t^l_0}) applied to general $l$ and Lemma \ref{lem: estimating eigenvalue of M matrix for signature},
we have
\[
C_{1,q,l}\triangleq\sup_{t\in(0,1]}\mathbb{E}\left[\left|\frac{t^{2Hl}}{\lambda_{t}^{(l)}}\right|^{2q}:\|U_{t}^{(l)}\|_{\textsc{HS}}<r\right]<\infty.
\]
It follows that 
\begin{align}
\mathbb{E}\left[\left|\frac{t^{2Hl_{0}}}{\lambda_{t}^{(l)}}\right|^{2q}:\|U_{t}^{(l)}\|_{\textsc{HS}}<r\right] & =\frac{1}{t^{4qH(l-l_{0})}}\mathbb{E}\left[\left|\frac{t^{2Hl}}{\lambda_{t}^{(l)}}\right|^{2q}:\|U_{t}^{(l)}\|_{\textsc{HS}}<r\right]\nonumber \\
 & \leq\frac{C_{1,q,l}}{t^{4qH(l-l_{0})}}.\label{eq: first bit}
\end{align}
In order to bound the right hand-side of \eqref{eq: upper bound J_t 1}, we also write 
\begin{equation*}
  \mathbb{P}\left(\|M_{R_{t}}\|_{\mathrm{F}}>\frac{1}{4}\lambda_{t}^{(l_{0})},\ \|U_{t}^{(l)}\|_{\textsc{HS}}<r\right)
  =\mathbb{P}\left(t^{-2Hl_{0}}\|M_{R_{t}}\|_{\mathrm{F}}\geq\frac{1}{4}t^{-2Hl_{0}}\lambda_{t}^{(l_{0})},\ \|U_{t}^{(l)}\|_{\textsc{HS}}<r\right).
\end{equation*}
Now a crucial observation is that $R_{t}$ is defined in terms of
signature components of order at least $l_{0}+1$ for the fractional
Brownian motion, as easily seen from \eqref{eq: R_t}. According to the scaling property of the signature, if we define $\xi_{t}\triangleq t^{-2H(l_{0}+1)}\|M_{R_{t}}\|_{\mathrm{F}}$,
then $\xi_{t}$ has moments of all orders uniformly in $t\in(0,1].$
It follows that 
\begin{align}
 & \mathbb{P}\left(\|M_{R_{t}}\|_{\mathrm{F}}>\frac{1}{4}\lambda_{t}^{(l_{0})},\ \|U_{t}^{(l)}\|_{\textsc{HS}}<r\right)\nonumber \\
 & =\mathbb{P}\left(t^{2H}\xi_{t}\geq\frac{1}{4}t^{-2Hl_{0}}\lambda_{t}^{(l_{0})},\ \|U_{t}^{(l)}\|_{\textsc{HS}}<r\right)\nonumber \\
 & \leq\mathbb{E}\left[\left|\frac{4t^{2H}\xi_{t}}{t^{-2Hl_{0}}\lambda_{t}^{(l_{0})}}\right|^{2q(l-l_{0})};\ \|U_{t}^{(l)}\|_{\textsc{HS}}<r\right]\nonumber \\
 & =t^{4qH(l-l_{0})}\mathbb{E}\left[\left|\frac{4\xi_{t}t^{2Hl_{0}}}{\lambda_{t}(l_{0})}\right|^{2q(l-l_{0})};\ \|U_{t}^{(l)}\|_{\textsc{HS}}<r\right]\nonumber \\
 & \leq t^{4qH(l-l_{0})}\left(\mathbb{E}[|4\xi_{t}|^{4q(l-l_{0})}]\right)^{\frac{1}{2}}\cdot\left(\mathbb{E}\left[\left|\frac{t^{2Hl_{0}}}{\lambda_{t}(l_{0})}\right|^{4q(l-l_{0})}\right]\right)^{\frac{1}{2}}\nonumber \\
 & \leq C_{2,q,l}t^{4qH(l-l_{0})}.\label{eq: second bit}
\end{align}
Plugging (\ref{eq: first bit}) and (\ref{eq: second bit}) into \eqref{eq: upper bound J_t 1}, we arrive
at 
\[
J_{t}\leq\sqrt{C_{1,q,l}\cdot C_{2,q,l}}<\infty,\ \ \text{for\ all}\  t\in(0,1].
\]

\bigskip

\noindent{\it Step 4: Conclusion.}  Putting together our estimates on $I_t$ and $J_t$ and inserting them into~\eqref{eq: decomposition I_t+J_t}, our claim \eqref{eq: uniform sharp bound on inverse Malliavin matrix} is readily proved. 
%Therefore, the uniform estimate (\ref{eq: uniform sharp bound on inverse Malliavin matrix})
%follows, and the proof of Lemma \ref{lem: upper estimate for approximating density}
%is complete.

\end{proof}

\subsection{Completing the proof of Theorem \ref{thm: local lower estimate}.}

Finally, we are in a position to complete the proof of Theorem \ref{thm: local lower estimate}.
Indeed, recall that (\ref{eq: lower estimate for approximating density})
and (\ref{eq: comparison estimate}) assert that for $x$ and $y$ such that $d(x,y)\leq t^H$ and $t<\tau$ we have
\begin{align}\label{f1}
p^\eta(t,x,y)\geq \frac{C_{1}}{|B_d(x,t^H)|},\quad\text{and}\quad |p(t,x,y)-p^\eta_l(t,x,y)|\leq C_{H,V,l}t.
\end{align}
In addition, owing to \eqref{eq:local-comparison}, for small $t$ we get
\begin{equation}\label{f2}
\frac{1}{|B_d(x,t^H)|}\geq\frac{C}{t^{HN/l_0}}.
\end{equation}
Putting together \eqref{f1} and \eqref{f2}, it is thus easily seen that when $t$ is small enough we have
\[
p(t,x,y)\geq\frac{C_{2}}{|B_{d}(x,t^{H})|}.
\]
The proof of Lemma \ref{lem: upper estimate for approximating density} is thus complete.

\begin{appendices}
\section{A disintegration formula on Riemannian manifolds.}

Since we are not aware of a specific reference in the literature, for completeness we include a proof of a general disintegration formula on Riemannian manifolds, which  is used for Proposition \ref{th: disintegration}.

Recall that, if $V$ is an $m$-dimensional real inner product space, then for
each $0\leqslant p\leqslant m,$ the $p$-th exterior power $\Lambda^{p}V$
of $V$ carries an inner product structure defined by 
\[
\langle v_{1}\wedge\cdots\wedge v_{p},w_{1}\wedge\cdots\wedge w_{p}\rangle_{\Lambda^{p}V}\triangleq\det\left(\langle v_{i},w_{j}\rangle_{1\leqslant i,j\leqslant p}\right).
\]
In particular, if $M$ is a Riemannian manifold, then for each $p$,
the space of differential $p$-forms carries a canonical pointwise
inner product structure induced from the Riemannian structure of $M$.
A norm on differential $p$-forms is thus defined pointwisely on $M$.

Let $N$ be an oriented $n$-dimensional Riemannian manifold. Suppose
that $F:\ M\triangleq\mathbb{R}^{m}\rightarrow N$ ($m\geqslant n$)
is a non-degenerate $C^{\infty}$-map in the sense that $(dF)_{p}$
is surjective everywhere. Then we know that for each $q\in F(\mathbb{R}^{m}),$
$F^{-1}(q)$ is a closed submanifold of $\mathbb{R}^{m},$ which
carries a canonical Riemannian structure induced from $\mathbb{R}^{m}$. From
differential topology we also know that $F^{-1}(q)$ carries a natural
orientation induced from the ones on $\mathbb{R}^{m}$ and
$N$. In particular, the volume form on $F^{-1}(q)$ is well-defined
for every $q$. 

Now we have the following disintegration formula.
\begin{thm}
Let $\mathrm{vol}_{N}$ be the volume form on $N$. Then for every
$\varphi\in C_{c}^{\infty}(\mathbb{R}^{m}),$ we have 
\begin{equation}
\int_{\mathbb{R}^{m}}\varphi(x)dx=\int_{q\in N}\mathrm{vol}_{N}(dq)\int_{x\in F^{-1}(q)}\frac{\varphi(x)}{\|F^*\mathrm{vol}_{N}\|}\mathrm{vol}_{F^{-1}(q)}(dx),\label{eq: general disintegration}
\end{equation}
where $\mathrm{vol}_{F^{-1}(q)}\triangleq 0$ if $F^{-1}(q)=\emptyset.$
\end{thm}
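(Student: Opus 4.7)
The strategy is the classical coarea approach: localize via a partition of unity and then use the submersion theorem to reduce the identity to Fubini's theorem in adapted coordinates, while tracking the Jacobian that produces the factor $\|F^{*}\mathrm{vol}_{N}\|$.

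First I would observe that at every $p\in\mathbb{R}^{m}$, because $(dF)_{p}\colon T_{p}\mathbb{R}^{m}\to T_{F(p)}N$ is surjective, we can decompose $T_{p}\mathbb{R}^{m}=\ker(dF)_{p}\oplus\ker(dF)_{p}^{\perp}$ orthogonally. Choosing an orthonormal basis $\{e_{1},\ldots,e_{m-n}\}$ of $\ker(dF)_{p}$ and $\{f_{1},\ldots,f_{n}\}$ of $\ker(dF)_{p}^{\perp}$, and any orthonormal basis $\{w_{1},\ldots,w_{n}\}$ of $T_{F(p)}N$, one has
\[
\|F^{*}\mathrm{vol}_{N}\|(p)=\bigl|\det\bigl(\langle (dF)_{p}(f_{i}),w_{j}\rangle\bigr)\bigr|=\sqrt{\det\bigl((dF)_{p}(dF)_{p}^{*}\bigr)}.
\]
In particular $\|F^{*}\mathrm{vol}_{N}\|$ is strictly positive and smooth on $\mathbb{R}^{m}$, so the right-hand side of \eqref{eq: general disintegration} makes sense.

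Next I would apply the submersion theorem (a.k.a. rank theorem). Around any $p\in\mathbb{R}^{m}$, choose a chart $U\subseteq\mathbb{R}^{m}$ with smooth coordinates $(u^{1},\ldots,u^{m})$ and a chart $V\subseteq N$ with positively oriented coordinates $(v^{1},\ldots,v^{n})$ such that $F$ reads $(u^{1},\ldots,u^{m})\mapsto(u^{1},\ldots,u^{n})$. In these coordinates the Euclidean volume form on $U$ is $dx=J(u)\,du^{1}\wedge\cdots\wedge du^{m}$ for some positive smooth density $J$, while the Riemannian volume form on $V$ is $\mathrm{vol}_{N}=G(v)\,dv^{1}\wedge\cdots\wedge dv^{n}$ with $G>0$. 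A short computation then shows
\[
F^{*}\mathrm{vol}_{N}=G(u^{1},\ldots,u^{n})\,du^{1}\wedge\cdots\wedge du^{n},
\]
and a direct linear-algebra calculation in the orthogonal decomposition above identifies the norm of this $n$-form, taken in the Riemannian structure of $\mathbb{R}^{m}$, with $G/J_{\perp}$, where $J_{\perp}$ is precisely the density that converts $du^{n+1}\wedge\cdots\wedge du^{m}$ on each fiber into the induced volume form $\mathrm{vol}_{F^{-1}(q)}$. Combining these identifications gives the pointwise relation
\[
dx = \frac{\|F^{*}\mathrm{vol}_{N}\|(x)}{G(v)}\,\bigl(du^{1}\cdots du^{n}\bigr)\wedge \mathrm{vol}_{F^{-1}(q)}(dx).
\]

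With this local picture in hand, I would choose a locally finite cover of $\mathrm{supp}(\varphi)$ by charts as above, subordinate a partition of unity $\{\chi_{\alpha}\}$ to it, and apply Fubini's theorem to each $\chi_{\alpha}\varphi$: integrating first in $(u^{n+1},\ldots,u^{m})$ along the fiber and then in $(u^{1},\ldots,u^{n})=(v^{1},\ldots,v^{n})$ reproduces exactly the right-hand side of \eqref{eq: general disintegration}, since the factor $G(v)\,dv^{1}\cdots dv^{n}$ reconstitutes $\mathrm{vol}_{N}$ and the density along the fiber is $1/\|F^{*}\mathrm{vol}_{N}\|$. Summing in $\alpha$ gives the global identity.

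The only genuinely non-trivial step is the linear-algebra identification in the second paragraph: one must verify that the same determinant $\sqrt{\det((dF)(dF)^{*})}$ appears as the norm of $F^{*}\mathrm{vol}_{N}$ on $\Lambda^{n}T^{*}\mathbb{R}^{m}$ and as the Jacobian relating $du^{n+1}\wedge\cdots\wedge du^{m}$ to the fiber volume. This is a straightforward consequence of the block decomposition of $(dF)$ with respect to the bases $\{f_{i}\}$ on $\ker(dF)^{\perp}$ and $\{w_{j}\}$ on $T_{F(p)}N$, together with the Gram determinant formula on $\Lambda^{n}$. Orientation issues are handled by choosing the coordinate charts on $\mathbb{R}^{m}$ compatibly with the natural orientation on $F^{-1}(q)$ induced by the orientations of $\mathbb{R}^{m}$ and $N$, which is exactly the convention fixed in the statement.
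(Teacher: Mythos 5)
Your plan follows the same route as the paper's own proof: localize with a partition of unity, straighten $F$ into a coordinate projection, and reduce the identity to Fubini's theorem while tracking Jacobian densities; even your opening identity $\|F^{*}\mathrm{vol}_{N}\|=\sqrt{\det\bigl((dF)(dF)^{*}\bigr)}$ is the one the paper obtains at the end of its computation. The gap lies in the step you yourself single out as the only non-trivial one: you do not carry out the Jacobian identification, and the version you record is not correct. Write $dx=J\,du^{1}\wedge\cdots\wedge du^{m}$, $\mathrm{vol}_{N}=G\,dv^{1}\wedge\cdots\wedge dv^{n}$ and $\mathrm{vol}_{F^{-1}(q)}=J_{\perp}\,du^{n+1}\wedge\cdots\wedge du^{m}$ along a fiber. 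For the fiberwise integration to reproduce the factor $1/\|F^{*}\mathrm{vol}_{N}\|$ one needs
\begin{equation*}
\|F^{*}\mathrm{vol}_{N}\|=\frac{G\,J_{\perp}}{J},
\qquad\text{equivalently}\qquad
dx=\frac{G}{\|F^{*}\mathrm{vol}_{N}\|}\,(du^{1}\wedge\cdots\wedge du^{n})\wedge\mathrm{vol}_{F^{-1}(q)},
\end{equation*}
whereas you assert $\|F^{*}\mathrm{vol}_{N}\|=G/J_{\perp}$ and display $dx=\frac{\|F^{*}\mathrm{vol}_{N}\|}{G}\,(du^{1}\cdots du^{n})\wedge\mathrm{vol}_{F^{-1}(q)}$: the factor is inverted and the coordinate density $J$ has disappeared. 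These statements are moreover inconsistent with your own conclusion, since inserting your displayed relation into the Fubini step produces the fiber density $\|F^{*}\mathrm{vol}_{N}\|/G^{2}$ rather than $1/\|F^{*}\mathrm{vol}_{N}\|$.

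A two-line example detects the error: take $m=2$, $n=1$, $N=\mathbb{R}$ with the standard metric and $F(x^{1},x^{2})=2x^{1}$. Then $\|F^{*}\mathrm{vol}_{N}\|=\|2\,dx^{1}\|=2$, while in the adapted coordinates $u^{1}=2x^{1}$, $u^{2}=x^{2}$ one has $G=1$, $J=1/2$, $J_{\perp}=1$, so $G/J_{\perp}=1\neq 2$, and your displayed relation would give $dx^{1}\wedge dx^{2}=2\,du^{1}\wedge du^{2}=4\,dx^{1}\wedge dx^{2}$. The relation among $G$, $J$, $J_{\perp}$ and $\|F^{*}\mathrm{vol}_{N}\|$ is exactly the content of the theorem, and it is where the paper spends its effort: it introduces $\overline{F}(x_{1},x_{2})=(F(x),x_{2})$, inverts the Jacobian blockwise, computes the Gram determinant of the fiber parametrization, and uses Sylvester's determinant identity to arrive at $dx=\frac{1}{\|F^{*}\mathrm{vol}_{N}\|}\,\mathrm{vol}_{N}\wedge\mathrm{vol}_{F^{-1}(y)}$. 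To repair your argument you must actually perform this computation (your orthogonal splitting $T_{p}\mathbb{R}^{m}=\ker(dF)_{p}\oplus\ker(dF)_{p}^{\perp}$ is a fine place to do it, via the Gram/Cauchy--Binet formula), stating the identity in the correct direction and keeping the density $J$; with that done, the partition-of-unity and Fubini scaffolding you set up does yield the disintegration formula.
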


\begin{proof}
By a partition of unity argument, it suffices to prove the formula
locally under coordinate charts on $N$. Fix $p\in\mathbb{R}^{m}$ and
$q\triangleq F(p)\in N.$ Let $(V;y^{i})$ be a chart around $q$.
Then the Jacobian matrix $\frac{\partial y}{\partial x}$ has full
rank (i.e. rank $n$) at $p.$ Without loss of generality, we may
assume that 
\[
\frac{\partial y}{\partial x_{1}}\triangleq\left(\frac{\partial y^{i}}{\partial x^{j}}\right)_{1\leqslant i,j\leqslant n}
\]
is non-degenerate, where we write $x_{1}=(x^{1},\cdots,x^{n})$ and
$x_{2}=(x^{n+1},\cdots,x^{m})$. Define a map $\overline{F}:\ \mathbb{R}^{m}\rightarrow N\times\mathbb{R}^{m-n}$
by $\overline{F}(x_{1},x_{2})\triangleq(F(x_{1},x_{2}),x_{2}).$ It
follows that locally around $p$$,$ we have 
\[
\frac{\partial\overline{F}}{\partial x}=\left(\begin{array}{cc}
\frac{\partial y}{\partial x_{1}} & \frac{\partial y}{\partial x_{2}}\\
0 & \mathrm{I}_{m-n}
\end{array}\right).
\]
In particular, $\frac{\partial\overline{F}}{\partial x}$ is non-denegerate
at $p$. Therefore, $\overline{F}$ defines a local diffeomorphism
between $p\in U$ and $W=V'\times(a,b)$ for some $U$, $V'\subseteq V$
and $(a,b)\subseteq\mathbb{R}^{m-n}.$ We use $(y,z)\in V'\times(a,b)$
to denote the new coordinates on $U\subseteq\mathbb{R}^{m}.$ Note
that every slice $\{(y,z)\in W:\ y=y_{0}\}$ ($y_{0}\in V'$) defines
a parametrization of the fiber $F^{-1}(y_{0})\cap U$.

By change of coordinates from $x=(x_{1},x_{2})$ to $(y,z),$ we have
\begin{equation}
dx=\frac{1}{\det\left(\frac{\partial\overline{F}}{\partial x}\right)}dy\wedge dz=\frac{1}{\det\left(\frac{\partial y}{\partial x_{1}}\right)}dy\wedge dz.\label{eq: computing dx in disintegration}
\end{equation}
Since for each $y\in V'$, $z\in(a,b)\mapsto\overline{F}^{-1}(y,z)\in F^{-1}(y)$
defines of parametrization of the fiber $F^{-1}(y)\cap U$, we know
that 
\[
dz=\frac{\mathrm{vol}_{F^{-1}(y)}}{\sqrt{\det\left(\left(\langle\partial_{i}z,\partial_{j}z\rangle\right)_{n+1\leqslant i,j\leqslant m}\right)}}
\]
for each fixed $y\in V'$, where the inner product is defined by the
induced Riemannian structure on $F^{-1}(y).$ But we know that $\{\partial_{i}x:\ 1\leqslant i\leqslant m\}$
is an orthonormal basis of $T_{x}\mathbb{R}^{m}$ for every $x$.
Therefore, 
\begin{align*}
\langle\partial_{i}z,\partial_{j}z\rangle & =\sum_{\alpha,\beta=1}^{m}\frac{\partial x^{\alpha}}{\partial z^{i}}\frac{\partial x^{\beta}}{\partial z^{j}}\langle\partial_{\alpha}x,\partial_{\beta}x\rangle\\
 & =\sum_{\alpha=1}^{m}\frac{\partial x^{\alpha}}{\partial z^{i}}\frac{\partial x^{\alpha}}{\partial z^{j}}.
\end{align*}
It follows that 
\[
\left(\langle\partial_{i}z,\partial_{j}z\rangle\right)_{n+1\leqslant i,j\leqslant m}=\left(\frac{\partial x}{\partial z}\right)^{*}\cdot\frac{\partial x}{\partial z}.
\]

On the other hand, we know that 
\[
\left(\begin{array}{cc}
\frac{\partial x_{1}}{\partial y} & \frac{\partial x_{1}}{\partial z}\\
\frac{\partial x_{2}}{\partial y} & \frac{\partial x_{2}}{\partial z}
\end{array}\right)\cdot\left(\begin{array}{cc}
\frac{\partial y}{\partial x_{1}} & \frac{\partial y}{\partial x_{2}}\\
0 & \mathrm{I}_{m-n}
\end{array}\right)=\mathrm{I}_{m}.
\]
By comparing components, we get
\[
\begin{cases}
\frac{\partial x_{1}}{\partial y}=\left(\frac{\partial y}{\partial x_{1}}\right)^{-1},\\
\frac{\partial x_{1}}{\partial z}=-\left(\frac{\partial y}{\partial x_{1}}\right)^{-1}\cdot\frac{\partial y}{\partial x_{2}},\\
\frac{\partial x_{2}}{\partial y}=0,\\
\frac{\partial x_{2}}{\partial z}=\mathrm{I}_{m-n}.
\end{cases}
\]
Therefore, 
\[
\frac{\partial x}{\partial z}=\left(\begin{array}{c}
-\left(\frac{\partial y}{\partial x_{1}}\right)^{-1}\cdot\frac{\partial y}{\partial x_{2}}\\
\mathrm{I}_{m-n}
\end{array}\right),
\]
and
\begin{align*}
\det\left(\left(\langle\partial_{i}z,\partial_{j}z\rangle\right)_{n+1\leqslant i,j\leqslant m}\right) & =\left(\begin{array}{cc}
-\left(\frac{\partial y}{\partial x_{2}}\right)^{*}\left(\frac{\partial y}{\partial x_{1}}\right)^{*-1} & \mathrm{I}_{m-n}\end{array}\right)\cdot\left(\begin{array}{c}
-\left(\frac{\partial y}{\partial x_{1}}\right)^{-1}\cdot\frac{\partial y}{\partial x_{2}}\\
\mathrm{I}_{m-n}
\end{array}\right)\\
 & =\det\left(\left(\frac{\partial y}{\partial x_{2}}\right)^{*}\left(\frac{\partial y}{\partial x_{1}}\right)^{*-1}\left(\frac{\partial y}{\partial x_{1}}\right)^{-1}\frac{\partial y}{\partial x_{2}}+\mathrm{I}_{m-n}\right)\\
 & =\det\left(\left(\frac{\partial y}{\partial x_{1}}\right)^{*-1}\left(\frac{\partial y}{\partial x_{1}}\right)^{-1}\frac{\partial y}{\partial x_{2}}\left(\frac{\partial y}{\partial x_{2}}\right)^{*}+\mathrm{I}_{n}\right),
\end{align*}
where in the last equality we have used Sylvester's determinant identity
(i.e. $\det(\mathrm{I}_{m}+AB)=\det(\mathrm{I}_{n}+BA)$ if $A,B$
are  $m\times n$ and $n\times m$ matrices respectively).

Consequently, according to (\ref{eq: computing dx in disintegration}),
we get 
\begin{align*}
dx & =\frac{1}{\det\left(\frac{\partial y}{\partial x_{1}}\right)\cdot\sqrt{\det\left(\left(\langle\partial_{i}z,\partial_{j}z\rangle\right)_{n+1\leqslant i,j\leqslant m}\right)}}dy\wedge\mathrm{vol}_{F^{-1}(y)}\\
 & =\frac{1}{\sqrt{\det\left(\frac{\partial y}{\partial x_{1}}\left(\frac{\partial y}{\partial x_{1}}\right)^{*}\right)\cdot\det\left(\left(\frac{\partial y}{\partial x_{1}}\right)^{*-1}\left(\frac{\partial y}{\partial x_{1}}\right)^{-1}\frac{\partial y}{\partial x_{2}}\left(\frac{\partial y}{\partial x_{2}}\right)^{*}+\mathrm{I}_{n}\right)}}dy\wedge\mathrm{vol}_{F^{-1}(y)}\\
 & =\frac{1}{\sqrt{\det\left(\frac{\partial y}{\partial x_{1}}\left(\frac{\partial y}{\partial x_{1}}\right)^{*}+\frac{\partial y}{\partial x_{2}}\left(\frac{\partial y}{\partial x_{2}}\right)^{*}\right)}}dy\wedge\mathrm{vol}_{F^{-1}(y)}\\
 & =\frac{1}{\sqrt{\det\left(\frac{\partial y}{\partial x}\left(\frac{\partial y}{\partial x}\right)^{*}\right)}}dy\wedge\mathrm{vol}_{F^{-1}(y)}.
\end{align*}
But we also know that 
\begin{align*}
\|F^{*}dy\| & =\sqrt{\langle dy^{1}\wedge\cdots\wedge dy^{n},dy^{1}\wedge\cdots\wedge dy^{n}\rangle}\\
 & =\sqrt{\mathrm{det}\left(\left(\langle dy^{i},dy^{j}\rangle\right)_{1\leqslant i,j\leqslant n}\right)}\\
 & =\sqrt{\mathrm{det}\left(\frac{dy}{dx}\left(\frac{dy}{dx}\right)^{*}\right)}.
\end{align*}
Therefore, we arrive at 
\begin{align*}
dx & =\frac{1}{\|F^{*}dy\|}dy\wedge\mathrm{vol}_{F^{-1}(y)}\\
 & =\frac{1}{\|F^{*}\mathrm{vol}_{N}\|}\mathrm{vol}_{N}\wedge\mathrm{vol}_{F^{-1}(y)}
\end{align*}
on $U$, where in the last equality we have used the fact that $F^{*}$
is a linear map.

Now the proof of the theorem is complete.
\end{proof}
\begin{rem}
Note that the disintegration formula (\ref{eq: general disintegration})
is intrinsic, i.e. it does not depend on coordinates over $N$.
However, the formula is not true when $M$ is not flat. Indeed, the
left hand side of the formula depends on the entire Riemannian structure
of $M$ since the volume form is defined in terms of the Riemannian
metric on $M$. However, the right hand side of the formula depends
only on the Riemannian structure of $N$ and of those fibers. In general,
the volume form on $M$ cannot be recovered intrinsically from the
geometry of $N$ and the geometry of those fibers.
\end{rem}
A particularly useful case of the disintegration formula is
when $N=\mathbb{R}^{n}.$ In this case, the formula reads
\begin{equation*}
\int_{\mathbb{R}^{m}}\varphi(x)dx=\int_{y\in\mathbb{R}^{m}}dy\int_{F^{-1}(y)}\frac{\varphi(x)}{\sqrt{\det\left(\frac{\partial y}{\partial x}\cdot\left(\frac{\partial y}{\partial x}\right)^{*}\right)}}\mathrm{vol}_{F^{-1}(y)}(dx).
\end{equation*}

\end{appendices}


\begin{thebibliography}{99}

\bibitem{BFO19}F. Baudoin, Q. Feng and C. Ouyang, Density of the signature process of fBM, preprint, \textit{arXiv:1904.09384}, 2019.

\bibitem{BH07}F. Baudoin and M. Hairer, A version of H\"ormander's theorem for the fractional Brownian motion, \textit{Probab. Theory Related Fields} 139 (3-4) (2007) 373--395.

\bibitem{BNOT16}F. Baudoin, E. Nualart, C. Ouyang and S. Tindel, On probability laws of solutions to differential systems driven by a fractional Brownian motion, \textit{Ann. Probab.} 44 (4) (2016) 2554--2590.

\bibitem{BOZ15}F. Baudoin, C. Ouyang and X. Zhang, Varadhan estimates for rough differential equations driven by fractional Brownian motions, \textit{Stochastic Process. Appl.} 125 (2015) 634--652.

\bibitem{BOZ16}F. Baudoin, C. Ouyang and X. Zhang, Smoothing effect of rough differential equations driven by fractional Brownian motions, \textit{Ann. Inst. H. Poincar\'e Probab. Statist.} 52 (1) (2016) 412--428.

\bibitem{CF10} T. Cass and P. Friz, Densities for rough differential equations under H\"ormander condition, \textit{Ann. of Math.} 171 (3) (2010) 2115--2141.

\bibitem{CHLT15} T. Cass, M. Hairer, C. Litterer and S. Tindel, Smoothness of the density for  solutions to Gaussian rough differential equations, \textit{Ann. Probab.} 43 (1) (2015) 188--239.

\bibitem{CQ}
L. Coutin, Z. Qian,
Stochastic analysis, rough path analysis and fractional Brownian motions. 
\textit{Probab. Theory Related Fields} 122 (2002), no. 1, 108--140.

\bibitem{Da07} A. M. Davie,
Differential Equations Driven by Rough Paths: An Approach via Discrete Approximation.
\newblock {\it Applied Mathematics Research Express. AMRX}, 2 (2007).

\bibitem{DU97}L. Decreusefond and A. \"Ust\"unel, Stochastic analysis of the fractional Brownian motion, \textit{Potential Anal.} 10 (1997) 177--214.

\bibitem{FH14}P. Friz and M. Hairer, \textit{A course on rough paths with an introduction to regularity structures}, Springer, 2014.

\bibitem{FV06}P. Friz and N. Victoir, A variation embedding theorem and applications, \textit{J. Funct. Anal.} 239 (2006) 631--637.

\bibitem{FV10}P. Friz and N. Victoir, \textit{Multidimensional stochastic processes as rough paths: theory and applications}, Cambridge University Press, 2010.

\bibitem{KMS93}A. Kilbas, O. Marichev and S. Samko, \textit{Fractional integrals and derivatives: theory and applications}, Gordon and Breach, Amsterdam, 1993.

\bibitem{KS84}S. Kusuoka and D. Stroock, 
Applications of the Malliavin calculus. I. 
Stochastic analysis (Katata/Kyoto, 1982), 271--306, North-Holland Math. Library, 32, North-Holland, Amsterdam, 1984.

\bibitem{KS84}S. Kusuoka and D. Stroock, 
Applications of the Malliavin calculus. II. 
\textit{J. Fac. Sci. Univ. Tokyo} Sect. IA Math. 32 (1985), no. 1, 1-76.

\bibitem{KS87}S. Kusuoka and D. Stroock, Applications of the Malliavin calculus, Part III, \textit{J. Fac. Sci. Univ. Tokyo} 34 (1987) 391--442.

\bibitem{Lyons94} T. Lyons, Differential equations driven by rough signals (I): an extension of an inequality of L.C.Young, \textit{Math. Res. Lett.} 1 (1994) 451--464.

\bibitem{Lyons98}T. Lyons, Differential equations driven by rough signals, \textit{Rev. Mat. Iberoam.} 14 (2) (1998) 215--310.

\bibitem{Ma78}
P. Malliavin,
Stochastic calculus of variation and hypoelliptic operators. \textit{Proceedings of the International Symposium on Stochastic Differential Equations} (\textit{Res. Inst. Math. Sci.}, Kyoto Univ., Kyoto, 1976), pp. 195--263, Wiley, 1978. 

\bibitem{Montgomery02}
R. Montgomery,  
\textit{A tour of subriemannian geometries, their geodesics and applications}, 
American Mathematical Society, Providence,  2002.

\bibitem{Milnor97}J.W. Milnor, \textit{Topology from the differentiable viewpoint}, Princeton University Press, Princeton, 1997.

\bibitem{Nualart06}D. Nualart, \textit{The Malliavin calculus and related topics}, Springer-Verlag, 2006.

\bibitem{PT00}
V. Pipiras, M. Taqqu,
Integration questions related to fractional Brownian motion. 
\textit{Probab. Theory Related Fields} 118 (2000), no. 2, 251--291.

\bibitem{Young36}L.C. Young, an inequality of H\"oder type, connected with Stieltjes integration, \textit{Acta Math.} 67 (1936) 251--282.

\end{thebibliography}
\end{document}